\numberwithin{equation}{section}
\theoremstyle{definition} \newtheorem{defn}{Definition}[section]
\theoremstyle{definition} \newtheorem*{defn*}{Definition}
\theoremstyle{plain} \newtheorem{thm}[defn]{Theorem}
\theoremstyle{plain} \newtheorem{propn}[defn]{Proposition}
\theoremstyle{plain} \newtheorem{lemma}[defn]{Lemma}
\theoremstyle{plain} \newtheorem{cor}[defn]{Corollary}
\theoremstyle{plain} 
\theoremstyle{plain} \newtheorem{fact}[defn]{Fact}
\theoremstyle{plain} \newtheorem*{theorem*}{Theorem}
\theoremstyle{definition} 
\theoremstyle{definition} 
\theoremstyle{definition} \newtheorem{rmk}[defn]{Remark} \theoremstyle{remark}
\theoremstyle{remark} \newtheorem*{rmk*}{Remark}
 \theoremstyle{remark}
\theoremstyle{plain} \newtheorem*{thm*}{Theorem} \theoremstyle{plain}
\newtheorem*{cor*}{Corollary}
\newcommand {\R} {\mathbb{R}} \newcommand {\Q} {\mathbb{Q}}
\newcommand {\Z} {\mathbb{Z}}
\newcommand {\N} {\mathbb{N}} \newcommand {\Rbar} {\bar{\mathbb{R}}}
\newcommand {\D} {\mathcal{D}} 
\newcommand {\iso} {\cong} 
\newcommand {\nin} {\notin}  
\newcommand{\C}{\mathbb{C}}
\renewcommand {\epsilon}{\varepsilon}  
 \newcommand {\F} {\mathcal{F}}
\newcommand {\dom} {\textrm{dom}} 
 \renewcommand {\d}[2] {\frac{\partial
    #1}{\partial #2}}
 \newcommand{\Rtilde} {\tilde{\R}}
 \newcommand{\OO}{\mathcal{O}}
\newcommand {\crp} {cellular $r$-parameterization} \newcommand {\crr}{cellular $r$-reparameterization}
  \newcommand {\calC}
{\mathcal{C}}
\newcommand\cF{{\mathcal{F}}}
\newcommand{\poly}{{\operatorname{poly}}}
\newcommand{\e}{\epsilon}
\newcommand{\cC}{\calC}
\newcommand{\reg}{{\operatorname{reg}}}
\newcommand{\sing}{{\operatorname{sing}}}
\newcommand{\const}{{\operatorname{const}}}
 \newcommand{\Ss}{\mathcal{S}} 
\DeclareMathOperator {\Pfaff} {Pfaff}
\DeclareMathOperator{\vol}{vol}
\DeclareMathOperator{\covol}{covol}
\DeclareMathOperator{\Exp}{Exp}
\DeclareMathOperator{\cl}{\bf cl}
\DeclareMathOperator{\End}{End}
\begin{document}

\title[Effective Pila--Wilkie for Pfaffian sets, with applications]{An effective Pila--Wilkie theorem for sets definable using Pfaffian functions, with some diophantine applications}

\author[G. Binyamini]{Gal Binyamini} \thanks{The first author was
  supported by the ISRAEL SCIENCE FOUNDATION (grant No. 1167/17) and
  has received funding from the European Research Council (ERC) under
  the European Union's Horizon 2020 research and innovation programme
  (grant agreement No 802107)}
\address{Department of Mathematics,
  Weizmann Institute of Science, Rehovot, Israel}
\email{gal.binyamini@weizmann.ac.il}

\author[G. O. Jones]{Gareth O. Jones}
\thanks{The second author thanks the Fields Institute for their hospitality while he was working on this paper during the Thematic Program on `Tame Geometry, Transseries and Applications to Analysis and Geometry'}
\address{School of Mathematics, University of Manchester, Oxford Road, Manchester M13 9PL, UK}
\email{gareth.jones-3@manchester.ac.uk}

\author[H. Schmidt]{Harry Schmidt}
\thanks{The third author thanks the Department of Mathematics and Informatics of the University of Basel.}
\address{Departement Mathematik und Informatik, Spiegelgasse 1, 4051 Basel, Schweiz}
\email{harry.schmidt@unibas.ch}

\author[M. E. M. Thomas]{Margaret E. M. Thomas}
\thanks{The fourth author is supported by NSF grant DMS-2154328.}
\address{Department of Mathematics, Purdue University, 150 N. University Street, West Lafayette, Indiana 47907-2067, USA}
\email{memthomas@purdue.edu}

\begin{abstract}
We prove an effective version of the Pila--Wilkie Theorem \cite{PilaWilkie} for sets definable using Pfaffian functions, providing effective estimates for the number of algebraic points of bounded height and degree lying on such sets. We also prove effective versions of extensions of this result due to Pila \cite{PilaAlgPoints} and Habegger--Pila \cite{HP}. In order to prove these counting results, we obtain an effective version of Yomdin--Gromov parameterization for sets defined using restricted Pfaffian functions. Furthermore, for sets defined in the restricted setting, as well as for unrestricted sub-Pfaffian sets, our effective estimates depend polynomially on the degree (one measure of complexity) of the given set. The level of uniformity present in all the estimates allows us to obtain several diophantine applications. These include an effective and uniform version of the Manin--Mumford conjecture for products of elliptic curves with complex multiplication, and an effective, uniform version of a result due to Habegger \cite{fiberedelliptic} which characterizes the set of special points lying on an algebraic variety contained in a fibre power of an elliptic surface. We also show that if André--Oort for $Y(2)^g$ can be made effective, then André--Oort for a family of elliptic curves over $Y(2)^g$ can be made effective.
\end{abstract}

\keywords{Pfaffian functions, effectivity, Gromov–Yomdin parametrization, o-minimality, Pila--Wilkie Theorem, Manin--Mumford Conjecture, CM elliptic curves, unlikely intersections, Andr\'e--Oort Conjecture}

\subjclass[2020]{03C64, 11G15, 11G18, 11U09}
\maketitle

\section{Introduction} \label{sec:intro}
	Suppose that $X\subseteq\R^n$ is definable in an o-minimal expansion of the real field. Then a basic version of the Pila--Wilkie Theorem \cite{PilaWilkie} asserts that, for all $\epsilon>0$, there is a constant $c$ such that the set $X^{tr}$ contains at most $cH^\epsilon$ rational points of height at most $H$. Here $X^{tr}$ is $X\setminus X^{alg}$, with $X^{alg}$ the union of all connected infinite semialgebraic subsets of $X$, and the height of $q=(q_1,\ldots,q_n)\in\Q^n$ is $\max_{1 \le i \le n}\{H(q_i)\}$,	where, for each $i=1,\ldots,n$, $H(q_i)$ is $\max\{|a|,|b|\}$, where $q_i=a/b$ for coprime integers $a$ and $b$.

	Here we show that, for the expansion of the real field by Pfaffian functions, the constant in the Pila--Wilkie Theorem is effectively computable in terms of $\epsilon$ and certain parameters associated with a definition of the set $X$. For sets definable using restricted Pfaffian functions, or sets which have an existential definition in terms of Pfaffian functions, we can be more precise. In these cases, we show that the dependence is polynomial in one of the parameters; this feature allows us to obtain several diophantine applications. In order to state this result, we briefly recall some definitions. 
	A sequence $f_1,\ldots,f_l:U \to \R$ of analytic functions on a product of open intervals $U\subseteq \R^n$ is a \emph{Pfaffian chain} if there are polynomials $P_{i,j} \in \R[X_1,\ldots,X_n,Y_1,\ldots,Y_i]$, for $i=1,\ldots,l$ and $j=1,\ldots,n$,  such that 
	\begin{equation}
	\d{f_i}{x_j}(x)=P_{i,j}(x,f_1(x),\ldots,f_i(x)), \label{pfaffian}
	\end{equation}
for all $i,j$, and all $x \in U$. We then say that a function $f:U\to \R$ is \emph{Pfaffian} with chain $f_1,\ldots,f_l$ if $f(x)= P(x,f_1(x),\ldots,f_l(x))$, for all $x \in U$, for some polynomial $P \in \R[X_1,\ldots,X_n,Y_1,\ldots,Y_l]$. 
We define the \emph{format} of such a Pfaffian function to be $n+l$ and its \emph{degree} to be $\Sigma _{i,j} \deg (P_{i,j}) + \deg(P)$.

A \emph{semi-Pfaffian set} is a set $X \subseteq \R^n$, for some $n$, which is defined by a boolean combination of equations of the form $g=0$ and inequalities of the form $h>0$, where all the functions $g$ and $h$ involved are Pfaffian and defined on some common $U$ as above. The \emph{format} of $X$ is the maximum of the formats of all the functions $g$ and $h$ appearing in the definition of $X$, and the \emph{degree} of $X$ is the sum of the degrees of all these $g$ and $h$.

A \emph{sub-Pfaffian set} is a set $Y \subseteq \R^k$, for some $k$, such that there is a semi-Pfaffian set $X \subseteq \R^n$, for some $n \geq k$, with $Y=\pi(X)$, where $\pi \colon \R^n \to \R^k$ is the natural coordinate projection to the first $k$ coordinates. The \emph{format} and \emph{degree} of $Y$ are defined to be those of $X$.

 Although our main result is for sets defined from restricted Pfaffian functions (which we discuss shortly), it immediately implies the following, in which we write $X(\mathbb Q,H)$ for the set of rational points of height at most $H$ on a set $X$.

	\begin{thm}\label{Introthm} Suppose that $X$ is a sub-Pfaffian set with format at most $k$ and degree at most $d$. Then, for all $\epsilon>0$, there are positive real numbers $c$ and $\gamma$, effectively computable from $k$ and $\epsilon$, such that
		\[
		\# X^{tr}(\mathbb Q, H)\le cd^\gamma H^\epsilon,
		\]
		for all $H \ge 1$.
	\end{thm}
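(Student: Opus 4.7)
The plan is to deduce Theorem \ref{Introthm} from the main effective Pila--Wilkie theorem of the paper, which concerns sets defined using restricted Pfaffian functions, together with its extension to counting algebraic points of bounded height and degree. The reduction goes via a semialgebraic change of variables that compactifies the ambient space.

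Since any rational point of height at most $H$ on $X\subseteq\R^k$ lies in $[-H,H]^k$, it suffices to bound $\#(X\cap[-H,H]^k)^{tr}(\Q,H)$. Write $X=\pi(Z)$ for a semi-Pfaffian $Z\subseteq\R^n$ defined by a Pfaffian chain on a product $U=I_1\times\cdots\times I_n$ of open intervals, with format and degree bounded by those of $X$. For each coordinate, choose a semialgebraic diffeomorphism $\phi_j\colon(-1,1)\to I_j$ --- affine when $I_j$ is bounded, or of the form $t\mapsto t/(1-t^2)$ when $I_j$ is unbounded --- and set $\phi=\phi_1\times\cdots\times\phi_n$. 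Pulling back the defining Pfaffian chain via $\phi$ produces a Pfaffian chain on $(-1,1)^n$ whose restriction to a suitable compact sub-box consists of restricted Pfaffian functions. The resulting set $\phi^{-1}(Z)$ is thus defined using restricted Pfaffian functions with format bounded in terms of $k$ alone, and degree bounded polynomially in $d$, since $\phi$ is rational of bounded degree.

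A rational point $q\in X(\Q,H)$ pulls back under $\phi$ (applied coordinatewise to the first $k$ entries) to a real-algebraic point of degree at most $2$ and height bounded polynomially in $H$. Moreover, since $\phi$ is a semialgebraic bijection, it preserves infinite connected semialgebraic subsets, so $(\phi^{-1}(X))^{tr}=\phi^{-1}(X^{tr})$. Applying the effective Pila-type theorem for algebraic points of bounded degree (proved earlier in the paper, following Pila \cite{PilaAlgPoints}) to the restricted sub-Pfaffian set $\phi^{-1}(X)$, with the degree fixed at $2$ and the height adjusted to $CH^a$ for explicit constants $C$ and $a$, yields a bound of the form $c'd^{\gamma'} H^{a\epsilon'}$. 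Choosing $\epsilon'=\epsilon/a$ gives the desired bound $cd^\gamma H^\epsilon$ with effectively computable constants depending on $k$ and $\epsilon$.

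The main obstacle is the bookkeeping: one must verify explicitly that the pulled-back Pfaffian chain has format bounded in $k$ and degree bounded polynomially in $d$, and track the dependence of all constants on $k$ and $\epsilon$ through the change of variables. These steps are routine but must be performed carefully in order to preserve both the effectivity and the polynomial dependence on $d$ asserted in the statement.
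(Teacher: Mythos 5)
There is a genuine gap at the heart of your reduction: the pulled-back set is not a \emph{restricted} sub-Pfaffian set, and no single ``suitable compact sub-box'' exists. After composing with $\phi$ (and augmenting the chain by the functions $1/(1-t_j^2)$, which is needed to keep a genuine Pfaffian chain — composition with a rational map is not literally a chain), the set $\phi^{-1}(Z)$ lives on the open box $(-1,1)^n$ and in general accumulates at its boundary: preimages of rational points of height at most $H$ in the first $k$ coordinates tend to the boundary as $H\to\infty$ (already when some $I_j$ has a finite endpoint, rationals of height $\le H$ come within $\approx 1/H$ of it, so no compactification is even needed to see the problem), and, worse, the fibre coordinates $t_{k+1},\ldots,t_n$ witnessing that a given $q\in X(\Q,H)$ lies in the projection are completely uncontrolled and may themselves approach the boundary. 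The effective counting theorems of Section \ref{sec:counting} apply only to sets definable in $\Rtilde$, i.e.\ to projections of sets cut out by Pfaffian functions restricted to a box whose \emph{closure} lies in the domain; $\phi^{-1}(Z)$ is not of this form, so the application of the restricted theorem with constants independent of $H$ is not justified. Your opening reduction to $X\cap[-H,H]^k$ does not repair this, since it constrains neither the fibre coordinates nor the distance of the first $k$ coordinates to the boundary of $U$ uniformly in $H$.

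The repair is to let the box depend on $H$: for fixed $H$ there are finitely many rational points of height at most $H$, each with a witness in $Z$, so one can choose a box $B_H$ with closure in the domain containing all these witnesses; restricting the original Pfaffian functions to $B_H$ changes neither format nor degree, and since $X_H\subseteq X$ one has $X_H^{alg}\subseteq X^{alg}$, hence $X^{tr}(\Q,H)\subseteq X_H^{tr}(\Q,H)$. But once the box depends on $H$, your compactification $\phi$ and the passage to degree-$2$ algebraic points buy nothing — this is exactly the paper's argument, which first deduces the restricted analogue of Theorem \ref{Introthm} from Theorem \ref{thm:pw-blocks} with $g=1$ (each basic block in the conclusion is a point or lies in $X^{alg}$), and then exhausts the unrestricted set by the $H$-dependent restrictions $X_H$, whose format and degree are independent of $H$. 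So the structure of your proof can be salvaged, but as written the key step (a single compact sub-box making $\phi^{-1}(Z)$ restricted sub-Pfaffian while still capturing all bounded-height points and their fibres) fails.
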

 We will discuss our main counting results for sets defined from restricted Pfaffian functions later. But we point out now that, in that case, we obtain effective forms of both the block-counting and semi-rational versions of the Pila--Wilkie Theorem, due to Pila \cite{PilaAlgPoints} and Habegger and Pila \cite{HP}, respectively. Note that, from our results, we could also immediately conclude a version of Theorem \ref{Introthm} above in which we count algebraic points of bounded degree over $\mathbb Q$, with $c$ and $\gamma$ then also depending on a bound on the degree of the points counted. 

		There has been much recent work on effective forms of the Pila--Wilkie Theorem. In particular, the first-named author has given an effective version of Pila--Wilkie for semi-Noetherian sets \cite{Gal1}; these include the restricted semi-Pfaffian sets we consider below. Further, recently, in \cite{Gal2}, he has established effective polylogarithmic bounds in a setting related to that of \cite{Gal1}. Compared to these results, the class of sets we consider in our main results is (presumably) smaller, and, compared to the result of \cite{Gal2}, our bound is weaker. However, our results are more uniform and it is this that lets us move beyond the restricted setting. Our results substantially improve recent work of the second and fourth-named authors \cite{JT}, extending their work to any dimension, and improving the dependence on the degree of the Pfaffian set considered. We also obtain effective results for general definable sets in the unrestricted setting, though without the polynomial dependence on the degree. 		
	While this paper was being finished, the paper \cite{BNZ} by the first author, Novikov and Zack was posted on the arXiv establishing Wilkie's conjecture for the restricted Pfaffian structure and $\R_{\exp}$. While \cite{BNZ} employs a different approach in the smooth parametrization step, it draws on many technical ideas that appeared originally in the present paper for proving polynomial bounds with respect to complexity parameters. We remark that the smooth parametrization lemma of \cite{BNZ}, while sharper with respect to $r$, does not imply our smooth parameterization result (Theorem \ref{crpcrr}), as it only parameterizes a ``large part'' of $X$ as in Yomdin's original formulation \cite{Yomdin2,Yomdin1}, while Theorem \ref{crpcrr} here parameterizes the whole set as in Gromov's version \cite{Gromov} (in the algebraic case) and as in Pila and Wilkie's approach \cite{PilaWilkie} (in the general o-minimal setting).
	
 	The level of uniformity that we obtain has some diophantine applications. Suppose that $A$ is a product of $g$ elliptic curves over the complex numbers. Then work of the second and third-named authors \cite{JSdefns} implies that the graph of the exponential of $A$ given by the product of the Weierstrass functions of the elliptic curves is, once restricted to a certain fundamental domain, a set defined by Pfaffian functions, with an effective bound on its complexity that depends only on $g$. This kind of uniformity across lattices seems difficult to achieve by the complex-analytic methods in \cite{Gal1,Gal2}. In the Pila--Zannier strategy for unlikely intersection problems, our result in this setting leads to effective uniform counting results which depend polynomially on the degree of the variety considered. 
	
For our main application to Manin--Mumford and mixed Andr\'e--Oort, we suppose that all the elliptic factors of $A$ have complex multiplication, and are given by Weierstrass equations over some fixed number field $K$. 

\begin{thm}\label{IntroMM}Suppose that $V \subseteq A$ is an irreducible subvariety defined over a number field $L$ extending $K$. Suppose that $V$ does not contain a torsion translate of a positive dimensional abelian subvariety of $A$. For  any $\epsilon>0$, there exist $c$ and $m$ effectively computable from $g$ and $\epsilon$ such that, if $P\in V(\mathbb C)$ is torsion of exact order $N$, then 
$$
N \le c [L:K]^{1+\epsilon} \deg(V)^m.
$$\end{thm}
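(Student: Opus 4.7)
The plan is to follow the Pila--Zannier strategy, combining the effective counting Theorem~\ref{Introthm} (and its block-counting refinement) with Ax--Lindemann--Weierstrass for products of CM elliptic curves and an effective Galois-theoretic lower bound on the degree of a torsion point of a CM abelian variety.

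I would first uniformize $A(\mathbb{C})$ as $\mathbb{C}^g/\Lambda$ via the coordinate-wise Weierstrass map $\pi\colon \mathbb{C}^g \to A(\mathbb{C})$, and fix a fundamental parallelogram $F$ for $\Lambda$, identified via an integral basis of $\Lambda$ with $[0,1)^{2g}$. The result of \cite{JSdefns} cited in the introduction ensures that $Z := \pi^{-1}(V)\cap F$ is sub-Pfaffian, with format depending only on $g$ and with degree bounded by a polynomial in $\deg(V)$ (and constants depending only on the fixed Weierstrass equations for the $E_i$, hence on $g$). A torsion point $P\in V(\mathbb{C})$ of exact order $N$ then lifts to points of $F$ whose coordinates in the chosen integral basis of $\Lambda$ lie in $\tfrac{1}{N}\mathbb{Z}^{2g}$, so each such lift is a rational point of $Z$ of height at most $N$.

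Since $V$ is defined over $L$, every $\mathrm{Gal}(\overline{L}/L)$-conjugate of $P$ also lies in $V$ and is again torsion of exact order $N$, producing at least $[L(P):L]\ge [K(P):K]/[L:K]$ distinct rational lifts in $Z$ of height at most $N$. An effective version of the standard lower bound for torsion on CM abelian varieties, arising in Silverberg's style from class field theory for the CM fields, gives $[K(P):K]\ge c_0(g,\epsilon)\,N^{1-\epsilon/2}$. On the other hand, applying the effective block-counting form of Pila--Wilkie to $Z$ and invoking Pila's Ax--Lindemann for products of CM elliptic curves together with the hypothesis that $V$ contains no torsion coset of a positive-dimensional abelian subvariety, one sees that the algebraic part $Z^{\mathrm{alg}}$ contributes only polynomially-in-$\deg(V)$ many torsion lifts (coming from the isolated torsion abelian-subvariety cosets inside $V$), while the transcendental part contains at most $c_1(g,\epsilon)\deg(V)^{m_0}N^{\epsilon/2}$ rational points of height at most $N$. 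Comparing the upper and lower bounds then yields $N^{1-\epsilon}\le c_2(g,\epsilon)\,[L:K]\,\deg(V)^{m_0}$, and a final rescaling of $\epsilon$ and constants gives the stated inequality with exponent $1+\epsilon$ on $[L:K]$.

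I expect the main technical obstacle to be the effective CM Galois lower bound $[K(P):K]\gg N^{1-\epsilon}$ for a torsion point of exact order $N$ on a product of CM elliptic curves, with the implicit constants depending only on $g$ and $\epsilon$ and not on the individual $E_i$; the second delicate point is to control the algebraic part uniformly in $\deg(V)$, since Ax--Lindemann rules out positive-dimensional torsion-bearing algebraic pieces only qualitatively, and one must then invoke a Bezout-type bound to count the finitely many zero-dimensional exceptional cosets polynomially in $\deg(V)$ and propagate this polynomial dependence through the block-counting step. The entire argument relies essentially on the polynomial-in-degree uniformity afforded by Theorem~\ref{Introthm}, which is precisely what this paper sets out to establish and which complex-analytic approaches such as those in \cite{Gal1,Gal2} do not obviously deliver across varying lattices.
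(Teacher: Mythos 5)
Your overall route is the one the paper takes: uniformize by the coordinate-wise Weierstrass exponential restricted to a fundamental domain, use \cite{JSdefns} to obtain a sub-Pfaffian set $Z$ of format depending only on $g$ and degree polynomial in $\deg(V)$, identify lifts of torsion points of exact order $N$ with rational points of height at most $N$, use Ax--Lindemann together with the hypothesis on torsion cosets to control the algebraic part, count the transcendental part by Theorem \ref{Introthm}, and play the resulting upper bound off against an effective CM Galois lower bound and the factor $[L:K]$; this is precisely Lemma \ref{counting} combined with Theorem \ref{BCGalois} inside the proof of Theorem \ref{mmCM1}. Two remarks on ingredients. The Galois bound you single out as the main obstacle is not reproved in the paper either: it is quoted as Gao's result (Theorem \ref{BCGalois}, \cite[Corollary 13.4]{Gaotowards}), with constants depending only on $\theta$ and $\dim A$, which is exactly the uniformity you require. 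Also, under the stated hypothesis the algebraic part of $Z$ contributes no torsion lifts at all: a positive-dimensional connected semialgebraic subset of $Z$ through a lift of a torsion point would, by Ax--Lindemann, produce a positive-dimensional torsion coset inside $V$, contradicting the hypothesis. So your hedge about ``isolated torsion abelian-subvariety cosets'' is unnecessary; the paper instead removes all translates of abelian subvarieties first (forming $V^*$), which loses no torsion points and makes the algebraic part of $Z$ empty.

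The one step of your argument that fails as written concerns torsion points lying on the divisor $D_g$, that is, points $P$ having some coordinate equal to the identity $O$ of its factor. The definability statement of \cite{JSdefns} concerns $\Exp_A$ restricted to the fundamental domain $F_A$, which omits exactly the preimage of $D_g$ (the Weierstrass functions are not defined there), so such a $P$ has no lift in $Z$; and since $O$ is $K$-rational, every Galois conjugate of $P$ also lies on $D_g$, so your lower bound ``at least $[L(P):L]$ distinct rational lifts in $Z$'' is vacuous for these points and the comparison of counts gives no bound on their order. The paper repairs this by an induction over coordinate projections: it considers the varieties $V_2^{(i)}=\pi^{(i)}\bigl(V^*\cap (E_1\times\cdots\times O\times\cdots\times E_g)\bigr)$, observes that their dimension drops because $V^*$ contains no cosets, and repeats the counting argument at each step (see the proof of Lemma \ref{counting}). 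This is a routine but genuinely necessary addition to your proposal, since the exact order of such a $P$ equals that of its projection and must still be bounded.
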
 
	

	A related effective result in the spirit of a uniform Manin--Mumford statement is Dill's   \cite[Proposition 3.3]{Dilltorsion} when taken in conjunction with \cite[Theorem 4.2]{Dilltorsion}, the latter of which is in turn a combination of results of Lombardo in the non-CM case \cite{lombardo} and of Bourdon and Clark in the CM case \cite{BourdonClark}. His methods can be used to prove Theorem \ref{IntroMM} if  $A$ is a power of a CM elliptic curve (but with a larger exponent for $[L:K]$). We  also prove a uniform and effective bound on the order and number of positive dimensional torsion translates contained in $V$ in Section 5 (Theorem \ref{mmCM1}). We could not find comparable results in the literature. For a general abelian $A$, not necessarily a product of elliptic curves, the existence of complexity bounds depending only on $g$ follows from the definability results of Peterzil and Starchenko \cite{PS}. However, here effectivity is not yet known.

	The novelty in Theorem \ref{IntroMM} is that it gives an effective Manin--Mumford statement in the CM case, with polynomial dependence on the degree of the variety $V$, and depending only on the dimension $g$ but otherwise independent of the elliptic curves involved. Moreover, the result is for varieties of arbitrary dimension.  Note that this level of uniformity is unattainable without the assumption of complex multiplication. For example, taking a family of curves in the fibre product of a Legendre curve, the openness of the Betti map as in the work of Habegger \cite{fiberedelliptic} implies that it contains torsion points of arbitrarily high order. 
	
	More general uniform bounds for the number of torsion points (but not their order) in Manin--Mumford were obtained in other cases.  
	DeMarco, Krieger, and Ye \cite{DKY} establish a result for genus two curves which admit a degree two map to an elliptic curve, embedded in their Jacobians, with a bound on the number of torsion points that is independent of the field of definition of the curve. Dimitrov, Gao, and Habegger \cite{DGH} prove a bound for the torsion on a curve embedded in its Jacobian which depends only on the genus of the curve and the degree of the field of definition. Works of Kühne \cite{kuhne2021equidistribution} as well as Gao, Ge, and Kühne
	\cite{gao2021uniform} removed the dependence on the field of definition and generalized to higher dimensions.  
	

The high degree of uniformity in our Manin--Mumford result has an application to a mixed Andr\'e--Oort problem. To state our result we introduce some terminology (see Section \ref{sec:apps} for further details in a more general setting).  Let  $\mathcal{E}^{(g)}$ be the $g$-fold fibre power of the Legendre family $Y^2=X(X-1)(X-\lambda)$, with $\lambda\ne 0,1$. Call a point $P\in \mathcal{E}^{(g)} (\mathbb{C})$ \emph{special} if it is a torsion point in its fibre and this fibre has complex multiplication. We write  $P_{\mathcal{E}^{(g)}}$ for the set of special points of $\mathcal{E}^{(g)}$. A variety $\Ss\subseteq \mathcal{E}^{(g)}$ is called \emph{special} if either $\Ss$ is a component of an algebraic subgroup of $\mathcal{E}^{(g)}_\lambda$, where $\lambda \in \mathbb{C}\setminus \{0,1\}$ is such that $\mathcal{E}^{(g)}_\lambda$ has complex multiplication, or $\Ss$ is an irreducible component of a flat subgroup scheme of $\mathcal{E}^{(g)}$. We also need a notion of complexity for special subvarieties; in this case we can define that as follows. Suppose that $\Ss\subseteq \mathcal{E}^{(g)}$ is special. If $\Ss$ is as in the first case of the definition, then we define the complexity of $\Ss$ to be the maximum of the degree of the algebraic subgroup of $\mathcal{E}^{(g)}_\lambda$ and the discriminant of the endomorphism ring of $\mathcal{E}_\lambda$. If $\Ss$ is as in the second case, then we define the complexity of $\Ss$ to be the degree of the smallest flat subgroup scheme of which $\Ss$ is an irreducible component. Given a fixed algebraic variety $V\subseteq \mathcal{E}^{(g)}$, we say that a special subvariety $\mathcal{S} \subseteq V$ is maximal if any special subvariety $\mathcal{S}'$ satisfying  $\mathcal{S}\subseteq \mathcal{S}' \subseteq V$ is equal to $\mathcal{S}$. We prove the following (see Theorem \ref{effectiveHabegger}).

\begin{thm} \label{Intro-fibreproduct} There exist effectively computable constants $c$ and $m$ depending only on $g$ with the following property.  If $V \subseteq \mathcal{E}^{(g)}$ is a variety defined over a number field $K$, then any maximal special subvariety $\mathcal{S} \subseteq V$ has complexity at most 
	$$\exp(c([K:\mathbb{\Q}]\deg(V))^m).$$
\end{thm}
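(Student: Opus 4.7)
The plan is to follow the Pila--Zannier strategy, taking the effective Pila--Wilkie theorem (Theorem \ref{Introthm}), in its announced extensions to block-counting and to algebraic points of bounded degree, as the main counting input. The first step is a sub-Pfaffian uniformization of $\mathcal{E}^{(g)}$: by the definability and complexity bounds of \cite{JSdefns}, the restriction to a suitable fundamental domain of the map sending $(\lambda,z_1,\ldots,z_g)$ to $(\lambda,\wp_\lambda(z_1),\ldots,\wp_\lambda(z_g))$ is a sub-Pfaffian set whose format depends only on $g$ and whose degree is absolutely bounded. Pulling back $V$ along this map produces a sub-Pfaffian set $X$ of format depending only on $g$ and degree polynomial in $\deg(V)$ and $[K:\Q]$.

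The main step is to produce, for a maximal special subvariety $\mathcal{S}\subseteq V$ of complexity $\Delta$, a large Galois orbit of algebraic points of controlled height inside the preimage of $V$. In Case~A of the definition, $\mathcal{S}$ is a component of an algebraic subgroup of degree $\delta$ in a CM fibre $\mathcal{E}^{(g)}_\lambda$ with CM order of discriminant $|D|$, and $\max(\delta,|D|)\le \Delta$. The effective class number lower bound of Goldfeld--Gross--Zagier supplies $\gg \log|D|$ Galois conjugates of the singular modulus $\lambda$ over $\Q$, and the effective open image theorem for CM elliptic curves from \cite{BourdonClark} contributes a further multiplicative factor, polynomial in $\delta$, from the Galois orbit of a generic parameter point of the subgroup inside each CM fibre. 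In Case~B, $\mathcal{S}$ is an irreducible component of a flat subgroup scheme of degree $\Delta$; specializing $\lambda$ to many singular moduli reduces this to a fibrewise application of Case~A. Standard height estimates for singular moduli and for division points on elliptic curves place all these algebraic points at height polynomial in $\Delta$, $[K:\Q]$ and $\deg(V)$, with degree over $\Q$ polynomial in the same quantities.

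Maximality of $\mathcal{S}$ in $V$ ensures that any positive-dimensional semi-algebraic block in $X^{\mathrm{alg}}$ containing a generic preimage of $\mathcal{S}$ corresponds to a special subvariety of $V$ containing $\mathcal{S}$, and hence, by maximality, to $\mathcal{S}$ itself; the Galois conjugates of the chosen parameter point that do not lie in that single block are therefore captured by the transcendental part and are subject to the effective Pila--Wilkie bound of shape $c\deg(V)^\gamma H^\epsilon$. Combining this upper bound with the lower bound $\gg(\log\Delta)^{\alpha(g)}$, substituting the polynomial height estimate $H\le(\Delta[K:\Q]\deg(V))^{\beta(g)}$, and choosing $\epsilon$ small enough to absorb the factor $\Delta^{O(\epsilon)}$, rearranges to $\log\Delta\le c([K:\Q]\deg(V))^m$, which is the claimed bound. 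The main obstacle will be arranging Cases~A and~B into a single sub-Pfaffian family of format depending only on $g$ and degree polynomial in $\deg(V)$, so that the uniformity of the effective Pila--Wilkie bound applies to both simultaneously; the exponential shape of the final complexity bound is an unavoidable consequence of using the weak (logarithmic) effective class number estimate of Goldfeld--Gross--Zagier, since the ineffective $|D|^{1/2-o(1)}$ bound would have yielded a polynomial complexity bound instead.
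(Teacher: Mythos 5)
There is a genuine gap, and it is at the very first step. You claim that the two--variable uniformization $(\lambda,z_1,\ldots,z_g)\mapsto(\lambda,\wp_\lambda(z_1),\ldots,\wp_\lambda(z_g))$, restricted to a fundamental domain, is sub-Pfaffian with format depending only on $g$, ``by the definability and complexity bounds of \cite{JSdefns}''. But \cite{JSdefns} gives a Pfaffian description only \emph{fibrewise}: for each fixed product of elliptic curves (fixed lattices) the graph of $\Exp_A$ on a fundamental domain is semi-Pfaffian with format and degree bounded effectively in $g$, uniformly \emph{across} lattices, but not jointly in the $\lambda$-direction. Joint definability in $\lambda$ would require the periods of the Legendre family as functions of $\lambda$ (solutions of the Picard--Fuchs equation, which is not of Pfaffian triangular type) and the two-variable function $(\lambda,z)\mapsto\wp_\lambda(z)$; none of these is known to be (restricted) sub-Pfaffian with controlled complexity. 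Consequently the set $X$ on which you want to count rational points is not known to lie in the scope of Theorem \ref{Introthm}, and the whole mixed Pila--Zannier argument cannot be launched. This is exactly the obstruction the paper flags in the introduction: for Theorems \ref{Intro-fibreproduct} and \ref{intro-reduction} the counting is used \emph{only fibrewise}, in CM fibres, where \cite{JSdefns} applies with complexity depending only on $g$.

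The paper's actual route is different in structure. It first proves a uniform effective Manin--Mumford theorem for products of CM elliptic curves (Theorem \ref{mmCM1}), by applying Theorem \ref{Introthm} to the fibrewise exponential and playing it against Gao's Galois lower bound for CM torsion (Theorem \ref{BCGalois}), with maximal torsion cosets handled via Lemma \ref{countingmaximal} and Ax's theorem (fibrewise Ax--Lindemann only). This is packaged in families as Theorem \ref{mainCM}, and Theorem \ref{effectiveHabegger} then follows by induction on $\dim V$: the pure/CM-point part is bounded by K\"uhne's effective estimate \cite[Theorem 8.1]{heegnerpoints} (this is the sole source of the exponential shape, as you correctly anticipate), the degree of the ambient group scheme is controlled by Lemma \ref{moredegrees}, and B\'ezout plus Galois descent control the components of $V\cap\mathcal S$, which are defined over fields of effectively bounded degree, so the induction closes. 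Even granting your first step, your sketch has further unsupported points: the identification of positive-dimensional blocks through a preimage of $\mathcal S$ with special subvarieties of $V$ requires a mixed Ax--Lindemann statement for $\mathcal E^{(g)}$, which you do not supply and the paper never needs; and for positive-dimensional $\mathcal S$ the ``large Galois orbit of a generic parameter point of the subgroup'' must be made precise (one counts torsion points $P$ with $P+H\subseteq V$ maximal, as in Lemma \ref{countingmaximal}, and must account for the kernel $H\cap H^{T}$), which your proposal leaves vague.
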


Without the effectivity, and with constants depending also on the height of the variety, this statement is due to Habegger (Theorem 1.1 in \cite{fiberedelliptic}). It also follows from Gao's results on mixed Andr\'e--Oort \cite{Gaotowards}, but again these are ineffective. We give a different proof, reducing it to our Manin--Mumford results, which gives both effectivity and uniformity. This strategy extends to a general reduction argument and we show that if André--Oort for $Y(2)^g$ can be made effective, then André--Oort for products of fibred powers of the Legendre family can be made effective, as follows (see Theorem \ref{reduction}).

\begin{thm}\label{intro-reduction}
	 Suppose that there is an effective proof of the André--Oort conjecture for $Y(2)^g$. Then there is an effective proof of the André--Oort conjecture for $\mathcal{E}^{(n_1)}\times \cdots \times \mathcal{E}^{(n_g)}$ ($n_i \geq 1, i = 1, \dots, g$). 
\end{thm}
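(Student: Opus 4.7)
The plan is to reduce the analysis of a variety $V \subseteq \mathcal{E}^{(n_1)} \times \cdots \times \mathcal{E}^{(n_g)}$ to a problem on its image in $Y(2)^g$ together with its fiberwise structure, and to combine the hypothesized effective André--Oort for $Y(2)^g$ with our effective fibered Manin--Mumford results. The structural fact I will rely on is that each special subvariety $\mathcal{S}$ of $\mathcal{E}^{(n_1)} \times \cdots \times \mathcal{E}^{(n_g)}$ projects under the natural map $\pi$ to a special subvariety $T$ of $Y(2)^g$, whose coordinates split (up to Hecke correspondences) into a ``CM part,'' where the factors are fixed CM points, and a ``modular part,'' accounting for the remaining coordinates. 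Over the CM part of $T$, the fiber of $\mathcal{S}$ is a torsion translate of an abelian subvariety of a fixed product of CM elliptic curves, while over the modular part it is an irreducible component of a flat subgroup scheme in the Legendre-family fiber.

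First I would project and apply the hypothesis. The Zariski closure $\pi(V) \subseteq Y(2)^g$ has degree effectively bounded in terms of $\deg V$, and its field of definition is contained in $K$. By the assumed effective André--Oort for $Y(2)^g$, the finitely many maximal special subvarieties $T \subseteq \pi(V)$ have complexity bounded by some effectively computable $\exp(c_0 ([K:\mathbb{Q}] \deg V)^{m_0})$. Next, fix such a $T$ and set $V_T := V \cap \pi^{-1}(T)$. The variety $\pi^{-1}(T)$ is a product of a fixed product $A_{\mathrm{CM}}$ of CM elliptic curves, indexed by the CM coordinates of $T$, with a Legendre-family product restricted to the modular part of $T$. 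Every maximal special subvariety of $V$ above $T$ is then built from a torsion translate of an abelian subvariety of $A_{\mathrm{CM}}$ together with an irreducible component of a flat subgroup scheme in the Legendre part; the former is controlled by Theorem \ref{IntroMM} applied fiberwise, and the latter by Theorem \ref{Intro-fibreproduct} (or the obvious product version of it). Both give bounds polynomial in degree and effective in $[K:\mathbb{Q}]$, and taken together they yield the required overall complexity bound.

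The hard part will be the product generalization needed in the fiberwise step, together with the clean handling of the mixed CM/modular decomposition of $T$: Theorem \ref{Intro-fibreproduct} as stated treats a single Legendre family $\mathcal{E}^{(g)}$ with CM points and flat subgroup schemes living in the same factor, whereas here one must work simultaneously across several factors, with the CM and modular structures entangled through $V$. I would handle this by iteratively slicing $V$ along the equations defining $T$ --- first fixing the CM coordinates and applying Theorem \ref{IntroMM} uniformly across the CM fibers (the uniform torsion-order bound coming from our complex-multiplication hypothesis), then invoking Theorem \ref{Intro-fibreproduct} on the residual Legendre-family variety --- and keeping careful track of degrees at each step to preserve the polynomial dependence on $\deg V$.
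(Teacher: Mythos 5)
There is a genuine gap, and it sits at the heart of your step 3. A maximal special subvariety $\mathcal S\subseteq V$ only has $\pi(\mathcal S)$ \emph{contained in} some maximal special subvariety $T$ of $\overline{\pi(V)}$, not equal to it, so your claim that every maximal special subvariety of $V$ above $T$ splits as (torsion translate in the CM part of $\pi^{-1}(T)$) $\times$ (component of a flat subgroup scheme over the modular part of $T$) fails for precisely the subvarieties that carry the difficulty. Already for a curve $V\subseteq\mathcal E^{(n)}$ dominating $Y(2)$ one has $T=Y(2)$, with no CM part, yet the maximal special subvarieties of $V$ are typically special points lying over CM points of $Y(2)$; these are not components of flat subgroup schemes over $T$. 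Consequently a single application of the effective Andr\'e--Oort hypothesis to $\pi(V)$ cannot suffice: after the fibrewise Manin--Mumford step one must re-apply the hypothesis to the images of the lower-dimensional components that are produced, and it is exactly this recursion that the paper's proof organizes as an induction on $\dim V$. The paper introduces the smallest group scheme $\mathcal A$ over $\pi(V)$ containing $V$ and uses Theorem \ref{mainCM}: if $\dim V<\dim\mathcal A$, then $V\cap P_{\mathcal A}$ lies in an effectively determinable union of special subvarieties $\mathcal S$ with $\dim(\mathcal S\cap\mathcal A)<\dim\mathcal A$, minimality of $\mathcal A$ forces every component of $V\cap\mathcal S$ to have dimension $<\dim V$, and one recurses; if $V$ is a component of $\mathcal A$, the problem descends to $\pi(V)\subseteq Y(2)^g$, where the hypothesis is invoked (repeatedly, along the induction, for the images of the successively smaller varieties).

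Relatedly, your appeal to ``the obvious product version'' of Theorem \ref{Intro-fibreproduct} over a modular base of dimension $h\ge 2$ is circular: effective control of maximal special subvarieties of subvarieties of $\mathcal E^{(m_1)}\times\cdots\times\mathcal E^{(m_h)}$ is essentially the statement being proved, and in the paper it is obtained only via Theorem \ref{reduction} together with the Andr\'e--Oort hypothesis for $Y(2)^h$; Theorem \ref{effectiveHabegger} treats a single fibre power precisely because effective Andr\'e--Oort is unconditionally available for the one-dimensional base $Y(2)$ (via \cite{heegnerpoints}). Two smaller points: the hypothesis, as formulated in the paper, is an effective procedure determining the Zariski closure of special points, not a bound of the specific shape $\exp(c_0([K:\Q]\deg V)^{m_0})$ you assume; and Theorem \ref{IntroMM} only bounds torsion orders on varieties with no positive-dimensional torsion cosets, so the fibrewise step needs the full torsion-coset statement (Theorem \ref{mmCM1}), together with the spreading-out dichotomy of the paper's proof of Theorem \ref{mainCM} (subgroups defined over $\Z$ spread into group schemes over modular loci of the base, while the remaining case is handled by Lemma \ref{lemlower}).
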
 

As in the original proof of Pila and Wilkie \cite{PilaWilkie}, the main ingredient for our counting result is a parameterization result. Instead of making the proof given by Pila and Wilkie effective in our setting, we use the recent presentation by the first-named author and Novikov \cite{BNYG}. We rely heavily on work by the first-named author and Vorobjov \cite{BV}, who prove the cell decomposition results crucial for our method. These, in combination with the more efficient proof structure of \cite{BNYG}, are what enable us to obtain polynomial dependence on the degree in the restricted setting. Theorem \ref{Introthm} follows immediately from these results, using the exhaustion idea of the second and fourth-named authors in \cite{JT}.

For the diophantine applications Theorem \ref{Intro-fibreproduct} and \ref{intro-reduction}, we do not use Pila--Wilkie-style counting on the uniformization of the Legendre family, but only use the counting fiberwise, which allows us to obtain the high degree of uniform effectivity. We reduce the André--Oort conjecture for the Legendre family to the André--Oort conjecture for the base (the pure part). Without effectivity, the André--Oort conjecture is now known in full generality \cite{BSY}, \cite{PST-EG} and thus, strictly speaking, we give a new proof of the André--Oort conjecture for products of Legendre families.  

The organization of this paper is as follows. In Section \ref{sec:prelims}, we introduce the restricted Pfaffian setting in which we largely work, recall the preliminaries of \cite{BV} and outline some consequences which we will exploit. In Section \ref{sec:param}, we prove our effective parameterization result in this restricted setting. In Section \ref{sec:counting}, we obtain our counting results in the restricted setting (from which results for unrestricted sub-Pfaffian sets, in particular Theorem \ref{Introthm}, immediately follow). In Section \ref{sec:apps}, we present our various diophantine applications, including Theorems \ref{IntroMM}, \ref{Intro-fibreproduct} and \ref{intro-reduction}. Finally, in Section \ref{sec:unres}, we consider effective counting results in the general unrestricted Pfaffian setting.
	
\section{Setting and Preliminaries} \label{sec:prelims}
In this section, we describe the restricted Pfaffian setting in which we predominantly work in this paper, and present some definitions, terminology, and preliminary results in this setting that we shall need later.

\subsection{The restricted Pfaffian setting}\label{subsec:resPf}
\sloppy Recall from the introduction the definition of a Pfaffian function $f:U\to \R$ in terms of a polynomial $P \in \R[X_1,\ldots,X_n,Y_1,\ldots,Y_l]$ and a Pfaffian chain $f_1,\ldots,f_l:U \to \R$, with  $U \subseteq \R^n$ a product of open intervals.

Given an open box $B\subseteq U$ whose closure is a subset of $U$, we call $f|_B$ a \emph{restricted Pfaffian function}. As in the unrestricted case, we define the \emph{format} of such a function $f|_B$ to be $n+l$ and we define the \emph{degree} of $f|_B$ to be $\Sigma_{i,j}\deg (P_{i,j}) + \deg(P)$, where the differential equation system defining $f_1,\ldots,f_l$ is given in terms of the polynomials $P_{i,j} \in \R[X_1,\ldots,X_n,Y_1,\ldots,Y_i]$ (see (\ref{pfaffian})). 



By analogy to the unrestricted case (see Section \ref{sec:intro}), a \emph{restricted semi-Pfaffian set} is a set $X \subseteq \R^n$, for some $n$, which is defined by a boolean combination of equations of the form $g=0$ and inequalities of the form $h>0$, where all the functions $g$ and $h$ involved are restricted Pfaffian functions defined on a common open box $B \subseteq U$ as above. Just as in the unrestricted setting, the \emph{format} of such a set $X$ is the maximum of the formats of all the functions $g$ and $h$ appearing in the definition of $X$, and the \emph{degree} of $X$ is the sum of the degrees of these functions $g$ and $h$.
Similarly, a \emph{restricted sub-Pfaffian set} is a set $Y\subseteq \R^k$, for some $k$, such that there is a restricted semi-Pfaffian set $X \subseteq \R^n$, for some $n \geq k$, with $Y=\pi (X)$, where $\pi \colon \R^n \to \R^k$ is the natural coordinate projection to the first $k$ coordinates. Again, as in the unrestricted case, the \emph{format} and \emph{degree} of $Y$ are defined to be those of $X$. 
A map $f:X\to X'$ is said to be a \emph{restricted sub-Pfaffian map} if its graph is a restricted sub-Pfaffian set.




	
\subsection{Effectivity in the restricted Pfaffian setting}\label{subsec:effresPf}We now recall a number of definitions and terminology 
that are originally due to the first-named author and Vorobjov \cite{BV}.

First, we recall the following shorthand for describing the nature of effective dependence between certain quantities. Given a positive real number $a$ and tuples of positive real numbers $b= (b_1,\ldots,b_m)$ and $c = (c_1,\ldots,c_n)$, we write that ``$a$ is $\const(b)$'' to mean that there exists an effectively computable function $\gamma \colon \mathbb{N}^{m} \to \mathbb{N}$ such that $a \leq \gamma(\lceil b_1 \rceil, \ldots, \lceil b_{m} \rceil)$ (that is, $a$ is bounded effectively in $b$), and we write that ``$a$ is $\poly_{b}(c)$'' to mean that there exists an effectively computable function $\gamma \colon \mathbb{N}^{m} \to \mathbb{N}$ such that $a \leq (c_1+ \ldots + c_n +1)^{\gamma(\lceil b_1 \rceil, \ldots, \lceil b_{m} \rceil)}$ (that is, $a$ is bounded by a polynomial in $c$ of degree depending effectively on $b$). 

We now recall from \cite{BV} the *-format and *-degree of a restricted sub-Pfaffian set. These notions play a key part in our work; in particular, they are important in the cell decomposition results obtained in \cite{BV}, which we recall shortly. 
Throughout, we modify the required definitions and statements of \cite{BV} by presenting them in terms of the underlying universe $\R$, as opposed to $[0,1]$, a modification that is entirely routine. (Note, however, that $[0,1]$ is denoted in \cite{BV} by $I$, whereas in this paper we will use $I$ to denote the open interval $(0,1)$, as in \cite{BNYG}.)


\begin{defn}[{\cite[Definition 3]{BV}}]\label{starformatdegree}
We say that a restricted sub-Pfaffian set $Y\subseteq \R^n$ has \emph{*-format} $\F$ and \emph{*-degree} $D$ if there are finitely many restricted semi-Pfaffian sets $X_i \subseteq \R^{k_i}$ and connected components $X_i'$ of $X_i$ such that 
\begin{equation}\label{stardegree}
Y= \bigcup_i \pi_i (X_i'),
\end{equation}
where the semi-Pfaffian sets $X_i$ have formats whose maximum is $\F$ and degrees whose sum is  $D$. (Here $\pi_i :\R^{k_i}\to \R^n$ is the natural coordinate projection.) 
A restricted sub-Pfaffian map $f:X\to X'$ inherits its \emph{*-format} $\F$ and \emph{*-degree} from its graph. 
\end{defn}

As remarked in \cite{BV}, the above definitions of *-format and *-degree of a restricted sub-Pfaffian set $Y$ will depend on the presentation of $Y$ as in \eqref{stardegree}. 
 Therefore, given real numbers $\F$ and $D$, when we say that $Y$ has *-format (at most) $\F$ and *-degree (at most) $D$, we mean that there is some presentation of $Y$ as in \eqref{stardegree} witnessing that $Y$ has *-format (at most) $\F$ and *-degree (at most) $D$. 

\begin{rmk} \label{rmk:fd*fd}
We note here the following observation from \cite[Remark 10]{BV}, about the relationship between the notions of *-format and *-degree defined here and the notions of format and degree defined in Subsection \ref{subsec:resPf}: a restricted sub-Pfaffian set of format $\F$ and degree $D$ will have *-format $\F$ and *-degree $\poly_{\F}(D)$.
\end{rmk}

We now recall the cell decomposition result from \cite{BV} that we need. 

\begin{thm}[{{\cite[Theorem 1]{BV}}}]\label{BVcd} 
Let $k$ and $n$ be positive integers and let $\F$ and $D$ be positive real numbers. Let $X_1,\ldots,X_k\subseteq \R^n$ be restricted sub-Pfaffian sets of *-format at most $\F$ and *-degree at most $D$. There exists a restricted sub-Pfaffian cell decomposition $\D$ of $\R^n$ compatible with $X_1,\ldots,X_k$ such that $\# \D$ is $\poly_{\F}(k,D)$ and such that each cell $C\in \D$ has *-format $\const(\F)$ and *-degree $\poly_{\F}(D)$.
\end{thm}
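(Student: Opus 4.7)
The plan is to proceed by induction on the ambient dimension $n$, following the standard strategy for producing a cell decomposition compatible with a finite family of sets, while carefully tracking how each operation affects *-format and *-degree. First I would reduce from the restricted sub-Pfaffian to the restricted semi-Pfaffian setting. By Definition \ref{starformatdegree}, each $X_j$ is a finite union of coordinate projections of connected components $X_{j,\alpha}'$ of semi-Pfaffian sets $X_{j,\alpha}$ whose maximum format is at most $\F$ and whose degrees sum to at most $D$. The strategy is to build a cylindrical cell decomposition of the larger ambient space in which the $X_{j,\alpha}$ live---compatible with all of them, so that each connected component $X_{j,\alpha}'$ becomes a union of cells---and designed so that the coordinate projection back to $\R^n$ sends cells to cells, yielding the desired sub-Pfaffian decomposition of $\R^n$.

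For the semi-Pfaffian cell decomposition, I would induct on dimension. Given the defining Pfaffian functions $g_1,\ldots,g_s$, I would form the auxiliary semi-Pfaffian sets describing their $x_n$-critical loci, pairwise intersection projections, and sign-change loci; project these to $\R^{n-1}$; and apply the inductive hypothesis there. Over each resulting cell in $\R^{n-1}$, each $g_i$ is then either identically zero or strictly monotonic in $x_n$, so the fibers decompose into finitely many graphs and bands that combine into a cylindrical decomposition of $\R^n$. The polynomial bound in $D$ on the number of cells would come from Khovanskii-style estimates: the number of connected components of a Pfaffian system in $\R^n$ is polynomial in the degrees of the defining polynomials, with exponent depending only on the format.

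The main obstacle is preventing the *-degree from growing multiplicatively at each projection step. A naive approach---forming boolean combinations of defining functions---would multiply degrees and lead to exponential dependence on $D$. The essential device, which I would borrow from \cite{BV}, is that the *-format/*-degree formalism absorbs the multiplicative blow-up of boolean combinations into the number of pieces in the presentation \eqref{stardegree}, at the cost of only polynomially many additional pieces, while Pfaffian projection operations can be encoded by enlarging the ambient dimension without raising the degrees of the defining polynomials. Arranging this absorption uniformly through the inductive step---so that the $(n-1)$-dimensional decomposition, combined with the fiber-wise analysis, contributes only polynomial overhead in $D$ rather than compounding at each level---is the delicate technical point that the *-degree framework is engineered to handle.
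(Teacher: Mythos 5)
First, a point of comparison: the paper does not prove Theorem \ref{BVcd} at all --- it is quoted verbatim from \cite{BV} (Theorem 1 there), and the only glimpse of its proof given here is Remark \ref{rmk:analytic-cells}, which recalls that in \cite{BV} the cell walls are obtained as sections of unions of graph cells $\pi_n(X_\alpha)$ for \emph{smooth} semi-Pfaffian manifolds $X_\alpha$ mapping submersively to the base cell. So what you are really attempting is a reconstruction of the \cite{BV} argument, and your sketch does capture its broad shape: induction on the ambient dimension, a cylindrical decomposition whose projections respect the presentation \eqref{stardegree}, Khovanskii-type bounds for the number of connected components, and bookkeeping in which boolean/projection blow-up is absorbed into the number of pieces of the *-presentation rather than into the degree.

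The gap is that the one genuinely hard point is asserted rather than proved. You write that "arranging this absorption uniformly through the inductive step" is "the delicate technical point that the *-degree framework is engineered to handle," and that you would "borrow from \cite{BV}" the essential device --- but that device \emph{is} the theorem. Concretely: Pfaffian functions are not closed under projection, so in your recursion the critical loci, sign-change loci and the section functions $y_1<\cdots<y_K$ over a base cell are only sub-Pfaffian, defined by first-order formulas involving quantifiers and case distinctions; bounding the *-format and *-degree of such constructions is exactly the content of Theorem \ref{BVfmlas} ([BV, Theorem 2]), which in \cite{BV} is deduced from (or proved in tandem with) the cell decomposition theorem you are trying to establish, so invoking that closure calculus inside the induction is circular unless you prove the needed instances independently with polynomial degree control at each of the $n$ nested projection steps. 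This is also where your outline diverges from what \cite{BV} actually do: rather than the classical critical-value/sign-invariance CAD recursion on the defining functions, they route the construction through smooth semi-Pfaffian pieces with submersive projections (the graph cells of Remark \ref{rmk:analytic-cells}), precisely so that fibre counts and section functions stay uniformly controlled. Without carrying out that step --- or some substitute for it --- your argument establishes at best the shape of the induction, not the polynomial bound $\poly_{\F}(k,D)$ on the number of cells or the $\poly_{\F}(D)$ bound on their *-degrees.
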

\begin{rmk}\label{rmk:analytic-cells}
 In Theorem~\ref{BVcd}, one may assume that every cell in the decomposition is real-analytic, that is, every cell wall is given by a real-analytic function. This follows from the proof of Theorem~\ref{BVcd}. More specifically, the cell walls are constructed in the proof of \cite[Proposition 19]{BV} as sections of a union of graph cells $\pi_n(X_\alpha)$, where $\pi_n:\R^m\to\R^n$ is the natural coordinate projection and $X_\alpha\subseteq\R^n$ is a semi-Pfaffian smooth manifold. In the proof, it is argued that, since $X_\alpha$ maps submersively to the base cell, each graph is the graph of a continuous function. In fact, since the sets $X_\alpha$ are locally defined by restricted Pfaffian equations (which are in particular real-analytic), the same argument with the real-analytic implicit function theorem shows that these graphs are real-analytic as well.
\end{rmk}

Let $\Rtilde$ be the expansion of the real field by a relation for each presentation as in \eqref{stardegree} of every restricted sub-Pfaffian set. The atomic formula corresponding to one of these relations is defined to have the same *-format and *-degree as the restricted sub-Pfaffian set having the corresponding presentation (as defined in Definition \ref{starformatdegree}). This is then extended to define the *-format and *-degree of a formula $\phi$ (in the language of $\Rtilde$) as in \cite[Definition 7]{BV}. With this set-up, the first-named author and Vorobjov prove the following.

\begin{thm}[{\cite[Theorem 2]{BV}}]\label{BVfmlas} 
Let $\F$ and $D$ be positive real numbers and let $\phi$ be a formula in the language of $\Rtilde$ with *-format at most $\F$ and *-degree at most $D$. The set defined by $\phi$ in $\Rtilde$ is restricted sub-Pfaffian with *-format $\const(\F)$ and *-degree $\poly_{\F}(D)$.
\end{thm}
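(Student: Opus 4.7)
My plan is to proceed by induction on the structure of the formula $\phi$ in the language of $\Rtilde$, tracking how the *-format and *-degree of the defined set evolve under each logical operation. The base case is immediate: by the construction of $\Rtilde$, an atomic formula corresponds to a restricted sub-Pfaffian set whose *-format and *-degree match those assigned to the formula. For the inductive step, the goal is to show that each connective or quantifier inflates the *-format by at most $\const(\F)$ and the *-degree by at most $\poly_\F(D)$.

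For disjunction, if $\phi_1$ and $\phi_2$ define sets $Y_1,Y_2$ with presentations $Y_i=\bigcup_\alpha \pi_\alpha(X_\alpha')$ as in \eqref{stardegree}, then concatenating these presentations produces a presentation of $Y_1\cup Y_2$ whose *-format is the maximum of the two and whose *-degree is the sum, well within the allowed bounds. For existential quantification $\exists y\,\psi(\xbar,y)$, composing each projection $\pi_\alpha$ appearing in a presentation of $\{(\xbar,y):\psi\}$ with the coordinate projection $\R^{n+1}\to\R^n$ yields a valid presentation of the defined set with unchanged *-format and *-degree. Conjunction and universal quantification I plan to reduce to the previous cases by De Morgan's laws and the equivalence $\fa y\,\psi\iff\neg\exists y\,\neg\psi$, so the only genuinely delicate case is negation.

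For negation $\phi=\neg\psi$, with $\psi$ defining $Z\subseteq\R^n$, I invoke Theorem~\ref{BVcd} (taking $k=1$) to obtain a restricted sub-Pfaffian cell decomposition $\D$ of $\R^n$ compatible with $Z$, with $\#\D$ bounded by $\poly_\F(D)$ and each cell having *-format $\const(\F)$ and *-degree $\poly_\F(D)$. The complement $\R^n\setminus Z$ is then the union of those cells of $\D$ that avoid $Z$: there are at most $\poly_\F(D)$ such cells, the maximum of their *-formats remains $\const(\F)$, and the sum of their *-degrees is still $\poly_\F(D)$. This gives the required presentation of the complement.

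The main obstacle is managing the negation step across the induction: each negation invokes cell decomposition and multiplies the *-degree polynomially, so care is needed to ensure that repeated negations (arising from universal quantifiers and conjunctions rewritten via De Morgan) do not blow up the parameters beyond the claimed bounds. Since the *-format of $\phi$ itself encodes the formula's logical structure, the constants produced at each negation can be absorbed into the overall $\F$-dependence by the end of the induction, while the *-degree remains $\poly_\F(D)$ because composing polynomially many polynomial inflations is still polynomial. All other steps are essentially bookkeeping on how restricted sub-Pfaffian presentations combine under unions and coordinate projections.
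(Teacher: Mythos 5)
The paper does not actually prove this statement: it is quoted verbatim from \cite[Theorem 2]{BV}, so there is no internal proof to compare against. Your reconstruction — induction on the structure of the formula, with unions handled by concatenating presentations as in \eqref{stardegree}, existential quantifiers by composing the projections, and negation by taking the cells of a decomposition from Theorem~\ref{BVcd} that miss the given set — is exactly the standard way this result is deduced from the effective cell decomposition theorem, and it is essentially how it is obtained in \cite{BV}; so in outline your route is the intended one, not a genuinely different argument.

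One point needs repair, though. Your closing justification, that ``composing polynomially many polynomial inflations is still polynomial,'' is false as stated: composing $k$ maps of the shape $D\mapsto D^{c}$ yields $D^{c^{k}}$, which is polynomial in $D$ only if $k$ is bounded independently of $D$. The induction closes not for that reason but because the number of complementation steps (negations, and the complementations hidden in conjunctions and universal quantifiers after De Morgan) is bounded in terms of the *-format of $\phi$ alone, via the way \cite[Definition 7]{BV} assigns *-format and *-degree to formulas; with that definition each such step sends $(\F,D)$ to $(\const(\F),\poly_{\F}(D))$ and there are only $\const(\F)$ many of them, so the final bound is $\poly_{\F}(D)$ with exponent depending effectively on $\F$. (Unbounded disjunctions are harmless, since there the *-degrees merely add.) You should also record the routine bookkeeping that quantification over a variable other than the last one requires a coordinate permutation, which preserves the semi-Pfaffian data up to relabelling; with these points made explicit your argument is complete.
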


We will use this theorem frequently, often without explicit reference. Here are some sample uses of this theorem. Suppose that $f$ and $g$ are restricted sub-Pfaffian functions of *-format at most $\F$ and *-degree at most $D$ such that the composition $f \circ g$ is defined. Then, by Theorem \ref{BVfmlas}, the composition has *-format $\const(\F)$ and *-degree $\poly_{\F}(D)$. 
Now suppose that $F$ is a $C^r$ restricted sub-Pfaffian function in $\ell$ variables of *-format at most $\F$ and *-degree at most $D$. Then, by Theorem \ref{BVfmlas}, for any $\ell$-tuple of nonnegative integers $\alpha$ with $|\alpha| \leq r$, the subset of the domain of $F$ on which $||F^{(\alpha)}|| \leq 1$ holds is a restricted sub-Pfaffian set of *-format $\const(\F,r)$ and *-degree $\poly_{\F,r}(D)$ (see Section \ref{sec:param} for explication of the multi-index notation).

From now on, until the end of Section \ref{sec:apps}, we write `definable' to mean `definable in $\tilde{\R}$'.

Here are some corollaries of the above results. As noted above, we use the notation $I$ to denote the open interval $(0,1)$. 
We frequently work with definable families of sets and maps, and define their *-format and *-degree by identifying a definable family of sets $\{X_a \colon a \in A\}$ with the definable set $X=\bigcup_{a \in A}\left(\{a\} \times X_a\right)$, and identifying a definable family of maps $\{F_a : X_a \to \R^{\ell} \colon a \in A\}$ on $X$ with the definable map $F : X \to \R^{\ell}$.

\begin{cor}\label{monotonicity} 
Let $r$ be a nonegative integer and let $\F$ and $D$ be positive real numbers. Let $X = \{X_a \colon a \in A\}$ be a definable family of subsets of $I$ and let $F = \{F_a:X_a \to I \colon a \in A\}$ be a definable family of functions on $X$ of *-format at most $\F$ and *-degree at most $D$. There exist a positive integer $k$, definable sets $A_1,\ldots,A_k$  partitioning $A$, and, for each $i=1,\ldots,k$, a positive integer $K_i$ and definable functions $\alpha_{i,j}, \beta_{i,j} \colon A_i \to I$, for $j=1,\ldots,K_i$, such that the following hold. 

First, for each $i=1,\ldots,k$ and $a \in A_i$, the intervals  $(\alpha_{i,j}(a),\beta_{i,j}(a))$, for $j=1\,\ldots,K_i$, are pairwise disjoint and the union of their closures is the closure of $X_a$. Moreover, we have that, for each $i=1,\ldots,k$ and $j=1,\ldots,K_i$, exactly one of the following properties holds:
\begin{enumerate}
\item $F_a$ is $C^r$ and $F^{(r)}_a$ is strictly increasing on $(\alpha_{i,j}(a),\beta_{i,j}(a))$, for all $a\in A_i$;
\item $F_a$ is $C^r$ and $F^{(r)}_a$ is strictly decreasing on $(\alpha_{i,j}(a),\beta_{i,j}(a))$, for all $a\in A_i$;
\item $F_a$ is $C^r$ and $F^{(r)}_a$ is constant on $(\alpha_{i,j}(a),\beta_{i,j}(a))$, for all $a\in A_i$.
\end{enumerate}
Finally, $k$ and $K_i$, for each $i=1,\ldots,k$, are $\poly_{\F,r}(D)$, and, for each $i=1,\ldots,k$, each set $A_i$ and each of the functions $\alpha_{i,j}$ and $\beta_{i,j}$, for $j=1,\ldots, K_i$, has *-format  $\const(\F,r)$ and *-degree $\poly_{\F,r}(D)$.
\end{cor}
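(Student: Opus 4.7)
My plan is to derive the corollary from the cell decomposition Theorem~\ref{BVcd}, applied to a small collection of auxiliary definable sets that encode the regularity and sign information that needs to be read off from the resulting cells. The key tool throughout is Theorem~\ref{BVfmlas}, which will ensure that the auxiliary sets, being defined by first-order formulas in the language of $\Rtilde$, inherit the desired *-format and *-degree bounds.

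First, iteratively applying the first-order $\epsilon$-$\delta$ definition of the derivative together with Theorem~\ref{BVfmlas}, I would observe that for each $i=0,1,\ldots,r+1$ the graph of the partial function $(a,x)\mapsto F_a^{(i)}(x)$, obtained by fibrewise differentiation in the $I$-coordinate, is a definable subset of $A\times I\times\R$ of *-format $\const(\F,r)$ and *-degree $\poly_{\F,r}(D)$. I would then form three definable subsets $X^+, X^-, X^0\subseteq X$ consisting of those $(a,x)\in X$ at which $F_a^{(r+1)}(x)$ exists and is positive, negative, or zero, respectively. Each of these also has *-format $\const(\F,r)$ and *-degree $\poly_{\F,r}(D)$, again by Theorem~\ref{BVfmlas}.

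Next I would apply Theorem~\ref{BVcd} to $A\times I$ with compatibility list $\{X,X^+,X^-,X^0\}$, obtaining a cell decomposition $\D$ of $A\times I$ with $\#\D$ being $\poly_{\F,r}(D)$ and each cell of *-format $\const(\F,r)$ and *-degree $\poly_{\F,r}(D)$. Every cell lying over a base cell $A'\subseteq A$ is either a graph cell $\{(a,\alpha(a)):a\in A'\}$ or a band cell $\{(a,x):a\in A',\alpha(a)<x<\beta(a)\}$. A band cell contained in $X$ is contained in exactly one of $X^+,X^-,X^0$, so throughout that cell $F_a^{(r+1)}$ exists and has constant sign; hence $F_a$ is $C^{r+1}$ (in particular $C^r$) on $(\alpha(a),\beta(a))$, and $F_a^{(r)}$ is strictly increasing, strictly decreasing, or constant there, according to which of $X^+, X^-, X^0$ contains the cell. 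I would then let $A_1,\ldots,A_k$ enumerate those base cells in $A$ that carry at least one band cell of $X$, and for each $A_i$ let $\alpha_{i,j},\beta_{i,j}$ be the defining functions of the band cells over $A_i$. Disjointness of the intervals for fixed $a$ follows from disjointness of cells; the graph cells of $X$ over $A_i$ contribute only finitely many isolated points per fibre, which are absorbed on taking closures, so the union of the closures of the intervals recovers $\operatorname{cl}(X_a)$. The bounds on $k, K_i$ and on the *-format and *-degree of the $A_i,\alpha_{i,j},\beta_{i,j}$ are then immediate from Theorem~\ref{BVcd}.

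The main technical obstacle is controlling the *-degree of the sets $X^\pm$ and $X^0$: their natural definitions involve nested quantifiers coming from the iterated $\epsilon$-$\delta$ definition of $F_a^{(r+1)}$, and a priori this could push the *-degree beyond a $\poly_{\F,r}(D)$ bound. This is precisely the work that Theorem~\ref{BVfmlas} does for us, translating the first-order complexity of the differential formula (which depends only on $\F$ and $r$) into a polynomial-in-$D$ bound on the *-degree. Aside from this, care is needed to confirm that a band cell contained in $X$ really is contained in one of $X^+, X^-, X^0$ rather than straddling the set where $F^{(r+1)}$ fails to exist; this is ensured by the compatibility of $\D$ with all three sign sets and with $X$ itself, after which the band cell, being connected, must lie inside a single component of their disjoint union.
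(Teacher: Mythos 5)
Your proposal follows what is essentially the paper's (implicit) route: the corollary is stated there without a written proof, as a direct consequence of the effective cell decomposition Theorem~\ref{BVcd} applied compatibly with the definable sets recording the existence and sign of the fibrewise derivative, with Theorem~\ref{BVfmlas} controlling the *-format and *-degree of those sets and of the resulting cell-wall data, exactly as you do. The one point to tighten is your last paragraph: compatibility plus connectedness only shows a band cell inside $X$ lies entirely in one of $X^+$, $X^-$, $X^0$ \emph{or} entirely in the locus where $F_a^{(r+1)}$ fails to exist, and to exclude the latter (and likewise to justify that graph cells only contribute fibrewise finite sets) you should invoke the standard o-minimality fact that, for each fixed $a$, the non-$C^{r+1}$ locus of the definable function $F_a$ is finite, so it cannot contain an interval fibre of a band cell; with that one-line addition the argument is complete.
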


We also need a version of the above for functions of more than one variable. 

\begin{cor}\label{monotonicity_multivar}
Let $\ell$ and $\ell'$ be positive integers, let $r$ be a nonnegative integer and let $\F$ and $D$ be positive real numbers. Let $F = \{F_a : I^{\ell} \to I^{\ell'} : a \in A\}$ be a definable family of maps of *-format at most $\F$ and *-degree at most $D$. There exist a positive integer $k$ and a definable family $V = \{V_a : a \in A\}$ of subsets of $I^{\ell}$ such that, for each $a \in A$, it holds that $\dim(V_a) < \ell$ and $F_a$ is $C^r$ on $I^{\ell} \setminus V_a$. Moreover, $k$ is $\poly_{\F,r}(D)$ and the family of sets $V$ has *-format $\const(\F,r)$ and *-degree $\poly_{\F,r}(D)$.
\end{cor}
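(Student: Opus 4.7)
The plan is first to reduce to the case $\ell' = 1$ by handling each coordinate function of $F$ separately and taking the union of the resulting bad sets; this affects the *-format only by a $\const(\F)$ factor and the *-degree only by a $\poly_{\F}(D)$ factor.

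For the scalar case, for each $a \in A$ and each multi-index $\alpha$ with $|\alpha| \le r$, let $W_{a, \alpha}$ denote the set of points $x \in I^\ell$ at which the partial derivative $F_a^{(\alpha)}$ either fails to exist or fails to be continuous, and set $V_a = \bigcup_{|\alpha| \le r} W_{a, \alpha}$. Each $W_{a, \alpha}$ is first-order definable in $\Rtilde$ using the standard iterated-difference-quotient characterization of partial derivatives together with an $\epsilon$--$\delta$ formulation of continuity; the resulting formula has size depending only on $\F$ and $r$. By Theorem \ref{BVfmlas}, the family $\{V_a : a \in A\}$ is therefore definable with *-format $\const(\F, r)$ and *-degree $\poly_{\F, r}(D)$, and $k$ may be taken as this polynomial bound on the *-degree.

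Since $\Rtilde$ is o-minimal, any definable function is $C^r$ on a definable dense open subset of its domain, so the complement of this open set, which contains $V_a$, has empty interior and hence dimension strictly less than $\ell$. By construction, $F_a$ is $C^r$ on $I^\ell \setminus V_a$, as required.

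The main obstacle will be ensuring that the quantifier structure in the first-order definition of $V_a$ is arranged so that the defining formula has *-format and symbol count controlled purely by $\F$ and $r$, in order that Theorem \ref{BVfmlas} delivers polynomial dependence on $D$ rather than something worse. An alternative route, iterating Corollary \ref{monotonicity} coordinate by coordinate with parameters ranging in $A \times I^{\ell - 1}$, is available but would require additional inductive bookkeeping over $\ell$ and $r$ to track the complexity bounds.
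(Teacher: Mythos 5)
Your argument is correct and follows essentially the route the paper intends: this corollary is stated without a separate proof, as a direct consequence of Theorem \ref{BVfmlas} applied to a formula expressing failure of $C^r$-smoothness via (iterated) difference quotients — exactly in the spirit of the sample application about sets defined by conditions on $F^{(\alpha)}$ given immediately after that theorem — together with o-minimality of $\Rtilde$ (via $C^r$ cell decomposition) for the qualitative bound $\dim(V_a)<\ell$, which needs no effectivity. One small refinement: replace your $V_a$ by $I^{\ell}\setminus\interior\bigl(I^{\ell}\setminus V_a\bigr)$, i.e.\ put into $V_a$ every point having no neighbourhood on which $F_a$ is $C^r$ — this is definable by a formula of size depending only on $\F$ and $r$, so the complexity bounds are unchanged, and the dimension bound persists since this set still avoids the dense open $C^r$-locus — so that $I^{\ell}\setminus V_a$ is open, which is the reading needed when the corollary is invoked in the proof of $(ii)_{\ell+1}$.
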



Finally, we need the following version of definable choice.

\begin{lemma}\label{choice} 
Let $\ell$ be a positive integer and let $\F$ and $D$ be positive real numbers. Let $X = \{ X_a : a \in A\}$ be a definable family of nonempty subsets of $I^{\ell}$, with *-format at  most $\F$ and *-degree at most $D$. There exists a definable map $F: A\to I^{\ell}$, of *-format $\const(\F)$ and *-degree $\poly_{\F}(D)$, such that $F(a) \in X_a$, for each $a \in A$.

\end{lemma}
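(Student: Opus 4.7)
The plan is to proceed by induction on $\ell$, using Theorem \ref{BVcd} for the base case and Theorem \ref{BVfmlas} to propagate the *-format and *-degree bounds through projection and fiber-taking in the inductive step.

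For the base case $\ell=1$, I would view the family as a subset $X = \bigcup_{a \in A} (\{a\} \times X_a) \subseteq \R^m \times I$ (where $A \subseteq \R^m$) and apply Theorem \ref{BVcd} to obtain a cell decomposition $\mathcal{D}$ compatible with both $X$ and $A$, having $\poly_\F(D)$ many cells each of *-format $\const(\F)$ and *-degree $\poly_\F(D)$. Since cell decomposition is cylindric with respect to the projection onto $\R^m$, the cells of $\mathcal{D}$ contained in $X$ that lie above a given base cell $A'\subseteq A$ are either graphs of definable functions $\gamma \colon A' \to I$ or open bands $\{(a,y) : \alpha(a) < y < \beta(a)\}$ between definable functions $\alpha, \beta \colon A' \to [0,1]$. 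Because $X_a$ is nonempty for each $a \in A$, at least one such cell lies above each base cell of $A$ and is contained in $X$; fixing a canonical enumeration of the cells of $\mathcal{D}$, select the first such cell $C$ above $A'$, and define $F$ on $A'$ to be the defining function $\gamma$ in the graph case, or the pointwise midpoint $(\alpha+\beta)/2$ in the band case (which lies in $I$ since $\alpha, \beta \in [0,1]$). The complexity bounds for $F$ transfer from those guaranteed by Theorem \ref{BVcd}.

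For the inductive step $\ell \geq 2$, let $\pi_1 \colon I^\ell \to I$ denote projection to the first coordinate. Then $\{\pi_1(X_a) : a \in A\}$ is a definable family of nonempty subsets of $I$ whose *-format is $\const(\F)$ and *-degree is $\poly_\F(D)$ by Theorem \ref{BVfmlas}, so the base case supplies a definable choice $g \colon A \to I$ with $g(a) \in \pi_1(X_a)$ satisfying the required bounds. I would then define $X'_a := \{y \in I^{\ell-1} : (g(a), y) \in X_a\}$, which is a definable family of nonempty subsets of $I^{\ell-1}$ with the same style of bounds (again by Theorem \ref{BVfmlas}), and invoke the inductive hypothesis to obtain a definable choice $F' \colon A \to I^{\ell-1}$ with $F'(a) \in X'_a$. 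Setting $F(a) := (g(a), F'(a))$ completes the construction.

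The main technical point is to verify that the *-format and *-degree estimates stay of the required form $\const(\F)$ and $\poly_\F(D)$ through the $\ell$ iterations of the inductive step and the intermediate applications of Theorem \ref{BVfmlas}. Since $\ell$ is fixed, composing boundedly many polynomial bounds yields a polynomial bound overall, with the effective dependencies absorbed into the $\const$ and $\poly_\F$ notation; and analogously for the effective constant bounds on *-format.
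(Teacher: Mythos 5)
Your proposal is correct, and since the paper states Lemma \ref{choice} without proof — as a routine corollary of Theorems \ref{BVcd} and \ref{BVfmlas} — your cell-decomposition induction on $\ell$ (graphs versus bands over base cells, choosing the graph value or the midpoint, then slicing at the chosen first coordinate and recursing) is exactly the standard argument those results are intended to support. The complexity bookkeeping is also handled correctly: since $\ell \le \F$, the $\ell$-fold composition of effectively computable format bounds and polynomial degree bounds remains $\const(\F)$ and $\poly_{\F}(D)$.
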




\section{Parameterization} \label{sec:param}
Throughout this section, unless otherwise stated, $\ell$, $\ell'$ and $k$ denote positive integers, $r$ denotes a nonnegative integer, and $B$, $\cF$ and $D$ denote positive real numbers.

We begin by introducing a number of key definitions and items of notation. In particular, we present, in the notation of this paper, definitions from \cite{BNYG} that we require. Recall that we use $I$ to denote the open interval $(0,1)$. 
\begin{defn}[{\cite[Definitions 4 and 6]{BNYG}}] A \emph{basic cell} $\cC \subseteq \R^{\ell}$ of \emph{length} $\ell$ is a product of $\ell$
sets, each of which is either $I$ or $\{0\}$. Given a basic cell
$\calC \subseteq \R^\ell$ and $i=1,\ldots, \ell$, write $\mathcal{C}_{\leq i}$ for
the projection of $\calC$ to the first $i$ coordinates. 
A continuous map
$f=(f_1,\ldots, f_{\ell}):\calC\to \R^{\ell}$ on a basic cell $\calC$ of length $\ell$ is said to be \emph{cellular} if,
for all $i = 1,\ldots, \ell$, the function $f_i(x_1,\ldots,x_{\ell})$ does not depend on 
$x_{i+1},\ldots,x_{\ell}$, and we have that $f_1$ is strictly increasing and, for all $i = 2,\ldots, \ell$ and each $(x_1,\ldots,x_{i-1})\in \calC_{\leq i-1}$, the function
$x_i \mapsto f_i(x_1,\ldots,x_{i-1},x_i)$ is strictly increasing.
\end{defn}
The following is a straightforward consequence of these definitions.

\begin{lemma}[{\cite{BNYG}, p.422}] If $\calC$ and $\calC'$ are basic cells of length $\ell$
  and $\phi:\calC\to \calC'$ and $\psi :\calC'\to \R^{\ell}$ are
  cellular, then $\psi \circ \phi \colon \cC \to \R^{\ell}$ is cellular.
\end{lemma}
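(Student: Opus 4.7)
The plan is to verify directly the three defining properties of cellularity for the composition $F := \psi \circ \phi = (F_1, \ldots, F_{\ell})$, using the corresponding properties for $\phi = (\phi_1, \ldots, \phi_{\ell})$ and $\psi = (\psi_1, \ldots, \psi_{\ell})$. Continuity of $F$ is automatic from continuity of $\phi$ and $\psi$, so the work lies in checking the triangular dependence of the components and their strict monotonicity in the last variable.

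First I would verify the dependence property: for each $i$, $F_i(x) = \psi_i(\phi_1(x), \ldots, \phi_i(x))$ because $\psi_i$ depends only on its first $i$ arguments. Each $\phi_j$ with $j \le i$ depends only on $x_1, \ldots, x_j$, hence only on $x_1, \ldots, x_i$, so $F_i$ depends only on $x_1, \ldots, x_i$, as required.

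Next I would check strict monotonicity. For $i = 1$, $F_1(x_1) = \psi_1(\phi_1(x_1))$ is the composition of two strictly increasing functions, hence strictly increasing. For $i \ge 2$, fix $(x_1, \ldots, x_{i-1}) \in \calC_{\le i-1}$. Then $\phi_1, \ldots, \phi_{i-1}$ take constant values, and the map $x_i \mapsto \phi_i(x_1, \ldots, x_{i-1}, x_i)$ is strictly increasing by cellularity of $\phi$. Setting $y_j = \phi_j(x_1, \ldots, x_j)$ for $j < i$, strict monotonicity of $x_i \mapsto F_i(x_1, \ldots, x_i) = \psi_i(y_1, \ldots, y_{i-1}, \phi_i(x_1, \ldots, x_i))$ will then follow from strict monotonicity of $y_i \mapsto \psi_i(y_1, \ldots, y_{i-1}, y_i)$ composed with the strictly increasing map $x_i \mapsto \phi_i(x_1, \ldots, x_i)$.

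The one point requiring a moment's care --- and the closest thing to an obstacle --- is that the monotonicity hypothesis on $\psi_i$ only applies when $(y_1, \ldots, y_{i-1})$ lies in $\calC'_{\le i-1}$. This is ensured by the assumption that $\phi$ is a map \emph{into} $\calC'$: for any $x \in \calC$ we have $\phi(x) \in \calC'$, so in particular $(\phi_1(x), \ldots, \phi_{i-1}(x)) \in \calC'_{\le i-1}$. With this observation the three conditions are all verified and the proof is complete.
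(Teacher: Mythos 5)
Your proof is correct: the componentwise verification of the triangular dependence and of strict monotonicity in the last variable, together with the observation that $\phi$ mapping into $\calC'$ guarantees the prefix $(\phi_1(x),\ldots,\phi_{i-1}(x))$ lies in $\calC'_{\le i-1}$ where the monotonicity hypothesis on $\psi_i$ applies, is exactly the routine argument intended. The paper itself quotes this lemma from \cite{BNYG} without proof, describing it as a straightforward consequence of the definitions, and your argument is that straightforward consequence spelled out.
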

Given a set $X \subseteq \R^{\ell}$ and a map $f = (f_1, \ldots, f_{\ell'}) \colon X \to \R^{\ell'}$, we set $\| f \| : = \sup \{ |f_{1}(x)|, \ldots, |f_{\ell'}(x)| : x \in X \} $. If $f$ is moreover $C^r$, then, given an $\ell$-tuple of nonnegative integers $\alpha$ with $| \alpha | \leq r$, we denote the derivative of $f$ of order $\alpha$ by 
$$f^{(\alpha)} = \left( \frac{\partial^{| \alpha |} f_1}{\partial x_{1}^{\alpha_1}\cdots \partial x_{\ell}^{\alpha_{\ell}}},\ldots, \frac{\partial^{| \alpha |} f_{\ell'}}{\partial x_{1}^{\alpha_1}\cdots \partial x_{\ell}^{\alpha_{\ell}}} \right).$$ 
We then set $\| f \|_r := \max\{\|f^{(\alpha)}\|: \alpha \in \mathbb{N}^{\ell}, | \alpha | \leq r \}$. 
We use this rather than the norm used in \cite{BNYG} because this is what we will need for the diophantine applications in Section \ref{sec:apps}.
\begin{defn}[{\cite[Definition 7]{BNYG}}]
A \emph{cellular} $r$-\emph{parameterization} of a set $X \subseteq \R^{\ell}$ is a finite set $\Phi$ of cellular $C^r$-maps $\phi:\calC_{\phi}\to \R^{\ell}$, from various basic cells $\calC_{\phi} \subseteq I^{\ell}$ of length $\ell$, such that $\| \phi \|_r \le 1$, for each $\phi \in \Phi$, and
\[
  X=\bigcup_{\phi\in\Phi} \phi(\calC_\phi).
\]
A \emph{cellular} $r$-\emph{reparameterization} of a map $F:X \to Y$, with $X \subseteq \R^{\ell}$, $Y \subseteq \R^{\ell'}$, is a \crp\ $\Phi$ of $X$ such that $\| F\circ \phi\|_r\le 1$, for each $\phi \in \Phi$.
\end{defn}
\begin{rmk}\label{rmk:param}\sloppy Note that this modifies the notions of $r$-parameterization and $r$-reparameterization as defined in \cite{PilaWilkie} in two ways: the parameterizing maps here must be cellular, and the domain of each parameterizing map is a (possibly different) basic cell in $I^{\ell}$ of length $\ell$, rather than the basic cell $I^{\dim{(X)}}$ in every case. The cellularity is crucial for the proof approach; it is clear to see how an $r$-parameterization (respectively, $r$-reparameterization) in the sense of \cite{PilaWilkie} can easily be obtained from a cellular $r$-parameterization (respectively, cellular $r$-reparameterization).
\end{rmk}
\begin{defn}
\sloppy We say that a \crp\ or \crr\ $\Phi$ is \emph{definable} if the maps in $\Phi$ are definable.
In this case, the \emph{*-format} of such a $\Phi$ is defined to be the maximum of the *-formats of the $\phi\in\Phi$, and the \emph{*-degree} of such a $\Phi$ is defined to be the sum of the *-degrees of the $\phi\in \Phi$.
\end{defn}

We also require analogous definitions for (definable) families of sets and functions. 

\begin{defn}Given a set $A$, we use the notation $\tilde{\Phi}= \{ \left( \Phi^{(i)}, A_i \right) : i = 1,\ldots, k \}$ to denote the following: a finite partition of $A=A_1\cup \ldots \cup A_k$ together with, for each $i=1,\ldots, k$, a positive integer $K_i$ and a finite collection of maps 
\[
\Phi^{(i)} =\{ \phi_{i,j}: A_i \times \calC_{i,j}\to \R^\ell : j=1,\ldots,K_i\},
\]
where $\cC_{i,j}$ is a basic cell. We say that such a collection $\tilde{\Phi}$ is \emph{definable} if the maps $\phi_{i,j}$ are definable, for each $i=1,\ldots,k$, $j=1,\ldots,K_i$, and say that such a collection $\tilde{\Phi}$ has \emph{*-format} and \emph{*-degree} given, respectively, by the maximum of the *-formats of the $\phi_{i,j}$ and the sum of the *-degrees of the $\phi_{i,j}$, taken over all $i=1,\ldots,k$ and $j=1,\ldots,K_i$. 
\end{defn}

\sloppy Note that, if such a collection of maps $\tilde{\Phi}$ is definable, then the value of $k$ (that is, the number of sets in the partition of $A$) and the total number of maps across all $\Phi^{(i)}$ are both necessarily bounded by the *-degree of $\tilde{\Phi}$.

\begin{defn}\label{r-param}
Let $X=\{ X_a :a \in A\}$ be a family of subsets of $\R^\ell$. We say that $\tilde{\Phi}= \{ \left( \Phi^{(i)}, A_i \right) : i = 1,\ldots, k \}$ is a \emph{cellular} $r$-\emph{parameterization} of $X$ if, for each $a \in A_i$, the collection $\Phi_{a}^{(i)} =\{ \phi_{i,j}(a,\cdot) : j=1,\ldots,K_i\}$ is a \crp\ of $X_a$. 

Let $F=\{F_a:X_a\to \R^{\ell'} : a \in A\}$ be a family of maps on $X$ as above. We say that $\tilde{\Phi}= \{ \left( \Phi^{(i)}, A_i \right) : i = 1,\ldots, k \}$ is a \emph{\crr} of $F$ if it is a \crp\ of $X$ such that, for each $i=1,\ldots,k$ and $a\in A_i$, we have $\| F_a (\phi_{i,j}(a,\cdot))\| \le 1$, i.e., for each $i = 1,\ldots, k$ and $a \in A_i$, the collection 
$\Phi_{a}^{(i)} =\{ \phi_{i,j}(a,\cdot) : j=1,\ldots,K_i\}$ is a \crr\ of $F_a:X_a\to \R^{\ell'}$. 
\end{defn}
  


It is the goal of this section to prove the following theorem.

\begin{thm}\label{crpcrr}
For any positive integer $\ell$, the following statements hold.

\begin{enumerate}
\item [$(i)_{\ell}$] 
For any nonegative integer $r$ and positive real numbers $\cF$ and $D$, if $X=\{ X_a: a\in A\}$ is a definable family of subsets of $I^{\ell}$ with *-format at most $\F$ and *-degree at most $D$, then there is a definable \crp\ of $X$ of *-format $\const(\cF,r)$ and *-degree $\poly_{\cF,r}(D)$.

\item [$(ii)_{\ell}$] For any nonegative integer $r$, positive integer $\ell'$ and positive real numbers $\cF$ and $D$, if $X=\{ X_a: a\in A\}$ is a definable family of subsets of $I^{\ell}$, and $F= \{ F_a:X_a\to I^{\ell'} : a \in A \}$ is a definable family of maps on $X$ with *-format at most $\cF$  and *-degree at most $D$, then there is a definable \crr\ of $F$ of *-format $\const(\cF,r)$ and *-degree $\poly_{\cF,r}(D)$.
\end{enumerate}
\end{thm}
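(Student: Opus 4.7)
The plan is to prove Theorem~\ref{crpcrr} by simultaneous induction on $\ell$, establishing $(i)_\ell$ and $(ii)_\ell$ together, following the Gromov--Yomdin parameterization template as organized by Binyamini--Novikov \cite{BNYG}, but with careful complexity bookkeeping at every step. Each operation I will use---composition of definable maps, cell decomposition (Theorem~\ref{BVcd}), monotonicity (Corollaries~\ref{monotonicity} and~\ref{monotonicity_multivar}), definable choice (Lemma~\ref{choice}), and Theorem~\ref{BVfmlas} for formulas---preserves the bound ``*-format $\const(\F, r)$, *-degree $\poly_{\F, r}(D)$'', so the induction will propagate the required complexity estimates provided only a polynomial number of pieces is introduced at each stage.

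For the base case $\ell = 1$, part $(i)_1$ follows directly from Corollary~\ref{monotonicity}: each $X_a \subseteq I$ is, outside $\poly_{\F, r}(D)$ many definable points (parameterized from $\{0\}$), a disjoint union of $\poly_{\F, r}(D)$ open subintervals $(\alpha_{i,j}(a), \beta_{i,j}(a))$, each parameterized by the affine map $t \mapsto \alpha_{i,j}(a) + (\beta_{i,j}(a) - \alpha_{i,j}(a))t$ from $I$, whose derivatives are automatically at most $1$ in absolute value since $\beta_{i,j}(a) - \alpha_{i,j}(a) < 1$. For $(ii)_1$, I would induct on $r$: the case $r = 0$ reduces to $(i)_1$, and in the inductive step I apply $(ii)_1$ for $r-1$ coordinate by coordinate to reduce to $\|F_a \circ \phi\|_{r-1} \leq 1$, then apply Corollary~\ref{monotonicity} to $F_a^{(r)}$ to split each cell into sub-intervals on which $F_a^{(r)}$ is monotone. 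On any such sub-interval the total variation of $F_a^{(r)}$ is bounded by~$2$ (since $|F_a^{(r-1)}| \leq 1$, by the fundamental theorem of calculus), so after splitting further at the points where $|F_a^{(r)}| = 1$ only a bounded number of pieces have $|F_a^{(r)}| > 1$, and these are absorbed by a single additional affine reparameterization on each.

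For the inductive step $\ell - 1 \Rightarrow \ell$, I would prove $(i)_\ell$ by projecting $X_a \subseteq I^\ell$ to $Y_a \subseteq I^{\ell - 1}$ and applying $(i)_{\ell - 1}$ to obtain a cellular $r$-parameterization $\psi \colon \cC \to Y_a$. Pulling back the fibers $X_{a, \psi(x)} \subseteq I$ and applying $(i)_1$ uniformly in $x$---using Lemma~\ref{choice} to select definable fiber parameterizations---produces a candidate map $(x, y) \mapsto (\psi(x), g(x, y))$. To upgrade this to a cellular $C^r$ map with derivatives at most $1$, I would invoke $(ii)_{\ell - 1}$ applied to $g$ (treated as a family of maps in $x$ with $y$ as parameter, via the one-variable-at-a-time reparameterization trick of \cite{BNYG}) to further reparameterize $\cC$ and simultaneously bound all mixed partial derivatives. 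The reduction of $(ii)_\ell$ to $(i)_\ell$ proceeds analogously: first parameterize $X_a$ via $(i)_\ell$, then reparameterize coordinate by coordinate so as to control each directional derivative of $F_a \circ \phi$ in turn.

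The main obstacle will be controlling the polynomial dependence on $D$ through the inductive step, particularly in the fiber-cutting and reparameterization phase. Each inductive application composes polynomial bounds from the previous step, and to prevent the exponent $\gamma(\F, r)$ in $D^{\gamma(\F, r)}$ from growing faster than a computable function of $\F$ and $r$, the argument must apply Theorem~\ref{BVcd} in its sharpest form (exploiting the explicit $\poly_\F(k, D)$ count on the number of cells), introduce no unnecessary auxiliary parameters, and use the real-analyticity of the cells (Remark~\ref{rmk:analytic-cells}) to ensure that implicit function constructions yield $C^r$ maps without uncontrolled branching.
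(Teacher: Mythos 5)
Your outline follows the right general template, but at the two places where the theorem's real analytic content lives the proposal substitutes claims that do not hold, so there is a genuine gap. First, in the base case $(ii)_1$: the assertion that, after splitting where $F_a^{(r)}$ is monotone and at the points where $|F_a^{(r)}|=1$, the remaining pieces with $|F_a^{(r)}|>1$ ``are absorbed by a single additional affine reparameterization'' is false. The hypotheses $\|F_a\|_{r-1}\le 1$ plus monotonicity of $F_a^{(r)}$ give, via the Mean Value Theorem, only a pointwise estimate of the shape $|F_a^{(r)}(x)|\le 4/x$ after an affine change placing the bad endpoint at $0$; the $r$-th derivative may genuinely be unbounded on such a piece (e.g.\ $F_a^{(r)}(x)\sim x^{-1/2}$ is compatible with $\|F_a\|_{r-1}\le1$). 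Your total-variation remark controls only the $L^1$-norm of $F_a^{(r)}$, not its supremum, and an affine substitution onto a subinterval multiplies the $r$-th derivative by the $r$-th power of a sub-unit length, so it can never turn an unbounded derivative into a bounded one. What is required is Gromov's non-affine substitution $x\mapsto x^2$, which converts the $4/x$ estimate into a bound of order $2^{r+2}x^{r-2}$ (bounded since $r\ge 2$); this is exactly the paper's Lemma \ref{x^2}, iterated one order at a time inside Lemma \ref{almostcrr}, which moreover treats the case $\|F_a'\|>1$ by parameterizing the graph through the inverse function --- another case your sketch does not address. Without these ingredients the induction on $r$ for $(ii)_1$ does not start.

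Second, the inductive step for $(ii)_\ell$ is only gestured at: ``reparameterize coordinate by coordinate so as to control each directional derivative of $F_a\circ\phi$ in turn'' hides the central difficulty, namely that after one has bounded the fiberwise norms $\|F_a(x_1,\cdot)\|_r$ (which is what applying $(ii)_\ell$ with $x_1$ as an extra parameter yields), one must bound the mixed derivatives $\partial^\alpha$ with $\alpha_1>0$ by a reparameterization in the $x_1$-direction alone, without destroying the bounds already obtained --- and this is not automatic. The paper's Lemma \ref{finalstep} is devoted precisely to this: an inner induction over multi-indices in degree-lexicographic order; Fact \ref{Lemma12} to isolate the finitely many $x_1$ over which $G_a^{(\alpha)}(x_1,\cdot)$ is unbounded (handled by $(ii)_\ell$ on a lower-dimensional family); and, in the bounded case, a definable-choice selection of a curve $\gamma_a(x_1)$ inside the set $S_a$ where $|G_a^{(\alpha)}|$ is at least half of $\|G_a^{(\alpha)}(x_1,\cdot)\|$, followed by an application of $(ii)_1$ to $(\gamma_a,\,G_a^{(\alpha-\epsilon_1)}\circ\gamma_a)$ and the substitution $x_1\mapsto\lambda_a(x_1)$, so that the chain rule converts one-variable control of $G_a^{(\alpha-\epsilon_1)}\circ\gamma_a\circ\lambda_a$ into a uniform bound on $\|G_a^{(\alpha)}(\lambda_a(x_1),\cdot)\|\cdot\|\lambda_a'\|$. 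Your proposal contains no mechanism of this kind (nor the cellularity bookkeeping that keeps the composed maps admissible), so the passage from fiberwise bounds to full $C^r$ bounds --- the heart of $(ii)_{\ell+1}$ --- remains unproved. By contrast, the complexity issue you single out at the end is the routine part: the number of rounds of cell decomposition and reparameterization depends only on $\ell$ and $r$, so the bound $\const(\F,r)$ on the *-format and polynomiality in $D$ propagate automatically once the analytic steps above are supplied.
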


\begin{rmk}\label{rmk:format-dim}
In the setting of this theorem, the value of $\F$ must be at least as large as $\ell$. Therefore, the bounds on the *-format and *-degree of the definable \crp\ or definable \crr\ asserted by this theorem are indirectly dependent on $\ell$.
\end{rmk}

Theorem \ref{crpcrr} is an effective version of the cellular Yomdin--Gromov Algebraic Lemma that was proved by the first-named author and Novikov in \cite{BNYG}. Our strategy for proving Theorem \ref{crpcrr} will in large part follow the approach taken in \cite{BNYG}. However, in order to obtain effective bounds, we will prove the result here directly, working with definable families throughout, rather than relying on (ineffective) model-theoretic compactness to derive statements for definable families from analogous statements for individual definable sets or functions. We will also provide many details of the proof here in order to make it clear that the bounds obtained are effective, as well as polynomial in the *-degree.

We will prove Theorem \ref{crpcrr} by induction on $\ell$. We begin with a sequence of lemmata. We will frequently make use of the following statement, or the essential idea contained in its proof of making linear substitutions to dampen derivatives.

\begin{lemma}\label{linsubst} 
Let $\ell$ be a positive integer, let $r$ be a nonnegative integer and let $B$, $\F$ and $D$ be positive real numbers. 
Let $X=\{ X_a : a \in A\}$ be a
  definable family of subsets of $\R^{\ell}$ and let $F= \{ F_a:X_a\to \R^{\ell'} : a \in A \}$ be a definable  family of maps on $X$.

Suppose that $\tilde{\Phi}= \{ \left( \Phi^{(i)}, A_i \right) : i = 1,\ldots, k \}$ fulfils the definition of being a definable \crp\ of $X$ with *-format at most $\cF$ and *-degree at most $D$ $($where $\Phi^{(i)} =\{ \phi_{i,j}: A_i \times \calC_{i,j}\to \R^\ell : j=1,\ldots,K_i\})$, except that 
 $\| \phi_{i,j}(a,\cdot))\|_{r}$ and $\| F_a (\phi_{i,j}(a,\cdot))\|_{r}$ are not necessarily bounded by $1$, but are bounded by $B$, for each $i=1,\ldots,k$, $j=1,\ldots, K_i$ and $a \in A_i$. 
 There is a definable \crr\ of $F$ with *-format $\const(\cF)$ and *-degree $\poly_{\cF}(B, D)$.
\end{lemma}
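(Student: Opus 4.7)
The plan is to precompose each parameterizing map $\phi_{i,j}(a,\cdot)$ with a suitable affine rescaling that dampens the higher-order derivatives, treating the family uniformly in $a$. When $r=0$, the lemma is immediate: any parameterization of $X$ has image in $X\subseteq I^\ell$ and each $F_a$ takes values in $I^{\ell'}$, so the zeroth-order norms $\|\phi_{i,j}(a,\cdot)\|$ and $\|F_a\circ\phi_{i,j}(a,\cdot)\|$ are automatically at most $1$ and $\tilde{\Phi}$ itself already qualifies as a \crr. Hence we may assume $r\geq 1$.

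Set $N:=\lceil B\rceil+1$. For each basic cell $\cC_{i,j}$ of length $\ell$, I would subdivide every $I$-factor into the $N$ open subintervals $\big((m-1)/N,\,m/N\big)$, $m=1,\ldots,N$, together with the $N-1$ interior singletons $\{m/N\}$, $m=1,\ldots,N-1$, and take products of these one-dimensional pieces across the $I$-coordinates of $\cC_{i,j}$. Each resulting product piece is naturally identified with a basic cell $\cC'\subseteq I^\ell$ of length $\ell$ via an affine map $\psi\colon\cC'\to\cC_{i,j}$: an open-subinterval coordinate becomes $I$ via $t\mapsto(t+m-1)/N$, and a singleton coordinate becomes $\{0\}$ via the constant map $0\mapsto m/N$ (positions originally of the form $\{0\}$ in $\cC_{i,j}$ stay as $\{0\}$). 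Each such $\psi$ is cellular, since each component depends only on its own coordinate and is either strictly increasing (on an $I$-factor) or satisfies the monotonicity condition vacuously (on a $\{0\}$-factor). The cellular composition lemma from \cite{BNYG} then ensures that $\phi_{i,j}(a,\cdot)\circ\psi$ is cellular, and as the pieces tile $\cC_{i,j}$ exactly, the images of the new maps still cover $\phi_{i,j}(a,\cC_{i,j})$.

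The key derivative estimate is the chain-rule identity for affine $\psi$: if $\psi$ has slope $1/N$ on each surviving $I$-coordinate, then for any admissible multi-index $\alpha$ (supported on the $I$-coordinates of $\cC'$) one has $(\phi\circ\psi)^{(\alpha)}(t)=N^{-|\alpha|}\phi^{(\alpha)}(\psi(t))$. For $1\leq|\alpha|\leq r$ this yields $\|(\phi_{i,j}(a,\cdot)\circ\psi)^{(\alpha)}\|\leq N^{-|\alpha|}B\leq B/N\leq B/(B+1)<1$, and the identical calculation applied to $F_a\circ\phi_{i,j}(a,\cdot)\circ\psi$ gives the required bound for $F$. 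Combined with the automatic zeroth-order bounds from $\phi_{i,j}(a,\cdot)$ mapping into $I^\ell$ and $F_a$ into $I^{\ell'}$, the new collection satisfies the \crr\ definition.

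For complexity, the subdivision depends only on $B$, not on $a$, so the construction is uniform in $a$ and the partition $A=A_1\cup\cdots\cup A_k$ of the original $\tilde{\Phi}$ carries over unchanged. The number of new maps replacing each $\phi_{i,j}$ is at most $(2N-1)^{d_{i,j}}\leq(2B+1)^\ell$, and since $\ell\leq\F$, the total map count is $\poly_{\F}(B,D)$. Each new map is the composition of a map of *-format $\leq\F$ and *-degree $\leq D$ with an explicit affine map whose *-format and *-degree depend only on $\ell$, so by Theorem \ref{BVfmlas} each has *-format $\const(\F)$ and *-degree $\poly_{\F}(D)$, giving total *-degree $\poly_{\F}(B,D)$. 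I expect the main technical subtlety to lie in the bookkeeping around the singleton pieces: one is forced to introduce basic cells with new $\{0\}$-factors in order to cover $\cC_{i,j}$ exactly, and then to verify that cellularity is preserved when $\phi_{i,j}(a,\cdot)$ is restricted to a hyperplane $\{x_k=m/N\}$ and reparameterized by a basic cell whose $k$-th factor is $\{0\}$; this reduces to observing that, in the cellular definition, the strict-monotonicity condition on a $\{0\}$-coordinate is vacuous.
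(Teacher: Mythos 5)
Your argument is correct and rests on the same core idea as the paper's proof: precompose each $\phi_{i,j}(a,\cdot)$ with affine contractions of slope roughly $1/B$, uniformly in $a$, use the chain rule to dampen all derivatives of order between $1$ and $r$, preserve cellularity via the composition lemma from \cite{BNYG}, and absorb the $\poly_{\F}(B)$-fold increase in the number of maps into the *-degree using $\ell\le\F$. Where you differ is the decomposition of the domain: you tile each $I$-factor exactly by $N$ open subintervals together with the $N-1$ interior singletons, which forces you to introduce new basic cells with additional $\{0\}$-factors and to check cellularity of the restrictions to hyperplane slices (the subtlety you correctly identify and resolve by the vacuity of the monotonicity condition on $\{0\}$-coordinates). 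The paper instead takes, for each $I$-coordinate, the $2B-1$ overlapping substitutions $x\mapsto\frac{2x+n}{2B}$, $n=0,\ldots,2B-2$: consecutive images overlap and their union is all of $I$, so the open pieces alone already cover the original cell, every new map is defined on the original basic cell $\cC_{i,j}$, and no singleton pieces or slice restrictions are needed. Your route costs a little extra bookkeeping but is no less valid; the one point worth flagging (common to both arguments) is that precomposition cannot reduce the order-zero sup norms, so the conclusion implicitly uses that the images of $\phi_{i,j}(a,\cdot)$ and of $F_a\circ\phi_{i,j}(a,\cdot)$ already lie in $[-1,1]$-cubes, as they do in every application of the lemma, where $X_a\subseteq I^{\ell}$ and $F_a$ maps into $I^{\ell'}$ -- an assumption you invoke explicitly in your $r=0$ case even though the statement is phrased over $\R^{\ell}$ and $\R^{\ell'}$.
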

\begin{proof}
 Without loss of generality we may assume that $B$ is an integer. 
  For each $i = 1,\ldots, k$ and $j=1,\ldots, K_i$, define
 \begin{multline*}
 N_{i,j} = \{ (n_1,\ldots,n_{\ell}) \in \{0,\ldots, 2B-2\}^{\ell} \colon \\ n_{s} = 0 \textrm{ for all } s = 1, \ldots, \ell \textrm{ such that } (\cC_{i,j})_{s} = \{0\} \},
 \end{multline*}
where $(\cC_{i,j})_{s}$ is the projection of $\cC_{i,j}$ onto the $s$th coordinate, for $s = 1, \ldots, \ell$.

For each $n = (n_1,\ldots,n_{\ell}) \in N_{i,j}$, define $\psi_{i,j,n} \colon A_i \times \cC_{i,j} \to \R^{\ell}$ by $\psi_{i,j,n}(a,x) = \phi_{i,j}\left(a, \frac{2x+n}{2B}\right)$.
 Set 
 \[
 \Psi^{(i)} = \{\psi_{i,j,n} \colon A_i \times \cC_{i,j} \to \R^{\ell} \colon j=1,\ldots,K_i, n \in N_{i,j}\}, 
\]
 for $i=1,\ldots,k$. Then $\tilde{\Psi} = \{(\Psi^{(i)},A_i) : i =1,\ldots,k\}$ is a definable cellular $r$-reparameterization of $F$. Each $\psi_{i,j,n}$ has *-format $\const(\cF)$ and *-degree $\poly(D)$, and so, as the size of $N_{i,j}$ is $\poly_{\F}(B)$ for any $i=1,\ldots,k$, $j=1,\ldots,K_i$, we have that $\tilde{\Psi}$ has *-format $\const(\cF)$ and *-degree $\poly_{\F}(B,D)$, as required.
\end{proof}

The following lemma shows that, in order to establish $(ii)_{\ell}$, for a given positive integer $\ell$, it is sufficient to prove the particular case of $(ii)_{\ell}$ in which $\ell' = 1$. 

\begin{lemma}\label{codomainI} 
Let $\ell$ be a positive integer, let $r$ be a nonnegative integer and let $\cF$ and $D$ be positive real numbers. 

Suppose that, for every definable family $X=\{ X_a : a \in A\}$ of subsets of $I^\ell$, we have that every definable family of functions $F=\{ F_a :X_a \to I :a\in A\}$ on $X$ with *-format at most $\cF$ and *-degree at most $D$ admits a definable \crr\ of *-format $\const(\cF,r)$ and *-degree $\poly_{\cF,r}(D)$. 
 
  Let $Y=\{ Y_{a'} : a' \in A'\}$ be a definable family of subsets of $I^\ell$. Any definable family of maps $G=\{ G_{a'}:Y_{a'} \to I^{\ell'} : a' \in A'\}$ on $Y$ with *-format at most $\cF'$ and *-degree at most $D'$ admits a definable \crr\ with *-format $\const(\cF',r)$ and *-degree $\poly_{\cF',r}(D')$.
\end{lemma}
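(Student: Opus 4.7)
The plan is to induct on $\ell'$. The base case $\ell'=1$ is precisely the hypothesis. For the inductive step with $\ell' \geq 2$, decompose each $G_{a'}$ as $(G'_{a'}, G_{a',\ell'})$, where $G'_{a'}\colon Y_{a'}\to I^{\ell'-1}$ consists of the first $\ell'-1$ components. Both the family $G'$ and the family of last components $\{G_{a',\ell'} : a' \in A'\}$ inherit *-format $\const(\cF')$ and *-degree $\poly_{\cF'}(D')$ from $G$ via Theorem \ref{BVfmlas}. Applying the induction hypothesis to $G'$ yields a definable cellular $r$-reparameterization $\tilde\Phi$ of $G'$ of *-format $\const(\cF',r)$ and *-degree $\poly_{\cF',r}(D')$, whose constituent maps $\phi$ each satisfy $\|\phi\|_r \leq 1$ and $\|G'_{a'}\circ \phi\|_r \leq 1$.

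Next, for each constituent map $\phi\colon A'_i \times \cC_\phi \to \R^\ell$ in $\tilde\Phi$, consider the definable family of functions $\{G_{a',\ell'} \circ \phi(a',\cdot)\colon \cC_\phi \to I\}$ parameterized by $a' \in A'_i$. Theorem \ref{BVfmlas} gives that this new family still has *-format $\const(\cF',r)$ and *-degree $\poly_{\cF',r}(D')$. A second application of the hypothesis produces definable cellular $r$-reparameterizations by cellular maps $\psi\colon \cC_\psi \to \R^\ell$ satisfying $\|\psi\|_r \leq 1$ and $\|G_{a',\ell'}\circ \phi \circ \psi\|_r \leq 1$.

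Each composition $\phi \circ \psi\colon \cC_\psi \to \R^\ell$ is cellular, by the composition lemma for cellular maps recalled earlier. Since $\|\phi\|_r, \|\psi\|_r \leq 1$ and $\|G'_{a'} \circ \phi\|_r \leq 1$, an application of the Faà di Bruno formula (each summand being a product of derivatives bounded by $1$, with finitely many combinatorial terms depending only on $r$ and $\ell$) yields a constant $C = C(r,\ell)$ with $\|\phi \circ \psi\|_r \leq C$ and $\|G_{a',j}\circ \phi \circ \psi\|_r \leq C$ for all $j = 1, \ldots, \ell'$. Since $\ell \leq \cF'$, this constant is effective in $r$ and $\cF'$. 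A single application of Lemma \ref{linsubst} to the candidate family $\{\phi \circ \psi\}$, with $F = G$ and $B = C$, then converts this into a genuine definable cellular $r$-reparameterization of $G$ of *-format $\const(\cF',r)$ and *-degree $\poly_{\cF',r}(D')$.

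The main obstacle is maintaining polynomial dependence of the cumulative *-degree on $D'$ across the $\ell'-1$ induction steps, since each step raises the previous *-degree bound to a power whose exponent depends on $\cF'$ and $r$. This is absorbed by the observation that $\ell'$ is itself bounded effectively by $\cF'$ (the codomain dimension appears among the coordinates of the graph defining the family, and so is bounded by its *-format). A nested composition of finitely many polynomial-in-$D'$ functions, each of degree depending on $\cF'$ and $r$, remains polynomial in $D'$ with degree depending on $\cF'$ and $r$, so the final *-degree stays within $\poly_{\cF',r}(D')$ as required.
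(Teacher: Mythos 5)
Your proof is correct and follows essentially the same route as the paper: induct on $\ell'$, peel off the last coordinate, reparameterize the first $\ell'-1$ coordinates by the inductive hypothesis, reparameterize the compositions of the last coordinate with the resulting maps via the $\ell'=1$ hypothesis, compose (cellularity being preserved), and then remove the resulting $\const(\cF',r)$ bound on the $C^r$-norms with Lemma \ref{linsubst}. The only detail the paper makes explicit that you elide is that, before assembling the composed maps into a single family, one should refine (via Theorem \ref{BVcd}) the partitions of each $A'_i$ produced by the second round of reparameterizations so that they are common across all parameterizing maps $\phi$ over $A'_i$; this is routine and preserves the stated *-format and *-degree bounds.
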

\begin{proof}
Let $Y=\{ Y_{a'} : a' \in A'\}$ be as in the statement of the lemma. We proceed by induction on $l'$. The case $l' = 1$ follows immediately by the hypothesis of the lemma, so we suppose that the statement holds for  $l'$, and let $G = \{G_{a'} : Y_{a'} \to I^{\ell' + 1} : a' \in A'\}$ be a definable family of maps on $Y$ of *-format at most $\F'$ and *-degree at most $D'$.

Writing the coordinate functions of $G_{a'}$, for each $a' \in A'$, as $(G_{a'})_{1},\dots,(G_{a'})_{\ell'+1}$, set
\[ \hat{G} := \{ ((G_{a'})_{1},\dots,(G_{a'})_{\ell'}) \colon Y_{a'} \to I^{\ell'} : a' \in A' \}.\]
 This family has *-format and *-degree bounded by those of $G$. Therefore, by the inductive hypothesis, $\hat{G}$ admits a definable \crr\ $\tilde{\Phi}= \{(\Phi^{(i)}, A'_i) : i = 1,\ldots, k \}$ with *-format at most $\const(\cF',r)$ and *-degree at most $\poly_{\cF',r}(D')$, where $A' = A'_1 \cup \ldots \cup A'_{k}$, and, for each $i=1,\ldots,k$, $\Phi^{(i)} = \{\phi_{i,j} : A'_i \times \cC_{i,j} \to I^{\ell} : j = 1,\ldots,K_i\}$, for some positive integer $K_i$. 

Now consider, for each $i = 1,\ldots, k$, $j=1,\ldots,K_i$, the definable family of maps on $A'_i \times \cC_{i,j}$ given by \[G_{i,j} = \{(G_{a'})_{\ell' +1}(\phi_{i,j}(a',\cdot)) \colon \cC_{i,j} \to I : a' \in A'_i \}.\] 
This has *-format $\const(\cF',r)$ and *-degree $\poly_{\cF',r}(D')$. 
By the hypothesis of the lemma, there is again a definable \crr\ $\tilde{\Psi}_{i,j}$ of $G_{i,j}$ with *-format $\const(\cF',r)$ and *-degree $\poly_{\cF',r}(D')$. Moreover, for a given $i=1,\ldots,k$,  we may assume, by subdividing further if necessary using Theorem \ref{BVcd}, that the partition of $A'_i$ given by each $\tilde{\Psi}_{i,j}$ is common across all $j=1,\ldots,K_i$, i.e. we have, for each $j=1,\ldots,K_i$, a definable \crr\ of $G_{i,j}$ of the form  $\tilde{\Psi}_{i,j}=\{(\Psi^{(p)}_{i,j},A'_{i,p}) : p = 1,\ldots,k_{i}\}$, where $A'_i = A'_{i,1} \cup \ldots \cup A'_{i,k_{i}}$, and, for each $p= 1,\ldots,k_{i}$,  we have that 
\[
\Psi_{i,j}^{(p)} = \{\psi_{i,j,p,q} : A'_{i,p} \times \cC'_{i,j,p,q} \to I^{\ell} : q = 1,\ldots,K_{i,p}\},
\]
 for some positive integer $K_{i,p}$, and moreover we have that $\tilde{\Psi}_{i,j}$ has *-format $\const(\cF',r)$ and *-degree $\poly_{\cF',r}(D')$.

Now, for each $i=1,\ldots,k$ and $p=1,\ldots,k_i$, set \[\Theta^{(i,p)} = \{ \theta_{i,j,p,q} : A'_{i,p} \times \cC'_{i,j,p,q} \to I^{\ell} : j= 1,\ldots, K_i, q=1,\ldots,K_{i,p}\},\] where $\theta_{i,j,p,q}(a', \cdot) = \phi_{i,j}(a',\psi_{i,j,p,q}(a',\cdot))$, for each $a' \in A'_{i,p}$. 
The collection $\tilde{\Theta} = \{(\Theta^{i,p},A'_{i,p}) : i=1, \ldots, k, \; p=1, \ldots, k_i\}$ takes the form of a definable \crr\ of $G$ of *-format $\const(\cF',r)$ and *-degree $\poly_{\cF',r}(D')$, except that $\| \theta_{i,j,p,q}(a',\cdot)\|_{r}, \| G_{a'} (\theta_{i,j,p,q}(a',\cdot))\|_{r} \le B$, where $B$ is a positive real number that is $\const(r)$, for each $i=1,\ldots,k$, $j=1,\ldots,K_i$, $p=1,\ldots,k_i$, $q=1,\ldots,K_{i,p}$ and $a' \in A_{i,p}$. We may therefore apply Lemma \ref{linsubst} to conclude, obtaining a definable \crr\ of $G$ of *-format $\const(\cF',r)$ and $\poly_{\cF',r}(D')$, as required. 
\end{proof}

The following is a critical lemma that allows us to dampen the derivatives of definable families of one-variable functions, the essential idea of which comes from Gromov \cite{Gromov}.
\begin{lemma}\label{x^2} Let $r$ be an integer with $r\ge 2$ and let $\F$ and $D$ be positive real numbers. Let $F=\{ F_a : I \to I : a\in A\}$ be a definable family of $C^{r-1}$-maps with *-format at most $\cF$ and *-degree at most  $D$ such that $\| F_a\|_{r-1}\le 1$, for  each $a \in A$. The family $F$ has a definable \crr\ with *-format $\const(\cF,r)$ and *-degree $\poly_{\cF,r}(D)$.
\end{lemma}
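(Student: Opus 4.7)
The plan is to apply Corollary~\ref{monotonicity} to $F$ with parameter $r$, obtaining a definable partition $A = A_1 \cup \dots \cup A_k$ of size $k = \poly_{\F,r}(D)$, and, for each $i$, definable intervals $(\alpha_{i,j}(a),\beta_{i,j}(a))$, $j=1,\dots,K_i$, with $K_i = \poly_{\F,r}(D)$, whose closures cover $[0,1]$ and on each of which $F_a$ is $C^r$ and $F_a^{(r)}$ is strictly monotone or constant. All data has *-format $\const(\F,r)$ and *-degree $\poly_{\F,r}(D)$. Since $F_a^{(r)}$ is monotone on each interval, it changes sign at most once, and by definably subdividing at the sign change (if any), one may further assume $F_a^{(r)}$ has constant sign, and hence that $F_a^{(r-1)}$ is monotone, on each piece. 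Finitely many seam points are handled by adjoining constant cellular maps with basic cell $\{0\}$, at negligible cost.

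The heart of the argument is a Gromov-style pointwise estimate on the top derivative. On a piece $(\alpha,\beta)$ as above, elementary integration using the monotonicity of both $F_a^{(r-1)}$ and $F_a^{(r)}$, together with $|F_a^{(r-1)}| \le 1$, yields
\[ |F_a^{(r)}(y)| \le \tfrac{2}{y-\alpha} \ \text{ if $F_a^{(r)}$ is decreasing, and } \ |F_a^{(r)}(y)| \le \tfrac{2}{\beta-y} \ \text{ if $F_a^{(r)}$ is increasing.} \]
Split $(\alpha,\beta)$ at $m:=(\alpha+\beta)/2$. In the first case (the second is symmetric), on $(\alpha,m)$ use the cellular substitution $\phi(t) = \alpha + (m-\alpha)t^2$, which absorbs the potential singularity of $F_a^{(r)}$ near $\alpha$; on $(m,\beta)$ the pointwise estimate already gives $|F_a^{(r)}| \le 2/(m-\alpha)$ uniformly, so the affine substitution $\phi(t) = m + (\beta-m)t$ suffices. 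Both are cellular (strictly increasing on $(0,1)$), definable with *-format $\const(\F,r)$ and *-degree $\poly_{\F,r}(D)$, and satisfy $\|\phi\|_r \le 1$ since $m - \alpha \le 1/2$.

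A direct Faà di Bruno expansion shows $\|F_a \circ \phi\|_r \le B(r)$ for a constant $B(r)$ depending only on $r$: using $\phi^{(j)} \equiv 0$ for $j \ge 3$, the only terms are $F_a^{(k)}(\phi(t))$ times products of $\phi'$ and $\phi''$, with $\lceil r/2 \rceil \le k \le r$; for $k < r$ one has $|F_a^{(k)}| \le 1$, and the critical $k=r$ contribution $|F_a^{(r)}(\phi(t))| \cdot (\phi'(t))^r$ is bounded, on the $t^2$-side, by $(2/((m-\alpha)t^2)) \cdot (2(m-\alpha)t)^r = 2^{r+1}(m-\alpha)^{r-1}t^{r-2} \le 2^{r+1}$ using $r \ge 2$, and on the affine side by $(2/(m-\alpha)) \cdot (m-\alpha)^r \le 2$. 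An application of Lemma~\ref{linsubst} with $B = B(r)$ then converts the collection into a genuine cellular $r$-reparameterization of $F$ satisfying $\|\phi\|_r, \|F_a\circ\phi\|_r \le 1$, multiplying the *-degree by only a factor of $\poly_\F(B(r)) = \const(\F,r)$ and preserving *-format $\const(\F,r)$. The principal obstacle is the pointwise estimate and the Faà di Bruno verification that the $t^2$-substitution dampens the top-order singularity by exactly the right amount; everything else is bookkeeping with the tools of Section~\ref{sec:prelims}.
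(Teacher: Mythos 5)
Your proposal follows essentially the same route as the paper: a monotonicity decomposition via Corollary~\ref{monotonicity}, a Gromov-type pointwise bound on the top derivative with a singularity at only one endpoint, a quadratic substitution to dampen it, constant maps on $\{0\}$ for the seam points, and a final rescaling via Lemma~\ref{linsubst}. However, the pointwise estimate is mis-stated, and as written the step fails in half the cases: which endpoint carries the possible blow-up of $F_a^{(r)}$ is governed by the monotonicity of $|F_a^{(r)}|$, not of $F_a^{(r)}$ itself. If $F_a^{(r)}$ is negative and decreasing, then $|F_a^{(r)}|$ is increasing and the singularity (if any) sits at $\beta$; for instance $F_a^{(r-1)}(y)=\sqrt{\beta-y}$ satisfies $|F_a^{(r-1)}|\le 1$ while $F_a^{(r)}(y)=-\tfrac12(\beta-y)^{-1/2}$ is negative, decreasing, and unbounded near $\beta$, so the claimed bound $|F_a^{(r)}(y)|\le 2/(y-\alpha)$ is false there; the correct bound in that case is $2/(\beta-y)$, and symmetrically for the negative increasing case. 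The fix is immediate: key the choice of substitution to the sign of $F_a^{(r)}$ together with its monotonicity direction (equivalently, to whether $|F_a^{(r)}|$ is non-increasing or non-decreasing), and in the singular-at-$\beta$ configuration use on $(m,\beta)$ the increasing (hence cellular) map $t\mapsto \beta-(\beta-m)(1-t)^2$ rather than a decreasing one; your Fa\`a di Bruno computation then goes through verbatim.

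With that repair, the argument is correct and differs from the paper's proof only cosmetically. The paper obtains the sign and monotonicity data by applying Corollary~\ref{monotonicity} twice (with orders $r-1$ and $r$) instead of subdividing at the definable sign-change point, and it normalizes to a single configuration (positive, singular at the left endpoint) by precomposing with an orientation-chosen affine map of $I$ onto the interval before substituting $x\mapsto x^2$, deriving the bound $4/x$ from the Mean Value Theorem, whereas you split at the midpoint and treat the two halves with a quadratic and an affine substitution respectively. Your remaining bookkeeping — the $\|\phi\|_r\le 1$ checks, the $B(r)$ bound, Lemma~\ref{linsubst}, and the $\const(\F,r)$ and $\poly_{\F,r}(D)$ complexity tracking via Theorems~\ref{BVcd} and~\ref{BVfmlas} — matches the paper.
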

\begin{proof}
We begin by applying Corollary \ref{monotonicity} twice to $F$ (with orders $r-1$ and $r$) to obtain a positive integer $k$, a partition $A_1,\ldots,A_k$ of $A$, and, for each $i=1,\ldots,k$, a positive integer $K_i$, as well as, for each $i=1,\ldots,k$ and $j=1,\ldots,K_i$, functions $\alpha_{i,j}, \beta_{i,j} \colon A_i \to I$, with the following properties:  
for each $i=1,\ldots,k$ and $a \in A_i$, the intervals  $(\alpha_{i,j}(a),\beta_{i,j}(a))$, for $j=1\,\ldots,K_i$, are pairwise disjoint and the union of their closures is $[0,1]$;  furthermore, we have that, for each $i=1,\ldots,k$ and $j=1,\ldots,K_i$, exactly one of the following properties holds:
\begin{enumerate}[(a)]
\item $F_a$ is $C^r$ and $F^{(r)}_a$ is positive on $(\alpha_{i,j}(a),\beta_{i,j}(a))$, for all $a\in A_i$; \label{Gromovlemma-positive}
\item $F_a$ is $C^r$ and $F^{(r)}_a$ is negative on $(\alpha_{i,j}(a),\beta_{i,j}(a))$, for all $a\in A_i$; \label{Gromovlemma-negative}
\item $F_a$ is $C^r$ and $F^{(r)}_a$ is identically zero on $(\alpha_{i,j}(a),\beta_{i,j}(a))$, for all $a\in A_i$.\label{Gromovlemma-zero}
\end{enumerate}

First consider case (\ref{Gromovlemma-positive}), that is, those $i=1,\ldots,k$, $j=1,\ldots, K_i$ for which $F_a^{(r)}$ is positive on $(\alpha_{i,j}(a),\beta_{i,j}(a))$, for all $a \in A_i$.

For each such $i,j$, define the function $\chi_{i,j}: A_i \times I \to I$ as follows. If $F_a^{(r)}$ is  strictly decreasing, or is constant, on $(\alpha_{i,j}(a),\beta_{i,j}(a))$, then set $\chi_{i,j}(a,x) = (\beta_{i,j}(a) - \alpha_{i,j}(a))x + \alpha_{i,j}(a).$ If, however, $F_a^{(r)}$ is strictly increasing on $(\alpha_{i,j}(a),\beta_{i,j}(a))$, then set $\chi_{i,j}(a,x) = (\alpha_{i,j}(a) - \beta_{i,j}(a))x + \beta_{i,j}(a)$.

Next, define the function $G_{i,j} \colon A_i \times I \to I$ by $G_{i,j}(a,x) = F_a (\chi_{i,j}(a,x))$. Note that, in this case,  we have $\| G_{i,j,a}  \|_{r-1} \leq 1$, for each such $a \in A_i$, and, if $F_a^{(r)}$ is positive and strictly monotone (respectively, constant) on $(\alpha_{i,j}(a),\beta_{i,j}(a))$, then $G_{i,j,a}^{(r)}$ is positive and strictly decreasing (respectively, constant) on $I$.

We now have that, for each such $i,j$, and $a \in A_i$, and for any $x \in I$, there exists some $c_x \in [\frac{x}{2},x]$, by the Mean Value Theorem, such that
\[
\frac{4}{x} \geq \frac{G_{i,j,a}^{(r-1)}(x) - G_{i,j,a}^{(r-1)}(x/2)}{(x/2)} = G_{i,j,a}^{(r)}(c_x) \geq G_{i,j,a}^{(r)}(x).
\]

Case (\ref{Gromovlemma-negative}), that is, those $i=1,\ldots,k$, $j=1,\ldots, K_i$ for which $F_a^{(r)}$ is negative on $(\alpha_{i,j}(a),\beta_{i,j}(a))$, for all $a \in A_i$, is similar; we swap, relative to the definition above, how $\chi_{i,j}$ is defined according to whether $F^{(r)}_a$ is increasing or decreasing, and then we define $G_{i,j}$ in the same way as above. This gives us that $\frac{4}{x} \geq - G_{i,j,a}^{(r)}(x)$, for any $x \in I$ and $a \in A_i$, for those $i,j$, which are in case (\ref{Gromovlemma-negative}).

We therefore have that
\begin{equation}\label{ddagger}
|G_{i,j,a}^{(r)}(x)| = |(F_{a}(\chi(a,x))^{(r)}| \leq \frac{4}{x},
\end{equation} 
for all $x \in I$ and $a \in A_i$, for those $i,j$, which are in cases (\ref{Gromovlemma-positive}) and (\ref{Gromovlemma-negative}).

We now define, for each $i=1,\ldots,k$ and $j=1,\ldots, K_i$ in these cases, the function $\phi_{i,j} \colon A_i \times I \to I$ by $\phi_{i,j}(a,x) = \chi_{i,j}(a,x^2)$, and consider the collection of definable families $\{(\Phi^{(i)},A_i) \colon i=1,\ldots,k\}$ given by $\Phi^{(i)} = \{ \phi_{i,j} \colon A_i \times I \to I \colon j = 1,\ldots, K_i\}$. For each $i=1,\ldots,k$ and $j=1,\ldots, K_i$ we have $\| (\phi_{i,j}(a,\cdot))\|_r \leq 2$ and $\| F_{a}(\phi_{i,j}(a,\cdot))\|_{r-1} \leq 1$ and, for each $x \in I$, we have that $|(F_{a}(\phi_{i,j}(a,x))^{(r)}|$ is bounded by an expression that is $\const(\cF,r)$ plus the term $|2^{r}x^{r}(F_{a}(\chi_{i,j}(a,\cdot))^{(r)}(x^2)|$. The latter is bounded by $2^{r+2}x^{r-2}$, by (\ref{ddagger}), which in turn is also $\const(\cF,r)$, as $r \geq 2$. So, in fact, $\| F_{a}(\phi_{i,j}(a,\cdot))\|_{r}$ is $\const(\cF,r)$, for those $i,j$, which are in cases (\ref{Gromovlemma-positive}) and (\ref{Gromovlemma-negative}), and each $a \in A_i$.

As for case (\ref{Gromovlemma-zero}), note that, for such $i=1,\ldots,k, j=1,\ldots,K_i$, if we simply define $\phi_{i,j}\colon A_i \times I \to I$ by $\phi_{i,j}(a,x) = (\beta_{i,j}(a) - \alpha_{i,j}(a))x + \alpha_{i,j}(a)$, then we have that $\| (\phi_{i,j}(a,\cdot))\|_r, \| F_{a}(\phi_{i,j}(a,\cdot))\|_{r} \leq 1$, for all $a \in A_i$, so add all such functions $\phi_{i,j}$ to $\Phi^{(i)}$, for those $i,j$, which are in case (\ref{Gromovlemma-zero}).

Finally, for each $i=1,\ldots,k$, define \[\Psi^{(i)} = \Phi^{(i)} \cup \{ \hat{\alpha}_{i,j} : A_i \times \{ 0 \} \to I \colon j=2,\ldots,K_i\},\] where $\hat{\alpha}_{i,j} : A_i \times \{ 0 \} \to I$ is defined by $\hat{\alpha}_{i,j}(a,0) = \alpha_{i,j}(a)$. Relabel the elements of $\Psi^{(i)}$ as $\psi_{i,j}$, for $j=1,\ldots,2K_{i}+1$.

 This collection of families $\tilde{\Psi}=\{(\Psi^{(i)},A_i) : i=1,\ldots,k\}$ meets the definition of being a definable \crr\ of $F$, except that there exists a positive real number $B$ that is $\const(\cF,r)$ such that $\|\psi_{i,j}(a,\cdot)\|_r,  \|F_a(\psi_{i,j}(a,\cdot))\|_r \leq B$, for each $i=1,\ldots, k$, $j=1,\ldots,2K_{i}+1$, and $a \in A_i$. This collection moreover has *-format $\const(\cF,r)$ and *-degree $\poly_{\cF,r}(D)$. Therefore, by applying Lemma \ref{linsubst}, we obtain a definable \crr\ of $F$ of *-format $\const(\cF,r)$ and *-degree $\poly_{\cF,r}(D)$, as required.
\end{proof}

We now utilize Lemma \ref{x^2} to prove a covering result for the graphs of a definable family of functions on subsets of $I$, which will serve as the basis of the proof of $(ii)_{1}$.

\begin{lemma}\label{almostcrr} Let $r$ be a nonnegative integer and let $\F$ and $D$ be positive real numbers. Let $X = \{ X_a : a \in A\}$ be a definable family of subsets of $I^2$ such that $X_a$ has dimension at most $1$, for each $a \in A$, and $X$ has *-format at most $\cF$ and *-degree at most $D$. There exists a positive integer $k$, definable sets $A_1, \ldots, A_k$ partitioning $A$ and, for each $i=1,\ldots,k$, a positive integer $K_i$, a definable family of maps $\Phi^{(i)} = \{ \phi_{i,j} \colon A_i \times I \to I^2 \colon j=1,\ldots,K_i\}$ and a definable set $\Sigma_i \subseteq A_i \times I^2$, which collectively have the following properties, for each $a \in A_i$:
  \begin{enumerate}[(i)]
  \item $\bigcup_{j = 1}^{K_i}\phi_{i,j}(a,I) = X_a\setminus (\Sigma_i)_a$; \label{curveslemma:i}
  \item $\#(\Sigma_i)_a$ is $\poly_{\F,r}(D)$;\label{curveslemma:ii}
  \item $\|\phi_{i,j}(a,\cdot)\|_r \leq 1$, for every $j=1,\ldots, K_i$;\label{curveslemma:iii}
  \item the first coordinate function of $\phi_{i,j}(a,\cdot)$ is strictly increasing (hence cellular) or constant, and the second coordinate function of $\phi_{i,j}(a,\cdot)$ is strictly monotone or constant, for every $j=1,\ldots, K_i$. \label{curveslemma:iv}
  \end{enumerate}
Moreover,  
\begin{enumerate}[(i)]
\setcounter{enumi}{4}
  \item the collection $\tilde{\Phi}=\{(\Phi^{(i)},A_i) \colon i= 1,\ldots,k\}$ has *-format $\const(\F,r)$ and *-degree $\poly_{\F,r}(D)$.\label{curveslemma:v}
  \end{enumerate} 
\end{lemma}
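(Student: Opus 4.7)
The plan is to apply cell decomposition to reduce to parameterizing individual one-dimensional cells in $I^2$, and to handle these using the Gromov--Yomdin substitution trick from Lemma \ref{x^2}. Apply Theorem \ref{BVcd} (with the real-analytic refinement of Remark \ref{rmk:analytic-cells}) to $X = \{X_a : a \in A\}$ viewed inside $A \times I^2$, and use Lemma \ref{choice} to fix definable selectors so that, after refining the base partition, $A = A_1 \sqcup \cdots \sqcup A_k$ and, over each $A_i$, the set $X_a$ decomposes uniformly as a disjoint union of a fixed number of points (collectively forming $(\Sigma_i)_a$, with cardinality bounded by the total number of cells, hence $\poly_\F(D)$) together with a fixed number of one-dimensional cells, each either a vertical segment $\{x_0(a)\} \times (\gamma(a), \delta(a))$ or the graph of a real-analytic function $y = g_a(x)$ over $(\alpha(a), \beta(a)) \subseteq I$ (or, symmetrically, $x = h_a(y)$). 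All endpoint functions, $g_a$, and $h_a$ are uniformly definable with *-format $\const(\F)$ and *-degree $\poly_\F(D)$.

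For a vertical cell, set $\phi_{i,j}(a,t) = (x_0(a), \gamma(a) + (\delta(a) - \gamma(a))t)$: constant in the first coordinate, strictly monotone linear in the second, with $\|\phi_{i,j}(a,\cdot)\|_r \leq 1$. For a graph cell $y = g_a(x)$, first linearly rescale to obtain $\tilde{g}_a \colon I \to I$ by $\tilde{g}_a(t) = g_a(\alpha(a) + (\beta(a) - \alpha(a))t)$, so that $\|\tilde{g}_a\|_0 \leq 1$. The goal is to construct a reparameterization $\psi_a \colon I \to I$ with $\|\psi_a\|_r \leq 1$ and $\|\tilde{g}_a \circ \psi_a\|_r \leq 1$; then the parameterization of the graph cell is $s \mapsto (\alpha(a) + (\beta(a) - \alpha(a))\psi_a(s),\; \tilde{g}_a(\psi_a(s)))$, with strictly increasing first coordinate. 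A further subdivision using Corollary \ref{monotonicity} ensures the second coordinate is strictly monotone or constant, giving (iv). Graph cells of the form $x = h_a(y)$ are treated symmetrically, followed by subdivision to ensure the first coordinate is strictly increasing or constant.

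The main obstacle is the construction of $\psi_a$: Lemma \ref{x^2} requires the precondition $\|\tilde{g}_a\|_{r-1} \leq 1$, which is not available initially. The approach is an iteration indexed by $s = 2, \ldots, r$: assuming that $\psi_{a,s-1}$ has been constructed with $\|\tilde{g}_a \circ \psi_{a,s-1}\|_{s-1} \leq 1$, apply Lemma \ref{x^2} at order $s$ to $\tilde{g}_a \circ \psi_{a,s-1}$ to obtain $\chi_{a,s}$ satisfying $\|\chi_{a,s}\|_s \leq 1$ and $\|\tilde{g}_a \circ \psi_{a,s-1} \circ \chi_{a,s}\|_s \leq 1$, then set $\psi_{a,s} = \psi_{a,s-1} \circ \chi_{a,s}$. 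The subtlety is that chain-rule expansion of $\psi_{a,r} = \chi_{a,2} \circ \cdots \circ \chi_{a,r}$ only yields $r$-th derivatives bounded by some $\const(r)$, rather than by $1$; this is acceptable because each $\chi_{a,s}$ is, up to linear rescaling, the explicit quadratic $t^2$-substitution from the proof of Lemma \ref{x^2}, whose iterated compositions admit explicit uniform bounds on all derivatives, after which Lemma \ref{linsubst} normalizes the bounds to $1$ at the cost of a $\const(r)$ factor increase in the number of pieces. Throughout, each use of Theorem \ref{BVcd}, Corollary \ref{monotonicity}, Lemma \ref{choice}, Lemma \ref{x^2}, and Lemma \ref{linsubst} preserves *-format $\const(\F, r)$ and *-degree $\poly_{\F, r}(D)$, giving the required complexity bound on $\tilde{\Phi}$.
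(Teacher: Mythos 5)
There is a genuine gap at the base of your iteration for graph cells. Lemma \ref{x^2} at order $s$ has the hypothesis $\|F_a\|_{s-1}\le 1$, so even its lowest instance ($s=2$) already requires a $C^1$ bound on the function being reparameterized. After your linear rescaling you only know $\|\tilde g_a\|_0\le 1$ (the values lie in $I$); the derivative $\tilde g_a'=(\beta(a)-\alpha(a))\,g_a'$ can still be arbitrarily large, since the graph cells produced by cell decomposition can be arbitrarily steep (think of $g_a(x)=\sqrt{x}$ near $0$). Consequently you never construct the initial map $\psi_{a,1}$ with $\|\tilde g_a\circ\psi_{a,1}\|_1\le 1$, and the induction on $s=2,\ldots,r$ cannot start. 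This is precisely the point where the paper inserts a preliminary step: after the cell decomposition it applies Corollary \ref{monotonicity} with order $1$ and splits the graph cells into those with $\|F_a'\|\le 1$, where the rescaled function is $C^1$-bounded and the iterated use of Lemma \ref{x^2} you describe goes through, and those with $\|F_a'\|>1$, which are instead parameterized through the inverse function, i.e.\ as graphs $x=F_a^{-1}(y)$ over the second coordinate, whose derivative is then at most $1$.

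Your parenthetical ``or, symmetrically, $x=h_a(y)$'' does not repair this: Theorem \ref{BVcd} produces fibre cells only as graphs or intervals over the $x$-coordinate, so cells presented as graphs over $y$ do not arise by themselves, and you never say that the choice between the two representations must be driven by a definable comparison of $|g_a'|$ with $1$ (a comparison that itself requires a further subdivision of the base with controlled *-format and *-degree, and a definability argument for the inverse). Without this case split the treatment of steep graph cells fails. The remainder of your outline --- points and vertical segments handled by linear maps and absorbed into $\Sigma_i$ or $\Phi^{(i)}$, the iteration of Lemma \ref{x^2} with a final normalization via Lemma \ref{linsubst}, and the additional use of Corollary \ref{monotonicity} to secure property (iv) --- is consistent with the paper's argument and would be fine once the $C^1$ base case is supplied.
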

\begin{proof}
At the outset, note that it is sufficient to prove the statement in the case that $r \geq 2$, and so we will assume that $r \geq 2$ from now on.

By Theorem \ref{BVcd}, we have a cell decomposition $\D$ of $X$ such that $\# \D$ is $\poly_{\cF}(D)$ and each (not necessarily basic) cell in $\D$ has *-format $\const(\cF)$ and *-degree $\poly_{\cF}(D)$. 
We first note that such a cell decomposition gives rise to a cell decomposition $A_1, \ldots, A_k$ of $A$ for which the number of cells is also $\poly_{\cF}(D)$, and each cell $C$ of $\D$ is such that $\pi(C) = A_i$, for some $i=1,\ldots,k$, where $\pi$ is the projection of $\D$ onto $A$. 
By further applying Theorem \ref{BVcd} and Corollary \ref{monotonicity} (with $r=1$) to cells of $\D$ and their defining functions, if necessary (note that this will not change the type of bounds on the $*$-format, $*$-degree and number of cells), we may moreover assume that, for each $i=1,\ldots,k$, we have that every cell $C \in \D$ with $C \subseteq X$ and $\pi(C) = A_i$ has one of the following five forms: 

\begin{enumerate}
\item$C=\textrm{graph}(F|_{(g,h)_{A_i}})$, where $g$ and $h$ are continuous, definable functions on $A_i$; the range of $g$ lies in $I$, or $g$ is the constant function taking the value $0$;  the range of $h$ lies in $I$, or $h$ is the constant function taking the value $1$; we have $g<h$; $F \colon (g,h)_{A_i} \to I$ is a continuous, definable function; and, moreover, exactly one of the following three properties holds:
\begin{enumerate}
\item for all $a \in A_i$, the function $F_a$ is $C^1$ and strictly monotone on $(g(a),h(a))$, and $\|F_a'\| \leq 1$ on $(g(a),h(a))$;\label{Fgh_F'<1} 
\item for all $a \in A_i$, the function $F_a$ is $C^1$ and strictly monotone on $(g(a),h(a))$, and $\|F_a'\| > 1$ on $(g(a),h(a))$; \label{Fgh_F'>1}
\item for all $a \in A_i$, the function $F_a$ is constant on $(g(a),h(a))$; \label{Fgh_F'=0}
\end{enumerate}
\item $C = (F,H)_{\textrm{graph}(g)}$, where $g \colon A_i \to I$ is a continuous, definable function; $F$ and $H$ are continuous, definable functions on $\textrm{graph}(g)$; the range of $F$ lies in $I$, or $F$ is the constant function taking the value $0$; the range of $H$ lies in $I$, or $H$ is the constant function taking the value $1$; and we have $F<H$; \label{FHg}
\item $C=\textrm{graph}(F|_{\textrm{graph}(g)})$, for continuous, definable functions $g \colon A_i \to I$ and $F\colon \textrm{graph}(g) \to I$. \label{Fg}
\end{enumerate}

We now consider each of these forms of cells in turn.


First, let $i \in \{1,\ldots,k\}$ and let $C=\textrm{graph}(F|_{(g,h)_{A_i}})$ be a cell in $\D$ with $C \subseteq X$ of form (\ref{Fgh_F'<1}). 
We define the family $F_{C}:=\{(F_C)_a \colon I \to I \colon a \in A_i\}$, where 
\[(F_{C})_{a}(x) = F_a((h(a)-g(a))x + g(a)),\]
for each $a \in A_i$. This family $F_C$ has *-format $\const(\F)$ and *-degree $\poly_{\F}(D)$, and moreover $\|(F_C)_a\|_1 \leq 1$, for all $a \in A_i$. We may apply Lemma \ref{x^2} with $r=2$ to obtain a definable cellular $2$-reparameterization $\tilde{\Psi}_{2,C} = \{(\Psi^{(i,i_{2})}_{C}, A_{i,i_2,C}) \colon i_{2}=1,\ldots,k_{2,C}\}$ of $F_{C}$ with *-format $\const(\F,r)$ and *-degree $\poly_{\F,r}(D)$. If $r=2$, then stop. If $r>2$, then, working separately with each $\Psi^{(i,i_{2})}_{C}$, we may apply Lemma \ref{x^2} again, now with $r=3$, this time to $F_{i_2,C}: = \{ (F_C)_{a}(\psi(a,\cdot)) \colon a \in A_{i,i_2,C}\}$, for each $\psi \in \Psi^{(i,i_{2})
}_{C}$. 
Continuing in this way as many times as required (at most $r-1$ times in total), we obtain a definable \crr\ 
$\tilde{\Psi}_C = \{(\Psi^{(i,i')}_{C},A_{i,i',C}) \colon i'=1,\ldots, k_{C} \}$ of $F_{C}$ of *-format $\const(\F,r)$ and *-degree $\poly_{\F,r}(D)$, where $A_{i}=A_{i,1,C}\cup \ldots\cup A_{i,k_C,C}$.

Note that, for each $i' = 1, \ldots, k_C$, every $\psi \in \Psi^{(i,i')}_{C}$ has domain either $A_{i,i',C} \times I$ or $A_{i,i',C} \times \{0\}$. For those $\psi \in \Psi^{(i,i')}_{C}$ with domain $A_{i,i',C} \times I$, we define a corresponding map $\phi \colon A_{i,i',C} \times I \to I^{2}$ by 
\[\phi(a,x) = ((h(a)-g(a))\psi(a,x) + g(a), F_{a}((h(a)-g(a))\psi(a,x) + g(a))),\]
and set $\Phi^{(i,i')}_{C}$ to be the collection of all such maps $\phi$. This collection has the property that, for each $i'=1,\ldots,k_C$ and $a \in A_{i,i',C}$, the images of the maps in $(\Phi^{(i,i')}_{C})_{a} = \{\phi(a,\cdot)\colon \phi \in \Phi^{(i,i')}_{C}\}$ cover $C_a$, the fibre of $C$ above $a$, except perhaps for a finite set of size $\poly_{\F,r}(D)$, which can be described explicitly as the fibre above $a$ of the following set:
\begin{multline*} 
\Sigma_{i,i',C} = \{ (a, \psi(a,0),F_a(\psi(a,0)) \in A \times I^2 \colon \\ \psi \in \Psi^{(i,i')}_{C} \textrm{ with } \dom(\psi) = A_{i,i',C} \times \{0\} \}.
\end{multline*}
Moreover, for all $\phi \in \Phi^{(i,i')}_{C}$ and $a \in A_{i,i',C}$, we have first that $\| \phi(a,\cdot) \|_r \leq 1$, by construction, and, furthermore, that the first coordinate function of $\phi(a,\cdot)$ is strictly increasing (hence cellular) and the second coordinate function of $\phi(a,\cdot)$ is strictly monotone, as the corresponding map $\psi(a,\cdot)$ is cellular and $F_a$ is strictly monotone. Set $\tilde{\Phi}_{C} = \{(\Phi^{(i,i')}_{C},A_{i,i',C}) : i'=1,\ldots,k_C\}$.




For case (\ref{Fgh_F'>1}), we can argue analogously to case (\ref{Fgh_F'<1}), so we omit the details, only indicating the required modifications. For $i \in \{1,\ldots,k\}$ and a cell $C=\textrm{graph}(F|_{(g,h)_{A_i}})$ of $\D$ with $C \subseteq X$ of form (\ref{Fgh_F'>1}), we define the family $F_C$ in this case by setting 
\[(F_C)_{a}(x) = F_{a}^{-1}((\hat{h}(a)-\hat{g}(a))x + \hat{g}(a)),\]
for each $a \in A_i$, where $\hat{g}(a):= \lim_{x \to g(a)^{+}}F_{a}(x)$ and $\hat{h}(a):= \lim_{x \to h(a)^{-}}F_{a}(x)$, for each $a \in A_i$.
 After obtaining a partition $A_{i}=A_{i,1,C}\cup \ldots\cup A_{i,k_C,C}$ and a definable \crr\ 
$\tilde{\Psi}_C = \{(\Psi^{(i,i')}_{C},A_{i,i',C}) \colon i'=1,\ldots, k_{C} \}$ of $F_{C}$ of *-format $\const(\F,r)$ and *-degree $\poly_{\F,r}(D)$, by arguing as above using Lemma \ref{x^2}, we define, for each $i'=1,\ldots, k_{C}$ and each $\psi \in \Psi^{(i,i')}_{C}$ with domain $A_{i,i',C} \times I$, a map $\phi \colon A_{i,i',C} \times I \to I^{2}$ as follows:
\[
\phi(a,x) = (F_{a}^{-1}((\hat{h}(a)-\hat{g}(a))\psi(a,x) + \hat{g}(a)), (\hat{h}(a)-\hat{g}(a))\psi(a,x) + \hat{g}(a)).
\]
We then set, for each $i'=1,\ldots,k_C$,
\begin{multline*}
\Sigma_{i,i',C} = \{ (a, F_{a}^{-1}(\psi(a,0)), \psi(a,0)) \in A \times I^2 \colon \\ \psi \in \Psi^{(i,i')}_{C} \textrm{ with } \dom(\psi) = A_{i,i',C} \times \{0\} \}.
\end{multline*}
The collection $\tilde{\Phi}_{C} = \{(\Phi^{(i,i')}_{C},A_{i,i',C}) : i'=1,\ldots,k_C\}$ is then defined analogously, and this setup has the same properties as in case (\ref{Fgh_F'<1}).

At this point, we apply Theorem \ref{BVcd} again in order to obtain a further refinement of our cell decomposition $\D$, so that everything constructed so far can be understood as being defined with respect to the same partition $A_i = A_{i,1}\cup \ldots \cup A_{i,n_i}$, for each $i=1, \ldots, k$. This new cell decomposition still has size $\poly_{\F,r}(D)$, by Theorem \ref{BVcd}. 
For each $i=1,\ldots,k$, $i'=1,\ldots,n_{i}$, we define $\Phi^{(i,i')}$ (respectively $\Sigma_{i,i'}$) by taking the union of the collections $\Phi^{(i,i')}_{C}$ (respectively sets $\Sigma_{i,i',C}$) over all cells $C \in \D$ with $C \subseteq X$ and $\pi(C)=A_{i}$ of forms (\ref{Fgh_F'<1}) or (\ref{Fgh_F'>1}). 
The collection of definable families $\tilde{\Phi} = \{(\Phi^{(i,i')},A_{i,i'}) \colon i=1,\ldots,k, i'=1,\ldots,n_i\}$ then has *-format $\const(\F,r)$ and *-degree $\poly_{\F,r}(D)$. Moreover, for each $i=1,\ldots,k$, $i'=1,\ldots,n_i$ and $a \in A_{i,i'}$, the maps in $\Phi_{a}^{(i,i')}$ cover the union of the fibres above $a$ of those cells $C \in \D$ with $C \subseteq X$ and $\pi(C) = A_{i,i'}$ of forms (\ref{Fgh_F'<1}) or (\ref{Fgh_F'>1}), except for the finite set $(\Sigma_{i,i'})_a$ of size $\poly_{\F,r}(D)$.


We now work with this refinement of $\D$, and relabel the partition as $A = A_1 \cup \ldots A_k$, so that we  have so far obtained a collection of definable families $\tilde{\Phi} =\{(\Phi^{(i)},A_i)\colon i=1,\ldots,k\}$ and a collection of definable sets $\Sigma_{i} \subseteq A_i \times I^2$, for $i=1,\ldots,k$, as defined above. (Note that, even though we have taken a further refinement of $\D$, this has not introduced any new cells of the forms (\ref{Fgh_F'<1}) or (\ref{Fgh_F'>1}).) 


For each $i=1,\ldots,k$ and each cell $C=\textrm{graph}(F|_{(g,h)_{A_i}})$ in $\D$ with $C \subseteq X$ and $\pi(C) = A_i$  of form (\ref{Fgh_F'=0}), we define $\chi_{C} \colon A_{i} \times I \to I^2$ by \[\chi_{C}(a,x) = ((h(a)-g(a))x + g(a), F_a((h(a)-g(a))x + g(a)) ),\] 
and add $\chi_{C}$ to $\Phi^{(i)}$, which does not change the *-format and *-degree of $\tilde{\Phi}$. We clearly have, for all $a \in A_i$, that $\|\chi_{C}(a,\cdot)\|_r \leq 1$, that the first coordinate function of $\chi_C(a,\cdot)$ is strictly increasing (hence cellular), and the second coordinate function of $\chi_C(a,\cdot)$ is constant.


For each $i=1,\ldots,k$ and each cell $C=(F,H)_{\textrm{graph}(g)}$ in $\D$ with $C \subseteq X$ and $\pi(C) = A_i$ of form (\ref{FHg}), we define $\theta_{C} \colon A_{i} \times I \to I^2$ by 
\[\theta_{C}(a,x) = (g(a), (H_{a}(g(a)) - F_{a}(g(a)))x +F_{a}(g(a))),\] 
and add $\theta_{C}$ to $\Phi^{(i)}$, which again does not change the *-format and *-degree of $\tilde{\Phi}$. We clearly have, for all $a \in A_i$, that $\|\theta_{C}(a,\cdot)\|_r \leq 1$, that the first coordinate function of $\theta_C(a,\cdot)$ is constant, and the second coordinate function of $\theta_C(a,\cdot)$ is strictly increasing.


Finally, for each $i = 1,\ldots, k$ and each cell $C = \textrm{graph}(F|_{\textrm{graph}(g)})$ in $\D$ with $C \subseteq X$ and $\pi(C)=A_i$ of form (\ref{Fg}), we augment $\Sigma_i$ by the collection $C_{A_i} = \{(a,g(a),F_{a}(g(a))) \in A_i \times I^{2} \colon a \in A_i\}$. Since $\#\D$ is $\poly_{\F,r}(D)$, we also clearly have that $\#(\Sigma_{i})_{a}$ remains $\poly_{\F,r}(D)$, for each $i=1,\ldots, k$ and $a \in A_i$. 

To conclude, we note that the collection $\tilde{\Phi} = \{(\Phi^{(i)},A_{i}):i=1,\ldots,k\}$ so constructed together with the sets $\Sigma_i$, for $i=1,\ldots,k$, satisfy all the properties $(\ref{curveslemma:i}) - (\ref{curveslemma:v})$ in the statement of the lemma.
\end{proof}

We may now prove the base cases $(i)_{1}$ and $(ii)_{1}$ of Theorem \ref{crpcrr}.

\begin{proof}[Proof of $(i)_{1}$]
This case is straightforward, but we provide the details in order to show that the desired effective bounds may be obtained. Let $X=\{ X_a: a\in A\}$ be a definable family of subsets of $I$ with *-format at most $\F$ and *-degree at most $D$. By Theorem \ref{BVcd}, there is a cell decomposition $\D$ of $X$ of size $\poly_{\F}(D)$ whose cells have *-format $\const(\F)$ and *-degree $\poly_{\F}(D)$. This in turn gives rise to a cell decomposition of $A=A_1\cup \ldots \cup A_k$, and, for each $i=1,\ldots,k$, we can label the (not necessarily basic) cells in $\D$ that project to $A_i$ as $C_{i,j}$, where $j=1,\ldots,K_i$, for some positive integer $K_i$. These cells $C_{i,j}$ are of the form graph$(f)$ for $f\colon A_i \to I$, or of the form $(f,g)_{A_i}$, where $f$ and $g$ have domain $A_i$, $f$ has range in $I$ or is the constant function taking the value $0$, and $h$ has range in $I$ or is the constant function taking the value $1$. If $C_{i,j}$ is of the form graph$(f)$, then define $\phi_{i,j} \colon A_i \times \{0\} \to I$ by $\phi_{i,j} (a,x) = f(a)$, and if $C_{i,j}$ is of the form $(f,g)_{A_i}$, then define $\phi_{i,j} \colon A_i \times I \to I$ by $\phi_{i,j}(a,x) = (g(a)-f(a))x + g(a)$. Setting $\Phi^{(i)} := \{\phi_{i,j} \colon j=1,\ldots,K_i\}$, for each $i=1,\ldots,k$, gives the required definable cellular $r$-parameterization of $X$ of *-format $\const(\F)$ and *-degree $\poly_{\F}(D)$.
\end{proof}

\begin{proof}[Proof of $(ii)_{1}$]
We will derive $(ii)_{1}$ from Lemma \ref{almostcrr}. Let $X=\{ X_a: a\in A\}$ be a definable family of subsets of $I$, and let $F= \{ F_a:X_a\to I^{\ell'} : a \in A \}$ be a definable family of maps on $X$ with *-format at most $\cF$  and *-degree at most $D$. By Lemma \ref{codomainI}, it is enough to prove the statement in the case that $\ell' = 1$, so we assume that this is the case from now on. This means that $\Gamma:= \{\textrm{graph}(F_a) : a \in A\}$ is a definable family of sets to which we may apply Lemma \ref{almostcrr}; doing so gives us a collection of definable families $\tilde{\Phi} = \{(\Phi^{(i)},A_i):i=1,\ldots,k\}$ with $A=A_1 \cup \ldots \cup A_k$, and a collection of definable sets $\Sigma_{i}$, for $i=1,\ldots,k$, all of which have those properties given by the statement of Lemma \ref{almostcrr}.

Recall, moreover, from the proof of Lemma \ref{almostcrr}, that $\Sigma_i$, for each $i=1,\ldots,k$, is the union of $\poly_{\F,r}(D)$ many sets of one of the following forms:
\begin{enumerate}[(a)]
\item $\{(a,g(a,0),F_{a}(g(a,0))) \in A \times I^2 : a \in A_i\}$, with $g \colon A_i \times \{0\} \to I$ definable;
\item $\{(a,g(a),F_{a}(g(a))) \in A \times I^2 : a \in A_i\}$, with $g \colon A_i \to I$ definable. 
\end{enumerate}
Clearly it is sufficient to assume that all sets comprising each $\Sigma_i$ are in fact of first form, by trivially extending the domain of $g$ in the latter case. 
For each $i=1,\ldots,k$, label the functions $g:A_i \times \{0\} \to I$ involved in the definition of $\Sigma_i$ as $g_{i,1},\ldots,g_{i,L_i}$, for some positive integer $L_i$.

Now, for each $i=1,\ldots,k$, let 
\begin{multline*}
\Psi^{(i)} = \{ \phi_1 \colon A_i \times I \to I \colon \phi  = (\phi_1,\phi_2) \in \Phi^{(i)}\} \,\,\, \cup \\
\{ g_{i,j} : A_i \times \{0\} \to I : j=1,\ldots,L_i\},
\end{multline*}
and set $\tilde{\Psi}=\{(\Psi^{(i)},A_i):i=1,\ldots,k\}$.

By Lemma \ref{almostcrr} $(\ref{curveslemma:i}), (\ref{curveslemma:iii})$ and $(\ref{curveslemma:iv})$, we see that $\tilde{\Psi}$ is a definable \crr\ of $F$ (in particular, in $(\ref{curveslemma:iv})$, $\phi_1$ is cellular for all $\phi \in \Phi^{(i)}, i = 1,\ldots,k$, as there are no cells of form (\ref{FHg}) in a cell decomposition of $\Gamma$) and, by $(\ref{curveslemma:ii})$ and $(\ref{curveslemma:v})$, it has *-format $\const(\F,r)$ and *-degree $\poly_{\F,r}(D)$.
\end{proof}

We are moreover already equipped to prove the first inductive step of Theorem \ref{crpcrr}.
\begin{proof}[Proof of $(i)_{\ell+1}$]  Suppose that $(ii)_{\ell}$ holds, and that $X= \{X_a: a \in A\}$ is a definable family of subsets of $I^{\ell + 1}$ with *-format at most $\F$ and *-degree at most $D$, as in $(i)_{\ell + 1}$. 
  By Theorem \ref{BVcd}, we may suppose that each $X_a$ is a cell. Moreover, by treating each of the following cases separately, we may suppose that there is a definable family of cells $Y=\{Y_a :a\in A\}$ such that $Y_a\subseteq I^\ell$, for each $a \in A$, and either all the cells $X_a$ are of the form $X_a =$ graph$(f_a)$, where $f=\{ f_a:Y_a \to I :a \in A\}$ is a definable family of functions on $Y$, or all $X_a$ are of the form $X_a = (f_a,g_a)_{Y_a}$, where $f$ and $g$ are definable families of functions on $Y$, with $f$ either of the form $\{ f_a:Y_a \to I :a \in A\}$, or such that $f_a$, for every $a \in A$, is the constant function taking the value $0$ on $Y_a$, and $g$ is either of the form $\{ g_a:Y_a \to I: a\in A\}$, or such that $g_a$, for every $a \in A$, is the constant function taking the value $1$ on $Y_a$. Furthermore, in each case, the families $Y$, $f$ and (if applicable) $g$ have *-format $\const(\F)$ and *-degree $\poly_{\F}(D)$. We will consider the case $X_a = (f_a,g_a)_{Y_a}$, and leave the case $X_a =$ graph$(f_a)$, which is similar, to the reader.

Suppose $X_a = (f_a,g_a)_{Y_a}$, for all $a \in A$. If $f = \{ f_a:Y_a \to I :a \in A\}$ and $g = \{ g_a:Y_a \to I: a\in A\}$, then we may apply $(ii)_\ell$ to the family $H : = \{(f_a,g_a) : Y_a \to I^2 : a \in A\}$ in order to obtain a definable \crr\ $\tilde{\Phi}=\{(\Phi^{(i)},A_i) : i=1,\ldots,k\}$ of $H$ of *-format $\const(\F,r)$ and *-degree $\poly_{\F,r}(D)$. If $f$ or $g$ has one of the other forms listed in the previous paragraph, then we instead apply $(ii)_{\ell}$ to $f$, $(ii)_{\ell}$ to $g$, or $(i)_{\ell}$ to $Y$, as appropriate, in order to obtain a suitable collection $\tilde{\Phi}$.

For each $i=1,\ldots, k$ and $\phi_{i,j} \colon A_i \times \calC_{i,j}  \to Y_a$ in $\Phi^{(i)}$, we define $\psi_{i,j} : A_i \times (\calC_{i,j} \times I) \to X_a$ by
\begin{multline*}
\psi_{i,j}(a,x_1,\ldots,x_{\ell+1}) = \\ ( \phi_{i,j}(a,x_1,\ldots,x_\ell), (g_a( \phi_{i,j}(a,x_1,\ldots,x_\ell)) - f_a( \phi_{i,j}(a,x_1,\ldots,x_\ell)))x_{\ell +1} + \\ f_a( \phi_{i,j}(a,x_1,\ldots,x_\ell)) ).
\end{multline*}
For each $i=1,\ldots,k$, let $\Psi^{(i)}$ be the collection of all such maps $\psi_{i,j}$. Then $\tilde{\Psi} = \{ (\Psi^{(i)},A_i) : i=1,\ldots,k\}$ takes the form of a definable \crp\ of $X$, except that, for each $i=1,\ldots,k$, for each of the maps $\psi_{i,j} \in \Psi^{(i)}$ and each $a \in A_i$, we have that $\| \psi_{i,j}(a,\cdot) \|_r $ is not necessarily bounded by $1$, but rather by an effective constant $B$ that depends only on $\ell$, which, as we can see from Remark \ref{rmk:format-dim}, is $\const(\F)$. We may therefore apply Lemma \ref{linsubst} to conclude.
\end{proof}

We now come to our final lemma of this section, which will allow us to prove the final inductive step of Theorem \ref{crpcrr}, namely $(ii)_{\ell + 1}$, by reducing to the case of definable families of functions that have bounded derivatives when the first variable is treated as a parameter. To prove this lemma, we will use the following from \cite{BNYG}, which we quote without proof. 

\begin{fact}[{\cite[Lemma 16]{BNYG}}]\label{Lemma12}
Let $\ell$ be a positive integer. Let $f \colon I^{\ell} \to I$ be definable, and suppose that $\left|\left|\frac{\partial f}{\partial x_j}\right|\right| \le 1$, for all $j=2,\ldots,\ell$. There are only finitely many $x_1 \in I$ such that the function $\frac{\partial f}{\partial x_1}(x_1,\cdot)$ is unbounded.
\end{fact}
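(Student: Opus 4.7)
The plan is to assume that the set
\[
E := \{x_1 \in I : \d{f}{x_1}(x_1, \cdot) \text{ is unbounded}\}
\]
is infinite and derive a contradiction. Note that $E$ is definable in $\Rtilde$ via the formula $\forall M > 0 \, \exists x' \in I^{\ell-1} \, |\d{f}{x_1}(x_1, x')| > M$, so, by o-minimality, if $E$ is infinite then it contains an open interval $J \subseteq I$.

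First, by definable choice in $\Rtilde$, I would produce a single definable function $\phi$, defined for $x_1 \in J$ and $M > 0$, such that $\phi(x_1, M) \in I^{\ell-1}$ and $|\d{f}{x_1}(x_1, \phi(x_1, M))| > M$. Applying cell decomposition to $\phi$ as a function of the two variables $(x_1, M)$ yields a finite decomposition of its domain into cells on each of which $\phi$ is $C^1$. Consequently, for each fixed $M$, one finds a subinterval $J_M \subseteq J$ of length at least $|J|/K$, for some constant $K$ independent of $M$, on which $\phi(\cdot, M)$ is $C^1$; this uses that the horizontal slice $J \times \{M\}$ is covered by at most $K$ intervals coming from the fixed cell structure, and pigeonhole on their total length $|J|$.

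Next, consider the composition $F_M(x_1) := f(x_1, \phi(x_1, M))$ on $J_M$. The chain rule, combined with the hypothesis $\|\d{f}{x_j}\| \leq 1$ for $j \geq 2$, yields
\[
|F_M'(x_1)| > M - \sum_{i=1}^{\ell-1} \left| \d{\phi_i}{x_1}(x_1, M) \right|.
\]
Since $F_M$ and each $\phi_i(\cdot, M)$ take values in $(0,1)$, and since (by the uniform monotonicity theorem applied to these definable families in the parameter $M$) the number of monotone pieces is uniformly bounded in $M$, we obtain uniform total-variation bounds
\[
\int_{J_M} |F_M'(x_1)| \, dx_1 \leq N', \qquad \int_{J_M} \left| \d{\phi_i}{x_1}(x_1, M) \right| dx_1 \leq N',
\]
for some constant $N'$ independent of $M$. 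Restricting to the subset $S \subseteq J_M$ where $\sum_{i} |\d{\phi_i}{x_1}| < M/2$, the Chebyshev inequality gives $|S| \geq |J_M| - 2(\ell-1)N'/M$; for $M$ large this is at least $|J|/(2K)$. On $S$ we have $|F_M'| > M/2$, so $\int_{J_M} |F_M'| \geq M|J|/(4K)$, contradicting the total-variation bound once $M > 4K N'/|J|$.

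The main obstacle will be establishing the uniformity of both constants: $K$, obtained from the cell decomposition of $\phi$ regarded as a single definable function of two variables, and $N'$, obtained from applying the o-minimal uniform monotonicity theorem to the $M$-parametrized families. Once $\phi$ is treated as a single definable object rather than a separate function for each $M$, both uniformity statements are standard consequences of the o-minimality of $\Rtilde$.
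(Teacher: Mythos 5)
The paper does not prove this statement at all: it is quoted, explicitly without proof, from \cite{BNYG} (Lemma 16 there), so there is no internal argument to compare yours against; judged on its own, your proof is correct. It is also the same circle of ideas one finds in the cited source: a contradiction between unboundedness of $\partial f/\partial x_1$ over a whole interval of $x_1$ and the bounded variation forced by the facts that $f$ and the chosen witnesses take values in $(0,1)$ and that $\|\partial f/\partial x_j\|\le 1$ for $j\ge 2$. The two uniformity points you flag are indeed the only delicate ones, and both are standard o-minimality once $\phi$ is a single definable function of $(x_1,M)$: for the constant $K$ it is cleanest to take the cell decomposition of $\operatorname{dom}(\phi)$ with the coordinates ordered $(M,x_1)$, so that each two-dimensional cell meets every horizontal line $J\times\{M\}$ in a single interval (otherwise invoke uniform finiteness for the definable family of slices); and the uniform bound $N'$ on the number of monotonicity intervals is the definable-family form of the monotonicity theorem (the qualitative content of Corollary~\ref{monotonicity} suffices, no effectivity is needed for this Fact). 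Two small points to make explicit: the chain-rule computation for $F_M(x_1)=f(x_1,\phi(x_1,M))$ uses that $f$ is differentiable at the points $(x_1,\phi(x_1,M))$, not merely that the partials named in the statement exist; this is harmless under the natural reading of the Fact and in the paper's only use of it (Lemma~\ref{finalstep}, where the relevant function is $C^1$), but it should be stated. And in the final estimate the threshold should absorb the factor $\ell-1$ from the Chebyshev step, say $M>4K(\ell-1+1)N'/|J|$; your ``for $M$ large'' already covers this bookkeeping.
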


\begin{lemma}\label{finalstep} Let $\ell$ be a positive integer, let $r$ be a nonnegative integer and let $\F$ and $D$ be positive real numbers. Suppose that $(ii)_1, \ldots, (ii)_{\ell}$  hold. Let $X=\{ X_a : a \in A\}$ be a definable family of subsets of $I^{\ell + 1}$ and let $F= \{ F_a:X_a\to I : a \in A \}$ be a definable  family of functions on $X$ of *-format at most $\F$ and *-degree at most $D$. Suppose that, for each $a \in A$, the function $F_a$ is $C^r$ and that, for each $x_1 \in I$, we have 
\begin{equation}\label{star}
\| F_a(x_1,\cdot)\|_r\le 1.
\end{equation} 
There is a definable \crr\ of $F$ of *-format $\const(\F,r)$ and *-degree $\poly_{\F,r}(D)$.
\end{lemma}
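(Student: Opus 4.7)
The plan is to construct a cellular $r$-reparameterization of $F$ by first parameterizing the slices of $X_a$ transverse to the $x_1$-direction using the inductive hypothesis $(ii)_\ell$, and then reparameterizing the first variable $x_1$ via a Gromov-style substitution to tame the $x_1$-derivatives of the composition. The hypothesis $\|F_a(x_1,\cdot)\|_r \le 1$ guarantees that all partials of $F_a$ with respect to $(x_2,\ldots,x_{\ell+1})$ up to order $r$ are already controlled, so the only danger arises from derivatives involving $\partial_{x_1}$.

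As a first step, I would apply $(ii)_\ell$ to the definable family $\{F_a(x_1,\cdot) \colon X_{a,x_1} \to I\}_{(a,x_1) \in A \times I}$, where $X_{a,x_1}$ denotes the $x_1$-slice of $X_a$. Since the hypothesis already gives $\|F_a(x_1,\cdot)\|_r \le 1$, the output of $(ii)_\ell$ amounts to a family of cellular parameterizations $\psi_j(a,x_1,t_2,\ldots,t_{\ell+1})$ of these slices, with $\|\psi_j(a,x_1,\cdot)\|_r \le 1$ and $\|F_a(x_1,\psi_j(a,x_1,\cdot))\|_r \le 1$. For each such $\psi_j$ I would then build the candidate cellular map
\[
\phi_j(a,t_1,\ldots,t_{\ell+1}) = \bigl( \eta(a,t_1),\, \psi_j(a,\eta(a,t_1),t_2,\ldots,t_{\ell+1}) \bigr)
\]
for a suitable $\eta \colon A \times I \to I$ to be chosen below; cellularity of $\phi_j$ is automatic because $\eta$ depends only on $(a,t_1)$ and $\psi_j$ is cellular in $(t_2,\ldots,t_{\ell+1})$.

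The main obstacle is the choice of $\eta$: a naive application of $(ii)_1$ to the family $\{x_1 \mapsto F_a(x_1,\bar{x}) \colon (a,\bar{x}) \in A \times I^\ell\}$ would give a reparameterization depending on $\bar{x}$, destroying cellularity. To circumvent this, I would use Fact~\ref{Lemma12}, applied iteratively together with Corollary~\ref{monotonicity} to the relevant sup-functions over $(x_2,\ldots,x_{\ell+1})$, to identify a finite definable bad set $E_a \subseteq I$ of size $\poly_{\F,r}(D)$ outside which $\partial_{x_1}^k F_a(x_1,\cdot)$ and $\partial_{x_1}^k \psi_j(a,x_1,\cdot)$ are bounded functions of $(x_2,\ldots,x_{\ell+1})$ for $k = 1,\ldots,r$. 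The finitely many points in $E_a$ are covered by cellular maps whose first factor is the basic cell $\{0\}$; on each open component of $I \setminus E_a$, the monotonicity structure produced by Corollary~\ref{monotonicity} applied to the worst-case bound function permits a Gromov-style substitution $x_1 = \eta(a,t_1)$ (exactly as in Lemma~\ref{x^2}) that dampens the problematic $\partial_{x_1}^k$ contributions uniformly in $(t_2,\ldots,t_{\ell+1})$.

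To finish, I would verify via the chain rule that $\|\phi_j(a,\cdot)\|_r$ and $\|F_a(\phi_j(a,\cdot))\|_r$ are bounded by a constant that is $\const(\F,r)$: the derivatives in $(t_2,\ldots,t_{\ell+1})$ are handled by the cellular structure of $\psi_j$ combined with the slicewise hypothesis on $F_a$, while the derivatives involving $t_1$ are absorbed by the Gromov substitution at $\eta$. A final application of Lemma~\ref{linsubst} then reduces these constant bounds to $1$, yielding a genuine definable cellular $r$-reparameterization. The *-format and *-degree estimates propagate through each step ($(ii)_\ell$, $(ii)_1$, Corollary~\ref{monotonicity}, Fact~\ref{Lemma12}, Lemma~\ref{x^2}, Lemma~\ref{linsubst}), each of which preserves *-format $\const(\F,r)$ and *-degree $\poly_{\F,r}(D)$, giving the claimed bounds.
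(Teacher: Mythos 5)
Your overall skeleton (slice-wise use of $(ii)_\ell$, Fact \ref{Lemma12} to isolate finitely many bad values of $x_1$, a substitution in the first variable alone, then Lemma \ref{linsubst}) matches the shape of the paper's argument, but the step that actually produces the substitution in $x_1$ has a genuine gap. You propose to run a Gromov-style substitution ``exactly as in Lemma \ref{x^2}'' on the worst-case bound functions $x_1\mapsto\|\partial_{x_1}^kF_a(x_1,\cdot)\|$. The mechanism of Lemma \ref{x^2} does not apply to such sup-functions: the bound $|F^{(r)}(x)|\le 4/x$ there comes from the Mean Value Theorem applied to $F^{(r-1)}$ of the \emph{same} one-variable function together with monotonicity of its $r$-th derivative, and $\|\partial_{x_1}^{k}F_a(x_1,\cdot)\|$ is not the derivative of $\|\partial_{x_1}^{k-1}F_a(x_1,\cdot)\|$. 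The MVT only gives, for each fixed $\bar x$, a point $c(\bar x)\in[x_1/2,x_1]$ with $|\partial_{x_1}^kF_a(c(\bar x),\bar x)|\le 4/x_1$; without monotonicity in $x_1$ uniformly in $\bar x$ (which is exactly what cannot be arranged cellularly, as you yourself note) this yields no bound at $x_1$ itself, so monotonicity of the sup-function buys nothing. Moreover, your chain-rule verification needs control of the \emph{mixed} partials $\partial_{x_1}^k\partial_{x_j}^\beta F_a$ (and of $\psi_j$), which are controlled neither by the hypothesis $\|F_a(x_1,\cdot)\|_r\le1$ (pure slice derivatives only) nor by your sup-functions of pure $x_1$-derivatives; note also that even applying Fact \ref{Lemma12} to $\partial_{x_1}^{k-1}F_a$ for $k\ge 2$ already presupposes bounds on $\partial_{x_j}\partial_{x_1}^{k-1}F_a$ for $j\ge 2$, which you do not have at that stage. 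Taming all of these contributions with a single substitution $\eta(a,t_1)$ in one shot is precisely what cannot be justified.

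The paper's proof supplies the missing idea: an induction over multi-indices $\alpha$ in degree-lexicographic order, gaining one derivative at a time (the statements $(\dagger_\alpha)$), so that at each stage all derivatives $G^{(\beta)}_a$ with $\beta\prec\alpha$ — including the mixed ones — are already bounded, which is what legitimises the application of Fact \ref{Lemma12}. And when $\alpha_1>0$, the substitution in the first variable is not obtained from an $x\mapsto x^2$ trick applied to a sup-function: one passes to the definable set $S_a$ where $|G^{(\alpha)}_a|$ is at least half its slice-sup, chooses by definable choice a curve $\gamma_a$ through $S_a$ with identity first coordinate, and applies $(ii)_1$ to the one-variable family $(\gamma_a,\,G^{(\alpha-\epsilon_1)}_a\circ\gamma_a)$. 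The resulting cellular map $\lambda$ is the substitution, and the chain-rule identity for $\bigl(G^{(\alpha-\epsilon_1)}_a\circ\gamma_a\circ\lambda_a\bigr)'$, together with the inductively bounded lower-order derivatives, shows that $(G^{(\alpha)}_a\circ\gamma_a\circ\lambda_a)\cdot\lambda_a'$ — and hence, by the definition of $S_a$, $\|(G_a\circ(\lambda_a,\mathrm{id}))^{(\alpha)}\|$ — is bounded by a constant. This curve-selection-and-reparameterization device is what delivers the uniformity over the slice variables; without it (or an equivalent replacement) your argument does not go through.
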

\begin{proof} We work with the degree-lexicographic ordering on $\N^{\ell + 1}$, i.e. the well ordering $\prec$ on $\N^{\ell+1}$ such that, for all $\alpha, \beta \in \N^{\ell + 1}$, we have $\beta \prec \alpha $ if and only if either $|\beta|<|\alpha|$, or $|\beta|=|\alpha|$ and $\beta$ is strictly less than $\alpha$ in the lexicographic order.

With this ordering, we prove the lemma by showing the following by induction on $\alpha$, for all $\alpha  \in \mathbb{N}^{\ell+1}$ with $|\alpha|\le r$:
\begin{equation}\label{daggeralpha}\tag{$\dagger_{\alpha}$}
\parbox{0.9\textwidth}{Let $F$ be a definable family as in the statement of Lemma \ref{finalstep}. Then $F$ admits a definable cellular $0$-reparameterization $\tilde{\Phi} = \{ (\Phi^{(i)},A_i): i=1,\ldots,k \}$ of *-format $\const(\F,r)$ and *-degree $\poly_{\F,r}(D)$ such that, for all $i = 1,\ldots, k$, all $\phi_{i,j} \in \Phi^{(i)}$ and all $a\in A_i$, the map $\phi_{i,j}(a,\cdot)$ is $C^r$ with $\| \phi_{i,j}(a,\cdot)\|_r\le 1$, and, if $\beta \in \N^{\ell + 1}$ with $\beta\preceq \alpha$, then $\| (F_a(\phi_{i,j}(a,\cdot)))^{(\beta)}\| \le 1$.}
\end{equation} 

Let $\alpha  \in \mathbb{N}^{\ell+1}$. If $\alpha= 0$, then we take $A_1=A$, and set $\Phi^{(1)}$ to consist only of the map $\phi_{1} \colon A_1 \times I^{\ell + 1} \to I^{\ell + 1}$ given by $\phi_1 (a,x) = x$. We have that $\{(\Phi^{(1)},A_1)\}$ is a definable cellular $0$-reparameterization of $F$ with the properties stated in the base case $(\dagger_0)$.

Suppose now that $\alpha \succ 0$, so $|\alpha|>0$, and that $(\dagger_{\beta})$ holds for all $\beta  \in \mathbb{N}^{\ell+1}$ with $\beta \prec \alpha$. 
 In particular, $(\dagger_{\alpha'})$ holds for $\alpha'  \in \mathbb{N}^{\ell+1}$ the immediate predecessor of $\alpha$. Let $\tilde{\Phi} = \{ (\Phi^{(i)},A_i) \colon i = 1,\ldots,k \}$ be a definable cellular $0$-reparameterization  of $F$ given by $(\dagger_{\alpha'})$ in this case.
  
 Our goal is to prove $(\dagger_{\alpha})$. We may see by the following argument that it is sufficient to prove the conclusion of $(\dagger_{\alpha})$ for each family $\{ F_a(\phi_{i,j}(a,\cdot)) \colon a \in A_i\}$ with $i=1,\ldots, k$ and $\phi_{i,j} \in \Phi^{(i)}$. 
 Suppose that, for each $i=1,\ldots, k$ and $\phi_{i,j} \in \Phi^{(i)}$, there is a definable cellular $0$-reparameterization $\tilde{\Psi}_{i,j} = \{ (\Psi_{i,j}^{(p)}, A_{i,p}) : p = 1,\ldots, k_i\}$  of $\{ F_a(\phi_{i,j}(a,\cdot)) \colon a \in A_i\}$ with the properties given by the conclusion of $(\dagger_{\alpha})$. 
 By subdividing further if necessary, we may assume by Theorem \ref{BVcd} that, for each $i=1,\ldots, k$, the partition of $A_i = A_{i,1} \cup \ldots \cup A_{i,k_i}$ given by each $\tilde{\Psi}_{i,j}$ is common across all $\phi_{i,j} \in \Phi^{(i)}$. 
  This then allows us to define the collection of maps $\tilde{\Theta} = \{(\Theta^{(i,p)}, A_{i,p}) : i = 1,\ldots, k,\; p = 1, \ldots, k_i \}$, by setting $\Theta^{(i,p)}$, for each $i=1,\ldots,k$ and $p = 1, \ldots, k_i$, to consist of all maps of the form $\theta(a,\cdot) = \phi_{i,j} (a, \psi(a,\cdot))$, where $\phi_{i,j} \in \Phi^{(i)}$ and $\psi \in \Psi_{i,j}^{(p)}$. This collection $\tilde{\Theta}$ is a definable cellular $0$-reparameterization of $F$, and has all the properties stated in $(\dagger_{\alpha})$, except that, for each $i=1,\ldots, k, p=1,\ldots,k_i$, $\theta \in \Theta^{(i,p)}$ and $a \in A_{i,p}$, we do not have that $\| \theta(a,\cdot)\|_r$ is necessarily bounded by $1$. However, a calculation shows that there exists some positive real number $B$ that is $\const(\F,r)$ such that $\| \theta(a,\cdot)\|_r \le B$. 
  We may then argue by analogy to the proof of Lemma \ref{linsubst}, composing with linear functions, to obtain a definable cellular $0$-reparameterization of $F$ with the properties stated in $(\dagger_{\alpha})$.  
  
  
  Therefore, we fix $i = 1,\ldots, k$, and drop the indices in order to simplify our notation. That is, for the remainder of this proof, we refer to $A$ and $\Phi$ in place of $A_i$ and $\Phi^{(i)}$, and $\phi : A \times C_j \to \mathbb{R}^{\ell + 1}$ is a fixed element $\phi_{i,j}$ of $\Phi$. We set $G_a := F_a(\phi(a,\cdot))$, for each $a \in A$, and prove the analogous conclusion of $(\dagger_{\alpha})$ for the family $G:=\{G_a: I^{\ell+1} \to I : a \in A\}$ in place of $F$. By inductive hypothesis we already have that, for each $a \in A$, $\| G^{(\beta)} _a \| \le 1 $, for all $\beta  \in \mathbb{N}^{\ell+1}$ with $\beta \preceq \alpha'$. Note that the definable family $G$ has *-format $\const(\F,r)$ and *-degree $\poly_{\F,r}(D)$. 
  
  
Suppose first that $\alpha_1 = 0$. Since $\| \phi(a,\cdot )\|_r \le 1$, for all $a \in A$, a calculation using (\ref{star}) and the cellularity of $\phi(a,\cdot)$ shows that  $\| G^{(\alpha)}_a\| \leq B$, for all $a \in A$, for some positive real number $B$ that is $\const(\F,r)$. Hence we can obtain the required cellular $0$-reparameterization of $G$ using a finite collection of linear maps, as in the proof of Lemma \ref{linsubst}. 

 Therefore, we may suppose that $\alpha_{1} > 0$. By partitioning $A$ again, applying Theorem \ref{BVcd} and Theorem \ref{BVfmlas} as necessary, and reindexing, we may assume that either $\|G^{(\alpha)}_{a}\| \leq 1$, for all $a \in A$, or $\|G^{(\alpha)}_a \| >1$, for all $a \in A$. In the former case, there is nothing further to show, so suppose that $\|G^{(\alpha)}_a \| >1$, for all $a \in A$.
 
 Let $\epsilon_1, \ldots, \epsilon_{\ell + 1}$ denote the standard basis in $\mathbb{R}^{\ell + 1}$.  
 Applying Fact \ref{Lemma12} to $G^{(\alpha - \epsilon_1)}_{a}$, we see that, for each $a \in A$, there are only finitely many $x_1$ such that $G^{(\alpha)}_a(x_1,\cdot)$ is unbounded. By Theorem  \ref{BVfmlas}, the set $$\hat{A} = \{(a,x_1) \in A \times I \colon G^{(\alpha)}_{a}(x_1,\cdot) \textrm{ is unbounded}\}$$ has *-format $\const(\F,r)$ and *-degree $\poly_{\F,r}(D)$. By Theorem \ref{BVcd}, decomposing $A$ further if necessary, we may suppose that the set of such pairs is given by graph$(f_1) \cup \ldots \cup$ graph$(f_{N})$, for some positive integer $N$ and functions $f_{t} \colon A \to I$, for $t=1,\ldots, N$, of *-format $\const(\F,r)$ and *-degree $\poly_{\F,r}(D)$.

 For each $t=1,\ldots,N$, we may apply $(ii)_{\ell}$ to the definable family $\{G_{(a,f_{t}(a))} : I^{\ell} \to I : (a,f_{t}(a)) \in$ graph$(f_t) \}$ in order to obtain a definable \crr\ $\tilde{\Psi} = \{(\Psi^{(i')},\hat{A}_{i'}) \colon i'=1,\ldots,\hat{k}\}$ of this family with *-format $\const(\F,r)$ and *-degree $\poly_{\F,r}(D)$. 
 We may easily adapt $\tilde{\Psi}$ to obtain a definable \crr\ $\tilde{\Psi}_{+} = \{(\Psi^{(i')}_{+},\pi(\hat{A}_{i'})) : i'=1,\ldots,\hat{k} \}$ of $\{(G|_{(\textrm{graph}(f_{t}))\times I^{\ell}})_a : a \in \pi(\hat{A}_{i'})\}$ (where $\pi$ is the projection from $\hat{A}$ to $A$) with the same bounds on *-format and *-degree. 
 We have, for each $i' = 1,\ldots, \hat{k}$, $\psi \in \Psi^{(i')}_{+}$ and $a \in \pi(\hat{A}_{i'})$, that $\|G_a(\psi(a,\cdot))\|_r \le 1$. 
 This collection $\tilde{\Psi}_{+}$ will contribute to the definable cellular $0$-reparameterization of $G$ that we seek. 

To complete the required $0$-reparameterization, it remains to consider the case of bounded $G^{(\alpha)}_a(x_1,\cdot)$ (where $(a,x_1) \in \hat{A}$). 
We may precompose $G_a$ in the first coordinate with a linear map if required, in order to replace a subinterval of $I$ in the domain with $I$ itself, and so we may assume  from now on that $G^{(\alpha)}_a(x_1,\cdot)$ is bounded for all $a \in A$ and all $x_1\in I$. 
  
  For each $a \in A$, let
\[
    S_a = \left\{ x \in I^{\ell+1} : |G^{(\alpha)}_a(x)| \ge \frac12 \|G^{(\alpha)}_a(x_1,\cdot) \| \right\}.
  \]  

By Theorem \ref{BVfmlas}, the definable family $S=\{ S_a : a \in A\}$ has *-format $\const(\F,r)$ and *-degree $\poly_{\F,r}(D)$. By definable choice (Corollary \ref{choice}), there is a definable family of maps $\gamma = \{ \gamma_a : I \to S_a : a \in A\}$ such that each $\gamma_a$ has the identity as its first coordinate function. Moreover, $\gamma$ has *-format $\const(\F,r)$ and *-degree $\poly_{\F,r}(D)$. 

Consider the definable family of maps \[\{ (\gamma_a, G^{(\alpha- \epsilon_1)}_a \circ \gamma_a) : I \to I^{\ell+2} \colon a \in A\}.\] 
 By $(ii)_1$, there is a definable \crr\ $\tilde{\Lambda} = \{ (\Lambda^{(i')}, \check{A}_{i'}) : i' = 1,\ldots, \check{k}\} $ of this family that has *-format $\const(\F,r)$ and *-degree $\poly_{\F,r}(D)$. 
 Note that this is in particular a definable \crr\ of the family $\gamma$.
For each $i'= 1,\ldots,\check{k}$ and  $\lambda \colon \check{A}_{i'} \times \check{\cC}_{\lambda} \to I$ in $\Lambda^{(i')}$, consider the definable map $(\lambda,id) \colon \check{A}_{i'} \times \check{\cC}_{\lambda} \times I^{\ell} \to I^{\ell +1}$ given by sending $(a,x_1,x_2,\ldots,x_{\ell + 1})$ to $(\lambda(a,x_1),x_2,\ldots,x_{\ell + 1})$. 
Each such map $(\lambda,id)$ clearly has $\|(\lambda,id)_a\|_r = \|(\lambda_a,id)\|_r \leq 1$, for each $a \in A$. Moreover, for each $a \in A$ and each $\beta \in \N^{\ell+1}$ with $|\beta| \leq r$, we have that
\begin{multline} \label{Glambdaderivative}
(G_a \circ (\lambda_a, id))^{(\beta)} = \\ P_{\beta} \left(\left\{G^{(\beta')}_a\circ(\lambda_a,id) : \beta'  \in \mathbb{N}^{\ell+1}, \beta' \prec \beta\right\}, \left\{ \lambda_a^{(j)} : j \in \mathbb{N}, j \leq |\beta|\right\} \right) + \\ (G^{(\beta)}_a\circ(\lambda_a,id))\cdot(\lambda_a')^{\beta_1},
\end{multline}
where $P_{\beta}$ is a polynomial depending only on $\beta$.

From (\ref{Glambdaderivative}) we may then derive that there is a positive real number $B$, which is $\const(\F,r)$,  such that the following statements hold.

 First, $B$ is such that $\|(G_a \circ (\lambda_a, id))^{(\beta)}\|\le B$, for each $a \in A$ and $\beta \in \N^{\ell+1}$ with $\beta \prec \alpha$, using that, for all $a \in A$, $\|\lambda_a\|_r \leq 1$ and $\|G^{(\beta)}_a\| \leq 1$, for all $\beta \in \N^{\ell+1}$ with $\beta \prec \alpha$.

Moreover, $B$ is such that, for each $a \in A$,
\begin{eqnarray*}
\|(G_a \circ (\lambda_a, id))^{(\alpha)}\| & \leq & B + \|(G^{(\alpha)}_a\circ(\lambda_a,id))\cdot(\lambda_a')^{\alpha_1}\|,
\end{eqnarray*}
and hence is such that
\begin{eqnarray}
\|(G_a \circ (\lambda_a, id))^{(\alpha)}\| & \leq  & B + \|(G^{(\alpha)}_a(\lambda_a(x_1),\cdot))\|\|\lambda_a'\|^{\alpha_1} \notag \\
& \leq & B + 2\|G_a^{(\alpha)}\circ\gamma_{a}\circ \lambda_{a}\|\|\lambda_a'\|, \label{Glambdaid}
\end{eqnarray}
using the definitions of $S_a$ and $\gamma_a$, as well as that $\alpha_1 > 0$ and $\| \lambda_a' \| \leq 1$.

We then have that, for each $a \in A$,
\begin{multline*}
\left(G_a^{(\alpha - \epsilon_1)} \circ \gamma_a \circ \lambda_a \right)' = (G_{a}^{(\alpha)}\circ \gamma_a \circ \lambda_a)\cdot \lambda_a' \\ + \sum_{j=2}^{\ell+1} \left[ \left(G_a^{(\alpha - \epsilon_1 + \epsilon_j)}\circ \gamma_a \circ \lambda_a\right)\cdot ((\gamma_j)_a \circ \lambda_a)'\right].
\end{multline*}
\sloppy Since $\lambda$ is an $r$-parameterizing map for $\gamma$, we have that $\|\lambda_a\|_r$, \mbox{$\|\gamma_a \circ \lambda_a\|_r \leq 1$,} for all $a \in A$. Moreover, recall that $\|G_a^{(\beta)}\| \le 1$, for all $\beta \in \N^{\ell+1}$ with $\beta \prec \alpha$ and all $a \in A$. Therefore, $\|(G_a^{(\alpha)}\circ\gamma_{a}\circ \lambda_{a})\cdot\lambda_a'\|$ is $\const(\F)$, and hence $\|(G_a \circ (\lambda_a, id))^{(\alpha)}\|$ is $\const(\F,r)$ by (\ref{Glambdaid}).

Set $\Lambda_{+}^{(i')}$, for each $i'=1,\ldots,\check{k}$, to be the set of all maps $(\lambda,id)$ such that $\lambda \in \Lambda^{(i')}$, and set $\tilde{\Lambda}_{+} = \{ (\Lambda^{(i')},A_{i'}) : i' = 1,\ldots, \check{k}\}$. In the same spirit as in the proof of Lemma \ref{linsubst}, we may replace $\tilde{\Lambda}_{+}$ by a collection of families of maps $\tilde{\Lambda}_{+}$ with *-format $\const(\F,r)$ and *-degree $\poly_{\F,r}(D)$, which not only has the property that, for all $i'=1,\ldots,\check{k}$, $\lambda \in \Lambda_{+}^{(i')}$ and $a \in \check{A}_{i'}$, we have that $\lambda_a$ is $C^r$ with $\|\lambda_a\|_r \leq 1$ but, moreover, $\|(G_a \circ (\lambda_a, id))^{(\beta)}\| \le 1$, for all $\beta \in \N^{\ell+1}$ with $\beta \preceq \alpha$.

All $\tilde{\Lambda}_{+}$ formed in this way, together with all $\tilde{\Psi}_{+}$ formed as above, form a definable cellular $0$-reparameterization of the family $G$ that fulfils the criteria laid out in (\ref{daggeralpha}). 
The statement of the lemma therefore follows by induction.
\end{proof}

We are now in a position to complete the proof of Theorem \ref{crpcrr}.
\begin{proof}[Proof of $(ii)_{\ell+1}$]
 We now suppose that $(i)_1, \ldots, (i)_{\ell+1}$ and $(ii)_{1},\ldots,(ii)_{\ell}$ all hold.
 
 Let $X=\{ X_a: a\in A\}$ be a definable family of subsets of $I^{\ell +1}$, and let $F= \{ F_a:X_a\to I^{\ell'} : a \in A \}$ be a definable family of maps on $X$ with *-format at most $\cF$  and *-degree at most $D$. By Lemma \ref{codomainI}, it is enough to prove the statement in the case that $\ell' = 1$, so we assume this from now on.
 
 By $(i)_{\ell+1}$, there is a \crp\ of $X$ that has *-format $\const(\F,r)$ and *-degree $\poly_{\F,r}(D)$. Working separately with each family of maps $F_{a}(\phi(a,\cdot))$, where $\phi$ is a fixed $r$-parameterizing map and $a$ ranges over one set in the partition of $A$, we may assume that $X$ is a definable family of basic cells of length $\ell+1$, say $X = \{ \calC_a : a \in A\}$. Moreover, by applying Theorem \ref{BVcd}, we may partition $A$ further in such a way that we may assume that the dimension of the basic cells $\cC_a$ is the same for all $a \in A$. If this dimension is not equal to $\ell+1$, for all $a \in A$, then we may consider each as a basic cell of length at most $\ell$, and apply the appropriate statement from $(ii)_1,\ldots,(ii)_\ell$ to the corresponding family of functions, adapting the resulting parameterizing maps accordingly. We may therefore assume that $\cC_a = I^{\ell + 1}$, for every $a \in A$. Making this reduction and relabelling in this way, it is sufficient to find a definable \crr\ in the case of a definable family $F= \{ F_a :I^{\ell+1}\to I : a\in A\}$.

 We then consider $F$ as a family  of functions 
 \[
 F^* = \{ F_{(a,x_1)}: I^\ell \to I : a \in A, x_1\in I\}
 \] 
 given by  $F_{(a,x_1)} (x_2,\ldots,x_{\ell+1}) = F_a(x_1,\ldots, x_{\ell+1})$. Applying $(ii)_\ell$ to $F^*$ we obtain a definable \crr\ $\tilde{\Phi} = \{(\Phi^{(i)},\check{A}_i):i=1,\ldots,k\}$ of $F^*$, with *-format $\const(\F,r)$ and *-degree $\poly_{\F,r}(D)$, where $\check{A}_1 \cup \ldots \cup \check{A}_{k} = A\times I$. 
 
 By Theorem \ref{BVcd}, we may assume that each $\check{A}_i$ is a cell. If $\check{A}_i =$ graph$(f)$, for a function $f:A_i \to I$ (where $A_i$ is the projection of $\check{A}_i$ onto $A$), we may easily define, for each parameterizing map $\phi_{i,j} \colon \check{A}_i \times \cC_{i,j} \to I^{\ell}$ in $\Phi^{(i)}$, where $\cC_{i,j}$ is a basic cell of length $\ell$, a new parameterizing map $\hat{\phi}_{i,j} \colon A_i \times (\{ 0 \} \times \cC_{i,j}) \to I^{\ell + 1}$ given by $\hat{\phi}_{i,j}(a,0,x) = (f(a),\phi_{i,j}(a,f(a),x))$. For each $a \in A_i$, this clearly has the requisite properties to belong to a definable \crr\ of $F_a$.
  
 It remains to consider the case that $\check{A_i} = (f,g)_{A_i}$, where $f$ and $g$ are definable, continuous functions on $A_i$ (where again $A_i$ is the projection of $\check{A}_i$ onto $A$) such that the range of $f$ lies in $I$ or $f$ is the constant function taking the value $0$, and the range of $g$ lies in $I$ or $g$ is the constant function taking the value $1$.  
 By precomposing $F_a$ with the map $x \mapsto ((g(a)-f(a))x_1 + f(a), x_2, \ldots, x_{\ell +1})$, for each $a \in A_i$, we may assume that $\check{A}_i = A_i \times I$, and hence we have, for each $\phi_{i,j} \in \Phi^{(i)}$, that $\| \phi_{i,j}(a,x_1,\cdot)\|_r,  \| F_a (x_1, \phi_{i,j}(a,x_1,\cdot))\|_r \le 1$ holds for all $a \in A_i$ and all $x_1\in I$. 
 
 Again working separately with each parameterizing map $\phi_{i,j}$ in $\Psi^{(i)}$, we will restrict our attention to one of these, say $\phi$, for the remainder of the proof (dropping the indices $i, j$ from now on, so $\phi$ maps from $A \times (I \times \cC)$ to $I^{\ell}$). We define the family of maps $G:= \{ G_a \colon I^{\ell + 1} \to I^{\ell +2} \colon a \in A\}$ by 
\[
  G_a (x_1,\ldots,x_{\ell + 1})= (x_1, \phi(a,x_1, \ldots, x_{\ell + 1}),  F_a(x_1, \phi(a,x_1, \ldots, x_{\ell + 1})).
\]
   This family $G$ has the property that, for all $a \in A$ and $x_1 \in I$, we have $\|G_a(x_1,\cdot)\|_r \le 1$. 
 It moreover has *-format $\const(\F,r)$ and *-degree $\poly_{\F,r}(D)$. 
 Note that, in order to complete the proof, it suffices to find a definable \crr\ of $G$ with the required complexity bounds.

By Corollary \ref{monotonicity_multivar}, there is a definable family of sets $V= \{ V_a : a\in A\}$, with $V_a \subseteq I^{\ell+1}$ of dimension at most $\ell$, for each $a \in A$, such that $V$ has *-format $\const(\cF,r)$ and *-degree $\poly_{\F,r}(D)$, and is such that $G_a$ is $C^r$ on $I^{\ell+1}\setminus V_a$, for each $a \in A$. 

By $(i)_{\ell + 1}$, there is a definable \crp\ $\tilde{\Omega} = \{ (\Omega^{(i)},A_i) \colon i=1,\ldots,k\}$ of $V$, of *-format $\const(\F,r)$ and *-degree $\poly_{\F,r}(D)$. Again working separately with each family $G_{i,j}:= \{G_a(\omega_{i,j}(a,\cdot)) : a \in A_{i}\}$, for $i=1,\ldots,k$ and $\omega_{i,j} \in \Omega^{(i)}$, we may assume that we are working with a definable family of maps on a family of basic cells of a length at most $\ell$. By applying any of $(ii)_1, \ldots, (ii)_{\ell}$ as required, we may obtain a definable \crr\ of each $G_{i,j}$, which we can then easily adapt to a definable \crr\ of the function $G$ restricted to $V$, of the required *-format and *-degree.

In order to complete the proof, it is now sufficient to find a definable \crr\ of the family $\{ (G_a)|_{I^{\ell + 1} \setminus V_a} \colon a \in A\}$. This we may then combine with the definable \crr\ of the family $\{ (G_a)|_{V_a} \colon a \in A\}$ obtained above, partitioning further if necessary using Theorem \ref{BVcd}, in order to obtain a definable \crr\ of $G$.

Again applying $(i)_{\ell+1}$, the family $I^{\ell+1}\setminus V_a$ has a definable \crp\ $\tilde{\Theta} = \{ (\Theta^{(i)}, \hat{A}_i) : i=1,\ldots,\hat{k}\}$ of *-format $\const(\F,r)$ and *-degree $\poly_{\F,r}(D)$. To finish, it suffices to find, for each $i = 1,\ldots,\hat{k}$ and $\theta_{i,j} \in \Theta^{(i)}$, a definable \crr\ of the family $\hat{G}_{i,j}:= \{ G_a(\theta_{i,j}(a,\cdot)) : a \in \hat{A}_i\}$ that has the required bounds on *-format and *-degree. 
Arguing similarly to the case of $G$ restricted to $V$ above, by the inductive assumptions $(ii)_1, \ldots, (ii)_{\ell}$ it suffices to consider the case that  $\text{dom}(\theta_{i,j}(a,\cdot)) = I^{\ell +1}$, for all $a \in \hat{A}_i$. 

We may now use the cellularity of $\theta_{i,j}$ to key effect. Since we moreover have that $\| \theta_{i,j}(a,\cdot)\|_r\le 1$, for each $a \in \hat{A}_i$, and  $\| G_a( x_1,\cdot)\|_r\le 1$, for all $a \in \hat{A}_i$ and $x_1 \in I$, a computation, using the fact that many partial derivatives of $\theta_{i,j}$ are zero by cellularity, shows that there exists a positive real number $B$ that is $\const(\F,r)$ such that $\| G_a( \theta_{i,j}(x_1,\cdot))\|_r \leq B$, for all $a \in \hat{A}_i$ and $x_1 \in I$. Following a further application of Lemma \ref{linsubst}, we may assume that $\| G_a( \theta_{i,j}(x_1,\cdot))\|_r \le 1$, for all $a \in \hat{A}_i$ and $x_1 \in I$. We may then apply  Lemma \ref{finalstep} to the family $\hat{G}_{i,j}$ to finish the proof.
\end{proof}

\section{Effective Pila--Wilkie} \label{sec:counting}

In this section, we prove our effective forms of the Pila--Wilkie counting results. These include effective versions of results due to Pila \cite{PilaAlgPoints} (see Theorem \ref{thm:pw-blocks}) and Habegger--Pila \cite{HP} (see Corollary \ref{semi_cor}), and we derive Theorem \ref{Introthm} from the former.

Given a set $X \subseteq \R^n$, a positive integer $g$ and a positive real number $H$, we denote
\begin{equation*}
  X(g,H) := \{ x\in X : [\Q(x):\Q]\le g, H(x)\le H \},
\end{equation*}
where $H(x)$ is the usual multiplicative Weil height.

We will say that a definable set $X\subseteq \R^n$ is a \emph{basic
  block} if it is connected and regular of dimension $k$, and is
contained in a connected and regular semialgebraic set $S$ of
dimension $k$. The degree of the block is the degree of $S$. Note that
a point is a basic block. We will say that a definable family $X$ is a \emph{basic block family} if every one of its fibres is a basic block.

\begin{rmk}
  The notion of a basic block is due to Pila \cite{PilaAlgPoints}, who also
  introduces a somewhat more general notion of a \emph{block}. The
  main results in loc. cit. are stated in terms of blocks. In
  \cite[Remark 3.3]{PilaAlgPoints}, it is noted that presumably the main
  statements could be strengthened to require that the blocks in the
  conclusion are in fact basic blocks. We prove this stronger form in
  the present paper, and therefore have no need for introducing
  general blocks. See also Pila's recent book \cite{Pilabook} for similar formulations (in the general setting).
\end{rmk}

We need the following auxiliary polynomial construction, which is due to Habegger (it can be extracted from the arguments in \cite{Habegger}, which follows Wilkie's presentation of the counting theorem in \cite{Wilkie}).

\begin{propn}\label{auxiliary_polynomials}
Let $k,n$ and $d$ be nonnegative integers and let $g$ be a positive integer such that $k<n$ and $d \ge (g+1)n$. There exist nonnegative integers $r$ and $c$, and a positive real number $\epsilon$, all of which are $\const(g,k,n,d)$, with the following property. Suppose that $\phi:I^k\to \mathbb R^n$ is $C^r$ with $\| \phi \|_r\le 1$. Let $X$ be the image of $\phi$. Then, if $H$ is a real number with $H\ge 1$, the set $X(g,H)$ is contained in the union of at most $cH^\epsilon$ algebraic hypersurfaces of degree at most $d$. Moreover, $\epsilon \to 0$ as $d\to \infty$. 
\end{propn}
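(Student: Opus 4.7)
Plan: This is the Bombieri--Pila determinant method, in the form adapted by Wilkie to $C^r$-maps with bounded derivatives and extended by Pila to algebraic points. Set $D := \binom{n+d}{n}$ and list the monomials of degree $\le d$ in $n$ variables as $y^{\alpha_1},\dots,y^{\alpha_D}$. I would partition $I^k$ into axis-aligned sub-cubes of side length $\delta = H^{-\eta}$, with $\eta > 0$ to be chosen; there are at most $\lceil 1/\delta \rceil^k \le 2^k H^{k\eta}$ of them. The aim is to show that, for each sub-cube $C$, the set $\phi(C)\cap X(g,H)$ is contained in a single hypersurface of degree $\le d$, yielding the stated bound with $\epsilon := k\eta$ and $c$ effectively $\const(g,k,n,d)$.

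Fix a sub-cube $C$ and pick any $D$ points $p_1,\dots,p_D \in \phi(C)\cap X(g,H)$ (if there are fewer, there is nothing to prove); form the $D\times D$ matrix $M$ with $M_{ij} := p_i^{\alpha_j}$. A nontrivial kernel vector of $M$ is the coefficient vector of a polynomial of degree $\le d$ vanishing at all the $p_i$, so it suffices to force $\det M = 0$. I would do this by comparing an analytic upper bound with an arithmetic lower bound. For the upper bound, write $p_i = \phi(t_i)$ and Taylor-expand each $t\mapsto \phi(t)^{\alpha_j}$ to order $\rho := r-1$ around a base point $t_0 \in C$. Since $\|\phi\|_r \le 1$ and each monomial has degree $\le d$, the product rule gives an effective $\const(n,d,r)$ bound on the $r$-norm of $\phi^{\alpha_j}$, so each row of $M$ lies within $c_1 \delta^r$ of the span of the $\binom{k+\rho}{k}$ Taylor-coefficient vectors at $t_0$. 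Choosing $\rho$ with $\binom{k+\rho}{k} < D$ and expanding the determinant multilinearly yields
\[
|\det M| \le c_2\,\delta^{E}, \qquad E := r\bigl(D - \tbinom{k+\rho}{k}\bigr),
\]
with $c_2$ effectively $\const(k,n,d,r)$. For the lower bound, each $p_i^{\alpha_j}$ is algebraic of degree $\le g$ and naive height $\le c_3 H^d$; bounding the absolute value of each of the at most $g^D$ Galois conjugates of $\det M$ by the determinant of a matrix with entries of modulus $\le c_3 H^d$ and applying Liouville to the resulting norm bound gives
\[
|\det M| \ge c_4\, H^{-\mu}, \qquad \mu \le \const(g,n,d) \cdot D,
\]
an estimate linear in $gdD$ with all constants effectively computable.

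Combining these, $\det M = 0$ as soon as $\eta E > \mu$ (multiplicative constants are absorbed into the final $c$, and bounded $H$ are trivially handled). Setting $\eta := 2\mu/E$ therefore suffices; moreover, choosing $\rho$ so that $\binom{k+\rho}{k} = \lfloor D/2\rfloor$ forces $\rho \asymp d^{n/k}$, so one may take $r := \rho + 1$, which is $\const(g,k,n,d)$, and then $E \ge (r/2)D$, whence
\[
\eta \;\asymp\; \frac{\mu}{E} \;\asymp\; \frac{gd}{r} \;\asymp\; g\, d^{\,1 - n/k} \;\longrightarrow\; 0 \text{ as } d\to\infty,
\]
using $k < n$. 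The hypothesis $d \ge (g+1)n$ guarantees that $D$ is large enough for this choice of $\rho$ (and hence $r$) to satisfy $\binom{k+\rho}{k} < D$ while keeping $\eta$ small. The main technical obstacle is the arithmetic lower bound when $g > 1$: a naive Liouville bound over the field $\Q(p_1,\dots,p_D)$, whose degree may be as large as $g^D$, produces an exponent of size $g^D$, far too large. The Galois-averaged norm estimate replaces this by a bound linear in $D$, which is what makes $\epsilon \to 0$ possible; the replacement is classical since Pila, but must be carried out with explicit constants to yield effective dependence on $g,k,n,d$.
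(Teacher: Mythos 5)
Your skeleton --- subdivide $I^k$ into roughly $H^{k\eta}$ small cubes, then force the interpolation determinant to vanish by playing a Taylor upper bound against an arithmetic lower bound --- is the classical Bombieri--Pila--Wilkie method, and for $g=1$ it works. For $g\ge 2$, however, the arithmetic lower bound you assert is exactly the crux, and the argument you describe does not deliver it. The determinant $\det M$ lies in the compositum $F=\Q(p_1,\dots,p_D)$, whose degree is in general of order $g^{D}$, and the only estimate you actually sketch --- bounding each of the up to $g^{D}$ conjugates of $\det M$ by $D!$ times the $D$-th power of the size of the entries (which, incidentally, is $\approx H^{gd}$ rather than $H^{d}$ once you pass to conjugates) and then applying Liouville to the norm --- is precisely the naive bound you yourself dismiss: it yields $|\det M|\ge c\,H^{-\mu}$ with $\mu$ of order $g^{D}dD$, not $\const(g,n,d)\cdot D$. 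There is no ``Galois-averaged'' improvement of this shape: every nontrivial conjugate $\sigma(\det M)$ is an interpolation determinant at the conjugate points $\sigma(p_i)$, which need not lie on $X$, in the small cube, or even in $\R^n$, so none of those factors is small and their number really is $[F:\Q]-1$. Grouping points by the field they generate does not rescue the count either, since the number of distinct degree-$\le g$ fields arising can be comparable to the number of points. Since your choice $\eta\asymp\mu/E$ and the conclusion $\epsilon\to 0$ rest entirely on $\mu$ being linear in $D$, this is a genuine gap --- and it is the whole content of the proposition beyond the rational case.

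For comparison, the paper does not reprove the determinant step at all: it deduces the proposition from Habegger's explicit presentation (Proposition 16 and Lemma 22 of \cite{Habegger}, which follows Wilkie), choosing $r$ by $(g+1)\binom{k+r-1}{k}\le\binom{n+d}{n}<(g+1)\binom{k+r}{k}$, setting $\epsilon=(k+1)ng\,d/(r-1)$, and checking that the constants there are effective and that $d/(r-1)\to 0$ as $d\to\infty$. The bounded-degree case is handled in that source (going back to Pila's treatment of algebraic points) by a different arrangement of the interpolation argument --- visible in the factor $(g+1)$ in the choice of $r$ and the factor $g$ in $\epsilon$, which make the arithmetic input enter linearly in $g$ --- and not by a Liouville bound for a single determinant over the compositum of all $D$ points. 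If you want a self-contained proof, you need to import that mechanism (or give an equivalent one) explicitly; as written, the step you label the main technical obstacle is asserted rather than proved.
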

\begin{proof}
The argument can be extracted from the proof of \cite[Lemma 22]{Habegger}. First, we borrow the following notation from \cite{Habegger}: for any nonnegative integers $n$ and $d$, let $D_{n}(d) = \binom{n+d}{n}$ denote the number of monomials in $n$ variables of degree at most $d$. Then choose $r$ to be the unique positive integer with the property that $(g+1)D_{k}(r-1) \leq D_{d}(n) < (g+1) D_{k}(r)$, and set $\epsilon = (k+1)ng\left(\frac{d}{r-1}\right)$. Note that $r$ and $\epsilon$ are both $\const(g,k,n,d)$. A calculation (see \cite{Habegger}, p.1664, inequality (38)) shows that $\frac{d}{r-1} \to 0$ as $d \to \infty$, and hence $\epsilon \to 0$ as $d \to \infty$. Now apply \cite[Proposition 16]{Habegger} with $b=r-1$ and $B=1$, to obtain a nonnegative integer $c$, which is $\const(g,k,n,d)$, such that the conclusion of the proposition holds.
\end{proof}

The following proposition gives the main inductive step.

\begin{propn}\label{prop:pw-induction}
Let $m$ and $n$ be nonnegative integers, let $g$ be a positive integer and let $\F, D$ and $\e$ be positive real numbers. Suppose that $X\subseteq I^{m+n}$ is a definable
  family of *-format at most $\cF$ and *-degree at most $D$. There exist a nonnegative integer $M\ge m$ and 
  definable families $X^<,X^{bl}\subseteq I^M\times I^n$ of *-format
  $\const(\cF,g,\e)$ and *-degree $\poly_{\cF,g,\e}(D)$ such that the
  following hold.
 \begin{enumerate}
\item[(1)] Every nonempty fibre of $X^{bl}$ in $I^n$ is a basic block of degree $\const(\F,g,\e)$.
\item[(2)] For each $a \in I^m$ and real number $H\ge 1$, there exists a finite set $\Lambda_a\subseteq I^M$ such that:
\begin{enumerate}
\item[(a)] $\# \Lambda_a$ is $\poly_{\F,g,\e}(D)H^\e$ and, if $\alpha = (\alpha_1,\ldots,\alpha_M)\in\Lambda_a$, then $(\alpha_1,\ldots,\alpha_m)=a$;
\item[(b)] if $\alpha\in \Lambda_a$, then $X_\alpha^< \cup X_\alpha^{bl} \subseteq X_a$;
\item[(c)] if $\alpha\in \Lambda_a$, then $\dim X_\alpha^< < \dim X_a$;
\item[(d)] $X_a (g,H) \subseteq \bigcup_{\alpha\in\Lambda_a} \left( X_\alpha^<\cup X^{bl}_\alpha\right)$.
 \end{enumerate}
\end{enumerate}
\end{propn}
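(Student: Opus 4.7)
The plan is to combine the effective parameterization Theorem \ref{crpcrr} with the auxiliary polynomial construction of Proposition \ref{auxiliary_polynomials}, following the standard Pila--Wilkie strategy but carrying out each step uniformly in definable families of controlled complexity.

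First choose $d$ effectively in $g, n, \e$ so that the $\epsilon$ produced by Proposition \ref{auxiliary_polynomials} is at most $\e$, and let $r$ be the associated smoothness order; since $n$ is bounded by $\F$, both $d$ and $r$ are $\const(\F, g, \e)$. Apply Theorem \ref{crpcrr}$(i)_n$ with this $r$ to the family $\{X_a : a \in I^m\}$ to obtain a definable cellular $r$-parameterization comprising $\poly_{\F, g, \e}(D)$-many maps $\phi_{i,j} : A_i \times \cC_{i,j} \to I^n$, of $*$-format $\const(\F, g, \e)$ and $*$-degree $\poly_{\F, g, \e}(D)$. Parameterize polynomials in $n$ variables of degree $\leq d$, normalized so that the $\ell^\infty$-norm of their coefficient vectors lies in $I$, by $p \in I^N$ with $N = \binom{n+d}{n}$ effectively $\const(\F, g, \e)$. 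Introduce one further coordinate $t \in I$, partition $I$ via Theorem \ref{BVcd} into $\poly_{\F, g, \e}(D)$ sub-intervals to encode the finite choice of $(i,j)$, and set $M := m + 1 + N$. Then $\alpha = (a, t, p) \in I^M$ determines a map $\phi = \phi_{i(t), j(t)}$ and a polynomial $P = P_p$.

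Define the definable set $Z_\alpha := \{x \in \cC_\phi : P(\phi(a, x)) = 0\}$. In the \emph{drop case} ($Z_\alpha \subsetneq \cC_\phi$), set $X^<_\alpha := \phi(a, Z_\alpha)$ and $X^{bl}_\alpha := \emptyset$; since $\phi$ is cellular and $Z_\alpha$ is a proper definable subset of $\cC_\phi$, we have $\dim X^<_\alpha < \dim \phi(a, \cC_\phi) \leq \dim X_a$, giving (2c). In the \emph{block case} ($Z_\alpha = \cC_\phi$), set $X^{bl}_\alpha := \phi(a, \cC_\phi)$ (suitably refined, see below) and $X^<_\alpha := \emptyset$. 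For each $(a, H)$, Proposition \ref{auxiliary_polynomials} applied fibrewise to each parameterizing map yields $\leq cH^\e$ polynomials (with $c = \const(\F, g, \e)$) whose zero sets cover $\phi(a, \cC_\phi)(g, H)$. Let $\Lambda_a$ consist of the $\alpha$'s recording these choices; then $\#\Lambda_a$ is $\poly_{\F, g, \e}(D) H^\e$, yielding (2a). Properties (2b) and (2d) follow immediately from the construction, and the $*$-format and $*$-degree bounds on $X^<, X^{bl}$ follow from Theorem \ref{BVfmlas}.

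The main obstacle is verifying (1): that each nonempty fibre of $X^{bl}$ is a basic block of degree $\const(\F, g, \e)$. Connectedness of $\phi(a, \cC_\phi)$ is immediate from connectedness of $\cC_\phi$; regularity of the image is arranged by refining via Theorem \ref{BVcd} with Remark \ref{rmk:analytic-cells}, keeping top-dimensional analytic cells in $X^{bl}$ and pushing lower-dimensional residues into $X^<$ via further $\const(\F, g, \e)$-many parameters. To produce a connected regular semialgebraic containing set $S$ of matching dimension $k' := \dim \phi(a, \cC_\phi)$: when $k' = n$ take $S = I^n$; when $k' = n - 1$ take a connected component of the smooth locus of $\{P = 0\}$; when $k' < n - 1$, adjoin $n - 1 - k'$ further bounded-degree auxiliary hypersurfaces whose common zero locus with $\{P = 0\}$ has dimension $k'$, obtainable definably via, for instance, generic Bertini-style intersection arguments (at the cost of additional coordinates in $\alpha$, still $\const(\F, g, \e)$-many) and passage to a connected smooth component. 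A secondary subtlety is the packaging of the $\poly_{\F, g, \e}(D)$-many parameterizing maps into the single auxiliary coordinate $t$, so that $M$ is bounded independently of $D$.
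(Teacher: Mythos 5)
There is a genuine gap, in two places, both at the heart of the induction. First, your drop case: from $Z_\alpha\subsetneq\cC_\phi$ you conclude $\dim\phi(a,Z_\alpha)<\dim\phi(a,\cC_\phi)$ ``since $\phi$ is cellular''. This does not follow: the parameterizing maps supplied by Theorem \ref{crpcrr} are only $C^r$ (they are built from compositions, linear substitutions, $x\mapsto x^2$, etc.), not analytic, so $P(\phi(a,\cdot))$ is merely a definable $C^r$ function and its zero set can be a proper subset of $\cC_\phi$ of full dimension; cellularity (injectivity) of $\phi$ gives no help. The paper avoids this by never intersecting the hypersurfaces with the charts: after Theorem \ref{BVcd} and Remark \ref{rmk:analytic-cells} it takes the fibres $X_a$ themselves to be \emph{analytic} cells, and it performs the intersection $X_a\cap\pi^{-1}\{P=0\}$ only in the branch where it already knows $\pi(X_a)$ is not contained in any degree-$d$ hypersurface; analyticity then forces the dimension drop needed for (2c).

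Second, and more seriously, your block case cannot deliver property (1). When $k'<n-1$ you propose to cut $\{P=0\}$ by $n-1-k'$ further bounded-degree hypersurfaces chosen ``generically, Bertini-style''; but the containing set $S$ in the definition of a basic block must \emph{contain} $X^{bl}_\alpha=\phi(a,\cC_\phi)$, and a generic bounded-degree hypersurface will not contain this (typically transcendental) set, so no such $S$ of dimension $k'$ is produced -- indeed, when no bounded-degree semialgebraic container of the right dimension exists, the correct move is precisely \emph{not} to declare a block. This is the structural dichotomy your proposal is missing and that the paper supplies: after reducing (via cell decomposition and a union/coding step) to fibres that are cells of a fixed dimension $k$ with $0<k<n$, it applies Proposition \ref{auxiliary_polynomials} to every projection $\rho_\Pi$ onto $k+1$ coordinates and splits the parameter space into $\Sigma$ (those $a$ for which every codimension-one projection $\rho_\Pi(X_a)$ lies in a degree-$d$ hypersurface, whence intersecting the preimages yields a semialgebraic $S_a$ of dimension $k$ and degree $\const(\F,g,\e)$ containing $X_a$) and its complement. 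Only over $\Sigma$ does anything enter $X^{bl}$, namely the $k$-dimensional cells of $\reg X_a\cap\reg S_a$, which are then genuine basic blocks; the singular and lower-dimensional pieces, and over $I^m\setminus\Sigma$ the intersections with the hypersurface preimages (which drop dimension by analyticity, as above), go into $X^<$. Without this containment dichotomy over all $(k+1)$-coordinate projections, neither (1) nor (2c) can be established, so the proposal as written does not prove the proposition.
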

\begin{proof} We begin by noting that if $Y,Z \subseteq I^m\times I^n$ are definable families of *-format $\const(\F,g,\e)$ and *-degree $\poly_{\F,g,\e}(D)$ for which the conclusion holds, and moreover are such that $X_a=Y_a\cup Z_a$ for each $a\in I^m$, then the conclusion also holds for $X$. To see this, suppose that $Y^<,Y^{bl}\subseteq I^{M_Y} \times I^n$ and $Z^< , Z^{bl}\subseteq I^{M_Z}\times I^n$ satisfy the conclusion for $Y$ and $Z$ respectively, as do the finite sets $\Lambda_a^Y \subseteq I^{M_Y}$ and $\Lambda_a^Z \subseteq I^{M_Z}$, for each $a \in I^m$. Without loss of generality, suppose that $M_Y\ge M_Z$ and set $M=M_Y + 1$. For each $\alpha = (\alpha_1,\ldots,\alpha_{M_Y}) \in I^{M_Y}$, set $\alpha' = (\alpha_1,\ldots,\alpha_{M_Z}) \in I^{M_Z}$ to be the projection of $\alpha$ onto the first $M_Z$ coordinates. Then, for each $(\alpha, \alpha_M) \in I^{M}$, let 
\begin{flalign*}
\begin{aligned} 
  & X_{(\alpha,\alpha_M)}^{<} = \begin{cases} 
Y^{<}_{\alpha} &\textrm{ if } \alpha_M = \frac{1}{3} \\
Z^{<}_{\alpha'} &\textrm{ if } \alpha_M = \frac{2}{3}\\
\emptyset &\textrm {otherwise;}\\
\end{cases}\\
  \end{aligned}
    &  \quad
    &  \begin{aligned}&
X_{(\alpha,\alpha_M)}^{bl} = \begin{cases} 
Y^{bl}_{\alpha} &\textrm{ if } \alpha_M = \frac{1}{3} \\
Z^{bl}_{\alpha'} &\textrm{ if } \alpha_M = \frac{2}{3}\\
\emptyset &\textrm {otherwise}.\\
\end{cases} \\
  \end{aligned}
\end{flalign*}
Finally, for each $a \in I^m$, let $\Lambda^X_a = \{ (\alpha,\frac{1}{3}) \in I^{M}  : \alpha \in \Lambda^{Y}_a\}\cup \{ (\alpha',\frac{2}{3},\ldots,\frac{2}{3}) \in I^{M}: \alpha'\in \Lambda^{Z}_a\}$. With these choices, the conclusion holds for $X$.


By cell decomposition (Theorem \ref{BVcd}), we can assume that each fibre of $X$ is a nonempty cell, and that these cells all have the same dimension $k$, say. If $k=0$ we can take $X^{<} =  \emptyset$ and $X^{bl}= (I^m\times I^n)\times I^n$,  viewed as a family over $I^m\times I^n$. So we can assume that $k>0$. If $k=n$, then we can take $X^<=\emptyset$ and $X^{bl}=X$ (as $X$ is now a family of open cells). So we can also assume that $k<n$.

Let $\epsilon>0$. Let $d$ be a nonnegative integer that is $\const(k,n,g,\e)$, large enough that if $\e(k,n,g,d)$ is as provided by Proposition \ref{auxiliary_polynomials}, then $\e(k,n,g,d)<\e$, and let $r,c$ be as in Proposition \ref{auxiliary_polynomials}, which are then also $\const(k,n,g,\e)$. By our parameterization result (Theorem \ref{crpcrr}; see also the end of Remark \ref{rmk:param}), we can assume that each fibre $X_a$ is the image of a map $\phi$ as in Proposition \ref{auxiliary_polynomials}.  By this proposition, if $\Pi \subseteq \{ 1,\ldots, n\}$ has size $k+1$, then, for each $a \in I^m$ and each $H\ge 1$, the set $(\rho_\Pi(X_a))(g,H)$ is contained in the union of at most $cH^{\epsilon}$ hypersurfaces of degree at most $d$, where $\rho_\Pi$ is the coordinate projection corresponding to $\Pi$. Now, if $a\in I^m$, then either there exists some $\Pi$ such that $\rho_\Pi(X_a)$ has codimension $1$ and $\rho_\Pi(X_a)$ does not belong to any hypersurface of degree at most $d$, or, for every $\Pi$ such that $\rho_\Pi(X_a)$ has codimension $1$, there is a hypersurface of degree at most $d$ containing $\rho_\Pi(X_a)$, in which case there is then a semialgebraic set $S_a \subseteq I^n$, of dimension $k$ and a degree $d'$ which is $\const(k,n,g,\e)$, such that $X_a\subseteq S_a$ (the set $S_a$ is given by intersecting the preimages of those hypersurfaces that contain  the various $\rho_\Pi(X_a)$ of codimension $1$).

We will define the families $X^<$ and $X^{bl}$ by giving their fibres, sometimes partitioning the parameter space, and perhaps with $M$ varying over the partition. At the end, these can easily be combined into a single family with a single $M$.

First consider
\begin{multline*}
\Sigma= \{a \in I^m : X_a \subseteq S_a \text{ for some semialgebraic set } S_a  \text{ of dimension }k \\ \text{ and degree }d'\}.
\end{multline*}
Note that $\Sigma$ is definable with *-format $\const(\F,g,\e)$ and *-degree $\poly_{\F,g,\e}(D)$.  By definable choice (Corollary \ref{choice}) we can witness the $S_a$ with a definable family $S$, again of  *-format $\const(\F,g,\e)$ and *-degree $\poly_{\F,g,\e}(D)$. Put
$$
X^{\Sigma,\reg}=\{(a,x) : a\in \Sigma, x \in \reg X_a\cap \reg S_a \}
$$
and
$$
X^{\Sigma,\sing} =\{ (a,x) : a \in \Sigma, x\in X_a \cap (\sing X_a \cup \sing S_a)\}.
$$
Note that these are both definable with similar complexity bounds to those for $\Sigma$. If $a \in\Sigma$, then $\dim X_a^{\Sigma, \sing}< k= \dim X_a$, so we  add all the fibres of $X^{\Sigma, \sing}$ to $X^<$. We next apply cell decomposition (Theorem \ref{BVcd}) to $X^{\Sigma,\reg}$, obtaining $\poly_{\F,g,\epsilon}(D)$ cells. The families of cells whose fibre dimension is less than $k$ go into $X^<$. This leaves the families of cells with fibre dimension $k$. These are added to $X^{bl}$, expanding the dimension of the parameter set (that is, $M$ in the statement) to ensure that the fibres of $X^{bl}$ are the fibres of the various families of cells with fibre dimension $k$. We have that  $\#\Lambda_a$, for $a \in \Sigma$, is bounded by the number of cells, which is $\poly_{\F,g,\epsilon}(D)$.

Finally we consider the $a$ which are not in $\Sigma$. For each such $a$,  there exists some $\Pi \subseteq \{1,\ldots  n\}$ of size $k+1$ such that $\rho_\Pi(X_a)$ has codimension $1$ and $\rho_\Pi(X_a)$ does not belong to any hypersurface of degree $d$. Partitioning the parameter space, we can suppose that the same $\Pi$ works for all parameters. We then write $\pi :I^n\to I^{k+1}$ for the corresponding projection. Then, for every $a \notin \Sigma$, we know that the set $\pi (X_a(g,H))$ is contained in the union of at most $cH^\e$ algebraic hypersurfaces of degree at most $d$ in $\R^{k+1}$, and that the projection $\pi (X_a)$ is not contained in any such hypersurface. By cell decomposition (Theorem \ref{BVcd}) and Remark \ref{rmk:analytic-cells}, we can suppose that each $X_a$ is an analytic cell. It follows that the intersection of $X_a$ with the preimage in $I^n$ of any such hypersurface has dimension strictly less than $k$. So we can add the following to $X^{<}$:
$$
\{ (a,P,x ) : a \notin \Sigma, x\in X_a, P( \pi (x))= 0 \},
$$
with $P$ ranging over polynomials in $k+1$ variables of degree at most $d$. (We do not add anything to $X^{bl}$ in the case that $a \notin \Sigma$.) Note that, for $a \notin \Sigma$, the size of $\Lambda_a$ is at most $cH^\epsilon$. This completes the proof.
\end{proof}

With this in place we can prove our first counting result. This is an effective version of Pila's result in \cite{PilaAlgPoints} for restricted sub-Pfaffian sets. 
\begin{thm}\label{thm:pw-blocks}
Let $m$ and $n$ be nonnegative integers, let $g$ be a positive integer and let $\F, D$ and $\e$ be positive real numbers. Suppose that $X\subseteq \R^{m+n}$ is a definable family of *-format at most $\F$ and *-degree at most $D$. There exist a nonnegative integer $M\ge m$ and a definable family $Y\subseteq \R^M\times \R^n$ of *-format $\const(\F,g,\e)$ and *-degree $\poly (\F,g,\e)$ with the following properties. 
\begin{enumerate}
\item Every nonempty fibre of $Y$ in $I^n$ is a basic block of degree $\const(\F,g,\e)$.
\item For each $a \in \R^m$ and real number $H\ge 1$, there exists a finite set $\Lambda_a\subseteq \R^M$ such that:
\begin{enumerate}
\item[(a)] $\# \Lambda_a$ is $\poly_{\F,g,\e}(D)H^\e$ and, if $\alpha = (\alpha_1,\ldots,\alpha_M)\in\Lambda_a$, then $(\alpha_1,\ldots,\alpha_m)=a$;
\item[(b)] if $\alpha\in\Lambda_a$, then $Y_\alpha \subseteq X_a$;
\item[(c)] $X_a(g,H)\subseteq \bigcup_{\alpha\in\Lambda_a} Y_\alpha$.
\end{enumerate}
\end{enumerate}
\end{thm}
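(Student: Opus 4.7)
The plan is to derive Theorem \ref{thm:pw-blocks} from Proposition \ref{prop:pw-induction} in two steps: first reduce the ambient space from $\R^{m+n}$ to the unit box $I^{m+n}$ by a height-preserving semialgebraic partition, and then iterate Proposition \ref{prop:pw-induction} on fibre dimension to absorb the residue $X^<$ entirely into basic-block families.

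For the ambient reduction, I would partition $\R$ as $\{0,\pm 1\}\cup (0,1)\cup(-1,0)\cup(1,\infty)\cup(-\infty,-1)$, using the semialgebraic bijections $x\mapsto x$, $x\mapsto -x$, $x\mapsto 1/x$ and $x\mapsto -1/x$ to map each of the four open pieces onto $I$. All four maps preserve the multiplicative Weil height of algebraic numbers as well as their degree over $\Q$, so $X_a(g,H)$ is carried bijectively to points of equal height and degree in the transported fibres. Taking products, $\R^{m+n}$ decomposes into finitely many pieces (in number bounded in terms of $\F$), each mapped by a height-preserving semialgebraic bijection onto a subset of $I^{m+n}$. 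The transformed families are restricted sub-Pfaffian of *-format $\const(\F)$ and *-degree $\poly_{\F}(D)$. Moreover, basic blocks in $I^n$ of degree $\const(\F,g,\e)$ pull back to basic blocks in $\R^n$ of degree $\const(\F,g,\e)$, since a rational reparameterization of bounded degree sends the ambient regular connected semialgebraic set of a block to one of comparable degree (subdividing if needed, using semialgebraic cell decomposition, to restore connectedness and regularity).

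For the iteration, fix $\e':=\e/(n+1)$ and apply Proposition \ref{prop:pw-induction} to the transported $X\subseteq I^{m+n}$ with $\e$ replaced by $\e'$, yielding families $X^{bl,(1)}, X^{<,(1)}\subseteq I^{M_1}\times I^n$. Re-apply Proposition \ref{prop:pw-induction} to $X^{<,(1)}$, with the parameter space $I^{M_1}$ now playing the role of $I^m$, to obtain $X^{bl,(2)}, X^{<,(2)}$, and so on. The maximal fibre dimension strictly decreases at each step and starts at most $n$, so after at most $n+1$ iterations the residue $X^<$ has zero-dimensional fibres and is therefore swallowed by the $X^{bl}$ output in the degenerate case $k=0$ of Proposition \ref{prop:pw-induction}. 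The aggregate counting estimate is
\[
\#\Lambda_a\;\le\;\sum_{j=1}^{n+1}\poly_{\F,g,\e}(D)\,H^{\e'}\;\le\;\poly_{\F,g,\e}(D)\,H^{\e}.
\]
To combine the $X^{bl,(j)}$, together with the outputs on the various pieces of the ambient partition, into a single family $Y\subseteq\R^M\times\R^n$, I would reuse the disjoint-union construction from the opening paragraph of the proof of Proposition \ref{prop:pw-induction}: introduce one extra parameter coordinate indexing the iteration step and the partition piece, and take $M$ to be $1$ plus the maximum of the parameter dimensions arising.

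The main obstacle is the propagation of *-format and *-degree through the iteration: each application of Proposition \ref{prop:pw-induction} inflates the *-format by a $\const_{\F,g,\e}$ amount and the *-degree by a $\poly_{\F,g,\e}$ amount, so after $n+1$ applications (where $n\le\F$) the *-format is still $\const(\F,g,\e)$ and the *-degree is still $\poly_{\F,g,\e}(D)$; this uses the elementary fact that composing a $\poly_{\F,g,\e}$ bound with itself a bounded-in-$\F$ number of times yields another $\poly_{\F,g,\e}$ bound (the exponents multiply but remain effectively controlled in $\F,g,\e$). Modulo this bookkeeping, the only further point to verify is that the semialgebraic pullbacks preserve the basic-block property with comparable degree bounds, which follows from the action of rational maps of bounded degree on regular connected semialgebraic sets.
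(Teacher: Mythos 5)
Your proposal follows essentially the same route as the paper's proof: reduce to $I^{m+n}$ via the height- and degree-preserving maps $x\mapsto\pm x^{\pm1}$, iterate Proposition \ref{prop:pw-induction} (with $\e$ suitably divided) on the successive $<$-families until the fibre dimension is exhausted, and merge the resulting block families into a single family $Y$ by padding the parameter space. One correction to your bookkeeping: the parameter sets compound multiplicatively across iterations (each $\alpha$ in the level-$j$ set $\Lambda^{(j)}$ spawns its own level-$(j+1)$ set), so the aggregate count is bounded by $\sum_{j\le n+1}\bigl(\poly_{\F,g,\e}(D)H^{\e'}\bigr)^{j}$ rather than by the plain sum you display; with $\e'=\e/(n+1)$ this is still $\poly_{\F,g,\e}(D)H^{\e}$, so your conclusion is unaffected.
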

\begin{proof}
First, we can assume that $X\subseteq I^m \times \R^n$. Next, using the maps $x\mapsto \pm x^{\pm 1}$, we can assume that $X\subseteq I^m \times [0,1]^n$. And then inductively, we can assume that $X\subseteq I^m\times I^n$. We apply Proposition \ref{prop:pw-induction} to $X$, with $\epsilon/n$ in place of $\e$. We then apply it again to $X^<$, then to $X^{<<}$ and so on, until the resulting $<$-set is empty, so we need at most $n$ applications. We then adjust the resulting familes $X^{bl}, X^{< bl},\ldots$ so that they have the same parameter space dimension, and let $Y$ be the family whose fibres are those of $X^{bl}, X^{<bl},\ldots, X^{<\cdots<bl}$ (adjusting the dimension of the parameter space as necessary to ensure that each fibre is a basic block).
\end{proof}
Our remaining counting result is an effective version of the form of counting proved by Habegger and Pila \cite{HP}. We first need some definitions. Suppose that $X\subseteq \R^m\times\R^n$. For $g$ a positive integer and $H\ge 1$ a real number we put
$$
X^\sim (g,H) = \{ (x,y) \in X : [\Q(x),\Q] \leq g, H(x) \leq H \}
$$
and
$$
X^{\sim,iso}(g,H) = \{ (x,y) \in X^\sim(g,H) : y \text{ is isolated in } X_x\}.
$$

\begin{thm}\label{semi_thm} Let $\ell$, $m$ and $n$ be nonnegative integers, let $g$ be a positive integer and let $\F, D$ and $\e$ be positive real numbers.
Suppose that $X \subseteq \R^\ell\times \R^m\times \R^n$ is a definable family of *-format at most $\F$ and *-degree at most $D$. There exist a positive integer $J$ which is $\poly_{\F,g,\epsilon}(D)$, positive integers $k_j$, for $j=1,\ldots, J$, basic block families $W^{(j)}\subseteq \R^{k_j}\times \R^\ell \times \R^m$, for $j=1,\ldots, J$, of *-format $\const(\F,g,\epsilon)$ and *-degree $\poly_{\F,g,\epsilon}(D)$, continuous definable maps $\phi^{(j)}:W^{(j)} \to \R^n$ of *-format $\const(\F,g,\epsilon)$ and *-degree $\poly_{\F,g,\epsilon}(D)$, for $j=1,\ldots, J$, and a positive real number $C$ which is $\poly_{\F,g,\epsilon}(D)$ such that the following hold.
\begin{itemize}
\item[(i)] For all $j=1,\ldots, J$ and $(a',a)\in \R^{k_j}\times \R^\ell$, we have
\[
\Gamma(\phi^{(j)})_{(a',a)} \subseteq \left\{ (x,y) \in X_a : y \text{ is isolated in } X_{(a,x)}\right\}.
\]
\item[(ii)] Suppose that $a \in \R^{\ell}$. For any real number $H \ge 1$, the set $X^{\sim,iso}_a(g,H)$ is contained in the union of at most $CH^\epsilon$ graphs $\Gamma(\phi^{(j)})_{(a',a)}$ with $j\in \{ 1,\ldots, J\}$ and $a' \in \R^{k_j}$.
\end{itemize}
\end{thm}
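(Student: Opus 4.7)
The plan is to reduce to Theorem \ref{thm:pw-blocks} by extracting the continuous parameterizations of isolated fibre points as graph cells. First I would form the definable set
$$X^{iso}=\{(a,x,y)\in X : y\text{ is isolated in } X_{(a,x)}\},$$
which by Theorem \ref{BVfmlas} has *-format $\const(\F)$ and *-degree $\poly_\F(D)$, and apply Theorem \ref{BVcd} to $X^{iso}$, obtaining cells $C_1,\ldots,C_J$ with $J$ being $\poly_\F(D)$ and each cell of *-format $\const(\F)$ and *-degree $\poly_\F(D)$. Let $\pi$ denote the projection $\R^\ell\times\R^m\times\R^n\to\R^\ell\times\R^m$. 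For each $C_j\subseteq X^{iso}$, the fibre of $\pi|_{C_j}$ over any $(a,x)\in\pi(C_j)$ is contained in the discrete (hence $0$-dimensional) set $X^{iso}_{(a,x)}$, so $\dim C_j=\dim\pi(C_j)$. In the o-minimal cell structure this forces $C_j$ to be a graph cell: there is a continuous definable map $\psi_j:U_j\to\R^n$ with $U_j=\pi(C_j)$ and $C_j=\mathrm{graph}(\psi_j)$, and $\psi_j$ and $U_j$ inherit the complexity bounds of $C_j$.

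Next I would view each $U_j\subseteq\R^\ell\times\R^m$ as a definable family over $\R^\ell$ and apply Theorem \ref{thm:pw-blocks} with parameters $g$ and $\epsilon$. This yields integers $k_j$ and basic block families $W^{(j)}\subseteq\R^{k_j}\times\R^\ell\times\R^m$ of *-format $\const(\F,g,\epsilon)$ and *-degree $\poly_{\F,g,\epsilon}(D)$, together with, for each $a\in\R^\ell$ and $H\ge 1$, finite sets $\Lambda^{(j)}_a\subseteq\R^{k_j}$ of size $\poly_{\F,g,\epsilon}(D)H^\epsilon$, such that $W^{(j)}_{(\alpha,a)}\subseteq(U_j)_a$ whenever $\alpha\in\Lambda^{(j)}_a$, and such that $(U_j)_a(g,H)\subseteq\bigcup_{\alpha\in\Lambda^{(j)}_a}W^{(j)}_{(\alpha,a)}$. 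Setting $\phi^{(j)}:W^{(j)}\to\R^n$ to be the restriction of $\psi_j$ to the image of $W^{(j)}$ under projection to the $(a,x)$-coordinates gives a continuous definable map of *-format $\const(\F,g,\epsilon)$ and *-degree $\poly_{\F,g,\epsilon}(D)$.

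Condition (i) is then immediate, since $\Gamma(\phi^{(j)})_{(a',a)}\subseteq C_j\cap(\{a\}\times\R^{m+n})\subseteq X^{iso}_a$. For condition (ii), any $(x,y)\in X^{\sim,iso}_a(g,H)$ satisfies $(a,x,y)\in X^{iso}$ and so lies in some unique $C_j$, whence $y=\psi_j(a,x)$ and $x\in (U_j)_a(g,H)$; by the covering property of $W^{(j)}$, there exists $\alpha\in\Lambda^{(j)}_a$ with $x\in W^{(j)}_{(\alpha,a)}$, hence $(x,y)\in\Gamma(\phi^{(j)})_{(\alpha,a)}$. Summing over $j$, the total number of graphs used is at most $J\cdot\max_j\#\Lambda^{(j)}_a$, which is $\poly_{\F,g,\epsilon}(D)H^\epsilon$, yielding the required $C$. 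The main subtle step is identifying the cells of $X^{iso}$ as graph cells of continuous functions with polynomially-controlled complexity; this uses the $0$-dimensionality of the isolated-point fibres together with the standard feature of o-minimal cell decomposition that cells of the same dimension as their base projection are graphs of continuous functions. The remaining work is straightforward tracking of the bounds through Theorems \ref{BVfmlas}, \ref{BVcd}, and \ref{thm:pw-blocks}.
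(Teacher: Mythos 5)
This is correct and essentially the paper's own argument: the paper likewise passes to the isolated-point set $X'$, uses Theorem \ref{BVcd} to write it as $\poly_{\F}(D)$ graphs of continuous definable functions $f^{(i)}\colon E^{(i)}\to\R^{n}$ over cells $E^{(i)}\subseteq\R^{\ell}\times\R^{m}$, and then runs the Habegger--Pila Theorem 7.1 argument with Theorem \ref{thm:pw-blocks} in place of their block-counting theorem, which is precisely the apply-blocks-to-the-base-cells-and-compose step you spell out. The one detail to tidy is that Theorem \ref{thm:pw-blocks} guarantees $Y_\alpha\subseteq X_a$ only for $\alpha\in\Lambda_a$, so to have $\phi^{(j)}$ defined on all of $W^{(j)}$ and to get (i) for every $(a',a)$ you should first discard those fibres of $W^{(j)}$ not contained in the corresponding $(U_j)_a$ (a definable condition of controlled *-format and *-degree by Theorem \ref{BVfmlas}); this retains every fibre indexed by $\Lambda^{(j)}_a$, so (ii) and the bounds are unaffected.
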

\begin{proof} We follow the original proof of Habegger and Pila \cite{HP}, using our block counting result, Theorem \ref{thm:pw-blocks}, and making a small adjustment to get the complexity bounds. First note that 
\[
X'= \left\{ (a,x,y) \in X : y \text{ is isolated in }X_{(a,x)}\right\}
\]
is definable with *-format $\const(\F)$ and *-degree $\poly_{\F}(D)$, by Theorem \ref{BVfmlas}. So we may assume that $X'=X$. Then, by cell decomposition (Theorem \ref{BVcd}), we have $\text{poly}_{\F}(D)$ cells $E^{(i)}\subseteq \R^\ell\times \R^m$ and definable continuous functions $f^{(i)}:E^{(i)}\to \R^n$, with $E^{(i)}$ and $f^{(i)}$ both of *-format $\const(\mathcal{F})$  and *-degree $\poly_\F(D)$, such that 
\[
\bigcup_{i}\Gamma(f^{(i)})=X.
\]
We can now proceed exactly as in \cite[Theorem 7.1]{HP}, using our Theorem \ref{thm:pw-blocks} when they appeal to \cite[Theorem 7.3]{HP}.
\end{proof}
\begin{cor}\label{semi_cor} Suppose that $X$, $g$ and $\epsilon$ are as in Theorem \ref{semi_thm}. Let $\pi_1:\R^m\times\R^n\to\R^m$ and $\pi_2:\R^m\times \R^n\to \R^n$ be the natural projections. There exists a positive real number $c$ which is $\poly_{\F,g,\epsilon}(D)$ with the following property. Suppose that $a \in \R^\ell$, and that the real number $H\ge 1 $ and set $ \Sigma \subseteq X_a^{\sim}(g,H)$ are such that
\[
\# \pi_2 (\Sigma) > c H^\epsilon.
\]
There exists a continuous definable $\beta:[0,1]\to X_a$ with the following properties.
\begin{itemize}
\item[(i)] The map $\pi_1\circ \beta : [0,1]\to \R^m$ is semialgebraic.
\item[(ii)] The map $\pi_2\circ \beta:[0,1]\to \R^n$ is not constant.
\item[(iii)] We have $\pi_2( \beta(0)) \in \pi_2 (\Sigma)$.
\item[(iv)] The restriction $\beta|_{(0,1)}$ is analytic (and in particular, $\pi_1\circ \beta|_{(0,1)}$ is analytic).
\end{itemize}
\end{cor}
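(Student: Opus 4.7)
The plan is to apply Theorem \ref{semi_thm} to $X$ with the given $g$ and $\epsilon$, producing the families $W^{(j)}$, the continuous definable maps $\phi^{(j)}$, and the constant $C$, all with the desired complexity bounds; set $c := C+1$, which is $\poly_{\F,g,\epsilon}(D)$ as required. Partition $\Sigma = \Sigma_{\mathrm{iso}} \sqcup \Sigma_{\neg\mathrm{iso}}$ according to whether the $\R^n$-coordinate is isolated in its fibre $X_{(a,x)}$. The hypothesis $\#\pi_2(\Sigma) > cH^\epsilon$ implies either $\Sigma_{\neg\mathrm{iso}} \neq \emptyset$ or $\#\pi_2(\Sigma_{\mathrm{iso}}) > CH^\epsilon$; these will be handled separately.

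In the non-isolated case, pick any $(x_0,y_0) \in \Sigma_{\neg\mathrm{iso}}$; then $y_0$ is a non-isolated point of the definable set $X_{(a,x_0)} \subseteq \R^n$, so analytic cell decomposition (Remark \ref{rmk:analytic-cells}) yields a continuous definable arc $\gamma:[0,1] \to X_{(a,x_0)}$ with $\gamma(0)=y_0$, $\gamma$ non-constant, and $\gamma|_{(0,1)}$ real-analytic. Then $\beta(t) := (x_0,\gamma(t))$ fulfils (i)--(iv), with $\pi_1 \circ \beta \equiv x_0$ trivially semialgebraic.

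In the isolated case, Theorem \ref{semi_thm}(ii) covers $\Sigma_{\mathrm{iso}} \subseteq X_a^{\sim,iso}(g,H)$ by at most $CH^\epsilon$ graphs $\Gamma(\phi^{(j)})_{(a',a)}$, so the pigeonhole principle produces $(x_1,y_1),(x_2,y_2) \in \Sigma_{\mathrm{iso}}$ on a common such graph with $y_1 \neq y_2$ (hence $x_1 \neq x_2$). Write $\phi := \phi^{(j)}(a',a,\cdot) : B \to \R^n$, where $B := W^{(j)}_{(a',a)}$ is a basic block contained in a connected regular semialgebraic set $S$ of the same dimension; by invariance of domain $B$ is open in $S$. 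The level set $E := \phi^{-1}(y_1)$ is a proper non-empty closed subset of the connected set $B$, so it admits a boundary point $x_1' \in B$. Since $B$ is open in $S$, some small semialgebraic open neighborhood $V \subseteq B$ of $x_1'$ in $S$ contains a point $x^*$ with $\phi(x^*) \neq y_1$; connect $x_1'$ to $x^*$ by a semialgebraic arc $\sigma:[0,1] \to V$ that is analytic on $(0,1)$. Setting $\beta(t) := (\sigma(t),\phi(\sigma(t)))$, Theorem \ref{semi_thm}(i) places $\beta$ in $\Gamma(\phi) \subseteq X_a$, and properties (i)--(iv) are routine to verify: $\pi_1 \circ \beta = \sigma$ is semialgebraic, $\pi_2(\beta(0)) = \phi(x_1') = y_1 \in \pi_2(\Sigma)$, and $\pi_2 \circ \beta$ is non-constant since $\phi(x^*) \neq y_1$; analyticity of $\beta|_{(0,1)}$ follows after shrinking $V$ into an analytic cell on which $\phi$ is real-analytic.

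The main subtlety lies in the isolated case: producing a semialgebraic $\pi_1 \circ \beta$ while simultaneously keeping $\pi_2 \circ \beta$ non-constant and $\beta$ inside $\Gamma(\phi)$. A naive semialgebraic arc in $S$ from $x_1$ to $x_2$ need not remain in $B$, and even if it does the composition $\phi \circ \sigma$ could start out constantly equal to $y_1$. The resolution is to shift the basepoint from $x_1$ to a boundary point $x_1'$ of $\phi^{-1}(y_1)$ within $B$: one still has $\phi(x_1') = y_1 \in \pi_2(\Sigma)$, while semialgebraic neighborhoods of $x_1'$ in $S$ automatically lie inside $B$ (by openness) and must contain points where $\phi$ differs from $y_1$ (by the boundary property), so a suitable semialgebraic arc is available.
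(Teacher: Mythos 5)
Your overall strategy is correct, and it is in substance the Habegger--Pila argument that the paper's own proof simply invokes (the paper proves the corollary by citing \cite{HP} verbatim, with Theorem \ref{semi_thm} replacing their Theorem 7.1): the split into a non-isolated case handled by curve selection inside the fibre and an isolated case handled by pigeonhole over the at most $CH^\epsilon$ graphs, and, crucially, the shift of the basepoint to a boundary point $x_1'$ of the level set $E=\phi^{-1}(y_1)$ inside the block $B$, together with the observation that $B$ is open in the ambient connected regular semialgebraic set $S$, are exactly the right moves and resolve the main subtlety (keeping $\pi_2\circ\beta$ non-constant while $\pi_1\circ\beta$ is semialgebraic and $\beta$ stays in the graph).

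The one step that does not work as written is your final analyticity claim: you cannot in general ``shrink $V$ into an analytic cell on which $\phi$ is real-analytic''. The point $x_1'$ lies on the frontier of $E$, which is precisely where analyticity of $\phi$ may degenerate, so no neighbourhood of $x_1'$ need lie in a single such cell; moreover these cells are only definable, not semialgebraic, so you cannot force the semialgebraic arc $\sigma$ into one by semialgebraic curve selection without losing property (i). The standard repair is one-dimensional: take a connected semialgebraic neighbourhood $V'\subseteq B$ of $x_1'$ in $S$ (e.g.\ the connected component containing $x_1'$ of the intersection of $S$ with a small Euclidean ball -- this also fixes the minor point that your $V$ need not be connected, which is needed for the semialgebraic path to exist), pick $x^*\in V'$ with $\phi(x^*)\ne y_1$ and any semialgebraic path $p\colon[0,1]\to V'$ from $x_1'$ to $x^*$, set $t_1:=\sup\{t:\phi\circ p\equiv y_1\text{ on }[0,t]\}$, and choose $s>t_1$ so small that both $p$ and $\phi\circ p$ are analytic on $(t_1,s)$ (each is a one-variable definable map, hence analytic off finitely many points, using Remark \ref{rmk:analytic-cells}). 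Reparametrizing $[t_1,s]$ to $[0,1]$ and setting $\beta=(p,\phi\circ p)$ gives $\pi_2(\beta(0))=y_1\in\pi_2(\Sigma)$, non-constancy of $\pi_2\circ\beta$ by the choice of $t_1$, and analyticity on $(0,1)$. Note that simply truncating your arc near $0$ to gain analyticity would not suffice, since the semialgebraic path may initially run inside $E$ (it cannot see the definable set $E$), which would destroy property (ii); this is why the ``last time equal to $y_1$'' bookkeeping is needed. With this replacement your proof is complete and agrees with the argument the paper refers to.
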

\begin{proof}
The proof is exactly the same as in \cite{HP}, using Theorem \ref{semi_thm} in place of their Theorem 7.1. Note that we do not make any assertion about the complexity of $\beta$. 
\end{proof}


We conclude this section with a discussion of how to establish some related counting results, especially in the unrestricted sub-Pfaffian setting. In particular, we outline how Theorem \ref{Introthm} follows from Theorem \ref{thm:pw-blocks} and the exhaustion idea of \cite{JT}.

First, note that the analogue of Theorem \ref{Introthm} for definable sets (that is, for restricted sub-Pfaffian sets) can be obtained directly from Theorem \ref{thm:pw-blocks}, as can be seen by the following argument. 
Let $X \subseteq \R^n$ be a definable set of format at most $k$ and degree at most $d$. Note that, by Remark \ref{rmk:fd*fd}, $X$ has *-format at most $k$ and *-degree $\poly_{k}(d)$. 
Applying Theorem \ref{thm:pw-blocks} to $X$ with $g=1$, we obtain, for some nonnegative integer $M$, a definable family $Y \subseteq \R^M \times \R^n$ and, for each $H \geq 1$, a finite set $\Lambda \subseteq \R^M$ of size $\poly_{k,\epsilon}(d)H^{\epsilon}$ such that $X(\Q,H) \subseteq \bigcup_{\alpha \in \Lambda} Y_{\alpha}$. Moreover, for every $\alpha \in \Lambda$, the fibre $Y_{\alpha}$ is a basic block (of degree $\const(k,\epsilon)$) contained in $X$, so is either a point or lies in $X^{alg}$. Therefore, $X^{tr}(\Q,H)$ contains only $\poly_{k,\epsilon}(d)H^{\epsilon}$-many points, that is, 
there exist positive real numbers $c, \gamma$, effectively computable from $k$ and $\epsilon$ (and independent of $H$), such that $\# X^{tr}(\Q,H) \leq cd^{\gamma} H^{\epsilon}$, for all $H \geq 1$.

We now derive Theorem \ref{Introthm} from this restricted analogue. Here we let $X \subseteq \R^n$ be an (unrestricted) sub-Pfaffian set of format at most $k$ and degree at most $d$. For any fixed $H \geq 1$, there is an open box $B_H$ such that, if we define a set $X_H \subseteq \R^n$ from $X$ by replacing, in the definition of $X$, all of the Pfaffian functions by their restrictions to $B_H$, then the set $X_H$ is restricted sub-Pfaffian (i.e. definable), and $\#X^{tr}_{H}(\Q,H) = \#X^{tr}(\Q,H)$. Note in particular that the format and degree of $X_H$ are the same as those of $X$ (that is, are $k$ and $d$ respectively), and hence are independent of $H$. 
Applying the above restricted analogue of Theorem \ref{Introthm} to $X_H$ gives positive real numbers $c, \gamma$, effectively computable from $k$ and $\epsilon$ and independent of $H$, such that $\#X^{tr}(\Q,H) = \#X^{tr}_{H}(\Q,H) \leq cd^{\gamma} H^{\epsilon}$, as required. Clearly, the more general version of Theorem \ref{Introthm} discussed in the introduction, in which algebraic points of bounded degree over $\Q$ are counted, follows via an analogous argument.

Finally, we note that this kind of exhaustion argument can also be used straightforwardly to derive a general analogue of Corollary  \ref{semi_cor} for (unrestricted) sub-Pfaffian sets.

\section{Applications} \label{sec:apps}

\subsection{General setting }\label{generalsetting}

We start by describing the general setting. Let $g\geq 1$ and $A = E_1\times \cdots \times E_g$, where $E_1, \dots, E_g$ are elliptic curves defined over the complex numbers. 
For each $i=1,\ldots,g$, we denote by $O$ the marked point of $E_i$ that serves as the identity element and by $\mathcal{L}_i$ the line bundle on $E_i$ with divisor $(O)$. We can use those to embed  $A =  E_1\times \cdots \times E_g \hookrightarrow \mathbb{P}_2^g$, where, for each $i=1,\ldots,g$, $E_i$ is embedded into $\mathbb{P}_2$ via a Weierstrass equation (as specified below) and we set 
\begin{align}\label{linebundle}
 \mathcal{L} = \sum_{i =1}^g \pi_i^*\mathcal{L}_i,
\end{align}
which is an ample line bundle on $A$ with divisor $D_g = \sum_{i = 1}^g[\pi_i^*O]$. Here $\pi_i:A \rightarrow E_i$ is the projection to the $i$-th elliptic curve, for $i =1,\dots, g$.  In what follows we denote by $\deg = \deg_{\mathcal{L}}$ the degree function given by $\mathcal{L}$. 

Below, all constants depend on this choice of line bundle (as it determines the degree), though we will not explicitly mention this.


For each $i=1,\ldots,g$, let $\omega_1^{(i)}, \omega_2^{(i)}$, be a basis of the lattice $\Omega_i$ corresponding to the Weierstrass equation for $E_i$, such that $\omega_2^{(i)}/\omega_1^{(i)}$ lies in the standard fundamental domain. 

Let $\Exp_{A}\colon\C^g \to A$ be the exponential map of $A$ that sends 
$(z_1, \dots, z_g) \in \mathbb{C}^g$, with $z_i\nin \Omega_i$ for $i=1,\ldots,g$, to the affine chart $A \setminus D_g$ via
$$(z_1, \dots, z_g)\rightarrow (\wp_1(z_1), \wp_1'(z_1), \dots, \wp_{g}(z_g), \wp_g'(z_g)),  $$
where $\wp_i$ is the $\wp$-function determined by the lattice $\Omega_i$, for $i = 1, \dots, g$. 
We set 
$$(\omega_1, \dots, \omega_{2g}) = (\omega^{(1)}_1, \omega_2^{(1)}, \dots, \omega_1^{(g)}, \omega_2^{(g)}).$$
By \cite[Theorem 1]{JSdefns},  the graph of $\Exp_A$ restricted to 
\begin{multline*}
F_A = \Bigg\{\sum_{i = 1}^{2g}t_i\omega_i: t_i \in [0,1), i=1,\dots, 2g, \text{ with } t_it_{i+1}\ne 0 \\[-5\jot] \text{ for } i=2j-1 \text{ where }j=1,\ldots g \Bigg\}
\end{multline*}
is a semi-Pfaffian set with format bounded by $c_1$ and degree bounded by $c_2$, with effective $c_1$ and $c_2$ depending on $g$. 
We can apply this   result directly to the geometry of products of elliptic curves. The following result could perhaps be obtained in other ways but we found no reference in the literature. 

\begin{lemma}\label{cosets} Let $V\subseteq A$ be a strict algebraic subvariety of degree $\deg(V)$. There exist real effectively computable constants $c$ and $m$ depending only on $g$ such that the number $N_A$ of maximal connected subgroups contained in $V$ is such that
$$N_A \leq c \deg(V)^{m}.$$
\end{lemma}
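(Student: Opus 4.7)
The plan is to identify maximal connected algebraic subgroups of $A$ contained in $V$ with certain $\mathbb{Q}$-rational points on a restricted sub-Pfaffian subset of a Grassmannian, and then to count those points using the effective cell decomposition (Theorem~\ref{BVcd}) and block-counting (Theorem~\ref{thm:pw-blocks}) results of this paper. Identifying $\mathbb{C}^g$ with $\R^{2g}$ via the real basis $\omega_1,\ldots,\omega_{2g}$ of the period lattice, a connected algebraic subgroup $B\subseteq A$ of positive dimension $k$ corresponds bijectively to its Lie algebra $L_B\subseteq\R^{2g}$, a $2k$-dimensional $\mathbb{Q}$-rational $\mathbb{C}$-invariant real subspace, and the inclusion $B\subseteq V$ is equivalent to $L_B\cap F_A\subseteq W$, where
$$W := \Bigl\{t\in[0,1)^{2g} : \Exp_A\Bigl(\sum_{i=1}^{2g} t_i\omega_i\Bigr)\in V\Bigr\}.$$

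By \cite[Theorem 1]{JSdefns}, together with the algebraicity of $V\subseteq\mathbb{P}_2^g$ of degree at most $\deg V$, I would first check that $W$ is a restricted sub-Pfaffian set of $*$-format $\const(g)$ and $*$-degree $\poly_g(\deg V)$. Consequently, for each $k$ with $1\leq k\leq g$, the set
$$\tilde U_k := \{L \in \mathrm{Gr}(2k,\R^{2g}) : L\text{ is }\mathbb{C}\text{-invariant and }L\cap F_A\subseteq W\}$$
is a restricted sub-Pfaffian subset of the Grassmannian of the same quality of complexity by Theorem~\ref{BVfmlas}, and its $\mathbb{Q}$-rational points are exactly the Lie algebras of $k$-dimensional connected algebraic subgroups $B\subseteq V$.

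Next, I would apply Theorem~\ref{BVcd} to each $\tilde U_k$, obtaining a decomposition into $\poly_g(\deg V)$ restricted sub-Pfaffian cells. Each isolated $\mathbb{Q}$-rational point of $\tilde U_k$ contributes at most one subgroup, while within each positive-dimensional cell the maximal subgroups contained in $V$ form a definable subfamily whose $\mathbb{Q}$-rational members can be counted via Theorem~\ref{thm:pw-blocks} applied to the maximality locus (itself restricted sub-Pfaffian of $*$-degree $\poly_g(\deg V)$). Summing the resulting contributions across all cells and all $k=1,\ldots,g$ yields $N_A\leq c\deg(V)^m$ for effectively computable constants $c$ and $m$ depending only on $g$. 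The main obstacle will be the treatment of positive-dimensional cells in $\tilde U_k$, which arise genuinely when $V$ is itself a union of cosets of a subgroup: in such cases one must identify maximal elements across different dimensions, and keeping the count polynomial in $\deg V$ likely requires an induction on $\dim V$ together with arithmetic B\'ezout-type bounds relating $\deg B$ to $\deg V$ for subgroups $B\subseteq V$.
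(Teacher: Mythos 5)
There is a genuine gap, and it sits exactly where you defer the difficulty to a closing remark. Your plan is to count the subgroups as $\mathbb{Q}$-rational points of the sets $\tilde U_k$ in the Grassmannian via Theorem \ref{thm:pw-blocks}, but that theorem (like Theorem \ref{Introthm}) only counts rational points \emph{of height at most $H$}, and only those not absorbed into semialgebraic blocks. To get any bound at all you must first know that the rational points you want to count have height bounded polynomially in $\deg(V)$ --- equivalently, that the lattice of every maximal connected subgroup $H\subseteq V$ is generated by integer vectors of size $\poly_g(\deg V)$. That is the substantive content of the lemma, and your argument never produces it; the ``arithmetic B\'ezout-type bounds relating $\deg B$ to $\deg V$'' that you mention only at the end are precisely the missing ingredient (in the paper this is Lemma 2 of Bombieri--Zannier \cite{BZuniform}). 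Moreover, even granting a height bound, block counting gives no control over rational points lying on the algebraic part of $\tilde U_k$: positive-dimensional semialgebraic families of invariant subspaces --- the very ``main obstacle'' you flag --- are exactly where Theorem \ref{thm:pw-blocks} returns blocks rather than a point count, and the proposed repair by induction on $\dim V$ is not carried out. So as written the proof does not close, and the two missing pieces (a degree/height bound for subgroups contained in $V$, and a separate treatment of families of subgroups) are the whole of the problem.

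It is also worth noting that once the degree bound is available, the counting machinery becomes unnecessary, which is how the paper proceeds: by \cite{BZuniform}, any maximal connected subgroup $H\subseteq V$ has $\deg(H)\le c\deg(V)^m$; the covolume of the corresponding lattice $L\subseteq\Z^{2g}$ is bounded polynomially in $\deg(H)$, because the preimage of $H$ in the fundamental domain $F_A$ is, by \cite[Theorem 1]{JSdefns}, a Pfaffian-definable set with polynomially many connected components, each an affine slice of the unit cube of uniformly bounded volume; Minkowski's second theorem (Lemma \ref{lattices}) then yields generators of $L$ of euclidean length $O_g(\covol(L))$, hence integer vectors with entries bounded polynomially in $\deg(V)$. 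Since a connected algebraic subgroup is determined by its lattice, the number of candidates is at most the number of such tuples of integer vectors, which is $c\deg(V)^m$. This route uses only the Bombieri--Zannier degree bound, effective component counting for Pfaffian sets, and geometry of numbers --- no parameterization, cell decomposition over the Grassmannian, or Pila--Wilkie input --- and it avoids the height and algebraic-part issues that block your approach.
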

Before proving Lemma \ref{cosets}, we recall a general fact about subgroups of $\Z^{n}$. 
\begin{lemma} \label{lattices}Let $L \subseteq \Z^{n}$ be a subgroup and let $W = L\otimes \R$ be the real vector space generated by $L$. Further, let $\vol_W$ be the measure obtained by restricting the euclidean metric on $\R^n$ to $W$, and finally let $\covol(L)$ be the volume of a fundamental domain of $L$ in $W$. There exists an effectively computable constant $c_n$ depending only on $n$ and not on $L$ such that $L$ is generated by $2\dim(W)$ vectors $v_1, \dots, v_{2\dim (W)}$ whose euclidean length is such that
$$|v_i| \leq c_n\covol(L).$$
\end{lemma}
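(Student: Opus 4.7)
My plan is to apply standard geometry-of-numbers to $L$ inside its own real span $W$, exploiting the arithmetic fact that $L \subseteq \Z^n$ forces every nonzero vector of $L$ to have euclidean length at least $1$. Thus all successive minima $\lambda_1 \le \cdots \le \lambda_d$ of $L$ in $W$ (with $d := \dim W$) satisfy $\lambda_i \ge 1$. Minkowski's second theorem gives
\[
\lambda_1 \cdots \lambda_d \le \frac{2^d}{\vol_W(B_d)} \covol(L),
\]
where $B_d$ is the euclidean unit ball in $W$, and combining this with $\lambda_i \ge 1$ for $i < d$ yields $\lambda_d \le C_d \covol(L)$ for an effective constant $C_d$ depending only on $d \le n$.

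The next step is to turn linearly independent short vectors into actual generators. I would invoke a classical reduction theorem (for instance Mahler's result that any rank-$d$ lattice admits a basis $b_1, \ldots, b_d$ with $|b_i| \le c'_d \lambda_i$, or a Minkowski-reduced basis) to extract from $L$ an honest basis $b_1, \ldots, b_d$ satisfying $|b_i| \le c'_d \lambda_d \le c_n \covol(L)$. Setting $v_i := b_i$ for $i = 1, \dots, d$ and repeating (e.g.\ $v_{d+j} := b_j$ for $j = 1, \dots, d$) produces the required $2d$ generators; since every step is quantitative and explicit, the resulting $c_n$ is effectively computable from $n$.

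A self-contained alternative avoiding the appeal to a reduction theorem is to induct on $d$: pick a shortest $v_1 \in L$ (which is automatically primitive), orthogonally project $L$ onto $v_1^{\perp} \cap W$ to obtain a rank-$(d-1)$ lattice $L'$ with $\covol(L') = \covol(L)/|v_1|$, apply the inductive hypothesis to $L'$, and lift the resulting generators back to $L$ by subtracting integer multiples of $v_1$ so that the $v_1$-component of each lift has norm $\le |v_1|/2$. Together with $v_1$ this produces $\le 2d-1$ generators of $L$, and the length bound closes using $\lambda_1 \le \gamma_d \covol(L)^{1/d}$ together with the a priori lower bound $\covol(L) \ge \covol(W \cap \Z^n) \ge 1$ (the Gram matrix of an integer basis of the primitive lattice $W \cap \Z^n$ has nonzero integer determinant). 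The principal obstacle along this inductive route is that integrality of $L \subseteq \Z^n$ does not descend to $L' \subseteq W'$, so one cannot directly reuse the bound $\lambda_i \ge 1$; I would handle this by rephrasing the induction intrinsically in terms of $\covol$ and $\lambda_1$, tracking an effective lower bound $\lambda_1(L') \ge \tfrac{\sqrt{3}}{2} \lambda_1(L)$ obtained from adjusting each lift to have small $v_1$-component. The slack of $2d$ (rather than a sharper $d$) in the conclusion easily absorbs any loss from this step, which is presumably why the lemma is stated in this weaker form.
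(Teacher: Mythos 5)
Your primary argument is correct, and its first half is the same as the paper's: both apply Minkowski's second theorem to $L$ inside $W$, and both need the integrality $L\subseteq\Z^n$ (every nonzero vector has euclidean length at least $1$, i.e.\ $\lambda_i\ge 1$) to turn the product bound $\lambda_1\cdots\lambda_d\le C_d\covol(L)$ into an individual bound $\lambda_d\le c_n\covol(L)$ --- a point you make explicit and the paper leaves implicit, though it is needed there as well. You diverge in the second step, passing from short linearly independent vectors to generators: you invoke a classical reduction theorem (Mahler/Minkowski-reduced bases, $|b_i|\le c'_d\lambda_i$), which in fact produces a genuine basis of $d=\dim W$ short vectors, so the allowance of $2\dim(W)$ generators becomes redundant padding. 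The paper instead avoids any reduction theorem by an elementary device that explains the $2\dim(W)$ in the statement: fix an arbitrary basis of $L$ and replace each basis vector by its representative modulo the sublattice $L'$ generated by the $d$ Minkowski vectors $v_1,\dots,v_d$, the representative being taken in the fundamental parallelepiped $\{\sum_i t_i v_i : t_i\in[0,1)\}$; such a representative has length at most $\sum_i|v_i|\le n\,c_n\covol(L)$, and these $d$ representatives together with $v_1,\dots,v_d$ generate $L$. So your route buys a sharper conclusion (a short basis) at the cost of citing a deeper classical theorem, while the paper's is self-contained given only Minkowski's second theorem. Your inductive ``self-contained alternative'' is plausible but, as you acknowledge, needs extra bookkeeping because integrality does not survive the projection onto $v_1^{\perp}$; since your main route already closes the proof, that sketch is not load-bearing and its gaps are harmless.
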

\begin{proof} We apply Minkowski's second theorem to obtain $\dim(W)$ linearly independent vectors $v_1,\dots, v_{\dim(W)}$ satisfying 
$$|v_1|\cdot\ldots\cdot|v_{\dim W}| \leq c_n\covol(L).$$
Now fix a basis for $L$.  Each basis element of $L$ has a representative modulo the lattice $L'$ generated by $v_1,\dots, v_{\dim (W)}$ in the domain $F = \{\sum_{i = 1}^nt_i v_i: t_i \in [0,1)\}$. Then $v_1,\dots, v_n$ together with these representatives fulfil the conditions of the lemma. 
\end{proof}
\begin{proof}[Proof of Lemma \ref{cosets}] In Lemma 2 of  \cite{BZuniform}, Bombieri and Zannier show that the degree of a maximal coset contained in $V$ is bounded by $c\deg(V)^{m}$, with effective $c$ and $m$ depending only on $g$. By Lemma \ref{lattices}, it is sufficient to bound the co-volume of a lattice $L$ of a subgroup $H$ contained in $V$. For this we consider the set 
$$\tilde{F} =  \{(b_1, \dots,b_{2g}) \in [0,1)^{2g}: \Exp_A(b_1\omega_1 + \dots + b_{2g}\omega_{2g}) \in  H \}.$$
The number of connected components of $\tilde{F}$ is bounded polynomially in $\deg(H)$ by a polynomial depending only on $g$. Each connected component is the intersection of an affine space $\tilde{W}$ with the hypercube $[0,1)^{2g}$. Now each such intersection has volume bounded by a constant $c_{\vol} $ depending only on $g$, where again the volume form is obtained by restricting the euclidean metric to the affine space $\tilde{W}$. Thus we obtain that 
$$\text{covol}(L) \leq \#\{\text{connected components of } \tilde{F}\}\times c_{\text{vol}}$$
and the claim. 
\end{proof}
Here we establish the counting results that will be applied throughout. 
\begin{lemma}\label{counting}
  Let $V \subseteq A$ be a strict algebraic subvariety of degree
  $\deg(V)$. Let $V^*$ be $V$ deprived of all translates of
  (positive-dimensional) abelian subvarieties of $A$ contained in $V$.
  For all $\delta > 0$, there exist effectively computable constants
  $c_\delta$ and $m$ depending only on $\delta$ and $g$ such that, for
  all positive integers $N$, we have
	$$\# \{P \in V^*(\mathbb{C}): P\text{ has order at most } N\} \leq c_\delta \deg(V)^mN^\delta.$$
	\end{lemma}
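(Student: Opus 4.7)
The plan is to apply the Pila--Zannier strategy, combining the effective complexity bounds on the graph of $\Exp_A|_{F_A}$ from \cite{JSdefns} with our effective Pila--Wilkie theorem, and then using Ax--Lindemann to identify the algebraic part of the preimage with translates of abelian subvarieties. Concretely, consider the restricted sub-Pfaffian set
\[
X = \left\{(t_1,\ldots,t_{2g}) \in [0,1)^{2g} : \textstyle\sum_{i=1}^{2g} t_i\omega_i \in F_A \text{ and } \Exp_A\big(\sum_i t_i\omega_i\big) \in V\right\}.
\]
By \cite[Theorem 1]{JSdefns}, the graph of $\Exp_A|_{F_A}$ is semi-Pfaffian with format and degree bounded effectively in $g$ alone. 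Since $V \subseteq \mathbb{P}_2^g$ is cut out by polynomials whose total degree is bounded in terms of $g$ and $\deg(V)$, the set $X$ is restricted sub-Pfaffian of format $\const(g)$ and degree $\poly_g(\deg V)$.

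A torsion point $P \in A(\C)$ of order $N' \le N$ whose projection to each factor $E_j$ avoids the marked point $O$ has a unique preimage in $F_A$ of the form $\sum (a_i/N')\omega_i$ with $0 \le a_i < N'$, so gives a rational point of $X$ of height at most $N$. Torsion points in $V^*$ having some projection equal to $O$ lie in the intersection of $V$ with a coordinate hyperplane at infinity, and an induction on $g$ absorbs their contribution into the final bound (the base case $g=1$ being immediate, as then $V^*$ is a finite set of size at most $\deg(V)$). Applying Theorem~\ref{Introthm} to $X$ with exponent $\delta$ yields
\[
\#X^{tr}(\Q,N) \le c_\delta \deg(V)^m N^\delta,
\]
for effectively computable $c_\delta, m$ depending only on $g$ and $\delta$.

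It then remains to show that every preimage in $X$ of a torsion point in $V^*$ lies in the transcendental part $X^{tr}$. Suppose instead that such a preimage $t \in X$ lies in a positive-dimensional connected semialgebraic subset $Y \subseteq X$. The complexification $Y^{\C}$ is a positive-dimensional complex-analytic subset of $\Exp_A^{-1}(V)$, so by Ax--Lindemann for products of elliptic curves (a classical consequence of Ax's theorem on analytic subgroups of semi-abelian varieties), the Zariski closure of $\Exp_A(Y^{\C})$ in $A$ is a translate $a+B \subseteq V$ of a positive-dimensional abelian subvariety $B \subseteq A$. But then $\Exp_A(t) \in a+B$, contradicting $\Exp_A(t) \in V^*$.

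The essential technical point is to verify that the complexity of $X$ is bounded as claimed and that this polynomial dependence on $\deg(V)$ propagates correctly through Theorem~\ref{Introthm} to yield the stated bound; this should be routine given the form of our effective Pila--Wilkie statement and the effective definability results of \cite{JSdefns}. The Ax--Lindemann input is the only non-counting ingredient and, being classical in this setting, requires no effectivity.
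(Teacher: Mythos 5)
Your proposal is correct and follows essentially the same route as the paper: pull torsion points back to rational points of height at most $N$ on the sub-Pfaffian set furnished by \cite[Theorem 1]{JSdefns}, apply Theorem \ref{Introthm}, invoke Ax--Lindemann to dispose of the algebraic part, and treat points with a coordinate equal to $O$ by projecting away that factor and inducting. The only cosmetic difference is that you define the definable set using $V$ and argue that the relevant preimages lie in its transcendental part, whereas the paper builds the set from $V^*\setminus D_g$ and observes that its algebraic part is empty --- the same Ax--Lindemann step either way.
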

\begin{proof}
Recall that we have $\Exp_A\colon\C^g \to A$ defined as above. Let $V_1 = V^* \setminus D_g \subseteq A\setminus D_g$, where we have chosen an affine chart as described above. We consider
\begin{multline*}
Z= \big\{ (b_1,\ldots,b_{2g})\in \left([0,1)^2\setminus \{ 0\}\right)^g : \\ \Exp_A\left(b_1\omega_1+b_2\omega_2,\ldots,b_{2g-1}\omega_{2g-1}+b_{2g}\omega_{2g}\right) \in V_1 \big\}.
\end{multline*}

By \cite[Theorem 1]{JSdefns}, the set $Z$ is a semi-Pfaffian set with format bounded by $c_1$ and degree bounded by $c_2\deg (V)$, with effective $c_1$ and $c_2$ depending on $g$.  A torsion point $P$ on $V(\mathbb C)$ of order at most $N$ corresponds to a rational point on $Z$ of height at most $N$. Since the algebraic part of $Z$ is empty, our Theorem \ref{Introthm} gives 
\begin{equation}\label{upper_bound}
	\# \{ P \in V_1(\mathbb C) : P \text{ is torsion of order at most }N \} \le c_\delta \deg (V)^{m} N^{\delta},
\end{equation}
with effective $c_\delta$ and $m$ depending only on $\delta$ and $g$. Now we consider the varieties $V_2^{(i)}\subseteq\prod_{j = 1}^{i-1}E_j\times \prod_{j = i +1}^gE_j$ given by 
$$V_2^{(i)} = \pi^{(i)}\left(V^* \cap  \prod_{j = 1}^{i-1}E_j\times O\times\prod_{j = i +1}^gE_j\right),$$
for  $i =1, \dots, g.$ Here $\pi^{(i)}$ is the canonical projection $A \rightarrow \prod_{j = 1}^{i-1}E_j\times \prod_{j = i +1}^gE_j$, for $i =1, \dots, g$. As $V^*$ does not contain cosets, we have $\dim (V_2^{(i)}) < \dim(V)$, for $i = 1, \dots, g$. We continue inductively until the dimension reaches 0. This process clearly stops after at most $g-1$ steps and we apply Theorem \ref{Introthm} at each step. After possibly adjusting constants we obtain the claim. 
\end{proof}
\begin{lemma} \label{countingmaximal} 
	Let $V \subseteq A$ be an irreducible algebraic subvariety that is not the translate of an abelian subvariety. Let $H \subseteq A$ be a connected algebraic subgroup and let $H^T$ be the abelian variety that is orthogonal to $H$ with respect to the Riemann form given by $\mathcal{L}$. For all $\delta > 0$, there exist  effectively computable constants $c_\delta$ and $m$ depending only on $\delta$ and $g$ such that, for all positive integers $N$, we have
	\begin{multline*} 
		\# \{P \in H^T(\mathbb{C}): P\text{ has order at most } N, P + H \subseteq V \text{ is maximal} \} \\ \leq c_\delta\deg(V)^mN^\delta.
	\end{multline*} 
\end{lemma}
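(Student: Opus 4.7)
The plan is to reduce the bound to Lemma~\ref{counting} applied to the auxiliary strict subvariety
\[
U \;:=\; \{P \in H^T : P + H \subseteq V\} \;\subseteq\; H^T \subseteq A.
\]
First I will check that $U$ is a proper subvariety of $A$: since $V$ is irreducible and is not a translate of an abelian subvariety, we have $V \neq A$ and hence $U \subsetneq H^T$; in the degenerate case $H = \{0\}$ one has $U = V$, which is strict by hypothesis. Either way, $U$ is a strict subvariety of $A$.

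The geometric heart of the argument is the equivalence, for $P \in H^T$,
\[
P+H \subseteq V \text{ is maximal} \;\Longleftrightarrow\; P \in U^*,
\]
where $U^*$ denotes $U$ deprived of all translates of positive-dimensional abelian subvarieties of $A$ contained in $U$ (so Lemma~\ref{counting} applies directly to $U$). To prove this I will exploit that $H^T$ is complementary to $H$: every connected algebraic subgroup $H' \supseteq H$ of $A$ decomposes as $H' = H + (H' \cap H^T)$, and $(H' \cap H^T)^0$ is a positive-dimensional abelian subvariety of $H^T$ whenever $H' \supsetneq H$. An enlargement $P + H \subsetneq P + H' \subseteq V$ therefore yields a positive-dimensional coset $P + (H' \cap H^T)^0 \subseteq U$ through $P$, and conversely any positive-dimensional coset $P + H'' \subseteq U$ with $H'' \subseteq H^T$ produces $P + H + H'' \subseteq V$ with $H + H'' \supsetneq H$.

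Next I will bound $\deg_A(U)$ polynomially in $\deg_A(V)$. Writing $\pi_H \colon A \to A/H$ for the quotient morphism, I realise $U = (\pi_H|_{H^T})^{-1}(Z)$, where $Z := \{w \in A/H : \pi_H^{-1}(w) \subseteq V\}$ is the locus over which the fibres of $V \to A/H$ attain the maximal dimension $\dim H$. Standard image and fibre-dimension degree estimates in abelian varieties (in the spirit of Bombieri--Zannier \cite{BZuniform} and R\'emond) bound $\deg_{A/H}(Z)$, and through the isogeny $\pi_H|_{H^T}$ then $\deg_A(U)$, polynomially in $\deg_A(V)$ and $\deg_A(H)$ with effective constants depending only on $g$. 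Since we only count $P$ for which $P+H$ is a maximal coset in $V$, the Bombieri--Zannier bound on degrees of maximal cosets (already invoked inside Lemma~\ref{cosets}) lets me assume $\deg_A(H) \leq c_0\deg_A(V)^{m_0}$, and hence $\deg_A(U) \leq c_1 \deg_A(V)^{m_1}$.

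Finally I apply Lemma~\ref{counting} to the strict subvariety $U\subseteq A$, which gives
\[
\#\{P \in U^*(\C) : \textrm{ord}(P) \leq N\} \;\leq\; c_\delta \deg_A(U)^{m} N^\delta \;\leq\; c'_\delta \deg_A(V)^{m'} N^\delta,
\]
and, by the equivalence above, the left-hand side is exactly the count in the lemma. The principal obstacle is the degree bound on $U$: while it rests on well-known techniques, securing effectivity together with uniformity (dependence only on $g$, and polynomial dependence on $\deg(V)$) demands careful tracking of the polarizations on $H^T$ and $A/H$ induced by $\mathcal{L}$.
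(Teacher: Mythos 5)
Your route is genuinely different from the paper's. The paper never introduces the locus $U=\{P\in H^T: P+H\subseteq V\}$ at all: it works transcendentally, defining two sub-Pfaffian sets $Z$ and $Z_a$ (the second via an auxiliary point of $H$ off the divisor at infinity) whose complexity is controlled by \cite{JSdefns} together with the Bombieri--Zannier bound $\deg(H)\le c\deg(V)^m$ for maximal cosets and $\deg(H^T)\le c\deg(H)$ from \cite{MW_minimal}; it then uses Ax--Lindemann \cite{Ax} to show that any rational point corresponding to a \emph{maximal} coset must lie in the transcendental part, applies Theorem~\ref{Introthm} directly, and handles $H\subseteq D_g$ by the inductive device from Lemma~\ref{counting}. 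Your reduction to Lemma~\ref{counting} applied to $U$ is a clean alternative, and the key equivalence is sound: the modular law gives $H'=H+(H'\cap H^T)^0$ for any connected $H'\supseteq H$, so an enlargement of $P+H$ inside $V$ produces a positive-dimensional coset through $P$ inside $U$, and conversely; in fact only the inclusion $\{P: P+H\subseteq V\text{ maximal}\}\subseteq U^*$ is needed for the upper bound, and that direction is exactly what your argument gives. Note also that if your degree bound on $U$ is in place, your approach sidesteps the divisor-at-infinity issues ($Z$ versus $Z_a$, and the case $H\subseteq D_g$) that the paper has to treat separately.

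The genuine gap is the step you yourself flag as the principal obstacle: the effective bound $\deg(U)\le c_1\deg(V)^{m_1}$ with constants depending only on $g$. As written, this is asserted by appeal to estimates ``in the spirit of'' Bombieri--Zannier and R\'emond, but no precise statement in that literature gives the degree of the fibre-wise inclusion locus $Z=\{w\in A/H:\pi_H^{-1}(w)\subseteq V\}$ (equivalently of $U$) in the effective, polynomial-in-$\deg(V)$ form your argument requires, and since effectivity and polynomial dependence on $\deg(V)$ are the entire point of the lemma, this cannot be left as a black box. The gap is fillable: writing $U=H^T\cap\bigcap_{h\in H}(V-h)$, one checks that for $h_1,\dots,h_{g+1}\in H$ chosen generically (each step killing, for every irreducible component not already contained in $\bigcap_{h}(V-h)$, at least one dimension), one has the set-theoretic identity $U=H^T\cap V\cap(V-h_1)\cap\cdots\cap(V-h_{g+1})$; refined B\'ezout then gives $\deg(U)\le \deg(H^T)\deg(V)^{g+2}$, and since only maximal cosets are being counted one may assume $\deg(H)\le c\deg(V)^m$ by \cite{BZuniform} and hence $\deg(H^T)\le c'\deg(V)^m$ by \cite{MW_minimal}, all effectively. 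With such an argument supplied (or an effective elimination-theoretic bound for the fibre-dimension locus), your proof is complete; without it, the crucial uniformity claim is unproved.
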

\begin{proof} 
Below $c$ and $m$ denote effective constants depending only on $g$, which may differ in different occurrences. 
We first suppose that $H$ is not contained in $D_g$. Fix an auxiliary point $P_a \in H\setminus D_g$ and let $z_a \in F_A$ be the logarithm of $P_a$. In a similar manner to Lemma \ref{counting}, we then consider 
	\begin{multline*}
		Z_a= \big\{ (b_1,\ldots,b_{2g})\in \left([0,1)^2\setminus \{ 0\}\right)^g: \\ 
		P = \Exp_A\left(b_1\omega_1+b_2\omega_2,\ldots,b_{2g-1}\omega_{2g-1}+b_{2g}\omega_{2g} -z_a\right) \in H^T(\C),\\ P + (H\setminus D_g) \subseteq V_1  \big\}
	\end{multline*}
and 
	\begin{multline*}
	Z= \big\{ (b_1,\ldots,b_{2g})\in \left([0,1)^2\setminus \{ 0\}\right)^g : \\ 
	P = \Exp_A\left(b_1\omega_1+b_2\omega_2,\ldots,b_{2g-1}\omega_{2g-1}+b_{2g}\omega_{2g}\right) \in H^T(\C),\\ P + (H\setminus D_g) \subseteq V_1  \big\},
\end{multline*}
	where here $V_1 = V\setminus D_g$. It follows from \cite[Lemma 1.3]{MW_minimal} that $\deg(H^T) \leq c_1\deg(H)$, with $c_1$ depending only on $g$. 
	By \cite[Theorem 1]{JSdefns}, the sets $Z$ and $Z_a$ are sub-Pfaffian with format bounded by $c_1$ and degree bounded by $c(\deg (H)\deg (V))^m$. Recall that in Lemma 2 of  \cite{BZuniform}, Bombieri and Zannier show that the degree of a maximal coset contained in $V$ is bounded by $c\deg(V)^{m}$, so the degrees of $Z$ and $Z_a$ are bounded by $c(\deg (V))^m$. We first note that as $H^T + H = A$ the inclusion map $H^T \hookrightarrow A$ induces a surjective morphism $H^T \rightarrow A/H$ with finite kernel $H \cap H^T$. If $Z$ contains a semi-algebraic curve $S$, then we consider
	$$S_{\exp} = \{\Exp_A\left(b_1\omega_1+b_2\omega_2,\ldots,b_{2g-1}\omega_{2g-1}+b_{2g}\omega_{2g}\right): (b_1, \dots, b_{2g}) \in S\}, $$
	and, by Ax--Lindemann \textbf{\cite{Ax}}, the Zariski-closure of $S_{\exp} + H$ is a coset $H'$ contained in $V$. If $S$ contains a rational point $(b_1, \dots , b_{2g})$ such that $\Exp_A(b_1\omega_1 + \cdots + b_{2g}\omega_{2g}) + H$ is maximal in $V$, then $ H' \subseteq V$ is a torsion coset that contains a maximal torsion coset,  which is a contradiction. Thus each $(b_1, \dots , b_{2g}) \in \Q^{2g} \cap Z$ such that $\Exp_A(b_1\omega_1 + \cdots + b_{2g}\omega_{2g}) + H$ is maximal is contained in the transcendental part of $Z$. We can argue exactly analogously for $Z_a$. 
	
	 For each torsion translate $P + H $, where $P\in H^T$ is of order at most $N$,  we can find a rational point in $Z$ or in $Z_a$ of height at most $N$.  Thus the present lemma follows from Theorem \ref{Introthm}   if $H$ is not contained in $D_g$. If $ H \subseteq D_g$, then $H \cap D_g = H$ is maximal in 
$V \cap D_g$ and we can argue inductively as in the proof of Lemma \ref{counting}.  
\end{proof} 

\subsection{Families of elliptic curves and the mixed André--Oort conjecture}
In this subsection we assume that all varieties are defined over the algebraic numbers. 
In order to describe our main result in this subsection we provide some further background. Let $\mathcal{A}$ be a family of products of elliptic curves over a base variety $B$, that is, $\mathcal{A}$ is an algebraic variety defined over a number field and we have a map
$$\pi: \mathcal{A} \rightarrow B$$
that dominates $B$ with the property that each fibre of $\pi$ is a product of elliptic curves.
For each subvariety $B' \subseteq B$, we get a group scheme $\mathcal{A}_{B'}$ over $B'$ by base change. We say that $\mathcal{G} \subseteq \mathcal{A}$ is a subgroup scheme if it is a group scheme over the base $\pi(\mathcal{G})$.



It is useful to work in a universal object. We set $\mathcal{E}$ to be the Legendre family of elliptic curves over the modular curve $Y(2) = \mathbb{P}_1\setminus \{\infty,0,1\}$. It is a mixed Shimura variety and we will briefly describe the special subvarieties of the euclidean product $\mathcal{E}^g$.  We denote by $J:Y(2)^g \rightarrow \mathbb{A}^g$ the coordinatewise application of $\lambda \mapsto 256 ( \lambda^2-\lambda+1)^3/(\lambda^2(1-\lambda^2))$. The special subvarieties of $Y(2)^g$ are products of modular curves and CM-points, and the image under $J$ of a special subvariety in $Y(2)^g$ in $\mathbb{A}^g$ is a special subvariety as defined in \cite{HP}.

The special subvarieties of $\mathcal{E}^g$ are  components of subgroup schemes of $\mathcal{E}^g$ whose base is a special subvariety of $Y(2)^g$. 
An example of a special subvariety of $\mathcal{E}^g$ that is important for us is
$$\mathcal{E}^{(g)} = \mathcal{E}\times_{Y(2)}  \cdots \times_{Y(2)}\mathcal{E}, $$
the $g$-th fibre power of $\mathcal{E}$. 

Now we define the degree in the family setting. For a subvariety $B\subseteq Y(2)^g$, we define $\mathcal{L}_{B}$ to be the line bundle as in (\ref{linebundle}) on $\mathcal{E}^g_{B}$ over $\overline{\Q(B)}$. This gives us a degree function on each subgroup scheme of $\mathcal{E}^g$ that we also denote by $\deg$. 

\begin{defn} Let $\pi: \mathcal{E}^g \rightarrow Y(2)^g$ be as above and let $\Ss$ be a special subvariety of $\mathcal{E}^g$.  We define the complexity of $\pi(\Ss)$ to be the complexity of $J(\pi(\Ss))$ as defined in \cite[Definition 3.8]{HP}. We denote it by $\cl(\pi(\Ss))$. Now $\Ss$ is contained in a flat subgroup scheme  over $\pi(\Ss)$. We denote by $\mathcal{H}$ the smallest subgroup scheme of $\mathcal{E}^g_{\pi(\Ss)}$ containing $\Ss$. We then define the complexity of $\Ss$ by 
	$$\cl(\Ss) = \max\{\deg(\mathcal{H}), \cl(\pi(\Ss))\}.$$
\end{defn}
We remark that the set of special subvarieties of bounded complexity is finite. Moreover, given an effectively computable bound, this set can be effectively determined. 
\begin{defn} 
	For a family of abelian varieties (that is, a subgroup scheme) $\mathcal{A} \subseteq \mathcal{E}^g$, we let
	$$P_{\mathcal{A}} = \{P \in \mathcal{A}(\overline{\mathbb{Q}}): P  \text{ special} \}.$$
Here, a special point is just a special subvariety of dimension $0$. We can also describe special points without explicitly mentioning special subvarieties: a point $P$ in $\mathcal E^g$ is special exactly when $\pi (P)$ is special in $Y(2)^g$ and $P$ is torsion in $\mathcal E^{g}_{\pi(P)}$.

	Given an algebraic subvariety $V \subseteq \mathcal{A}$ we say that $\mathcal{S} \subseteq V$ is a maximal special subvariety if $\mathcal{S}$ is special and every special subvariety $\mathcal{S}'$ satisfying $\mathcal{S} \subseteq \mathcal{S}' \subseteq V$ also satisfies $\mathcal{S} = \mathcal{S}'$.  
\end{defn}


With these preliminaries we can formulate our main theorem of this subsection.

\begin{thm}\label{mainCM} Let $\mathcal{A}\subseteq \mathcal E^g$ be as above. 
	Let $V \subsetneq \mathcal{A}$ be a subvariety of positive codimension that dominates the base $B$ and suppose that $V$ and $ \mathcal{A}$ are both defined over a number field $K$. There exist effectively computable constants $c$ and $m$ depending only on $g$ such that 
	\begin{align}\label{union} V \cap P_{\mathcal{A}} \subseteq \bigcup \Ss \cap \mathcal{A},
		\end{align}
	where the union runs over all special subvarieties $\Ss$ of $\mathcal{E}^g$ such that $\dim(\Ss \cap \mathcal{A}) < \dim (\mathcal{A})$ and of complexity satisfying
		$$\cl(\Ss) \le c([K:\mathbb{Q}]\deg(V))^m.$$
\end{thm}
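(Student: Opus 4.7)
The plan is to combine our fiberwise effective Manin--Mumford bounds (Lemma \ref{counting} and Lemma \ref{countingmaximal}, which are applications of Theorem \ref{Introthm} via the uniformly definable exponential map of \cite{JSdefns}) with the standard Galois-theoretic lower bounds for torsion fields of CM elliptic curves due to Bourdon--Clark. Fix $P\in V\cap P_{\mathcal A}$, write $\lambda=\pi(P)\in B$ for its image in the base, and let $N$ denote the order of $P$ in the CM abelian variety $\mathcal A_\lambda$. I split the argument according to whether $P$ lies in $V_\lambda^*$ (the ``good'' part of the fibre, as in Lemma \ref{counting}) or inside some positive-dimensional torsion coset of $\mathcal A_\lambda$ contained in $V_\lambda$. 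In both cases the special subvariety $\Ss\subseteq\mathcal E^g$ I produce will project to a positive-dimensional special subvariety of $Y(2)^g$, so that the potentially large factor $\cl(\lambda)$ does not enter $\cl(\Ss)$.

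Suppose first that $P\in V_\lambda^*$. The Galois orbit of $P$ over $K(\lambda)$ consists of torsion points of order $N$ still lying in $V_\lambda^*$, so combining Bourdon--Clark (applied factor-wise to the product of CM elliptic curves $\mathcal A_\lambda$) with Lemma \ref{counting} gives
\[
cN^{1-\epsilon}\le [K(\lambda)(P):K(\lambda)]\le c_\delta\deg(V)^m N^\delta.
\]
Choosing $\delta<1-\epsilon$ forces $N\le c\deg(V)^{m_1}$ with $m_1$ effective in $g$. I then take $\Ss$ to be a component of the $N$-torsion subgroup scheme $\mathcal E^g[N]\to Y(2)^g$ through $P$; this is a special subvariety of $\mathcal E^g$ given by a torsion section of order $N$, so $\cl(\Ss)$ is polynomial in $N$ and hence in $\deg(V)$. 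Since $\Ss\cap\mathcal A$ is contained in a torsion section restricted to $B$, its dimension is at most $\dim(B)<\dim(\mathcal A)$, satisfying the dimension condition.

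Suppose next that $P$ lies in a maximal positive-dimensional torsion coset $T+H\subseteq V_\lambda$, with $H\subseteq\mathcal A_\lambda$ a positive-dimensional abelian subvariety. The Bombieri--Zannier bound used in the proof of Lemma \ref{cosets} gives $\deg(H)\le c\deg(V)^{m_2}$. Now $H\subseteq E_1^\lambda\times\cdots\times E_g^\lambda$ is cut out by a bounded list of linear relations, possibly through isogeny correspondences between the elliptic factors arising from the CM structure. These relations extend canonically to a subgroup scheme $\tilde H\subseteq\mathcal E^g$ over a positive-dimensional special subvariety $\pi(\tilde H)\subseteq Y(2)^g$ obtained by imposing the relevant Hecke correspondences, with both $\deg(\tilde H)$ and $\cl(\pi(\tilde H))$ polynomial in $\deg(V)$. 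The order of the torsion translate $T$ is then controlled by the uniform effective Manin--Mumford statement (Theorem \ref{mmCM1}) of Section~5, which bounds the order of any torsion coset contained in $V$ by $c([K:\Q]\deg(V))^{m_3}$. Thus $\Ss=\tilde T+\tilde H$ is a special subvariety of $\mathcal E^g$ of complexity $\le c([K:\Q]\deg(V))^m$ containing $P$, and $\Ss\cap\mathcal A\subseteq V\subsetneq\mathcal A$ gives the dimension condition.

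The main obstacle will be the second case: uniformly extending a single-fibre torsion coset $T+H$ to a subgroup scheme of $\mathcal E^g$ over a positive-dimensional special subvariety of $Y(2)^g$ with polynomial complexity control demands a careful analysis of the isogeny data attached to the CM structure of $\mathcal A_\lambda$, together with the uniform effective Manin--Mumford input from Theorem \ref{mmCM1} (itself derived from Theorem \ref{IntroMM} by scanning over CM moduli) to handle the torsion translate. The polynomial, rather than exponential, complexity bound in the conclusion directly reflects the polynomial dependence on degree in our effective Pila--Wilkie theorem applied fiberwise.
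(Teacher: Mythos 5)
Your overall architecture -- apply a fibrewise effective Manin--Mumford statement over the CM fibre $\mathcal{A}_{\pi(P)}$ and then spread the resulting torsion coset out to a special subvariety of $\mathcal{E}^g$ of polynomially bounded complexity -- is essentially the paper's, which deduces the theorem directly from Theorem \ref{mmCM1} applied to $V_p$, $p=\pi(P)$. Your first case is fine up to small slips: the Galois lower bound (Theorem \ref{BCGalois}, or Bourdon--Clark factor-wise) is relative to the field of moduli of the CM fibre, so your bound on $N$ should carry a factor $[K:\Q]^{m}$ (which the statement permits), and in both cases you implicitly need the observation, made explicitly in the paper, that $V_p$ is defined over $K(p)$ with $[K(p):\Q(j\text{-invariants of }\mathcal{A}_p)]\le c_g[K:\Q]$, so that the fibrewise bounds are uniform in the CM point.

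The genuine gap is in your second case. You assert that the relations cutting out $H\subseteq\mathcal{A}_\lambda$, ``possibly through isogeny correspondences arising from the CM structure,'' always extend to a subgroup scheme $\tilde H$ over a \emph{positive-dimensional} special subvariety of $Y(2)^g$, so that $\cl(\lambda)$ never enters $\cl(\Ss)$. This is false when $H$ is defined using endomorphisms in $\End(\mathcal{E}_\lambda)\setminus\Z$: a non-integer endomorphism of a single elliptic factor exists only at isolated CM points of the modular curve, so such an $H$ is not the fibre of any subgroup scheme over a positive-dimensional base, and the natural special subvariety through $P$ then lies over the CM point $\lambda$, whose complexity (the discriminant of the CM order) \emph{does} enter $\cl(\Ss)$. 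This is exactly where the paper's dichotomy comes in: if $H$ is defined over $\Z$ (after the isogeny decomposition of Lemma \ref{isogenies}, whose isogeny degrees are bounded by $c\deg(H)^2$), the group spreads out over $B'=B\cap S'$ with $S'$ cut out by modular relations of bounded degree, as you describe; but if $H$ is not defined over $\Z$, one needs Lemma \ref{lemlower} -- resting on Lemma \ref{end} (a non-integer endomorphism has $|\alpha|\ge f|D|^{1/2}/2$) and Lemma \ref{lemrelation} ($\mathbf{Ht}(H)\le c\deg(H)$) -- to bound the discriminant, hence $\cl$ of the CM point, polynomially in $\deg(H)\le c\deg(V)^m$. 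Your proposal has no mechanism for bounding the complexity of the CM fibre in this case, so the complexity bound in the conclusion is unproved precisely for those special points whose ambient torsion coset is genuinely CM-defined; supplying the over-$\Z$/not-over-$\Z$ dichotomy and the height-of-relations estimate closes the gap and recovers the paper's argument.
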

Theorem \ref{mainCM} (whose proof will come in Subsection \ref{proofs}) has a direct connection to the (mixed) André--Oort conjecture for $\mathcal{E}^g$. In what follows we say that the André--Oort conjecture (we omit the term `mixed') has an effective proof for a given mixed Shimura variety $S$ if, for any algebraic variety $V \subseteq S$, there is an effective procedure to determine the Zariski closure of the special points of $S$ in $V$. 

Using Theorem \ref{mainCM} and arguing by induction (see Subsection \ref{proofs} for the argument), we have the following.

\begin{thm}\label{reduction}  Suppose that there is an effective proof of the André--Oort conjecture for $Y(2)^g$. Then there is an effective proof of the André--Oort conjecture for $\mathcal{E}^{(n_1)}\times \cdots \times \mathcal{E}^{(n_g)}$ ($n_i \geq 0, i = 1, \dots, g$). 
\end{thm}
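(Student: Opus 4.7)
The plan is a nested induction: outer on $g$ and, for each fixed $g$, inner on $\dim V$. Given an algebraic subvariety $V \subseteq \mathcal{A} = \mathcal{E}^{(n_1)} \times \cdots \times \mathcal{E}^{(n_g)}$ defined over a number field $K$, we describe an effective procedure that returns the Zariski closure of $V \cap P_\mathcal{A}$ as a finite union of special subvarieties. Decompose $V$ into irreducible components (an effective operation) and treat each separately. If an irreducible component is itself special, output it; otherwise proceed as follows.

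Let $V$ be irreducible and not special. First suppose $V$ dominates the base $Y(2)^g$. Then Theorem \ref{mainCM} supplies an effective bound $C = c([K:\Q]\deg V)^m$ such that $V \cap P_\mathcal{A}$ is covered by intersections $\mathcal{S} \cap \mathcal{A}$, where $\mathcal{S}$ ranges over special subvarieties of $\mathcal{E}^{n_1+\cdots+n_g}$ with $\cl(\mathcal{S}) \leq C$ and $\dim(\mathcal{S} \cap \mathcal{A}) < \dim \mathcal{A}$. Since the set of special subvarieties of bounded complexity is finite and effectively enumerable, for each such $\mathcal{S}$ we form $V \cap \mathcal{S} \cap \mathcal{A}$, which has dimension strictly less than $\dim V$, and recurse by the inner induction. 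Suppose instead that $V$ does not dominate, so $\pi(V) \subsetneq Y(2)^g$. Every special point of $\mathcal{A}$ projects to a special point of $Y(2)^g$, so by the hypothesis of effective André--Oort for $Y(2)^g$ we obtain an effectively computable finite list of special subvarieties $T_1,\ldots,T_k \subseteq Y(2)^g$ covering $\pi(V) \cap P_{Y(2)^g}$. Each $T_j$ is (up to components) a product $Y(2)^{I_j}\times\{\tau_j\}$, where $I_j$ indexes modular curve factors and $\tau_j$ is a CM point in the remaining coordinates, so $\mathcal{A}_{T_j}$ factors as $\mathcal{A}'_j \times B_j$, with $\mathcal{A}'_j$ a mixed Shimura variety over $Y(2)^{|I_j|}$ of the shape treated by the theorem with $g' = |I_j| < g$, and $B_j$ a product of CM elliptic curves.

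For each $j$, compute $V \cap \mathcal{A}_{T_j}$ and handle it by combining the outer inductive hypothesis on $\mathcal{A}'_j$ (effective André--Oort for $Y(2)^{g'}$ follows from that for $Y(2)^g$ by restricting to a fixed CM point in the remaining coordinates) with the effective Manin--Mumford bounds of the previous subsection (Theorem \ref{IntroMM}, together with the companion bound on positive-dimensional torsion cosets) applied fibrewise to the CM factor $B_j$. The principal obstacle lies precisely in this fibrewise argument: one must patch together, across the factorization $\mathcal{A}_{T_j} = \mathcal{A}'_j \times B_j$, the Manin--Mumford output on $B_j$ and the inductive output on $\mathcal{A}'_j$ into a single collection of special subvarieties of $\mathcal{A}$ describing $\overline{V \cap \mathcal{A}_{T_j} \cap P_\mathcal{A}}$, while keeping track of degrees and complexities so that effectivity is preserved. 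Termination is clear, since $\dim V$ strictly decreases in the dominant step and $g$ strictly decreases in the non-dominant step; the base cases $g = 0$ (handled by effective Manin--Mumford on a CM product via Theorem \ref{IntroMM}) and $\dim V = 0$ (a finite, directly inspectable set) close the recursion.
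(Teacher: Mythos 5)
Your proposal diverges from the paper's argument in a way that leaves two genuine gaps. The paper's proof is a single induction on $\dim V$ built around one key device: it takes $\mathcal{A}$ to be the \emph{smallest} group scheme over $\pi(V)$ containing $V$ (so the base is $\pi(V)$, not $Y(2)^g$). If $\dim V=\dim\mathcal{A}$, then $V$ is a component of $\mathcal{A}$ and maximal special subvarieties of $V$ are pullbacks of maximal special subvarieties of $\pi(V)\subseteq Y(2)^g$, which is exactly where the hypothesis of effective Andr\'e--Oort for $Y(2)^g$ is used. If $\dim V<\dim\mathcal{A}$, Theorem \ref{mainCM} gives an effectively determinable finite list of special $\mathcal{S}$ covering $V\cap P_{\mathcal{A}}$, and the \emph{minimality} of $\mathcal{A}$ is what guarantees that no such $\mathcal{S}$ contains $V$, hence that every component of $V\cap\mathcal{S}$ has dimension strictly less than $\dim V$. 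In your dominant case you work instead with the full ambient family and simply assert that $V\cap\mathcal{S}\cap\mathcal{A}$ has dimension strictly less than $\dim V$; this is unjustified. An irreducible, non-special $V$ dominating $Y(2)^g$ can perfectly well be contained in a proper special subvariety $\mathcal{S}$ (for instance inside a flat subgroup scheme of positive codimension), in which case $V\cap\mathcal{S}=V$ and your recursion makes no progress. Without the minimal-group-scheme trick (or some substitute), the inner induction does not close.

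The non-dominant branch has further problems. First, you yourself flag the fibrewise patching across the factorization $\mathcal{A}_{T_j}=\mathcal{A}'_j\times B_j$ as the ``principal obstacle'' and leave it unresolved, so as written this is an admitted gap rather than a proof. Second, the structural claim feeding it is not correct: special subvarieties of $Y(2)^g$ are products of CM points and \emph{modular curves}, which include curves cut out by modular (Hecke) relations linking several coordinates, not only sets of the form $Y(2)^{I_j}\times\{\tau_j\}$; so the family over $T_j$ is in general only isogenous to, not equal to, a product of fibre powers of $\mathcal{E}$, and the outer induction ``on $g$'' does not apply in the shape you state without handling these isogenies. The paper sidesteps this entire branch: because its family $\mathcal{A}$ lives over $\pi(V)$ from the start, there is no dominant/non-dominant dichotomy over $Y(2)^g$, no outer induction on $g$, and no fibrewise Manin--Mumford patching is needed inside the proof of Theorem \ref{reduction} itself (the Manin--Mumford input is already packaged into Theorem \ref{mainCM}). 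To repair your argument, replace the ambient family by the smallest group scheme over $\pi(V)$ containing $V$ and follow the two-case analysis above.
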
 
From work of Kühne \cite{kuehneeffectiveI}, and independently Bilu, Masser, and Zannier \cite{BMZeffective}, we deduce the following immediate  corollary. 
\begin{cor}\label{cor2} There is an effective proof of the André--Oort conjecture for $\mathcal{E}^{(n_1)}\times \mathcal{E}^{(n_2)}$ ($n_1,n_2 \geq 0$). 
\end{cor}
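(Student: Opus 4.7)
The plan is to combine Theorem \ref{reduction} with the effective André--Oort theorems for products of two modular curves due to Kühne \cite{kuehneeffectiveI} and, independently, to Bilu, Masser, and Zannier \cite{BMZeffective}. By Theorem \ref{reduction} specialized to $g=2$, it suffices to establish an effective proof of the André--Oort conjecture for $Y(2)^2$, so the entire task is to produce such a proof for the base.

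I would argue as follows. Let $V \subseteq Y(2)^2$ be an algebraic subvariety defined over a number field; the goal is to effectively compute the Zariski closure of the special points of $Y(2)^2$ lying in $V$. Since $\dim Y(2)^2 = 2$ we may, by passing to irreducible components, reduce to the case where $V$ is either a point (trivial), all of $Y(2)^2$ (trivial), or an irreducible curve. If $V$ is a special curve (a vertical or horizontal fiber of one of the coordinate projections) then $V$ itself is the answer. Otherwise, consider the finite cover $\pi := J \times J : Y(2)^2 \to Y(1)^2$ coming from the modular invariant; this map has degree $36$, sends special subvarieties to special subvarieties, and the preimage of each special point is a finite set of special points whose coordinates are effectively computable from the image. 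The image $\pi(V)$ is an irreducible curve in $Y(1)^2$; since $V$ is not special, neither is $\pi(V)$, so $\pi(V)$ contains only finitely many CM points.

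At this point I would invoke Kühne's result \cite{kuehneeffectiveI} (or the parallel result of \cite{BMZeffective}): there is an effectively computable upper bound, depending only on the degree and height of $\pi(V)$, for the discriminants of the CM points lying on $\pi(V)$. This produces a finite, effectively enumerable list of singular moduli; pulling back by $\pi$ and testing membership in $V$ gives the finitely many special points lying on $V$. Combined with Theorem \ref{reduction}, this yields an effective proof of the André--Oort conjecture for $\mathcal{E}^{(n_1)} \times \mathcal{E}^{(n_2)}$.

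The essential analytic and arithmetic difficulties --- effective bounds on singular moduli on plane modular curves, via transcendence and isogeny estimates --- are already resolved in the cited works, so no further technical obstacle remains. The only item of care is the passage from $Y(1)^2$ to $Y(2)^2$ via the finite cover $\pi$, but since $\pi$ is algebraic of bounded degree and both its fibres and the operation of taking preimages of special subvarieties are effectively computable, this step is routine.
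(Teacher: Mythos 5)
Your route is essentially the paper's: Corollary \ref{cor2} is deduced by specializing Theorem \ref{reduction} to $g=2$ and invoking the effective Andr\'e--Oort results of K\"uhne \cite{kuehneeffectiveI} and, independently, Bilu--Masser--Zannier \cite{BMZeffective} for curves in a product of two modular curves, the transfer between $Y(2)^2$ and $Y(1)^2$ via $J\times J$ being the routine part. One correction to your parenthetical: the special curves of $Y(2)^2$ are not just coordinate fibres (and such a fibre is special only when the fixed coordinate is a CM point; otherwise it contains no special points at all) but also the components of the $(J\times J)$-preimages of the modular curves $\Phi_N(x,y)=0$; with this standard classification your dichotomy is exactly what the cited results require, whereas under your literal description the step ``$V$ not special $\Rightarrow \pi(V)$ not special, hence finitely many effectively bounded CM points'' would fail for curves lying over a modular correspondence, which carry infinitely many CM points.
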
 
We have not made the nature of the effectivity of Corollary \ref{cor2} precise as it depends on the results by Kühne which are somewhat complicated to state.  However, it will be clear from the proofs of Theorems \ref{mainCM} and \ref{reduction} how his results enter.  In the case that $n_2 = 0$ above, we can be much more precise, as follows. 
\begin{thm}\label{effectiveHabegger} 
	
	There exist effectively computable constants $c$ and $m$ depending only on $n$ with the following property.  Suppose that $V \subseteq \mathcal{E}^{(n)}$ is an irreducible variety defined over a number field $K$. Let $\mathcal S\subseteq V$ be a maximal special subvariety. Then
	$$\cl(\pi(\mathcal{S})) \leq \exp(c([K:\mathbb{\Q}]\deg(V))^m)$$
and if $\mathcal H$ is the smallest subgroup scheme of $\mathcal{E}^{(n)}$ containing $\Ss$ then 	
$$\deg(\mathcal{\mathcal{H}}) \leq c([K:\mathbb Q] \deg (V))^m.$$
\end{thm}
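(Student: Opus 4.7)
The plan is to split according to $\dim \pi(\Ss)$: since $\mathcal{E}^{(n)}$ is a family over the curve $Y(2)$, the image $\pi(\Ss)$ is either a CM point or all of $Y(2)$. In either case the bound on $\deg(\mathcal{H})$ will come from a fibrewise application of Lemma \ref{countingmaximal} on a CM fibre, combined with the Bourdon--Clark lower bound on the size of the Galois orbit of a CM torsion point. The exponential bound on $\cl(\pi(\Ss))$ is only meaningful in the CM point case and will follow from the effective André--Oort theorem for $Y(2)$ due to Kühne \cite{kuehneeffectiveI} and, independently, to Bilu--Masser--Zannier \cite{BMZeffective}.

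First I would handle $\cl(\pi(\Ss))$. If $\pi(\Ss) = Y(2)$ this is $1$ by definition. Otherwise $\pi(\Ss) = \{\lambda_0\}$ with $\lambda_0$ a CM point contained in $\pi(V) \subseteq Y(2)$, a zero-dimensional subvariety defined over $K$ of degree at most $\deg(V)$, giving $[\Q(\lambda_0):\Q] \leq [K:\Q]\deg(V)$. The cited effective bound then yields
\[|\mathrm{disc}(\mathrm{End}(E_{\lambda_0}))| \leq \exp(c([K:\Q]\deg(V))^m)\]
with effective constants $c,m$ depending only on $n$.

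For the bound on $\deg(\mathcal{H})$, I would specialise to a CM fibre $\lambda_0$: either $\pi(\Ss)$ itself in the CM case, or a CM point of bounded discriminant at which $\Ss_{\lambda_0}$ remains a maximal torsion coset of $V_{\lambda_0} \subseteq E_{\lambda_0}^n$ in the other. Write $\Ss_{\lambda_0} = P + H$ with $H$ an abelian subvariety and $P \in H^T$ of exact order $N$. The Bombieri--Zannier bound (Lemma 2 of \cite{BZuniform}), as used in the proof of Lemma \ref{countingmaximal}, gives $\deg(H) \leq c\deg(V)^m$ directly. To bound $N$, consider the Galois orbit of $P + H$ over $L = K(\lambda_0)$: each conjugate is itself a maximal torsion coset of $V_{\lambda_0}$, since $V$ is defined over $K$, so Lemma \ref{countingmaximal} bounds the orbit size above by $c_\delta \deg(V)^m N^\delta$ for any $\delta > 0$. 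On the other hand, the Bourdon--Clark bound for CM elliptic curves provides a lower bound of the shape $N^{1-\epsilon}/\mathrm{poly}([L:\Q], |\mathrm{disc}|)$ for this orbit; comparing the two bounds and absorbing the already-controlled factors yields $N \leq c([K:\Q]\deg(V))^m$. Finally $\deg(\mathcal{H}) \leq c \deg(H) \cdot N$, since $\mathcal{H} = H + \langle P \rangle$ is a union of at most $N$ translates of $H$, which completes the argument.

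The main technical hurdle is the case $\pi(\Ss) = Y(2)$: one must exhibit a CM fibre $\lambda_0$ of controlled discriminant at which the specialisation $\Ss_{\lambda_0}$ remains a maximal torsion coset of $V_{\lambda_0}$. Any strict enlargement of $\Ss_{\lambda_0}$ in $V_{\lambda_0}$ that came from a flat subgroup scheme over $Y(2)$ would contradict the global maximality of $\Ss$, so the exceptional fibres are precisely those where the enlargement is "accidental" and fails to extend to a global section. Ruling these out effectively, while keeping $\lambda_0$ of bounded discriminant, likely through a further dimension count of bad fibres combined with the counting estimates already used, will be the crux.
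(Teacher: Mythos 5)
There is a genuine gap, and it sits exactly at the heart of the theorem. Your argument for bounding $\cl(\pi(\Ss))$, and subsequently for bounding the order $N$ via a Galois-orbit comparison, rests on the assertion that when $\pi(\Ss)=\{\lambda_0\}$ is a CM point, $\pi(V)$ is a zero-dimensional variety of degree at most $\deg(V)$, whence $[\Q(\lambda_0):\Q]\le[K:\Q]\deg(V)$. This is false in the main case of interest: $V$ may dominate $Y(2)$ while containing a maximal special subvariety lying over a single CM fibre (this is precisely the content of Habegger's theorem on fibre powers of elliptic surfaces). In that case nothing in your proposal bounds $[\Q(\lambda_0):\Q]$ or $\operatorname{disc}(\End(\mathcal{E}_{\lambda_0}))$, and without such a bound both your application of the effective CM estimate and your lower bound on the Galois orbit of $P+H$ over $L=K(\lambda_0)$ (needed to play against Lemma \ref{countingmaximal}) are vacuous. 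Citing effective Andr\'e--Oort for $Y(2)$ does not help here: Andr\'e--Oort for a single modular curve is trivial, and the issue is not locating CM points on a subvariety of the base but bounding the discriminant of the particular CM fibre over which $V$ acquires an extra torsion coset. You also flag, but do not resolve, the specialisation problem in the flat case $\pi(\Ss)=Y(2)$ (finding a CM fibre of controlled discriminant where maximality persists); as stated this is another unproved step.

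The paper's proof avoids both problems by never specialising and never trying to bound the CM fibre directly. It argues by induction on $\dim V$: when $\dim V<\dim\mathcal A$ (with $\mathcal A$ the smallest group scheme over $\pi(V)$ containing $V$), Theorem \ref{mainCM} traps every special point, hence every maximal special subvariety $\Ss$, inside $V\cap\mathcal S$ for one of finitely many special $\mathcal S$ of complexity $\le c([K:\Q]\deg(V))^m$; Bezout and Galois then show each component of $V\cap\mathcal S$ has smaller dimension and is defined over a field of polynomially bounded degree, and one induces. The cases $\dim V=\dim\mathcal A$ are handled by Lemma \ref{moredegrees} (bounding $\deg(\mathcal H)$ in terms of $\deg(\Ss)$ and the field of definition) together with the effective CM-point bound from \cite{heegnerpoints}, which is where the exponential in $\cl(\pi(\Ss))$ enters --- applied only after the induction has reduced to a situation where the relevant field degree is polynomially controlled. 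If you want to salvage your fibrewise strategy, you would need an independent effective mechanism to bound the discriminant of the CM fibre carrying $\Ss$ when $V$ dominates $Y(2)$; that is essentially equivalent to the inductive use of Theorem \ref{mainCM} in the paper.
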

Note that the above is a slightly more precise version of Theorem \ref{Intro-fibreproduct} and that  any isolated special point on $V$ is a torsion point of order at most $c ([K:\mathbb Q] \deg (V))^m$. 

\subsection{Products of elliptic curves with complex multiplication.}\label{CMEC}
In order to prove Theorem \ref{mainCM} we prove a uniform version of the Manin--Mumford conjecture for a product of elliptic curves with complex multiplication. Besides our counting, the main ingredient is the following Galois bound by Gao. 
\begin{thm}[{\cite[Corollary 13.4]{Gaotowards}}]\label{BCGalois} Let $A$ be an abelian variety with complex multiplication and let $K$ be the smallest number field over which $A$ is defined. Let $P \in A(\mathbb C)$ be a torsion point of exact order $N$. For every $\theta<1$, there exists an effective $c>0$ depending only on $\theta$ and the dimension of $A$ such that
	\[
	[K(P) :K] \ge c N^\theta.
	\]
\end{thm}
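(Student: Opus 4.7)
The plan is to exploit the theory of complex multiplication to describe the Galois action on $A[N]$ explicitly, reducing the theorem to a unit-group estimate. After enlarging $K$ by a bounded-degree extension if necessary, I may assume $A$ is isogenous over $K$ to a product $\prod_i A_i^{n_i}$ of simple CM abelian varieties, each $A_i$ with CM by a CM field $L_i$. For each simple factor, the Tate module $T_\ell(A_i)$ is free of rank one over $\mathcal{O}_{L_i}\otimes\Z_\ell$, and the image of the Galois representation on $A_i[N]$ is contained in $(\mathcal{O}_{L_i}/N\mathcal{O}_{L_i})^*$. Hence the image of $\mathrm{Gal}(\bar K/K)$ acting on $A[N]$ sits inside $\prod_i(\mathcal{O}_{L_i}/N)^*$, acting coordinate-wise on the isotypic pieces.

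The proof then reduces to three quantitative inputs. First, an effective CM form of Serre's open image theorem: the index of the Galois image in $\prod_i(\mathcal{O}_{L_i}/N)^*$ is bounded by a constant depending only on $A$, and this constant can be controlled effectively in terms of $\dim A$ once one bounds the conductor of the associated Hecke characters via the main theorem of complex multiplication. Second, an elementary lower bound $\#(\mathcal{O}_L/N)^* \gg_\epsilon N^{[L:\Q]-\epsilon}$, analogous to the classical $\phi(N)\gg_\epsilon N^{1-\epsilon}$. Third, a stabiliser estimate: the pointwise stabiliser in $\prod_i(\mathcal{O}_{L_i}/N)^*$ of a torsion point $P$ of exact order $N$ has size at most $O_\epsilon(N^\epsilon)$, provided one first reduces to the case where $P$ generates a cyclic submodule over the endomorphism ring. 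Combining these inputs gives $[K(P):K]\ge cN^\theta$ for any $\theta<1$, with $c$ effective in $\theta$ and $\dim A$.

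The main obstacle is the first input: one must give a uniform, effective open image statement whose constants depend only on $\dim A$ and not on the particular CM type or the arithmetic of the fields $L_i$. For a fixed simple factor this goes back to classical work of Shimura, Serre and Ribet, but organising the bounds uniformly across all CM abelian varieties of dimension $g$ requires a systematic analysis of the Mumford--Tate torus and its image in the idele class group of the reflex field; it is precisely at this step that Gao's framework in \cite{Gaotowards} does the heavy lifting. A secondary subtlety is the reduction to a generator of a cyclic endomorphism-submodule, which needs some care when $N$ factors differently in the various $L_i$, but this can be handled by projecting to each simple factor separately and combining the resulting orbit bounds multiplicatively.
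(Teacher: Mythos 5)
There is a mismatch of scope here: the paper does not prove this statement at all. Theorem \ref{BCGalois} is imported verbatim as \cite[Corollary 13.4]{Gaotowards}, and the whole point of quoting it is that the effective, dimension-only-dependent Galois lower bound for torsion on CM abelian varieties is taken as an external input. Your proposal is therefore not comparable to a proof in the paper; it can only be judged as a self-contained argument, and as such it has a genuine gap. Your ``first input'' --- an effective open-image statement for the CM Galois representation in $\prod_i(\mathcal{O}_{L_i}/N\mathcal{O}_{L_i})^*$ whose constants depend only on $\dim A$, uniformly over all CM types, all orders, all class numbers and all fields of definition --- is exactly the hard content of Gao's corollary. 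You acknowledge this (``it is precisely at this step that Gao's framework does the heavy lifting''), but that means the proposal reduces the theorem to the theorem: the elementary counting of $\#(\mathcal{O}_L/N)^*$ and the stabiliser estimate are routine, while the step that makes the constant depend only on $\theta$ and $g$ is asserted, not proved. Note also that naive appeals to Serre/Shimura/Ribet open-image results give indices depending on the particular abelian variety (conductor of the Hecke character, discriminant and class number of the reflex field), which is precisely what the statement forbids.

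A secondary issue is your opening reduction. Passing to an isogenous product $\prod_i A_i^{n_i}$ of simple CM factors is harmless only if the isogeny degree is controlled effectively in terms of $g$ alone: an isogeny of degree $d$ can shrink the exact order of $P$ by a factor up to $d$, so an unbounded $d$ destroys the uniformity of the exponent $\theta$. Either you must invoke an effective CM isogeny bound (again nontrivial and again part of what the cited framework supplies), or you should work directly with the CM algebra acting on $A$ and on $A[N]$ without decomposing up to isogeny. As written, the proposal is a reasonable road map of the classical CM strategy, but it does not constitute a proof of the statement.
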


We also need the following easy corollary. 
\begin{cor}\label{corGalois} Let $A$ and $K$ be as in Theorem \ref{BCGalois}. Let $H' \subseteq A$ be a torsion translate of a connected algebraic subgroup $H$ of $A$ and let $K'$ be the smallest number field over which $H'$ is defined. For every $\theta < 1$, there exists an effective $c>0$ depending only on $\theta$ and the dimension of $A$ such that 
	$$[K':K] \geq cN^\theta,$$
	where $N$ is the smallest positive integer such that $H' = H + P$ for a point $P$ of order $N$. 
 	\begin{proof} By Lemma 2.2 in \cite{MW_minimal}, there exists an effective constant $c'$ depending only on the dimension $g$ of $A$ such that all connected algebraic subgroups of $A$ are defined over a field extension of degree at most $c'$ over $K$. Thus the abelian variety $A^* = A/H$ is defined over a field extension $K^*$ of degree at most $c'$ over $K$. The reduction $H'/H$ is a torsion point $P^* \in A^*(\C)$ of order $N$ and by Theorem \ref{BCGalois} it holds that $[K^*(P):K^*] \geq c^*N^\theta$, where $c^*$ is effective and depends only on $\theta$ and on $g$. Now we can conclude, by applying Galois, that the orbit of $H'$ by the Galois group $\text{Gal}(\overline{\Q}/K)$ consists of at least $cN^\theta$ translates of algebraic subgroups, where $c$ depends only on $\theta$ and $g$.
 		\end{proof} 
	
\end{cor}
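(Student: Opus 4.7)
The plan is to reduce to Theorem \ref{BCGalois} via the quotient $A^* = A/H$. The key point is that $H$, being a connected algebraic subgroup (i.e.\ an abelian subvariety) of $A$, should be defined over a field extension of $K$ whose degree is controlled only by $g = \dim A$. For this I would invoke the Masser--Wüstholz bound (Lemma 2.2 of \cite{MW_minimal}), producing an extension $K^*/K$ with $[K^*:K] \le c'$ for some effective constant $c'$ depending only on $g$, such that $H$, and therefore the quotient morphism $\pi\colon A \to A^*$, are defined over $K^*$.

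Next I would argue that $A^*$ inherits complex multiplication from $A$: since $A$ is isogenous to a product of simple CM factors, so is any quotient, so $A^*$ is CM. The image $P^* := \pi(P) \in A^*(\C)$ has exact order $N$, precisely because $N$ was chosen minimal with $NP \in H$. Theorem \ref{BCGalois}, applied to $A^*$ over $K^*$ (whose dimension is at most $g$), then gives effective $c'' > 0$ depending only on $\theta$ and $g$ such that
\[
[K^*(P^*):K^*] \;\ge\; c'' N^\theta.
\]

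The final step is to transfer this lower bound from $[K^*(P^*):K^*]$ to $[K':K]$. Since $H'$ is a coset of $H$ and $\pi$ is defined over $K^*$, the image $\pi(H') = \{P^*\}$ is defined over $K^*(H') = K^* K'$, giving $K^*(P^*) \subseteq K^* K'$ and hence $[K^*(P^*):K^*] \le [K^*K':K^*] \le [K':K]$. Combining with the previous inequality yields $[K':K] \ge c'' N^\theta$, which (after absorbing $[K^*:K] \le c'$ if necessary into the constant, and possibly shrinking $\theta$ by an arbitrarily small amount to cover multiplicative slack) gives the required bound.

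The main subtlety I foresee is the bookkeeping in the last step: one must check that the field of definition of the coset $H'$ really does control the field of definition of the image point $P^*$ in the quotient, and that the Galois-orbit inequality $[K^*K':K^*] \le [K':K]$ is used in the correct direction. Once this is cleanly set up, the rest is a direct consequence of the CM Galois bound of Theorem \ref{BCGalois} combined with the isogeny-invariance of CM and the Masser--Wüstholz field-of-definition estimate.
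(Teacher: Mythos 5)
Your proposal is correct and follows essentially the same route as the paper: bound the field of definition of $H$ (hence of $A^*=A/H$ and the quotient map) via Lemma 2.2 of \cite{MW_minimal}, observe that $H'/H$ is a torsion point of exact order $N$ in the CM abelian variety $A^*$, apply Theorem \ref{BCGalois} over $K^*$, and transfer the bound back to the field of definition of $H'$. The only cosmetic difference is that the paper phrases the final transfer by counting $\mathrm{Gal}(\overline{\Q}/K)$-conjugates of $H'$, whereas you phrase it as the field inclusion $K^*(P^*)\subseteq K^*K'$ plus absorbing $[K^*:K]\le c'$ into the constant; these are the same argument in substance.
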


Note that the previous two statements cannot hold without the condition of complex multiplication (for, if they did, we would then obtain Theorem \ref{IntroMM} without this condition, but, as discussed in the introduction, this is impossible). 
So we now suppose in this subsection that our elliptic curves $E_1,\dots,E_g$ have complex multiplication. Recall that we set  $A=E_1\times \cdots \times E_g$. Let $K= \mathbb Q(j(E_1), \dots, j(E_g))$ and suppose that, for $i = 1, \dots, g$, the curve $E_i$ is given by a Weierstrass equation over $K$. 
\begin{thm}\label{mmCM1} Suppose that $V \subseteq A$ is an irreducible subvariety defined over a number field $L$ extending $K$. For  any $\epsilon>0$, there exist  $c_\epsilon$ and $m$ effectively computable from $g$ and $\epsilon$, and there exists $N$ such that 
	$$\overline{V \cap A_{\text{tors}}}^{\text{Zariski}} = \bigcup_{i = 1}^N T_i + H_i  $$
	where
	$$N \leq c_\epsilon [L:K]^{2g+\epsilon} \deg(V)^m,$$
	and, for $i=1,\ldots, N$, $T_i$ are torsion points such that
	$$\text{ord}(T_i) \le c_\epsilon [L:K]^{1+\epsilon} \deg (V)^m$$ 
	and $H_i$ are connected subgroups of degree at most $c_\epsilon\deg(V)^m$.
\end{thm}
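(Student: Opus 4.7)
The plan is to execute the Pila--Zannier strategy in the CM setting, combining our counting results (Lemma~\ref{counting} and Lemma~\ref{countingmaximal}) with the Galois lower bound (Corollary~\ref{corGalois}).

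First, I would decompose
\[
\overline{V \cap A_{\text{tors}}}^{\text{Zariski}} = \bigcup_{i=1}^{N} (T_i + H_i),
\]
where each $T_i + H_i$ is a maximal torsion translate contained in $V$, with $T_i$ chosen of minimal order in $H_i^T$. The bound $\deg(H_i) \le c\deg(V)^m$ is immediate from the Bombieri--Zannier estimate \cite{BZuniform} already invoked in the proof of Lemma~\ref{cosets}.

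Next, to bound $\text{ord}(T_i)$, I would fix a maximal torsion translate $T+H \subseteq V$ with $M := \text{ord}(T)$, and let $K'$ be the smallest extension of $K$ over which $T+H$ is defined. By Corollary~\ref{corGalois}, for any $\theta<1$ one has $[K':K] \ge c_\theta M^\theta$ with $c_\theta$ effective in $\theta$ and $g$. Since $V$ is defined over $L$ and Galois conjugation preserves maximal torsion translates contained in $V$, the Galois orbit of $T+H$ over $L$ has size at least $c_\theta M^\theta / [L:K]$, and consists of distinct maximal torsion translates in $V$. For the matching upper bound, I would apply Lemma~\ref{countingmaximal} to each possible connected subgroup $H'$ occurring as some $H_i$: Bombieri--Zannier gives $\deg(H') \le c \deg(V)^m$, and the number of connected subgroups of $A$ of degree at most $D$ is polynomially bounded in $D$ (via a standard sublattice count in $\Omega \cong \Z^{2g}$, in the spirit of Lemma~\ref{lattices}). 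Summing shows that the number of maximal torsion translates of $V$ whose minimal representative has order at most $M$ is at most $c_\delta \deg(V)^{m'} M^\delta$ for any $\delta>0$. Equating the two bounds and taking $\theta$ close to $1$ and $\delta$ small yields $M \le c_\epsilon[L:K]^{1+\epsilon}\deg(V)^{m''}$, as required for the statement on $\text{ord}(T_i)$.

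Finally, with the order bound in hand, I would bound the total count $N$ by applying Lemma~\ref{counting} to count isolated torsion points of $V$ (those in $V^*$) and Lemma~\ref{countingmaximal}, summed over the possible subgroups $H'$, to count the positive-dimensional translates, in each case with $M^* = c_\epsilon[L:K]^{1+\epsilon}\deg(V)^{m''}$ as the order bound. Both contributions take the form $c_\delta \deg(V)^m (M^*)^\delta$, comfortably absorbed into the shape $c_\epsilon[L:K]^{2g+\epsilon}\deg(V)^m$. The main technical obstacle I anticipate is the uniform handling of the counting across all connected subgroups $H'$ that can arise: one must combine the Bombieri--Zannier degree bound with a polynomial count on the number of connected subgroups of $A$ of bounded degree, and verify that summing Lemma~\ref{countingmaximal} over these does not inflate the $\deg(V)^m$ factor beyond a controllable polynomial.
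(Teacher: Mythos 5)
Your proposal is correct and follows the same Pila--Zannier skeleton as the paper's proof: play the Galois lower bounds (Theorem \ref{BCGalois}, Corollary \ref{corGalois}) off against the Pfaffian counting upper bounds (Lemmas \ref{counting} and \ref{countingmaximal}), with the Bombieri--Zannier estimate \cite{BZuniform} controlling $\deg(H_i)$. Two organizational differences are worth recording. First, for the positive-dimensional cosets you obtain the counting upper bound by summing Lemma \ref{countingmaximal} over \emph{all} connected subgroups of degree at most $c\deg(V)^m$, which obliges you to supply a polynomial bound on the number of such subgroups; this is true and provable from Lemma \ref{lattices} together with the covolume argument in the proof of Lemma \ref{cosets}, but it is an extra ingredient the paper avoids by fixing the subgroup $H$ of the given maximal coset and comparing its Galois conjugates directly with the count of Lemma \ref{countingmaximal} for that single $H$ (implicitly using, via \cite[Lemma 2.2]{MW_minimal}, that $H$ is defined over an extension of $K$ of degree bounded only in terms of $g$, so the conjugates may be taken to share the same subgroup part -- your summation in fact sidesteps this point more robustly). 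Second, your inequality $[K':K]\ge c_\theta M^\theta$ with $M=\operatorname{ord}(T)$ is not literally what Corollary \ref{corGalois} gives: that corollary is stated in terms of the minimal order of a representative of the coset, which may be smaller than the order of your chosen representative in $H^T$ by a factor up to $c(\deg H)^2$; this is harmless since $\deg H\le c\deg(V)^m$, and the paper handles precisely this discrepancy explicitly via \cite[Lemma 1.2]{MW_minimal}. Finally, for the total count $N$ you reuse the counting lemmas with the established order bound, which actually yields a bound of the shape $c\,[L:K]^{\epsilon}\deg(V)^{m}$, sharper than the stated $[L:K]^{2g+\epsilon}$, whereas the paper simply raises the order bound to the power $2g$; either way the stated estimate follows.
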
 
\begin{proof}
	As a warm-up we first treat isolated torsion points. We fix a complex embedding of $L$. As usual, we denote by $V^*$ the variety obtained by removing from $V$  all translates of abelian subvarieties contained in $V$. It follows from Theorem \ref{BCGalois} that, if $P \in V^*(\mathbb{C})$ of order $N_*$, then there are at least $c_\epsilon N_*^{1-\epsilon}/[L:K]$ points of order $N_*$ on $V^*$. Comparing with Lemma \ref{counting}, we obtain a bound on the order and, by raising to the power $2g$, a bound on the number $N$ of isolated torsion points and their order.
	
	Now let $H \subseteq A$ be a connected algebraic group such that there exists a torsion point $P$ with the property that $P + H \subseteq V$ is a maximal coset in $V$. We let $N$ be the order of $P + H$ in $A/H$. By \cite[Lemma 1.2]{MW_minimal}, there is an effective  $c$ depending only on $g$ such that $\# (H \cap H^T) \leq c(\deg (H))^2$. Thus, for each $P + H$ of order $N$, we can find $P' \in H^T$ of order at most $cN(\deg (H))^2$ such that $P' + H = P + H$.  By Lemma \ref{countingmaximal}, there are, for any $\delta > 0$, at most $c'_\delta \deg(V)^mN^{\delta}$  such torsion points in $H^T$. It follows from Corollary \ref{corGalois}  that each $P+ H$ comes with at least $c_\epsilon N^{1-\epsilon}/[L:K]$ conjugates.  Choosing $\delta$ appropriately, we then conclude as for the isolated torsion points. 
\end{proof}
For the reduction of Theorem \ref{mainCM} to Theorem \ref{mmCM1} we first establish some background on subgroups of products of elliptic curves. 


\begin{lemma}\label{isogenies}
Let $A=E_1\times \cdots \times E_g$ be a product of elliptic curves and $H$ a connected algebraic subgroup of $A$. Then there exist an effective $c$ depending only on $g$ and a partition $I_0,\ldots, I_n$ of $\{1,\ldots, g\}$ such that the following holds. For $i= 1, \ldots, n$ and $j,k\in I_i$, the elliptic curves $E_j$ and $E_k$ are isogenous via an isogeny of degree at most $c(\deg (H))^2$, and there are connected algebraic subgroups $H_i$ of $\prod_{j\in I_i} E_j$ such that
\[
H\iso \left( \prod_{i\in I_0} E_i\right) \times H_1 \times \cdots \times H_n,
\]
where $I_0$ is maximal such that this holds.
\end{lemma}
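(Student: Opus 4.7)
The plan is to work on the universal cover of $A$, identifying the connected algebraic subgroup $H$ with its Lie algebra $W \subseteq \mathbb{C}^g$; the algebraicity of $H$ is equivalent to $W \cap (\Omega_1 \times \cdots \times \Omega_g)$ being a lattice of full real rank $2 \dim W$ in $W$. Writing $\mathbb{C}^g = \bigoplus_{j=1}^g U_j$ with $U_j$ the $j$-th coordinate line and $U_I := \bigoplus_{j \in I} U_j$, the desired product decomposition of $H$ corresponds exactly to a direct sum decomposition $W = \bigoplus_{I \in P}(W \cap U_I)$ for some partition $P$ of $\{1, \ldots, g\}$. A short linear algebra argument (verifying $W \cap U_I = \bigoplus_{J \in Q}(W \cap U_{I \cap J})$ whenever $P$ and $Q$ both decompose $W$) shows this collection of partitions is closed under common refinement, so a unique finest partition $P_0$ exists; I put the singletons $\{j\}$ with $U_j \subseteq W$ (equivalently, $H$ contains $E_j$ as a direct summand) into $I_0$ and call the remaining blocks $I_1, \ldots, I_n$, with $H_i$ the identity component of $H \cap \prod_{j \in I_i} E_j$. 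By construction $I_0$ is maximal.

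For each block $I_k$ with $|I_k| \geq 2$ I next establish, by contradiction, that the $E_j$ ($j \in I_k$) are pairwise isogenous. If $T = \{l \in I_k : E_l \text{ is isogenous to } E_j\}$ were a proper subset of $I_k$ with complement $T' = I_k \setminus T \neq \emptyset$, then setting $H_k^S := (H_k \cap \prod_{l \in S} E_l)^0$, a short diagram chase identifies both $\pi_T(H_k)/H_k^T$ and $\pi_{T'}(H_k)/H_k^{T'}$ canonically with the common quotient $H_k/(H_k^T + H_k^{T'})$. This quotient is simultaneously isogenous to a quotient of $\prod_{l \in T} E_l$ and to a quotient of $\prod_{l \in T'} E_l$, hence to products of elliptic curves drawn from the disjoint isogeny classes $\{E_l\}_{l \in T}$ and $\{E_l\}_{l \in T'}$; the only such product is trivial. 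Therefore $H_k = H_k^T + H_k^{T'}$, contradicting the indecomposability of $W \cap U_{I_k}$ with respect to the partition $T \cup T'$.

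For the degree estimate, fix $j, j' \in I_k$ and set $A_{j,j'} := \pi_{\{j,j'\}}(H_k) \subseteq E_j \times E_{j'}$. If $\dim A_{j,j'} = 1$, then $A_{j,j'}$ is an elliptic curve with surjective projections $p_j, p_{j'}$ to $E_j, E_{j'}$ satisfying $\deg(p_j) + \deg(p_{j'}) = \deg(A_{j,j'})$, so the composed isogeny $p_{j'} \circ \hat{p}_j \colon E_j \to E_{j'}$ has degree $\deg(p_j)\deg(p_{j'}) \leq \deg(A_{j,j'})^2$, while $\deg(A_{j,j'}) \leq c\deg(H)$ by standard degree bounds for images of abelian subvarieties under projection homomorphisms between products of elliptic curves (as used in the proof of Lemma~\ref{cosets}). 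If instead $\dim A_{j,j'} = 2$, one extracts a one-dimensional abelian subvariety $C \subseteq H_k$ surjecting onto both $E_j$ and $E_{j'}$ by applying Poincar\'e reducibility to the kernel of $\pi_{\{j,j'\}}|_{H_k}$, and bounds $\deg(C)$ linearly in $\deg(H)$ using the minimal-subvariety estimates of Masser--W\"ustholz \cite{MW_minimal}; the composed isogeny again has degree at most $c(\deg H)^2$. The principal technical obstacle is securing the linear degree bound on the auxiliary curve $C$ in the second case, where, unlike the first, no direct one-dimensional object is provided by the pairwise projection, and the argument must rely on the minimality estimates inside $H_k$ itself.
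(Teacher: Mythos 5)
Your reduction of the splitting to the Lie-algebra decomposition $W=\bigoplus_I(W\cap U_I)$, the closure of admissible partitions under common refinement, and the pairwise-isogeny claim within a block (via the disjoint-isogeny-class quotient argument) are all fine; the paper simply calls this part ``standard''. For the degree bound you take a genuinely different route from the paper, and it is there that the proposal has a real gap, concentrated exactly where you flag ``the principal technical obstacle'': case (b), where $\pi_{\{j,j'\}}(H_k)=E_j\times E_{j'}$. Applying Poincar\'e reducibility to $\ker(\pi_{\{j,j'\}}|_{H_k})$ only yields a \emph{two}-dimensional complement $C'\subseteq H_k$, isogenous to $E_j\times E_{j'}$, and its degree can indeed be controlled polynomially in $\deg(H)$ via the complement/intersection estimates of \cite{MW_minimal}. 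But that is not what you need: you need a \emph{one}-dimensional abelian subvariety $C$ with nonzero projections to both $E_j$ and $E_{j'}$ and degree bounded in $\deg(H)$. Such curves inside $C'$ (or inside $E_j\times E_{j'}$) correspond, via the Isogeny Lemma and its converse (graphs of isogenies), to isogenies $E_j\to E_{j'}$, and the minimal degree of such a curve is comparable to the minimal isogeny degree between $E_j$ and $E_{j'}$ --- which is precisely the quantity the lemma is supposed to bound. So ``bounds $\deg(C)$ linearly in $\deg(H)$ using the minimal-subvariety estimates of Masser--W\"ustholz'' is not a deferred technicality but a restatement of the problem: neither Poincar\'e reducibility nor the estimates of \cite{MW_minimal} produce a curve with both projections nonzero, let alone one of controlled degree. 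Note also that case (b) with $H_k\cap(E_j\times E_{j'})$ finite genuinely occurs (e.g.\ a two-dimensional nonsplit $H$ in a product of four isogenous curves), so the case cannot be argued away.

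The paper's proof avoids this by never trying to work with the pair $(j,j')$ alone when the codimension is large: one first projects $H$ to a subproduct of $\dim H+1$ coordinates containing $j$ and $j'$ (the image is connected, of dimension $\dim H$, and of degree polynomially bounded in $\deg H$), so that the image has codimension one there; then a dimension count shows that its intersection with the coordinate plane $E_j\times E_{j'}$ is automatically at least one-dimensional, and (after discarding the split situations, which contradict minimality of the block) this intersection is exactly the one-dimensional nonsplit subgroup to which the Isogeny Lemma of \cite{MW_estimating} applies, with degree bounded by $\deg(H)$ up to constants. If you want to salvage your pair-by-pair scheme, you should replace your case (b) by this projection-then-intersection step; as written, case (b) is circular. (Two smaller points: in case (a) you should say why both projections of $A_{j,j'}$ are surjective --- a trivial projection would split a singleton off the block, contradicting minimality --- and the bound $\deg(A_{j,j'})\le c\deg(H)$ is standard but is not contained in Lemma \ref{cosets}, so it needs its own justification via the projection formula for the product polarization.)
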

\begin{proof}
Without the bound on the isogeny degrees, this is standard. For the bound, we can suppose that $n=1$ and that $I_0=\{ 0\}$, so that $H$ is a nonsplit subgroup of $A$, in the sense that there is no decomposition of $A$ into $A_1\times A_2$ with $H=H_1\times H_2$, where $H_j$ is a connected algebraic subgroup of $A_j$. To prove the bound, we first suppose that $\dim H=g-1$. Then, if $1\le i <j\le g$, let $H_{i,j}$ be the subgroup of $A$ obtained by setting coordinates in $E_k$ to be the identity $O$ for $k$ distinct from $i$ and $j$. By our assumptions, we then have
\[
\dim (H\cap H_{i,j})=1.
\]
We can then identify $H\cap H_{i,j}$ with a nonsplit subgroup of $E_i\times E_j$ so that, by the Isogeny Lemma (page 5 in \cite{MW_estimating}), there is isogeny between $E_i$ and $E_j$ of degree at most $c \deg (H\cap H_{i,j})^2$. Since $\deg (H\cap H_{i,j}) \le \deg (H)$, this finishes the proof in this case.

If $\dim H<g-1$, then we consider all projections of $H$ to products $\prod_{i \in I} E_i$, where $I\subseteq \{1,\ldots, g\}$ has size $\dim H+1$. Each such projection is a connected subgroup of the corresponding product, and has dimension $\dim H$. And each projection is nonsplit. So we can apply the argument above to conclude.
\end{proof}

The following lemma controls the degree of the image of a variety under an isogeny. 
\begin{lemma} \label{isogeny} Let $\Phi: A \rightarrow A'$ be an isogeny between products of elliptic curves and let $V\subseteq A$ be an algebraic subvariety. Then 
	$$\deg(\Phi(V)) \leq \deg(\Phi)^{\dim(V)}\deg(V).$$
\end{lemma}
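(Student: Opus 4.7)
The plan is to combine the projection formula in intersection theory with a numerical bound comparing the line bundles $\Phi^*\mathcal L'$ and $\deg(\Phi)\cdot \mathcal L$ on $A$. Set $n=\dim V$. Since $\Phi$ is an isogeny, it is finite and surjective, so the restriction $\Phi|_V\colon V\to \Phi(V)$ is generically finite of degree $\deg(\Phi|_V)\in[1,\deg\Phi]$. The projection formula applied to the cycle $[V]$ and the divisor class $\Phi^*\mathcal L'$ yields
\[
\deg(\Phi|_V)\cdot \deg_{\mathcal L'}(\Phi(V)) \;=\; V\cdot (\Phi^*\mathcal L')^n
\]
as intersection numbers on $A$. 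Since $\deg(\Phi|_V)\ge 1$, it is enough to prove
\[
V\cdot (\Phi^*\mathcal L')^n \;\le\; \deg(\Phi)^n\cdot \deg_{\mathcal L}(V).
\]

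The next step is the numerical claim that the $\mathbb Q$-divisor class $\deg(\Phi)\cdot \mathcal L - \Phi^*\mathcal L'$ is nef on $A$. Both $\deg(\Phi)\cdot \mathcal L$ and $\Phi^*\mathcal L'$ are already nef (in fact ample, as $\Phi$ is finite and $\mathcal L'$ is ample), so once the difference is nef, the multilinear expansion
\[
V\cdot(\deg(\Phi)\mathcal L)^n - V\cdot (\Phi^*\mathcal L')^n \;=\; V\cdot\bigl(\deg(\Phi)\mathcal L - \Phi^*\mathcal L'\bigr)\cdot\sum_{k=0}^{n-1}(\deg(\Phi)\mathcal L)^{n-1-k}\cdot(\Phi^*\mathcal L')^k
\]
exhibits the left-hand side as a sum of intersection numbers of nef divisor classes with the effective cycle $V$, each of which is nonnegative by Kleiman's theorem. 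This gives the desired inequality.

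To establish the nefness, I would decompose $\mathcal L' = \sum_{i=1}^g (\pi'_i)^*\mathcal L'_i$, so that $\Phi^*\mathcal L' = \sum_{i=1}^g \phi_i^*\mathcal L'_i$ with $\phi_i = \pi'_i\circ\Phi\colon A\to E'_i$ a (necessarily surjective) homomorphism of abelian varieties; it cannot vanish, since otherwise $\Phi$ would factor through a proper subabelian variety of $A'$, contradicting surjectivity. Writing $\phi_i = \sum_j \phi_{ij}\circ \pi_j$ for homomorphisms $\phi_{ij}\colon E_j\to E'_i$ and invoking the theorem of the cube, one expands each class $\phi_i^*\mathcal L'_i$ in the N\'eron--Severi group of $A$ explicitly in terms of the basis classes $\pi_j^*\mathcal L_j$ and the Poincar\'e-type correspondence classes between isogenous factors; the degrees of the individual $\phi_{ij}$ are controlled by $\deg(\Phi)$ through the Minkowski-style matrix estimate inherent in the structure of isogenies between products of elliptic curves. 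A curve-by-curve check via Kleiman's criterion then shows that $\deg(\Phi)\mathcal L - \Phi^*\mathcal L'$ pairs nonnegatively with every irreducible curve on $A$. This last explicit intersection estimate is the main technical obstacle; once it is in place, everything else is formal.
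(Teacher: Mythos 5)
Your reduction via the projection formula is fine and matches the first step of the paper's (one-line) argument: $\deg(\Phi|_V)\,\deg_{\mathcal L'}(\Phi(V)) = V\cdot(\Phi^*\mathcal L')^{\dim V}$, so everything hinges on comparing $\Phi^*\mathcal L'$ with $\deg(\Phi)\,\mathcal L$. But the claim on which your whole argument rests --- that $\deg(\Phi)\,\mathcal L-\Phi^*\mathcal L'$ is nef --- is exactly the step you defer (``the main technical obstacle''), and it is false for a general isogeny between products of elliptic curves. Take $A=A'=E\times E$, $\mathcal L=\mathcal L'$, and the automorphism $\Phi(x,y)=(x+Ny,y)$, so $\deg\Phi=1$. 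For the curve $V=\{P\}\times E$ one has $\deg_{\mathcal L}(V)=1$, while by the projection formula $\Phi^*\mathcal L'\cdot V=\deg_{\mathcal L}(\Phi(V))=N^2+1$ (the two coordinate projections restricted to $\Phi(V)$ have degrees $N^2$ and $1$), so $(\deg(\Phi)\mathcal L-\Phi^*\mathcal L')\cdot V=-N^2<0$. No theorem-of-the-cube bookkeeping or Minkowski-style estimate can repair this: your implicit assumption that the degrees of the component homomorphisms $\phi_{ij}$ are controlled by $\deg(\Phi)$ is precisely what fails here, since $\deg\Phi=1$ but $\phi_{12}=[N]$.

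The same example shows that the lemma as literally stated fails ($\deg(\Phi(V))=N^2+1>\deg(\Phi)^{\dim V}\deg(V)=1$), and also that the identity $\Phi^*\mathcal L'=\deg(\Phi)\,\mathcal L$ asserted in the paper's proof does not hold for arbitrary isogenies. What is true, and what is actually used later (in Lemma \ref{lemlower}, where $\Phi$ is produced via Lemma \ref{isogenies}), is the case of an isogeny compatible with the product decomposition, $\Phi=\phi_1\times\cdots\times\phi_g$ with $\phi_i\colon E_i\to E_i'$. In that case $\Phi^*\mathcal L'=\sum_i\deg(\phi_i)\,\pi_i^*\mathcal L_i$ and $\deg(\phi_i)\le\prod_j\deg(\phi_j)=\deg(\Phi)$, so $\deg(\Phi)\,\mathcal L-\Phi^*\mathcal L'$ is a nonnegative combination of the nef classes $\pi_i^*\mathcal L_i$, and your multilinear expansion then closes the argument immediately, with no N\'eron--Severi computation needed. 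So your frame is the right one, but as written the proposal has an unfilled gap at its core, and in the stated generality that gap cannot be filled: the hypothesis on $\Phi$ must be strengthened (as it implicitly is in the paper's application) before either your argument or the paper's goes through.
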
 
\begin{proof} Let $\mathcal{L}$ be the line bundle on $A$ described in (\ref{linebundle}) and let $\mathcal{L}'$ be the same on $A'$. Then $\Phi^* \mathcal{L}' =\deg(\Phi)\mathcal{L}$ and the lemma follows from the projection formula.  
	\end{proof}
Finally, we need an estimate on the height of a relation in the tangent space of $A$ that defines a subgroup. We first treat the case of a power of an elliptic curve.
Let $E$ be an elliptic curve with complex multiplication by an order $\OO \subseteq O_K$, where $K$ is a quadratic imaginary field.  Then $E = \mathbb{C}/J$, for some ideal $J$ in $\mathcal{O}$. We consider a connected subgroup $G \subseteq E^g$. We recall that we consider the canonical line bundle $\mathcal{L}$ on $E^g$ (\ref{linebundle}) and its associated degree $\deg$. We fix an embedding of $\OO$ into $\mathbb{C}$ and we note that each element $\alpha$ of the endomorphism ring of $E$ has norm $|\alpha|^2$, where we simply take the absolute value with respect to that embedding. For an $r \times n$ matrix $M$ we associate a height defined by 
$$\mathbf{Ht}(M) = \max_{i,j}\{|\alpha_{ij}|\},$$
where $\alpha_{ij}$ vary over the entries of $M$. For a subgroup $G$ of codimension $r$, there exist $r$ linearly independent relations on $E^g$ that vanish on $G$, and we denote by $\mathbf{Ht}(G)$ the minimum of height of the $r\times g$ matrix determined by these relations. 

\begin{lemma}\label{lemrelation} There exists an effective constant $c $ depending only on $g$ such that
	$$\mathbf{Ht}(G) \leq c\deg(G).$$
\end{lemma}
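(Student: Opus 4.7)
The plan is to identify the $\mathcal{O}$-module of linear relations vanishing on $G$, bound its covolume by $\deg(G)$ up to a constant depending only on $g$, and then apply the Minkowski-type Lemma \ref{lattices} to extract short relations. Concretely, let $W\subseteq\C^g$ be the tangent space of $G$ at the origin, so $\dim_\C W=g-r$, and set
\[
L=\{\alpha=(\alpha_1,\dots,\alpha_g)\in\mathcal{O}^g:\alpha_1 z_1+\dots+\alpha_g z_g=0\text{ for all }z\in W\}.
\]
Then $L$ is an $\mathcal{O}$-submodule of $\mathcal{O}^g$ of $\mathcal{O}$-rank $r$. Under the fixed embedding $\mathcal{O}\hookrightarrow\C$ used to define $|\cdot|$, it becomes a $\Z$-lattice of rank $2r$ inside $\C^g\cong\R^{2g}$, and its covolume $\covol(L)$ is computed with respect to the standard Euclidean structure.

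The central step is to prove $\covol(L)\leq c_1\deg(G)$ for some effective $c_1=c_1(g)$. To this end I would invoke Poincar\'e reducibility (available here because $E$ has complex multiplication) to produce a complementary connected subvariety $G^*\subseteq E^g$ of dimension $r$, isogenous to $E^r$, whose tangent space is a $\C$-complement of $W$ and whose degree satisfies $\deg(G^*)\leq c_g\deg(G)$ (via, for instance, \cite[Lemma 1.3]{MW_minimal} together with the projection formula). An isogeny $\psi\colon E^r\to G^*$ of degree polynomial in $\deg(G^*)$ identifies $L$, through the natural pairing between relations vanishing on $W$ and homomorphisms $G^*\to E$, with a finite-index sublattice of $\mathrm{Hom}(E^r,E)\cong\mathcal{O}^r$; tracking the contributions of $\psi$ and of the embedding $G^*\hookrightarrow E^g$ to the Euclidean normalization on $\C^g$ then yields the desired inequality. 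It is crucial that all norm comparisons are made through the same complex embedding $\mathcal{O}\hookrightarrow\C$ appearing in the statement, so that the factors depending on the discriminant of $\mathcal{O}$ (arising from $\covol(\mathcal{O})$ and from the Riemann form associated with $\mathcal{L}$) cancel, leaving a constant depending only on $g$.

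Given this covolume bound, I would apply Lemma \ref{lattices} (adapted in the obvious way to view $L$ as a sublattice of $\R^{2g}$ via an integral basis of $\mathcal{O}$) to obtain $2r$ generators $v_1,\dots,v_{2r}$ of $L$ whose Euclidean lengths each satisfy $|v_j|\leq c_2\covol(L)\leq c_2 c_1\deg(G)$; in particular, every coordinate $\alpha^{(j)}_i\in\mathcal{O}$ of $v_j$ obeys $|\alpha^{(j)}_i|\leq|v_j|$. Since these $v_j$ generate $L$ as a $\Z$-module, hence a fortiori as an $\mathcal{O}$-module, and $L$ has $\mathcal{O}$-rank $r$, one may select $r$ of them that are $\mathcal{O}$-linearly independent, equivalently $\C$-linearly independent under the embedding. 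Placing these as the rows of an $r\times g$ matrix $M$ produces a system of $r$ independent relations cutting out $G$ with $\mathbf{Ht}(M)\leq c\deg(G)$, as required. The main obstacle is the covolume-degree inequality of the second step: the point is to obtain a linear-in-$\deg(G)$ bound whose implicit constant depends only on $g$ and not on the CM discriminant of $E$, and the careful interplay between the principal polarization, the $\mathcal{O}$-action, and the chosen Euclidean normalization on $\mathcal{O}^g$ is what makes this cancellation work.
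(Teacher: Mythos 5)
Your argument breaks down at the step you yourself identify as central, namely the inequality $\covol(L)\leq c_1(g)\deg(G)$: this inequality is false, and no cancellation of discriminant factors can rescue it. Take $g=2$, $r=1$ and $G$ the diagonal in $E\times E$. Then $\deg_{\mathcal L}(G)=2$ for every $E$, while $L=\{(\alpha,-\alpha):\alpha\in\mathcal O\}$, whose Euclidean covolume in its real span is $2\,\Im(\tau)\geq f|D|^{1/2}$ for a $\Z$-basis $\{1,\tau\}$ of $\mathcal O$; this is unbounded as the CM order varies. More generally, any $\mathcal O$-stable lattice of $\C$-rank $r$ carries a factor of order $\covol(\mathcal O)^r\asymp (f|D|^{1/2})^r$ in its Euclidean covolume, whereas $\deg(G)$ stays bounded for subgroups cut out by $\Z$-relations; so a constant depending only on $g$ cannot exist. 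The same example shows the extraction step cannot work either: any $\Z$-generating set of $L$ must contain some $(\alpha,-\alpha)$ with $\alpha\notin\Z$, hence of length at least $f|D|^{1/2}/2$ by Lemma \ref{end}, so Lemma \ref{lattices} can never deliver $2r$ generators of length $O_g(\deg(G))$. What the lemma actually needs is only $r$ relations that are $\C$-linearly independent, i.e.\ control of the $\mathcal O$-module minima of $L$; this is a strictly smaller quantity than what Minkowski applied to the full $\Z$-lattice $L$ controls, and your plan conflates the two.

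For comparison, the paper's proof avoids lattice reduction entirely and is elementary: it reduces to codimension one (via $\deg(G_1\cap G_2)\leq c\deg(G_1)\deg(G_2)$), takes a relation $(\alpha_1,\dots,\alpha_g)$ realizing $\mathbf{Ht}(G)$ with $|\alpha_1|=\mathbf{Ht}(G)$, shows via Lemma \ref{normvsheight} that the ideal $I=(\alpha_1)+\cdots+(\alpha_g)$ has $N(I)\leq\mathbf{Ht}(G)$, and then counts points: for $z_2,\dots,z_g\in J$ the admissible $z_1$ modulo $J$ number at least $[I\cap J:\alpha_1 J]=N(\alpha_1)/N(I)\geq\mathbf{Ht}(G)$, so $G\cap(E\times O^{g-1})$ contains at least $\mathbf{Ht}(G)$ points, while this intersection has at most $c\deg(G)$ points. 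If you want to salvage a lattice-theoretic route, you would have to bound the $\mathcal O$-module successive minima of $L$ (for instance through an index $[\,\mathcal O^r:L'\,]$ after your Poincar\'e-reducibility and isogeny step), not the Euclidean covolume of $L$ as a $\Z$-lattice; as written, the proposal proves a statement that the diagonal example refutes.
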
 
We are going to show this for codimension 1 subgroups and then conclude the general case by noting that 
$$\deg(G_1\cap G_2) \leq c\deg(G_1)\deg(G_2),$$
where $c$ is a constant that depends only on $g$ \cite[Lemma 1.2]{MW_minimal}. 
So let $G$ be a connected subgroup of codimension 1. The tangent space $T_G$ of $G$ at the identity is a linear subspace of the tangent space of $E^g$, which we identify with $\mathbb{C}^g$. It is defined by a relation 
$$\alpha_1z_1 + \cdots + \alpha_g z_g = 0,$$
where $\alpha_1, \dots, \alpha_g \in \mathcal{O}$. We denote by $N$ the norm of an ideal and we first prove the following. 
\begin{lemma}\label{normvsheight} For  $I = (\alpha_1) + \cdots + (\alpha_g) \subseteq \mathcal{O}$, we have
	$$N(I) \leq \mathbf{Ht}(G).$$
\end{lemma}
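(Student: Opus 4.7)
The plan is to reduce the inequality to a short computation on ideal norms in $\mathcal{O}$.

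First I would record that, for each nonzero $\alpha_i$, the principal ideal $(\alpha_i) = \alpha_i \mathcal{O}$ is contained in $I$, so $I$ divides $(\alpha_i)$ in the monoid of nonzero integral ideals of $\mathcal{O}$. Taking norms, which are multiplicative on that monoid, this gives $N(I) \mid N((\alpha_i))$. The norm of a principal ideal generated by $\alpha \in \mathcal{O}$ equals $|\alpha|^{2}$, which is precisely the convention introduced at the start of the subsection. Hence $N(I) \leq |\alpha_i|^{2}$ for every nonzero entry of the tuple, and in particular $N(I) \leq \max_i |\alpha_i|^{2}$.

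Since the tuple $(\alpha_1, \ldots, \alpha_g)$ is chosen to define the fixed codimension-one relation for $G$, and the height of the corresponding $1 \times g$ matrix records the maximum entry via the convention fixed just before the lemma, the stated bound $N(I) \leq \mathbf{Ht}(G)$ then follows at once, once one reads the occurrences of $|\alpha_{ij}|$ in the height definition as the norm of $\alpha_{ij}$ on $\mathcal{O}$ (consistent with the sentence immediately preceding the display introducing $\mathbf{Ht}(M)$).

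The only delicate point, and therefore the main obstacle to presenting the argument cleanly, is to nail down the notational convention for $\mathbf{Ht}$ unambiguously. If one instead insists that $|\alpha_{ij}|$ denotes the complex absolute value, then the divisibility computation above yields only $N(I) \leq \mathbf{Ht}(G)^{2}$ without further input. To sharpen this to $N(I) \leq \mathbf{Ht}(G)$ one would have to exploit the minimality of the representation: any scaling by $\beta \in K^{\times}$ with $\beta \alpha_i \in \mathcal{O}$ for all $i$ (that is, $\beta \in I^{-1}$) cannot strictly decrease $\max_i |\beta \alpha_i|$, which forces $1$ to be a shortest nonzero vector of the lattice $I^{-1} \subset \mathbb{C}$. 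Minkowski's theorem applied to this lattice (whose covolume is $\operatorname{covol}(\mathcal{O})/N(I)$) then gives an absolute constraint on $N(I)$ in terms of the discriminant of $K$, which when combined with the trivial divisibility bound yields the stated inequality. Either way, the proof is a one-line consequence of the multiplicativity of the ideal norm together with the convention for $\mathbf{Ht}$, and I expect the authors to present it in the first (notational) form.
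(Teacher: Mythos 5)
Your primary argument rests on a misreading of the height convention. In the paper, $|\cdot|$ is the complex absolute value with respect to the fixed embedding (the norm of $\alpha$ is $|\alpha|^2$), and $\mathbf{Ht}(M)=\max_{i,j}|\alpha_{ij}|$ is taken with absolute values, not norms; this is forced by the later uses: Lemma \ref{end} gives $|\alpha|\geq f|D|^{1/2}/2$ and is applied in Lemma \ref{lemlower} as $\mathbf{Ht}(H)\geq f|D|^{1/2}/2$, and the proof of Lemma \ref{lemrelation} deduces $N(\alpha_1)/N(I)\geq \mathbf{Ht}(G)$ from the present lemma, which only works if $N(\alpha_1)=\mathbf{Ht}(G)^2$, i.e.\ under the absolute-value convention. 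Under the correct convention, your divisibility observation $N(I)\mid N(\alpha_i)$ yields only $N(I)\leq \mathbf{Ht}(G)^2$, as you yourself note.

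Your fallback does not close this gap. You correctly identify that minimality must be used, and your reformulation is on the right track: minimality says every nonzero $\beta$ with $\beta I\subseteq\mathcal{O}$ has $|\beta|\geq 1$. But Minkowski applied to this lattice only gives $\covol(\mathcal{O})/N(I)\geq \pi/4$, i.e.\ $N(I)\leq (2/\pi)\sqrt{|\mathrm{disc}(\mathcal{O})|}$, and combining this with $N(I)\leq\mathbf{Ht}(G)^2$ does not imply $N(I)\leq\mathbf{Ht}(G)$ in the range $1<\mathbf{Ht}(G)\lesssim\sqrt{|\mathrm{disc}(\mathcal{O})|}$; even splitting into the cases of integral versus non-integral entries (via Lemma \ref{end}) one only reaches $N(I)\leq C\,\mathbf{Ht}(G)$ with an absolute $C>1$, not the stated inequality. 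The missing ingredient, which is the heart of the paper's proof, is the identity $I\bar I=(N(I))$: with $|\alpha_1|=\mathbf{Ht}(G)$ it gives $\bar\alpha_1\alpha_j/N(I)\in\mathcal{O}$ for all $j$, i.e.\ $\bar\alpha_1/N(I)$ lies in your lattice $(\mathcal{O}:I)$, so minimality forces $|\alpha_1|/N(I)\geq 1$, that is $N(I)\leq\mathbf{Ht}(G)$ (equivalently, if $N(I)>\mathbf{Ht}(G)$ the rescaled relation $\bigl(\bar\alpha_1\alpha_1/N(I),\ldots,\bar\alpha_1\alpha_g/N(I)\bigr)$ is integral of height $\mathbf{Ht}(G)^2/N(I)<\mathbf{Ht}(G)$, a contradiction). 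Replacing your Minkowski step by this divisibility fact turns your setup into the paper's argument; as written, the proposal does not prove the lemma.
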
 
\begin{proof} Assume that we have picked a minimal relation and that $|\alpha_1| = \mathbf{Ht}(G)$. We first note that each ideal 
	$(\overline{\alpha}_1\alpha_j)$, for $j = 1, \dots, g$, is divisible by $N(I)$. This is because, as ideals, $N(I) = I\overline{I}$. If $N(I) > \mathbf{Ht}(G)$, this implies that 
	$$\mathbf{Ht}(\overline{\alpha}_1\alpha_1/N(I), \dots, \overline{\alpha}_1\alpha_g/N(I)) < \mathbf{Ht}(G),$$
	but $\overline{\alpha}_1\alpha_j/N(I) \in \mathcal{O}$, for each $j = 1, \dots, g$, and these form a form that vanishes on the tangent space of $G$ at $O$, which contradicts our assumption on the minimality of $\max_{i = 1, \dots, g}|\alpha_i|$. 
\end{proof}
Now we can turn to the proof of Lemma \ref{lemrelation}. 
\begin{proof}[Proof of Lemma \ref{lemrelation}] We assume that $G$ is defined by a relation $\alpha_1z_1 + \cdots + \alpha_gz_g = 0$ on the tangent space $\mathbb{C}^g$ of $E^g$. We continue with the assumptions of the proof of Lemma \ref{normvsheight}.  Now set $z_2, \dots, z_g \in J$. Then 
$$\alpha_1 z_1 \in (\alpha_2) + \cdots +(\alpha_g),$$
which implies that 
$$z_1 \in \frac1{\alpha_1}((\alpha_2) + \cdots +(\alpha_g))\cap J,$$
and, modulo $J$, there are at least
$$[I\cap J:\alpha_1J]$$
such elements, where $I = (\alpha_1) + \cdots + (\alpha_g)$.  We have that 
$$ [I\cap J:\alpha_1J] =N(\alpha_1)/N(I) \geq \mathbf{Ht}(G),$$
 where the last inequality follows from Lemma \ref{normvsheight}, and we deduce that the intersection of $G$ with the variety $E\times O^{g-1}$ has at least $\mathbf{Ht}(G)$ elements. 
\end{proof} 

We record a simple observation. 
\begin{lemma}\label{end}  The endomorphism ring of an elliptic curve $E$ with complex multiplication is an order of the form 
	$$
	\begin{cases}
		\mathbb{Z} + f\sqrt{-D}\mathbb{Z} \text{ if } D \not \equiv  1 \!\!\!\mod 4, \\
		\mathbb{Z} + \frac{1 + f\sqrt{-D}}{2}\mathbb{Z} \text{ if } D \equiv 1 \!\!\!\mod 4,
	\end{cases}
	$$
	where $D$ is the discriminant of $E$ and $f$ is the conductor of $E$. If $\alpha \in \End(E)$ is not an integer, then $|\alpha| \geq f|D|^{\frac12}/2$. 
\end{lemma}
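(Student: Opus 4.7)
The plan is to invoke the classical structure theory of CM elliptic curves together with the structure theory of orders in imaginary quadratic fields, and then derive the size bound by a direct computation.

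For the first assertion, I would first recall from classical CM theory (as presented, for example, in Silverman's \emph{Advanced Topics in the Arithmetic of Elliptic Curves}) that when $E$ has complex multiplication, $\End(E)$ embeds as an order in an imaginary quadratic field $K = \Q(\sqrt{-D})$. I would then appeal to the structure theorem for orders in $K$: every such order $\mathcal{O}\subseteq K$ is of the form $\Z + f\,\mathcal{O}_K$ for a unique positive integer $f$ (the conductor), and $\mathcal{O}_K$ itself admits the standard $\Z$-basis $\{1,\sqrt{-D}\}$ or $\{1,(1+\sqrt{-D})/2\}$ according to the congruence class indicated in the statement. Plugging this description of $\mathcal{O}_K$ into $\Z + f\,\mathcal{O}_K$ yields exactly the two presentations of $\End(E)$ given in the lemma.

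For the lower bound, I would take an arbitrary $\alpha\in\End(E)\setminus\Z$ and separate the two cases. In the first case, write $\alpha = a + bf\sqrt{-D}$ with $a,b\in\Z$; since $\alpha\notin\Z$, we have $b\neq 0$, and a direct computation gives
\[
|\alpha|^2 = a^2 + b^2 f^2 D \ge f^2 D,
\]
so $|\alpha| \ge f\sqrt{D} \ge f|D|^{1/2}/2$. In the second case, write $\alpha = a + b(1+f\sqrt{-D})/2 = (2a+b)/2 + (bf/2)\sqrt{-D}$ with $a,b\in\Z$ and (again) $b\neq 0$, so that
\[
|\alpha|^2 = \frac{(2a+b)^2}{4} + \frac{b^2 f^2 D}{4} \ge \frac{f^2 D}{4},
\]
giving $|\alpha| \ge f\sqrt{D}/2 = f|D|^{1/2}/2$. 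This establishes the claimed inequality.

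Both steps are essentially textbook, so there is no genuine obstacle; the only things to be careful about are the bookkeeping across the two congruence cases and observing that the non-integer hypothesis forces the coefficient of $\sqrt{-D}$ to be nonzero, which is precisely what yields the required lower bound.
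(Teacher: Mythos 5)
Your proposal is correct and follows essentially the same route as the paper: the first part is the standard structure theory of orders in imaginary quadratic fields, and your case-by-case computation of $|\alpha|^2$ is just an expanded version of the paper's one-line observation that $\alpha\notin\Z$ forces $|\Im(\alpha)|\ge f|D|^{1/2}/2$.
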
 
\begin{proof}  The first part is classical. For the second part, if $\mathbb{\alpha} \notin \mathbb{Z}$, then the imaginary part of $\alpha$ satisfies $\Im(\alpha) \geq f|D|^{\frac12}/2$ and the claim follows. 
\end{proof} 

Below, we say that $H \subseteq E^g$ is defined over $\Z$ if there exist $ n_{ij} \in \Z$, for $i  = 1, \dots, r, j = 1, \dots, g$, such that $H$ has finite index in the group $\tilde{H}$ of $(P_1,\ldots,P_g) \in E^g$ such that
	$$\sum_{j =1}^gn_{ij}P_j = O, i = 1, \dots r.$$
	Otherwise we say that $H$ is not defined over $\Z$. 

\begin{lemma} \label{lemlower} Let $A = E_1\times \cdots \times E_g$ be a product of elliptic curves with complex multiplication and let $H \subseteq A$ be an algebraic subgroup. If $H$ is not defined over $\Z$, then 
	$$\mathbf{cl}(A) \leq c(\deg(H))^m,$$ with $c,m$ depending only on $g$.	 
\end{lemma}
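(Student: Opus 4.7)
The plan is to combine the three preceding auxiliary lemmas in order to translate the hypothesis that $H$ is not defined over $\Z$ into a polynomial bound, in $\deg H$, on the CM discriminants that contribute to $\mathbf{cl}(A)$.

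First I would apply Lemma \ref{isogenies} to $H$, producing a partition $I_0, I_1, \ldots, I_n$ of $\{1, \ldots, g\}$ together with a decomposition
\[
H \cong \Big(\prod_{i \in I_0} E_i\Big) \times H_1 \times \cdots \times H_n,
\]
in which each $H_j$ (for $j \geq 1$) is a proper nonsplit connected subgroup of $A_j := \prod_{i \in I_j} E_i$, and within each such $I_j$ the elliptic curves are pairwise isogenous via isogenies of degree at most $c(\deg H)^2$. The hypothesis that $H$ is not defined over $\Z$ forces at least one $H_j$ with $j \ge 1$ to be nontrivial and, moreover, to use a non-integer element of the common endomorphism ring of the curves in $I_j$ in its defining relations.

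For such an index $j$, I would then fix a representative $E = E_{i_j}$ with CM order $\OO$ and use the bounded-degree isogenies $\phi_i : E \to E_i$ from Lemma \ref{isogenies} to assemble the coordinatewise isogeny $\Phi : E^{|I_j|} \to A_j$, whose degree is at most $(c(\deg H)^2)^g$. The preimage $H_j' := \Phi^{-1}(H_j) \subseteq E^{|I_j|}$ then has degree bounded polynomially in $\deg H$ (via Lemma \ref{isogeny}). Applying Lemma \ref{lemrelation} inside $E^{|I_j|}$ gives $\mathbf{Ht}(H_j') \leq c \deg(H_j') \leq c'(\deg H)^{m'}$, so every coefficient $\alpha \in \OO$ in a minimal defining relation of $H_j'$ satisfies $|\alpha| \le c'(\deg H)^{m'}$. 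Because the hypothesis on $H$ descends through $\Phi$ (any integer-coefficient description of $H_j'$ would push forward to one for $H_j$, and hence for $H$), at least one such $\alpha$ must lie in $\OO \setminus \Z$.

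Finally, I would invoke Lemma \ref{end}: for $\alpha \in \OO \setminus \Z$ we have $|\alpha| \geq f|D|^{1/2}/2$, where $f$ and $D$ are the conductor and discriminant of $\OO$, yielding $|D| \leq c''(\deg H)^{m''}$. Since each $E_i$ with $i \in I_j$ is linked to $E$ by an isogeny of degree at most $c(\deg H)^2$, the conductor and discriminant of $\End(E_i)$ are likewise controlled by polynomial expressions in $\deg H$; iterating over the relevant indices $j$ then gives the claimed bound on $\mathbf{cl}(A)$. The main technical obstacle is the descent step just described: one must carefully compare the target product $\prod_{i \in I_j} E_i$, whose isogenous but non-identical factors require an adaptation of the paper's definition of ``defined over $\Z$'' (which is formulated only for $E^g$), with $E^{|I_j|}$, and verify that the bounded-degree transport through $\Phi$ cannot erase the non-integer character of the defining relations.
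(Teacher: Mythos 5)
Your proposal follows essentially the same route as the paper's proof: reduce via Lemmas \ref{isogenies} and \ref{isogeny} to a connected subgroup of a power of a single CM elliptic curve, then combine the height bound $\mathbf{Ht}\le c\deg$ of Lemma \ref{lemrelation} with the lower bound $|\alpha|\ge f|D|^{1/2}/2$ of Lemma \ref{end} for non-integer endomorphisms to bound the discriminant polynomially in $\deg(H)$. The transport step you single out as the main obstacle (that ``not defined over $\Z$'' survives the bounded-degree isogeny, the definition being stated only for $E^g$) is precisely the point the paper itself passes over with the phrase ``it thus suffices to treat the case of a connected algebraic subgroup $H\subseteq E^g$'', so your write-up is, if anything, more explicit about what needs checking there.
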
 
\begin{proof}

We may assume that $H$ is connected. By Lemmas \ref{isogenies}  and \ref{isogeny} we can find elliptic curves $A_1, \dots, A_n$ and an isogeny 
\[
\Phi:A \rightarrow A' \times A_1^{\ell_1}\times \cdots \times A_n^{\ell_n},
\]
 where $A'$ is a product of elliptic curves, such that 
\[
\deg(\Phi) \leq c\deg(V)^m,
\]
with 
\[
 \Phi(H) = A'\times H_1\times \cdots \times H_n,
\]
	where $H_i \subseteq A_i^{\ell_i}$, for each $i =1, \dots, n$, is a connected algebraic subgroup of degree $c\deg(V)^m$. It thus suffices to treat the case of a connected algebraic subgroup $H \subseteq E^g$, where $E$ is an elliptic curve with complex multiplication. Recall the definition of $\mathbf{Ht}(M)$, just before Lemma \ref{lemrelation}. If $H$ is not defined over $\Z$, it follows from Lemma \ref{end} that $\mathbf{Ht}(H) \geq f|D|^{\frac 12}/2$. Then the desired bound follows from Lemma \ref{lemrelation}. 
\end{proof}

\subsection{Finishing the proofs} \label{proofs}

\begin{proof}[Proof of Theorem \ref{mainCM} from Theorem \ref{mmCM1}]

Let $\mathcal{A} \subseteq \mathcal{E}^g$ be a family of products of elliptic curves over a base variety $B \subseteq Y(2)^g$ and let $V \subseteq \mathcal{A}$. Let $p \in B(\overline{\mathbb{Q}})$ be such that $\mathcal{A}_p$ is a product of elliptic curves with complex multiplication. By Theorem \ref{mmCM1}, it follows that each torsion point in $V_p = V \cap \mathcal{A}_p$ is contained in the translate of a connected subgroup of degree $c\deg(V_p)^m$ by a torsion point of order at most $c([K:\mathbb{Q}]\deg(V_p))^m$, where $c$ and $m$ are constants depending only on $g$.  Note that $V_p$ is defined over  $K(p)$,  which is crucial for this argument, and that $\deg(V_p) \leq c_{\mathcal{L}}\deg(V)$, where $c_{\mathcal{L}}$ is a constant depending on $\mathcal{L}_B$. 
 If these connected subgroups are defined by relations in  $\mathbb{Z} \subseteq \End(\mathcal{E})$, then these subgroups are fibres of a group scheme $\mathcal{H} \subseteq \mathcal{A}$ defined over a subvariety $B'$ of the base $B$ of degree bounded by $c([K:\mathbb{Q}]\deg(V_p))^m$. This subvariety $B'$ is of the form $B' = B\cap S'$, where $S' \subseteq Y(1)^g$ is defined by modular relations. From Lemma \ref{isogenies}, the degree of these modular relations is bounded by $c([K:\mathbb{Q}]\deg(V))^m$. 
 
If such a subgroup is not defined over $\mathbb Z$, then it cannot be a fibre of a subgroup scheme over a base of positive dimension. In this case, Lemma \ref{lemlower} bounds the complexity of the abelian variety containing the subgroup. This finishes the proof. 
\end{proof}

Now we are going to reduce Theorem \ref{reduction} to Theorem \ref{mainCM}. 
\begin{proof}[Proof of Theorem \ref{reduction} from Theorem \ref{mainCM}] 
	We prove the theorem by induction on the dimension of $V$. So let $V \subseteq \mathcal{E}^{(n_1)}\times \cdots \times \mathcal{E}^{(n_g)}$ be an irreducible variety and let $\mathcal{A}$ be the smallest group scheme over $\pi(V)$ containing $V$. If $\dim(V) = \dim(\mathcal{A})$ then each maximal special subvariety $S$ of $\pi(V)$ in $Y(2)^g$ lifts to a maximal special subvariety $\pi^{-1}(S)\cap V $. Thus if $V$ is a component of $\mathcal{A}$ and we can determine the maximal special subvarieties of a subvariety of $Y(2)^g$, we can also determine the maximal special subvarieties of $V$. If $\dim(V) <  \mathcal{A}$, Theorem \ref{mainCM} tells us that $V \cap P_{\mathcal{A}}$ is contained in an effectively determinable finite union of special subvarieties $\mathcal{S}$ that satisfy $\dim(\mathcal{S}\cap \mathcal{A}) <\dim(\mathcal{A})$. If  such a special subvariety satisfied $V \cap \mathcal{S} = V$, then $V$ would be contained in a group scheme that is properly contained in $\mathcal{A}$, contradicting our assumption on $\mathcal{A}$. As $V$ is irreducible we deduce that each component $V' \subseteq V\cap \mathcal{S}$ satisfies $\dim(V') < \dim(V)$. We can then replace $V$ by $V'$ and finish the proof by induction. 
	\end{proof} 
\begin{lemma}\label{moredegrees} Let $\mathcal{S} \subseteq \mathcal{E}^{(n)}$ be a special subvariety  defined over $K$ and let $\mathcal{H} \subseteq \mathcal{E}^{(n)}$ be the smallest subgroup scheme containing $\mathcal{S}$. There exists an absolute effectively computable constant $c > 0$ such that 
	$$\deg(\mathcal{H}) \leq c(\deg(\mathcal{S})[K:\mathbb{Q}])^2.$$ 
\end{lemma}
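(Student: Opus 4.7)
The plan is to decompose $\mathcal{H}$ into cosets of its identity component and then bound the number of cosets using Galois-theoretic constraints arising from the fact that $\mathcal{S}$ is defined over $K$.

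Let $\mathcal{H}_0$ denote the identity component of $\mathcal{H}$; this is Galois-stable, being uniquely characterized as the connected component through the zero section. Since $\mathcal{S}$ is an irreducible component of $\mathcal{H}$, we may write $\mathcal{S} = \mathcal{H}_0 + T$ for some $T$, and let $N$ denote the order of $\bar{T} := T \bmod \mathcal{H}_0$ in the quotient group scheme $\mathcal{E}^{(n)}/\mathcal{H}_0$; by minimality of $\mathcal{H}$ we then have $\mathcal{H} = \bigcup_{m=0}^{N-1}(\mathcal{H}_0 + mT)$. Translations preserve the fibrewise degree with respect to $\mathcal{L}$, so summing over the $N$ components yields
$$\deg(\mathcal{H}) = N \deg(\mathcal{S}).$$

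To bound $N$, observe that $\mathcal{S}$ being defined over $K$ forces $\bar T$ to be a $K$-rational torsion element of order $N$ in $\mathcal{E}^{(n)}/\mathcal{H}_0$. In the case that $\mathcal{S}$ lies in a CM fibre $\mathcal{E}^{(n)}_\lambda$, the quotient is a CM abelian variety whose minimal field of definition $K_0$ is contained in $K$. Applying Theorem~\ref{BCGalois} with $\theta = 1/2$ yields an effective constant $c_1 > 0$, depending only on $n$, such that
$$c_1\,N^{1/2} \le [K_0(\bar{T}):K_0] \le [K:K_0] \le [K:\mathbb{Q}],$$
so $N \le c_1^{-2}\,[K:\mathbb{Q}]^2$. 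In the remaining case, where $\mathcal{S}$ is a component of a flat subgroup scheme dominating $Y(2)$, the element $\bar T$ is a torsion section of $\mathcal{E}^{(n)}/\mathcal{H}_0 \to Y(2)$, hence a $\pi_1(Y(2))$-invariant element of the $N$-torsion; the geometric monodromy of the Legendre family on the Tate module (factoring through $\Gamma(2)$) restricts such invariants to a group of size bounded by a constant $c_2$ depending only on $n$, so $N \le c_2$. In either case $N \le c\,[K:\mathbb{Q}]^2$ for an effective constant $c$ depending only on $n$, and combining with the degree identity above yields
$$\deg(\mathcal{H}) \le c\,[K:\mathbb{Q}]^2\,\deg(\mathcal{S}) \le c\,(\deg(\mathcal{S})[K:\mathbb{Q}])^2,$$
which is the claimed bound.

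The main obstacle will be the clean application of the CM Galois bound of Theorem~\ref{BCGalois}: one has to identify the quotient abelian variety precisely and track the containment of its minimal field of definition in $K$, making use of the fact that $\lambda \in K$ and that $\mathcal{H}_0$ is defined over $K$. A secondary technicality, for the flat subgroup scheme case, is the monodromy analysis, which follows from the standard theory of level structures on the Legendre family and the surjectivity of $\Gamma(2)$ onto $\mathrm{SL}_2(\mathbb{Z}/N)$ for odd $N$.
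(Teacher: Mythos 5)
Your treatment of the case $\dim(\pi(\mathcal{S}))=0$ is essentially sound and close in spirit to the paper's: there $\mathcal{S}$ really is a coset $\mathcal{H}_0+T$ of the identity component inside the CM fibre, the class $\bar T$ is a $K$-rational torsion point of the quotient CM abelian variety, and Theorem~\ref{BCGalois} (applied over the minimal field of definition $K_0\subseteq K$) bounds its order $N$ polynomially in $[K:\mathbb{Q}]$, giving $\deg(\mathcal{H})=N\deg(\mathcal{S})\le c[K:\mathbb{Q}]^2\deg(\mathcal{S})$.

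The flat case, however, has a genuine gap. An irreducible component of a flat subgroup scheme over $Y(2)$ is in general \emph{not} of the form $\mathcal{H}_0+T$ for a torsion section $T$: over a positive-dimensional base, the geometric generic fibre of a single component typically consists of \emph{many} cosets of the connected group $\mathcal{H}'$, permuted by monodromy. For example, for odd $N$ the exact-order-$N$ locus of $\mathcal{E}[N]$ is one irreducible component (since $\Gamma(2)$ surjects onto $\mathrm{SL}_2(\mathbb{Z}/N)$), and it is a multisection of large degree, not a translate of the zero section by a section. In such situations both of your key claims fail: the identity $\deg(\mathcal{H})=N\deg(\mathcal{S})$ is false (in the example $\deg(\mathcal{H})=N^2$ while $\deg(\mathcal{S})=N^2-1$), and the relevant order $N$ is unbounded, so the conclusion $N\le c_2$ is false as well. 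Your monodromy observation only shows that a $\pi_1(Y(2))$-invariant element of large exact order does not exist, i.e.\ it rules out the structure you assumed rather than bounding $N$; the hard case is precisely when $\mathcal{S}$ is such a multisection. The paper's proof handles this differently: since $\mathcal{S}$ is defined over $K$ and the Galois group of $\overline{\mathbb{Q}(j)}/\mathbb{Q}(j)$ acts transitively on points of exact order $N$ (\cite[Lemma 10.1]{mzsquare}, with $[\mathbb{Q}(\lambda):\mathbb{Q}(j)]=2$), the generic fibre of $\mathcal{S}$ must contain at least $N/(2[K:\mathbb{Q}])$ distinct translates of $\mathcal{H}'$, whence $\deg(\mathcal{S})\ge N\deg(\mathcal{H}')/(2[K:\mathbb{Q}])$ by the projection formula, while $\deg(\mathcal{H})\le cN\deg(\mathcal{H}')$; combining these gives the bound. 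In other words, in the flat case it is $\deg(\mathcal{S})$ itself that grows with $N$ and absorbs the torsion order, whereas your argument puts the factor $N$ on the wrong side of the inequality. To repair your proof you would need to replace the coset decomposition by this count of Galois/monodromy conjugate cosets contained in $\mathcal{S}$.
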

\begin{proof} 
		In what follows we denote by $\lambda$ the generic point of $Y(2)$ and by $j$ the $j$-invariant of the generic fibre. We will use the fact that connected subgroups of the generic fibre of $\mathcal{E}^{(n)}$ are defined over $\mathbb{Q}(\lambda)$ and torsion points have coordinates in a finite extension of $\mathbb{Q}(\lambda)$ whose degree we can bound from below. Now recall that a special subvariety $\mathcal{S}$ is a connected component of a group scheme $\mathcal{H}$ (of the same dimension as that of $\mathcal{S}$). First assume that $\dim(\pi(\mathcal{S})) = 1$. 
	From \cite[Lemma 10.1]{mzsquare}, which is a consequence of classical theory \cite{lang}, it follows that the Galois group of $\overline{\mathbb{Q}(j)}/\mathbb{Q}(j)$ acts transitively on points of exact order $N$, say. 
Moreover, $[\mathbb{Q}(\lambda):\mathbb{Q}(j)] = 2$ and the Galois group of $\overline{\mathbb{Q}(j)}/\mathbb{Q}(j)$ acts transitively on points of exact order $N$.  Now let $\widetilde{\mathcal{E}^{(n)} }$ be the base extension of $\mathcal{E}^{(n)}$ such that the torsion points of exact order $N$ are sections in $\widetilde{\mathcal{E}^{(n)} }$ of a base curve $B$ (extension of $Y(2)$). This comes with a canonical projection 
	$$p: \widetilde{\mathcal{E}^{(n)}} \rightarrow \mathcal{E}^{(n)}.$$
	 Now, the  Galois orbit of a translate $P + \mathcal{H}' \subseteq \mathcal{H}$, where $P$ is a point of exact order $N$ and $\mathcal{H}'$ is the connected subgroup of $\mathcal{H}$ (over $\overline{\Q(\lambda) }$), under the Galois group of $\overline{K(\lambda)}/K(\lambda)$, has cardinality at least $N/(2[K:\mathbb{Q}])$. Thus $p^{-1}(\mathcal{S})$ contains at least this many  translates. Each translate has degree at least $\deg(\mathcal{H}')$ with respect to the line bundle $\frac1{\deg(p)}p^*\mathcal{L}$. Now we can employ the projection formula to deduce that the degree of $\mathcal{S}$ is at least $\deg(\mathcal{H'})N/(2[K:\mathbb{Q}])$ and, as the degree of $\mathcal{H}$ is at most $cN\deg(\mathcal{H'})$, the claim follows.
	 
	 Now assume that $\dim(\pi(\mathcal{S})) = 0$. Then $\mathcal{S}$ is a translate of a connected abelian subvariety of $\mathcal{E}^{(n)}_{\pi(\mathcal{S})}$. Arguing as above but with the Galois bound coming from Theorem \ref{BCGalois}, we deduce the lemma. 
	\end{proof} 
We can now complete the proof of Theorem \ref{effectiveHabegger}. 
\begin{proof}[Proof of Theorem \ref{effectiveHabegger}] 
We prove the theorem by induction on $\dim (V)$. Suppose that $V\subseteq \mathcal{E}^{(n)}$ is an irreducible algebraic variety defined over a number field $K$. Below, we use $c$ and $m$ to denote positive effectively computable constants depending only on $n$, which may differ at different occurrences. 

Suppose that $\dim (V)=0$. If $V$ is not a special point, then there is nothing to do. So suppose that $V$ is a special point. As $V$ is defined over $K$, \cite[Theorem 8.1, p.232]{heegnerpoints} (with $\epsilon =\frac 12$) implies that
	$$\cl(\pi(V)) \leq \exp(c[K:\mathbb{Q}]^2).$$
By Theorem \ref{BCGalois}, the order of $V$ is bounded by $c[K:\mathbb Q]^2$ so that the smallest subgroup scheme containing $V$ has degree at most $c[K:\mathbb Q]^2$.  

Now suppose that $\dim (V)>0$. Let $\mathcal A$ be the smallest group scheme over $\pi (V)$ containing $V$. Suppose first that $\dim (V)= \dim (\mathcal A)$. If $\dim (\pi(V))=0$ then either $\pi (V)$ is a special point of $Y(2)$ or $V$ contains no special points. So we can suppose that $\pi(V)$ is a special point,  in which case $V$ is special. As above, we use \cite[Theorem 8.1, p.232]{heegnerpoints} (with $\epsilon =\frac 12$) to see that $\cl(\pi(V)) \leq \exp(c[K:\mathbb{Q}]^2)$. We can then apply Lemma \ref{moredegrees} to bound the degree of $\mathcal A$, finishing this case.  

If $\dim (\pi (V))=1 $ (and still assuming that $\dim (V)=\dim (\mathcal A)$) then $V$ is special, and we can apply Lemma \ref{moredegrees} to bound the degree of $\mathcal A$ and finish this case.

It remains to consider the case that $\dim(V)<\dim (\mathcal A)$. In this case, Theorem \ref{mainCM} implies that 
\[
V\cap P_{\mathcal A} \subseteq \bigcup \mathcal{S} \cap \mathcal A 
\]
where the union runs over all special subvarieties $\mathcal S$ of $\mathcal{E}^{(n)}$ such that $\dim (\mathcal S \cap \mathcal A)<\dim (\mathcal A)$ and $\cl (\mathcal S) \le c ([K:\mathbb Q] \deg (V))^m$. (Note that we can assume that the union runs over special subvarieties in $\mathcal{E}^{(n)}$ rather than $\mathcal{E}^n$, as $\mathcal{E}^{(n)}$ is itself a special subvariety of $\mathcal{E}^n$, and we are working in $\mathcal{E}^{(n)}$.)  Note that $V$ is not contained in any such $\mathcal S$ (as $\mathcal A$ is the smallest group scheme over $\pi (V)$ containing $V$). Hence $V\cap \mathcal S$ consists of a union of irreducible varieties of dimension less than $\dim V$. By Bezout's Theorem, for each such $\mathcal S$, the intersection $V\cap \mathcal S$ has at most $c([K:\mathbb Q]\deg (V))^m$ components. Fix such an $\mathcal S$. As the Galois group of $\overline{\Q}/K$ permutes these components, each component $V'\subseteq V\cap \mathcal S$ is defined over a number field $K'$ satisfying $[K':\mathbb Q] \le c([K :\mathbb Q]\deg (V))^m$. By our induction hypothesis, the conclusion of the theorem holds for each such component, and thus holds for $V$. 
\end{proof}
Suppose that, as in Theorem \ref{effectiveHabegger}, $V$ is an irreducible subvariety of $\mathcal{E}^{(n)}$ defined over a number field $K$, and suppose in addition that $V$ contains no special subvarieties of positive dimension. In this case, the theorem provides $c$ and $m$, effectively computable from $n$, such that if $P\in V(\mathbb C)$ is special, then $P$ has order at most $c([K:\mathbb Q] \deg (V))^m$ and $\pi (P)$ has complexity at most $\exp(c ([K:\mathbb Q] \deg (V))^m)$. If we had explicit values of $c$ and $m$ in terms of $n$, we would obtain an algorithm to compute all the special points on such a $V$. For instance, we could just compute all special points in $\mathcal{E}^{(n)}$ satisfying these bounds, and then check which of these points actually lie on $V$. But the proof of the theorem also leads to a different algorithm. This only needs explicit values of $c$ and $m$ such that if $P\in V(\mathbb C)$ is torsion, then $P$ has order at most $N= c([K:\mathbb Q] \deg (V))^m$. The intersection of $V$ with the torsion sections of order at most $N$ consists of a finite set of points $X$, say. We compute these points, and then for each $P \in X$, we check whether $\mathcal E_{\pi (P)}$ has complex multiplication (for instance using the algorithm discussed by Zannier in Remark 4.2.1 on page 104 of \cite{ZannierBook}).

\subsection{Elliptic curves with good reduction} 
The following is a special case of a theorem of S. David \cite{David}.
\begin{lemma}\label{david} Suppose that $E$ is an elliptic curve defined over a number field $L$ and that $P \in E(\overline{L})$ is a torsion point of exact order $N$. Let $S$ be the number of places of multiplicative bad reduction of $E$ over $L$. Then there is an effective absolute $c>0$ such that
\[
N \le c S [L(P):\mathbb Q] (1 + \log [L(P):\mathbb Q]).
\]
\end{lemma}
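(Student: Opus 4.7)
The plan is to deduce this as a direct specialisation of the main theorem of S.~David in \cite{David}, which provides an effective lower bound for the degree $[L(P):L]$ of the field of definition of a torsion point on an elliptic curve. Rearranged, David's estimate furnishes an upper bound of the shape
\[
N \le c_0 \, [L(P):\mathbb{Q}] \, (1+\log[L(P):\mathbb{Q}])^{\kappa} \cdot M(E/L),
\]
where $M(E/L)$ is an arithmetic invariant of $E$, typically expressible via its stable Faltings height $h_F(E)$. The task is therefore twofold: first, to bring the logarithmic factor into the exact shape $(1+\log[L(P):\mathbb{Q}])$; second, to bound $M(E/L)$ by an absolute constant times $S$.

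First I would extract from \cite{David} the precise form of the estimate in the elliptic case, together with the dependence of the constants on the residue characteristics; the logarithmic exponent $\kappa$ appearing in David's bound can be reduced to $1$ at the cost of absorbing an additional $\log\log$ factor, and the $\log N$ contributions that originally appear can be replaced by $\log[L(P):\mathbb Q]$ via a standard bootstrapping argument (a preliminary weak bound shows $\log N = O(\log[L(P):\mathbb Q] + \log S)$, and the last term is itself absorbed once $S \ge 1$).

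Next I would compare $M(E/L)$ with $S$. The plan is to replace $L$ by a finite extension $L'/L$ of absolutely bounded degree over which $E$ acquires everywhere semistable reduction, as supplied by the semistable reduction theorem; this leaves the number of multiplicative places unchanged up to an absolute constant, and multiplies $[L(P):\mathbb{Q}]$ by a bounded factor. Over $L'$, semistability forces all bad reduction to be multiplicative, so the local contributions to $h_F(E)$ come only from the $S$ multiplicative places, each contributing an absolutely bounded amount after the normalisation used in Silverman's explicit formula for the stable Faltings height; this yields $\max(1, h_F(E)) \le c_1 S$, and hence $M(E/L) \le c_1 S$ with $c_1$ absolute.

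The main obstacle will be the careful bookkeeping required to ensure that David's bound, after the passage to semistable reduction and after the bootstrapping of logarithms, collapses exactly to the asserted shape $cS[L(P):\mathbb Q](1+\log[L(P):\mathbb Q])$ with an absolute constant; in particular, one must check that no hidden residue-characteristic dependence or additional factor of $\log S$ intrudes. I expect this to be manageable because David's estimate is already linear in the height once the Faltings height enters, and because Silverman's formula expresses the semistable Faltings height as a \emph{sum} of $S$ absolutely bounded local terms, matching the target linear dependence on $S$.
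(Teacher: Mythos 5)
Your route has a genuine gap at the step where you bound the curve invariant entering David's estimate by the number of multiplicative places. The claim $\max(1,h_F(E)) \le c_1 S$ is false: even after passing to a semistable model, the local contribution of a place $v$ of multiplicative reduction to the (stable) Faltings height is proportional to $v(q)\log q_v$, i.e.\ to $-v(j_E)$, which is unbounded while $S$ stays fixed (already over $\mathbb{Q}$ one has curves with a single multiplicative prime and arbitrarily large $h(j_E)$, hence arbitrarily large $h_F$); the archimedean contribution is likewise unrelated to $S$. Consequently, any version of David's theorem formulated as a lower bound for $[L(P):L]$ in terms of a height-type invariant $M(E/L)$ cannot be converted, by your argument, into the stated bound with an absolute constant and linear dependence on $S$ alone; at best you would obtain a bound depending on the height of $E$, which is strictly weaker than the lemma and insufficient for the uniformity needed downstream (Theorem \ref{mmbad}).

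The paper's proof invokes a different result of David, namely \cite[Th\'eor\`eme 1.2 (ii)]{David}, which is a counting statement for points of small canonical height whose dependence is only on the number of places of multiplicative reduction and the degree of the ground field, with no height of the curve entering. One sets $k=L(P)$, observes that $P,[2]P,\ldots,[N]P$ are $N$ distinct points of canonical height $0$ in $E(k)$, bounds the number $S_P$ of multiplicative places of $E$ over $k$ by $S[L(P):\mathbb{Q}]$, and applies David's counting bound over $k$ to conclude directly. If you want to repair your write-up, you should switch to this counting form of David's theorem (or otherwise find an input whose only curve-dependence is through $S$); the semistable reduction and Faltings-height comparison should be discarded, as the required inequality there simply does not hold.
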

\begin{proof} Let $S_P$ be the number of places of multiplicative bad reduction of $E$ over $L(P)$. Then
\[
S_P \le S [L(P):\mathbb Q].
\]
Put $k= L(P)$. The points $P,[2]P,\ldots, [N]P \in E(k)$ are distinct and have canonical height $0$. The result then follows from David's \cite[Th\'eor\`eme 1.2 (ii)]{David}, with $S'_{E/k}$ there equal to our $S_P$ (and $h\ge 1$).
\end{proof}

\begin{thm} \label{mmbad}Suppose that $ A$ is a product of $g \geq 2$ elliptic curves given by Weierstrass equations over a number field $L$ each with at most $S$ places of multiplicative bad reduction. Suppose that $V\subseteq E_1\times\cdots\times E_g$ is an irreducible variety defined over $L$. For any $\epsilon >0$, there exist $c$ and $m$ effectively computable from $g$ and $\epsilon$, and there exists $N$ such that
$$\overline{V \cap A_{\text{tors}}}^{\text{Zariski}} = \bigcup_{i = 1}^N T_i + H_i, $$
where  
$$N \leq  c S^{g^2+\epsilon} [L:\mathbb Q]^{g^2+\epsilon}  (\deg (V))^m$$ 
and, for $i = 1, \dots, N$, $T_i$ are torsion points such that
$$ \text{ord}(T_i) \leq  c S^{g+\epsilon} [L:\mathbb Q]^{g+\epsilon}  (\deg (V))^m,$$
 and $H_i$ are connected subgroups of degree at most $c\deg(V)^m$. 
\end{thm}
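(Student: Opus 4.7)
My plan is to mirror the proof of Theorem \ref{mmCM1}, with Lemma \ref{david} playing the role of the CM Galois bound from Theorem \ref{BCGalois}. The key difference is that David's theorem, applied to a single elliptic factor, yields a Galois lower bound of shape $N^{1/g}$ rather than $N^{1-\epsilon}$, since only one of the $g$ elliptic coordinates of a torsion point of order $N$ is guaranteed to have order as large as $N^{1/g}$. This single observation is what produces the exponents $g+\epsilon$ and $g^2+\epsilon$ in the final bounds on the order and number of torsion cosets respectively.

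First I would treat isolated torsion points. If $P=(P_1,\ldots,P_g)\in V^*(\mathbb C)$ has exact order $N_*$, then, since $N_*=\text{lcm}(\text{ord}(P_1),\ldots,\text{ord}(P_g))$, some coordinate $P_i$ has order at least $N_*^{1/g}$. Applying Lemma \ref{david} to $P_i\in E_i$ and using $L(P_i)\subseteq L(P)$, one obtains, after absorbing the logarithmic factor into an arbitrarily small loss in the exponent, a lower bound
\[
[L(P):L]\ge c_\epsilon\frac{N_*^{(1-\epsilon)/g}}{S^{1-\epsilon}[L:\mathbb Q]}.
\]
Since $V^*$ is defined over $L$, this controls the size of the Galois orbit of $P$ inside $V^*$. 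Comparing with the upper bound $c_\delta\deg(V)^m N_*^\delta$ from Lemma \ref{counting} and optimising over the small parameters $\epsilon,\delta$ produces the stated bound on the order of an isolated torsion point. The bound on the number of such points then follows by substituting this order bound back into Lemma \ref{counting} (and, if a sharper exponent is required, by using the naive estimate $\#A[N_*]=N_*^{2g}$ to bound the orbit count), exactly as in the CM case.

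Next I would treat the positive-dimensional torsion translates $T+H\subseteq V$ with $T\in H^T$ maximal. By \cite[Lemma 2.2]{MW_minimal}, $H$ is defined over an extension of $L$ of degree bounded in terms of $g$ only. By Poincar\'e reducibility together with Lemma \ref{isogenies}, the quotient $A/H$ is isogenous, via an isogeny of degree controlled polynomially by $\deg(H)$ (and hence by $\deg(V)$ via Lemma \ref{cosets}), to a product of elliptic curves, each isogenous to one of the $E_i$. Crucially, multiplicative bad reduction is an isogeny invariant, and a bounded-degree base change multiplies the number of multiplicative bad reduction places by at most a constant depending only on $g$; so the factors of this product still have at most $cS$ places of multiplicative bad reduction. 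Applying Lemma \ref{david} coordinatewise to the image of $T$ then yields the analog of Corollary \ref{corGalois}: the Galois orbit of $T+H$ over $L$ has size at least $c_\epsilon N^{(1-\epsilon)/g'}/(S^{1-\epsilon}[L:\mathbb Q])$, where $g'=\dim(A/H)\le g$. Combining this with Lemma \ref{countingmaximal} applied to each connected subgroup $H$, and summing over the at most $\poly(\deg V)$ relevant subgroups provided by Lemma \ref{cosets}, then gives the claimed bounds on the order and number of such maximal cosets.

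The main obstacle I anticipate is carrying out the isogeny reduction for positive-dimensional translates cleanly. One needs to track that the auxiliary isogenies from $A/H$ to a product of elliptic curves have degree at most $\poly_g(\deg V)$, that the multiplicative bad reduction bound $S$ survives (up to constants depending only on $g$) both under this isogeny and under the bounded-degree base change required to define $H$, and that the resulting loss is absorbed into the final $\deg(V)^m$ factor. Once these technicalities are settled, the combinatorial optimisation to reach the precise exponents $g+\epsilon$ and $g^2+\epsilon$ is formally analogous to the CM case.
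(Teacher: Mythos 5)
Your handling of the isolated torsion points coincides with the paper's: the bound \eqref{galoisboundS} is exactly what the paper extracts from Lemma \ref{david} (your justification via the coordinate of order at least $N_*^{1/g}$ is the intended, unstated, reason for the exponent $1/g$), and the comparison with \eqref{upper_bound}/Lemma \ref{counting} is the same. For the positive-dimensional torsion cosets, however, you take a genuinely different and heavier route. The paper never passes to $A/H$ and does not invoke Lemma \ref{countingmaximal} here: it bounds the total number of maximal cosets contained in $V$ polynomially in $\deg(V)$ by Lemma \ref{cosets}, observes that the connected subgroups are defined over $L$ (at worst a bounded extension), so the Galois orbit of a maximal torsion coset $T+H$ consists again of maximal torsion cosets of $V$, and lower-bounds the size of that orbit by applying \eqref{galoisboundS} inside $A$ itself to a representative $T\in H^T$, using $\#(H\cap H^T)\le c\deg(H)^2$ and the Bombieri--Zannier bound $\deg(H)\le c\deg(V)^m$ to control how many conjugates of $T$ can produce the same coset; comparing the orbit size with the polynomial count of maximal cosets then bounds the order. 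This sidesteps precisely the obstacle you flag: no isogeny decomposition of $A/H$ with controlled degree is needed, and no bookkeeping of multiplicative bad reduction under isogeny and base change. Your route, modelled on Corollary \ref{corGalois} and Lemma \ref{countingmaximal} from the CM case, can be made to work, but note that your claim that a bounded-degree base change multiplies the number of multiplicative places by a constant depending only on $g$ is not literally correct, since an additive place with non-integral $j$-invariant can become multiplicative after base change; you would need to work with potentially multiplicative places (the paper's own Lemma \ref{david} implicitly glosses over the same point). In short, the paper's argument buys simplicity and avoids the quotient/bad-reduction analysis at no cost in the exponents, while your argument would additionally yield a count of maximal torsion cosets of each given order, which the stated theorem does not require.
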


\begin{proof} 

By Lemma \ref{david}, for any $\theta>0$ there is an effective $c' >0$ depending only on $g$ and $\theta$ such that, if $P \in A(\mathbb C)$ has order exactly $N$, then 
\begin{align}\label{galoisboundS}
[L(P):L ] \ge c' \frac{1}{S^{1-\theta} [L:\mathbb Q]} N^{\frac{1}{g} - \theta}.
\end{align}
 Combining this with \eqref{upper_bound} and choosing $\theta$ and $\delta$, we bound the order  of isolated torsion points in $V$. Now, by Lemma \ref{cosets}, the number of maximal cosets contained in $V$ is bounded polynomially in $\deg(V)$.  If such a coset is not a torsion coset, then it does not contain any torsion points, so we only need to worry about torsion cosets $T + H \subseteq V$ for a torsion point $T$ and a subgroup $H$. Each connnected subgroup is defined over $L$ and thus applying the bound (\ref{galoisboundS}) and comparing with the bound in Lemma \ref{cosets} we obtain the result. 
\end{proof}

\subsection{Final applications}

Before stating our final application we also note that, using methods of Bombieri and Zannier \cite{BZuniform}, we can remove the dependence on the field of definition $L$ in Theorem \ref{mmCM1}. However, we lose the polynomial dependence on the degree of the variety. 
\begin{thm}\label{mmCM2} Let $A$ be a product of $g$ elliptic curves and $K$ a field of definition for $A$ as described in Subsection \ref{generalsetting}. Let $V \subseteq A$ be an algebraic subvariety defined over a number field $L$ extending $K$. 
\begin{itemize}  
	\item[(i)] Let $S$ be as in Theorem \ref{mmbad}. For each $\epsilon > 0$, there exist effectively computable constants $c$ and $m$ depending on $\epsilon$ and $g$ such that the number $N_{iso}$ of isolated torsion points on $V$ satisfies
	\[ N_{iso} \leq c \deg(V)^m(S[L:\mathbb{Q}])^\epsilon.
	\]
	\item[(ii)] Suppose now that $A$ is a product of $g$ elliptic curves with complex multiplication. 
	There exists an effective constant $C$ depending only on the degree of $V$ and the dimension of $A$, and there exists  $N$ such that
  $$\overline{V \cap A_{\text{tors}}}^{\text{Zariski}} = \bigcup_{i = 1}^N T_i + H_i  $$
where, for $i=1,\ldots,N$, $T_i$ are torsion points and $H_i$ are connected subgroups of $A$, and 
$$N \le C.$$  
\item[(iii)] Suppose again that $A$ is a product of $g$ elliptic curves with complex multiplication. 
For each $\epsilon > 0$, there exist effectively computable constants $c$ and $m$ depending on $\epsilon$ and $g$ such that the number $N_{\text{iso}}$ of isolated torsion points in $V$ satisfies
\[N_{\text{iso}} \leq c \deg(V)^m[L:K]^\epsilon.
\]
\end{itemize} 
\end{thm}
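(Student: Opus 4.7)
The plan is to derive all three parts by combining our restricted counting results (Lemma \ref{counting} and Lemma \ref{countingmaximal}) with appropriate Galois lower bounds on the orbits of torsion points, the specific lower bound differing between the three parts.

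For part (i), I first observe that every isolated torsion point of $V$ automatically lies in $V^*$: if a torsion $P$ lay in a positive-dimensional translate $T+H\subseteq V$, then $T$ would necessarily be torsion (otherwise $T+H$ contains no Zariski-dense set of torsion) and $P+H_{\mathrm{tors}}\subseteq V\cap A_{\mathrm{tors}}$ would be Zariski dense in $P+H$, contradicting isolation of $P$. Fix $\epsilon>0$, and pick small parameters $\theta,\delta>0$ to be chosen later. Lemma \ref{david} gives $[L(P):L]\geq c'\,N^{1/g-\theta}/(S^{1-\theta}[L:\Q])$ for any torsion $P$ of exact order $N$. Since $V$ is $L$-rational, every $\mathrm{Gal}(\bar L/L)$-conjugate of an isolated torsion point is again an isolated torsion point in $V^*$ of the same exact order $N$, and Lemma \ref{counting} bounds the total number of torsion points of order at most $N$ in $V^*$ by $c_\delta\deg(V)^m N^\delta$. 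Comparing these two estimates, the maximum exact order $N_0$ of an isolated torsion point of $V$ satisfies
\[
N_0^{1/g-\theta-\delta}\ \leq\ C(\epsilon,g)\,\deg(V)^m\,S^{1-\theta}\,[L:\Q].
\]
A second application of Lemma \ref{counting} at $N_0$ bounds the total count: $N_{\mathrm{iso}}\leq c_\delta\deg(V)^m N_0^\delta$, and choosing $\theta,\delta$ sufficiently small in terms of $\epsilon$ and $g$ gives the claimed bound.

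Part (iii) runs along the same blueprint, with Theorem \ref{BCGalois} (and its consequence $[L(P):L]\geq c N^{1-\theta}/[L:K]$ obtained by dividing by $[L:K]$) replacing Lemma \ref{david}. The same two-step comparison yields a polynomial bound on $N_0$ in terms of $\deg(V)$ and $[L:K]$, and then $N_{\mathrm{iso}}\leq c_\delta\deg(V)^m N_0^\delta$ gives the claimed $[L:K]^\epsilon$ dependence after a judicious choice of $\delta$ in terms of $\epsilon$.

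Part (ii) is the main obstacle: it demands complete removal of the $[L:K]$ dependence from both the order of the torsion points $T_i$ and the number $N$ of components. The strategy is to combine (iii) with the Bombieri--Zannier uniformity machinery underlying Lemma \ref{cosets}. The positive-dimensional components of $\overline{V\cap A_{\mathrm{tors}}}^{\mathrm{Zariski}}$ are torsion translates $T_i+H_i$, and the set of possible connected subgroups $H_i$ is controlled by Lemma \ref{cosets} purely in terms of $\deg(V)$. For the isolated torsion points, and for the choice of $T_i$ in each positive-dimensional coset, I would exploit the fact that $\overline{V\cap A_{\mathrm{tors}}}^{\mathrm{Zariski}}$ is intrinsic to $V_{\bar K}$: the count $N$ is a geometric invariant. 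Playing the Galois orbit under $\mathrm{Gal}(\bar K/K)$ (rather than over $L$) of each such point, using Theorem \ref{BCGalois} directly and tracking orbit sizes across the $[L:K]$ conjugates $V^\sigma$, a pigeonhole/averaging argument of Bombieri--Zannier type will transfer the orbit bound into a bound on $N$ that is intrinsic to $V_{\bar K}$. The trade-off is the loss of polynomial dependence on $\deg(V)$: the intermediate steps force the final constant $C$ to be an unspecified effectively computable function of $\deg(V)$ and $g$. This transfer from an $L$-dependent to an $L$-independent count is where the main work lies.
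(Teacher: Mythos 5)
Your treatment of parts (i) and (iii) is correct and is essentially the paper's own argument: the paper packages the comparison of the Galois lower bounds (Lemma \ref{david} in case (i), Theorem \ref{BCGalois} in case (iii)) with the counting bound of Lemma \ref{counting} as the order bounds in Theorems \ref{mmbad} and \ref{mmCM1}, and then applies the counting result once more at that maximal order, exactly as you do; your observation that isolated torsion points (and their conjugates over $L$) lie in $V^*$ is the right justification for invoking Lemma \ref{counting}, and the bookkeeping with $\theta,\delta$ chosen in terms of $\epsilon$ goes through.

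Part (ii), however, is a genuine gap rather than a proof. You only announce that "a pigeonhole/averaging argument of Bombieri--Zannier type will transfer the orbit bound into a bound on $N$ that is intrinsic to $V_{\bar K}$" and concede that this is "where the main work lies"; no such argument is given, and as sketched it is not clear it can work: the Galois group $\mathrm{Gal}(\bar{\Q}/K)$ does not stabilize $V$ (only the conjugates $V^\sigma$), so counting orbits over $K$ does not directly bound torsion cosets on $V$ itself, and the machinery "underlying Lemma \ref{cosets}" is only the degree bound of \cite[Lemma 2]{BZuniform}, which is not what removes the dependence on $L$. The paper disposes of (ii) by following the specific argument of \cite[Proof of Theorem 1]{BZuniform}, which exploits the structure of the Galois action on torsion (here available in the CM setting, via the input of Theorem \ref{BCGalois}/Theorem \ref{mmCM1}) together with an induction on dimension; this is substantive content that your proposal names but does not supply, so part (ii) remains unproved as written.
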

 \begin{proof} For (i) and (iii) we apply our counting  result (Theorem \ref{Introthm}) to the set $Z$ in Lemma \ref{counting} again, while using the bound on the order of isolated torsion points given in Theorems \ref{mmbad} and \ref{mmCM1}, respectively.  
 	For (ii) we can follow closely the arguments in \cite[Proof of Theorem 1]{BZuniform}. We leave the details to the reader. 
\end{proof} 



For our final application we give an effective form of a result due to Barroero and Sha \cite{BS}. In \cite{BS}, a general but ineffective result is given, which is made effective in certain special cases (by other methods). We give an effective form of their general result. For this, we continue to suppose that elliptic curves are given by Weierstrass models in the projective plane, and work with affine coordinates $(P,Q)$ so that the origin is given by the point at infinity. 

\begin{thm} Let $E$ be an elliptic curve over a number field $K$ and suppose that $V \subseteq E \times \mathbb{G}_m^n$ is an irreducible curve defined over $K$, with coordinates $(P,Q,R_1,\ldots,R_n)$.  Suppose that $(P,Q)$ is not a torsion point of $E$ and that $R_1,\ldots,R_n$ are multiplicatively independent. Then there is an effective $N$ (depending on $E$, $V$ and $K$) such that there are at most $N$ points $c \in V(\mathbb C)$ with $(P(c),Q(c))$ torsion on $E$ and $R_1(c),\ldots,R_n(c)$ multiplicatively dependent.
\end{thm}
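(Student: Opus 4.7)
The proof follows the Pila--Zannier strategy for unlikely intersections, made effective using Theorem \ref{Introthm} together with the effective Galois lower bounds of Lemma \ref{david}. I would first reduce to the case in which the projection $V\to E$ is dominant: if not, $(P,Q)$ is a constant non-torsion point and the set to be bounded is empty. By \cite{JSdefns} applied to $E$, together with the elementary observation that the exponential restricted to a bounded vertical strip is Pfaffian of small complexity, the graph of the uniformization $\Exp\colon\mathbb C\times\mathbb C^n\to E(\mathbb C)\times\mathbb G_m(\mathbb C)^n$ restricted to a fixed product $F$ of fundamental domains is semi-Pfaffian with *-format and *-degree effectively bounded in terms of $E$ and $n$. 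Identifying $F$ with a semialgebraic subset of $\mathbb R^{2(n+1)}$ via real coordinates, the preimage $\tilde V\subseteq F$ of $V$ is a sub-Pfaffian real surface with *-format $\const(E,n)$ and *-degree $\poly_{E,n}(\deg V)$.

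Next I would encode the hypothesis of being special as rational data on the lift. A point $c\in V(\mathbb C)$ satisfying the hypotheses, with $(P(c),Q(c))$ of exact order $N_c$ and a primitive integer relation $\sum_{i=1}^n m_i \log R_i(c)=2\pi i k$ of size $T_c=\max_i|m_i|$, lifts to a point $\tilde c\in\tilde V$ whose two $E$-coordinates are rational of denominator $N_c$ and whose $\mathbb G_m$-coordinates satisfy the integer linear relation determined by $(m,k)$. This fits the semi-rational counting framework (Corollary \ref{semi_cor}) applied to the sub-Pfaffian set
\[
X=\bigl\{(t_1,t_2,m,k,\xi,\eta)\in \mathbb R^2\times \mathbb R^n\times\mathbb R\times\mathbb R^{2n}:(t_1,t_2,\xi,\eta)\in\tilde V,\ \textstyle\sum_i m_i\xi_i=k\bigr\},
\]
with $(t_1,t_2,m,k)$ treated as the rational part. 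Since the $\xi_i$ are bounded on $F$, one has $|k|=O(T_c)$, so each special $c$ produces a rational point of height $O(\max(N_c,T_c))$ in the first factor of $X$. Corollary \ref{semi_cor} then gives an upper bound $\leq c H^{\varepsilon}$ on the number of special $c$ with $\max(N_c,T_c)\leq H$, unless there exists a semialgebraic curve in $X$ whose image in the $(\xi,\eta)$-coordinates is non-constant; such a curve, pushed down through the uniformization, would produce a positive-dimensional algebraic subvariety of $V$ contained in a translate of a proper algebraic subgroup of $E\times\mathbb G_m^n$, contradicting the hypotheses on $(P,Q)$ and $R_1,\dots,R_n$.

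For the matching Galois lower bound, since $V$ is defined over $K$ every Galois conjugate of a special $c$ is again special with the same $N_c$ and an analogous relation. By Lemma \ref{david}, any torsion point of order $N_c$ on $E$ over a finite extension of $K$ has Galois orbit of size at least $c' N_c^{1-\varepsilon}$ with $c'$ effective. Comparing with the upper bound forces $N_c\leq N_0$ for some effective $N_0=N_0(E,K,V)$. Once $N_c$ is bounded, the set of possible torsion values $(P(c),Q(c))$ is finite and effectively enumerable; above each, the fibre of $V$ consists of finitely many algebraic points, and for each such point one effectively tests multiplicative dependence of the $R_i$-values (for example by Masser's effective bound on the minimal relation size, or by direct computation once degrees and heights are controlled).

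The main obstacle is the Ax--Lindemann-type input needed when applying Corollary \ref{semi_cor}: one must ensure that the ``algebraic alternative'' coming from the semi-rational counting really produces a coset contained in $V$, and hence a contradiction with the hypotheses. Happily, here we only need the mere non-existence of such a coset (not its effective identification), and this follows directly from the assumption that $(P,Q)$ is not torsion on $E$ and that $R_1,\dots,R_n$ are multiplicatively independent, which together exclude $V$ from any translate of a proper algebraic subgroup of $E\times\mathbb G_m^n$.
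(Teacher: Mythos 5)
Your overall strategy (Pila--Zannier with Corollary \ref{semi_cor} and Lemma \ref{david}) is reasonable and close in spirit to the argument of \cite{BS}, but as written it has a genuine gap at the crucial comparison step. The rational datum you attach to a special point $c$ has height $H\asymp\max(N_c,T_c)$, where $T_c$ is the size of the minimal relation vector $(m,k)$, while your Galois lower bound (via Lemma \ref{david}) grows only with $N_c$. The inequality $c'N_c^{1-\varepsilon}\le cH^{\varepsilon}$ therefore does \emph{not} force $N_c\le N_0$: a priori $T_c$ is completely uncontrolled in terms of $N_c$ or of $[K(c):K]$. Closing this requires exactly the two ingredients that make the proof in \cite{BS} work: an effective height bound for the special points (available here because $(P(c),Q(c))$ is torsion, hence of bounded height, and the projection $V\to E$ is finite, so the heights of the $R_i(c)$ are effectively bounded in terms of $V$ and $E$), and an effective Masser-type bound guaranteeing a multiplicative relation whose exponents are polynomial in $[K(c):K]$ and the height, so that $T_c$ (hence $H^\varepsilon$) loses to the Galois orbit. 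You invoke Masser's bound only in the endgame, to test dependence for finitely many points, but it is needed \emph{before} the counting comparison.

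A second, softer problem is the algebraic alternative. The arc produced by Corollary \ref{semi_cor} has semialgebraic $(t_1,t_2,m,k)$ with $(m,k)$ varying over the reals, and converting it into an algebraic coset inside $E\times\mathbb{G}_m^n$ requires a functional-transcendence (Ax--Lindemann) input for the mixed elliptic--multiplicative exponential, as in the proof of Lemma \ref{countingmaximal}; it does not ``follow directly from the assumptions''. Moreover the hypotheses do not exclude $V$ from every translate of a proper algebraic subgroup (for instance $\prod_i R_i^{m_i}$ could be identically equal to a constant that is not a root of unity), so the contradiction needs the additional observation that the arc starts at a point where the relation value is $1$, forcing the constant to be a root of unity and hence a genuine multiplicative dependence of the $R_i$. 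By contrast, the paper proves none of this from scratch: it notes that the only ineffective ingredient in \cite{BS} is \cite[Corollary 7.2]{HP}, checks via \cite{JSdefns} that the relevant surface and definable set of \cite{BS} are definable, and substitutes Corollary \ref{semi_cor}, leaving the height and small-relation steps of \cite{BS} (which are already effective) untouched.
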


As stated in the introduction to \cite{BS}, the only source of ineffectivity in the proof given by Barroero and Sha is the application of the counting theorem proved by Habegger and Pila (\cite[Corollary 7.2]{HP}). Using \cite{JSdefns}, it is easy to check that the surface $S$ described on page 811 of \cite{BS} is definable, and then the set $W$ defined further down page 811 of \cite{BS} is definable too. Then, by our Corollary \ref{semi_cor} in place of \cite[Corollary 7.2]{HP}, the implied constant in Lemma 2.3 of \cite{BS} is effective. And this suffices to make the proof in \cite{BS} effective.

\section{Sets definable from unrestricted Pfaffian functions} \label{sec:unres}
In this section, we consider the expansion of the real field by all total Pfaffian functions, that is, $$
\R_{\Pfaff} = ( \Rbar, \{ f : f:\R^n \to \R \text{ is Pfaffian}, n \in \N\}).
$$
In \cite{gab:rc}, Gabrielov proves that this structure is `effectively o-minimal'. More precisely, by combining Theorems 3.10, 6.1, and 3.13 of \cite{gab:rc}, each formula $\phi$ in the language of $\R_{\Pfaff}$
is endowed with a notion of a \emph{format} $\cF(\phi)$ such that the
following holds.

\begin{fact}\label{fact:unrestricted-pfaff}
If $\phi$ and $\psi$ are formulas in the language of $\R_{\Pfaff}$, then
  \begin{itemize}
  \item $\cF(\phi \vee \psi ) = \const(\cF( \phi) ,\cF(\psi))$;
  \item $\cF(\neg \phi )=\const(\cF(\phi))$;
  \item $\cF(\exists x \phi )=\const(\cF(\phi))$.
  \end{itemize}
Moreover, the number of connected components of the set defined by $\phi$ is $\const(\cF (\phi))$. 

Furthermore, $\cF(x= a) = \cF(x>a) =1$, for all $a\in\R$.
\end{fact}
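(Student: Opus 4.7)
The plan is to define the format $\cF(\phi)$ by structural induction on the formula $\phi$, so that each clause of the statement corresponds to a specific effective construction from \cite{gab:rc}. At the base, for atomic formulas of the form $x=a$ or $x>a$, one simply assigns format $1$; more generally, for atomic formulas $f=0$ or $f>0$ with $f$ a total Pfaffian function, the format encodes the number of variables, the length of a Pfaffian chain for $f$, and the relevant polynomial degrees. This matches the complexity data attached to semi-Pfaffian sets throughout \cite{gab:rc}, and in particular makes the last sentence of the Fact a tautology.

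For Boolean combinations, the effective bounds on format propagate using \cite[Theorem 3.10]{gab:rc}: a finite union or finite intersection of sub-Pfaffian sets is a sub-Pfaffian set whose complexity can be bounded effectively in the complexities of its constituents. Thus one defines $\cF(\phi\vee\psi)$ to be an effective upper bound extracted from that theorem, yielding $\cF(\phi\vee\psi)=\const(\cF(\phi),\cF(\psi))$. For negation one invokes Gabrielov's complement theorem \cite[Theorem 6.1]{gab:rc}, which is the genuinely non-trivial ingredient: the complement of a sub-Pfaffian set is again sub-Pfaffian with an effectively bounded complexity, and $\cF(\neg\phi)$ is defined to be that bound.

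The existential quantification step is then comparatively routine, because a sub-Pfaffian set is by definition a coordinate projection of a semi-Pfaffian set; one can take $\cF(\exists x\,\phi)$ to be $\cF(\phi)$ (or a trivial blowup thereof), since the sub-Pfaffian complexity bookkeeping already absorbs projections. Combining these three inductive steps produces, for every formula $\phi$ in the language of $\R_{\Pfaff}$, a sub-Pfaffian description of the set it defines whose complexity is $\const(\cF(\phi))$.

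For the bound on the number of connected components, one uses \cite[Theorem 3.13]{gab:rc}: once $\phi$ has been placed in a normal form of complexity $\const(\cF(\phi))$ by the previous step, this theorem---a Khovanskii--Rolle style fewnomial estimate---bounds the number of connected components effectively in the format and degree of the normal form, and hence in $\cF(\phi)$. The main obstacle in this whole program is negation, since it is the only step that requires going beyond the naive semi-Pfaffian formalism; all the other clauses become essentially bookkeeping built on top of Gabrielov's complement theorem.
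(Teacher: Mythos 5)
Your route coincides with the paper's: Fact \ref{fact:unrestricted-pfaff} is not proved there but imported wholesale, with the format calculus and the component bound attributed exactly to Theorems 3.10, 6.1 and 3.13 of \cite{gab:rc} --- the same three results you invoke for the Boolean, complement/negation and connected-component clauses --- and organizing them as a structural induction on formulas, with $\cF(x=a)=\cF(x>a)=1$ stipulated at the base, is precisely how the paper intends them to be combined.

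One caveat about your gloss of the negation and existential steps. You propose to keep every definable set in naive (unrestricted) sub-Pfaffian normal form, treat $\exists$ as free because sub-Pfaffian sets absorb projections, and appeal to ``Gabrielov's complement theorem'' to return $\neg\phi$ to sub-Pfaffian form. The effective complement theorem of that type is available for \emph{restricted} sub-Pfaffian sets; over all of $\R^n$ the class of sub-Pfaffian sets is not known to be effectively closed under complementation, and this is exactly the obstruction that \cite{gab:rc} is designed to circumvent: there the definable sets of $\R_{\Pfaff}$ are represented via relative closures (limit sets) of semi-Pfaffian families, and it is for that representation that the effective bounds on Boolean operations, projections and numbers of connected components are proved. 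So the format $\cF(\phi)$ in the Fact should be understood as attached to that formalism rather than to a sub-Pfaffian presentation; with that substitution your induction is exactly the argument the paper's citation encodes, but as literally written the negation step rests on a complement statement that is not known in the unrestricted setting.
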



We will say that a set $X$ definable in $\R_{\Pfaff}$ has format $\cF$ if there is a formula $\phi$ defining $X$ with $\cF(\phi)\le \F$. From now on, definability is with respect to $\R_{\Pfaff}$.

\begin{propn}
Let $n$ be a nonnegative integer, let $N$ be a postive integer and let $\F$ and $\epsilon$ be positive real numbers. 
  Let $X_1,\ldots,X_N\subseteq\R^n$ be definable sets,
  of format bounded by $\cF$. There exists a cell decomposition
  of $\R^n$ compatible with each $X_j$, for $j=1,\ldots,N$, such that the number of cells
  and their formats are $\const(N,\cF)$.
\end{propn}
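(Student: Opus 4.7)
I would argue by induction on $n$, emulating the classical proof of cell decomposition in o-minimal structures while tracking format bounds at every step via Fact~\ref{fact:unrestricted-pfaff}.

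For the base case $n=1$: for each $j$, the boundary $\bd(X_j)$ is definable using negation and existential quantification (interior and closure), so by Fact~\ref{fact:unrestricted-pfaff} it has format $\const(\F)$. Its cardinality is at most twice the number of connected components of $X_j$, which is $\const(\F)$. Sorting the $\const(N,\F)$-many points of $B := \bigcup_{j=1}^N \bd(X_j)$ yields the desired cell decomposition of $\R$ compatible with each $X_j$; each cell (a point, an open interval between consecutive points of $B$, or an unbounded end-interval) is defined by a formula of format $\const(N,\F)$.

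For the inductive step in dimension $n+1$, I would construct auxiliary definable subsets of $\R^n$ that encode where the structure of the vertical fibers of the $X_j$ changes: the projection $\pi(\bd(X_j))$, loci where $(\bd(X_j))_x$ has dimension $\geq 1$, and loci where the vertical-fiber cardinality of $\bd(X_j)$ jumps. Each such set is expressible from the $X_j$ using only Boolean operations, closure, interior, and existential quantification, hence has format $\const(\F)$ by Fact~\ref{fact:unrestricted-pfaff}. Applying the inductive hypothesis to these auxiliary subsets of $\R^n$ gives a cell decomposition of $\R^n$ of size and cell-format $\const(N,\F)$. Over each open cell $C$ of this $\R^n$-decomposition, the vertical fibers of the $X_j$ vary uniformly in $x\in C$; one then selects continuous definable functions $f_{C,1} < \cdots < f_{C,k_C}\colon C \to \R$ as ordered selections from $(\bd(X_j))_x$, whose graphs and the bands between them partition $C \times \R$ into cells compatible with each $X_j$. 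These functions and resulting cells have formats $\const(N,\F)$, and summing over the $\const(N,\F)$-many cells of $\R^n$ gives the desired global decomposition.

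The main obstacle is verifying that the chosen auxiliary subsets of $\R^n$ are indeed sufficient to guarantee the uniform-fiber property over each open cell of the induced $\R^n$-decomposition; this is the technical core of the classical o-minimal cell decomposition argument (as in van den Dries). It passes through in the present effective setting because every operation used (Boolean combinations, closure, interior, projection, and counting connected components and boundary points) is controlled effectively by Fact~\ref{fact:unrestricted-pfaff}, so that all intermediate sets have format $\const(\F)$ and all counting estimates depend only on $(N,\F)$.
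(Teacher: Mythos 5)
There is a genuine gap at the step you yourself flag as the ``main obstacle'': the auxiliary subsets of $\R^n$ you propose (the projections $\pi(\bd(X_j))$, the loci where $(\bd(X_j))_x$ is infinite, and the loci where the fibre cardinality of $\bd(X_j)$ jumps) are \emph{not} sufficient to guarantee that the fibres of the $X_j$ vary uniformly over each open cell of the induced decomposition of $\R^n$. A concrete counterexample already occurs for $n+1=2$: take $X=\{(x,y)\in\R^2 : xy<1\}\setminus\{(0,0)\}$. Then $\bd(X)=\{xy=1\}\cup\{(0,0)\}$, so $(\bd(X))_x$ consists of exactly one point for every $x$ (namely $1/x$ for $x\neq 0$ and $0$ for $x=0$): there is no cardinality jump, no infinite fibre, and $\pi(\bd(X))=\R$, so your auxiliary family gives no reason to cut the base at $x=0$. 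Yet over any interval containing $0$ the ordered selection from $(\bd(X))_x$ is discontinuous and the fibres of $X$ change from $(-\infty,1/x)$ to $\R\setminus\{0\}$ to $(1/x,\infty)$; the graphs-and-bands construction over such a cell simply does not produce cells compatible with $X$. In other words, constancy of the fibre cardinality of the boundary does not imply continuity of the root functions nor constancy of the membership pattern, so the ``uniform-fibre property'' you assert over open cells fails for the data you feed into the induction.

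Two further ingredients are needed, and they are exactly what the paper's proof supplies. First, the base must be partitioned according to the full \emph{combinatorial type} of the fibre: which points of $E(x)$ (endpoints and isolated points of the fibres $F_j(x)$) and which intervals between consecutive such points lie in which $X_j$. The number of such types is $\const(\cF)$ by the effective bound on connected components of fibres, and each type-set is definable of format $\const(\cF)$; compatibility of the graphs and bands with every $X_j$ is then automatic by construction, rather than something to be deduced. Second, even on a fixed type-set $T$ the ordered selection functions $y_1<\dots<y_K$ are only continuous off a definable subset $T'\subseteq T$ of smaller dimension (of format $\const(\cF)$), and this bad set cannot be named before the functions are defined, so it cannot be included in a single application of the inductive hypothesis as you propose; the paper instead applies the induction to $T,T'$ and then iterates the whole construction over $T'$, $T''$, \dots, at most $\dim T$ times. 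Your sketch, as written, performs one pass with auxiliary sets that neither encode the membership pattern nor the discontinuity loci, so the technical core does not ``pass through'' without these additional devices.
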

\begin{proof}
  We will work by induction on $n$ and assume without loss of
  generality that $N\le \cF$. The case $n=0$ is trivial, and so we let $n>0$. Let $x\in\R^{n-1}$ and let
  $F_j(x)\subseteq \R$ denote the fibre of $X_j$ over $x$. Denote  by $E_{j}(x)$ the
  collection of all points in $\R$ that are isolated points of $F_{j}(x)$ or endpoints of intervals in $F_{j}(x)$, and denote  by 
  $E(x)$ the union of the sets $E_{j}(x)$ over all $j=1,\ldots,N$. The \emph{combinatorial type} of $x$ is the map assigning to
  each $j$, and to each point $y\in E(x)$ (respectively adjacent endpoints
  $y,y'\in E(x) \cup \{\pm \infty\}$), the value $1$ or $0$ depending on whether $y\in F_j(x)$ (respectively $(y,y')\subseteq F_j(x)$) or not. 

  Since the number of isolated points and intervals in each $F_j(x)$ is $\const(\cF)$, the number of different combinatorial types is some positive integer 
  $M$ which is $\const(\cF)$. One easily verifies that the families
  $F_j(x),E(x)\subseteq\R^n$ are definable of format
  $\const(\cF)$, and that the subsets of $\R^{n-1}$ consisting of those $x$ with
  a given combinatorial type are definable with format
  $\const(\cF)$. In this way we obtain a partition of $\R^{n-1}$ into
  sets $T_1,\ldots,T_M$. It will suffice to construct a cell
  decomposition of $(T_i\times\R)\cap X_j$, for each $i,j$. After
  fixing $i$ and replacing each $X_j$ by $(T_i\times\R)\cap X_j$, it
  will suffice to prove the claim in the original notation, assuming
  without loss of generality that the combinatorial type is already
  fixed uniformly over $x\in T$, where $T:=\bigcup_{j=1}^{N}\pi_{n-1}(X_j)$, with $\pi_{n-1} \colon \R^n \to \R^{n-1}$ the natural coordinate projection.

  We proceed under this assumption. It follows that the number of points in
  $E(x)$, for $x\in T$, is some constant $K$ which is $\const(\cF)$, and we can
  define $K$ functions $y_1,\ldots,y_K:T\to \R$ such that
  $E(x)=\{y_1(x),\ldots,y_K(x)\}$ and $y_1(x)<\cdots<y_K(x)$, for every
  $x \in T$. These functions are definable of format
  $\const(\cF)$, each $y_j(x)$ being defined as the element of $E(x)$
  such that $E(x)$ contains exactly $j-1$ smaller elements. By general
  o-minimality, these functions are continuous on $T$ (with the induced
  topology) outside a definable set $T'\subseteq T$ of dimension strictly smaller
  than $\dim T$, and $T'$ also has format $\const(\cF)$ -- for
  instance by writing out an $\e$-$\delta$-definition for $T'$.

By the inductive assumption, there is a cell decomposition of $\R^{n-1}$ compatible with 
  $T,T'\subseteq\R^{n-1}$ and satisfying the desired bounds. If $C\subseteq\R^{n-1}$ is
  a cell disjoint from $T,T'$, then $C\times\R$ is a cell compatible
  with each $X_j$. If $C\subseteq T\setminus T'$, then the functions $y_j$ restrict
  to continuous functions on $C$, and by construction the sets
  \begin{align*}
    &\{(x,y) \in C \times \R \colon y=y_j(x) \}  & &j=1\ldots,K \\
    &\{ (x,y) \in C \times \R \colon y_j(x)<y<y_{j+1}(x) \} & &j=0,\ldots,K
  \end{align*}
  form a cell decomposition of $C\times\R$ compatible with each $X_j$
  (where we let $y_0=-\infty$ and $y_{K+1}=+\infty$). Indeed, the
  constant combinatorial type determines that each of these cells is
  either contained in, or strictly disjoint from, each $X_j$. These
  $\const(\cF)$ cells all have format $\const(\cF)$, and together they
  cover $(T\setminus T')\times\R$.

  Finally we need to construct cells covering $T'\times\R$. For this
  we note that $y_1,\ldots,y_K$ again give continuous functions on
  $T'\setminus T''$ for some set $T''\subseteq T'$ of dimension strictly
  smaller than $\dim T'$, and using the same construction we obtain
  $\const(\cF)$ cells of format $\const(\cF)$ covering
  $(T'\setminus T'')\times\R$. Repeating this process at most $\dim T$
  times finishes the proof.
\end{proof}

With this effective cell decomposition result, we can repeat the proof of effective parameterization from Section \ref{sec:param} in the setting of unrestricted Pfaffian functions. We no longer have polynomial dependence on the degree, but we retain effectivity, and can then prove effective counting results. We state only the most general result.

\begin{thm} Let $\ell, m$ and $n$ be nonnegative integers, let $g$ be a positive integer and let $\F$ and $\epsilon$ be positive real numbers. Suppose that $X \subseteq \R^\ell\times \R^m\times \R^n$ is a definable family of format $\cF$. There exist a positive integer $J$ that is $\const (\cF, g, \epsilon)$, positive integers $k_j$, for $j=1,\ldots,J$, together with basic block families $W^{(j)}\subseteq \R^{k_j}\times \R^\ell \times \R^m$ of format $\const (\F,g,\epsilon)$, for $j=1,\ldots, J$, and continuous definable maps $\phi^{(j)}:W^{(j)} \to \R^n$ of format $\const (\F,g,\epsilon)$, for $j=1,\ldots, J$, as well as a positive real number $C$ which is $\const (\F,g,\epsilon)$, such that the following hold.
\begin{itemize}
\item[(i)] For all $j=1,\ldots, J$ and $(a',a)\in \R^{k_j}\times \R^\ell$, we have
\[
\Gamma(\phi^{(j)})_{(a',a)} \subseteq \left\{ (x,y) \in X_a : y \text{ is isolated in } X_{(a,x)}\right\}.
\]
\item[(ii)] Suppose that $a \in \R^{\ell}$. For any real number $H \ge 1$, the set $X^{\sim,iso}_a(g,H)$ is contained in the union of at most $CH^\epsilon$ graphs $\Gamma(\phi^{(j)})_{(a',a)}$ with $j\in \{ 1,\ldots, J\}$ and $a' \in \R^{k_j}$.
\end{itemize}
\end{thm}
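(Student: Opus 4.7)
The plan is to repeat the development of Sections 3 and 4 in the unrestricted Pfaffian setting, using the effective cell decomposition proved earlier in Section 6 in place of Theorem \ref{BVcd}, and Fact \ref{fact:unrestricted-pfaff} in place of Theorem \ref{BVfmlas}. The only difference in the bookkeeping is that throughout, every quantity previously bounded by $\poly_{\F,r}(D)$ now simplifies to $\const(\F,r)$, because there is no degree parameter, and effective dependence on the format is all that the cell decomposition and the closure of formulas provide.

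First, I would establish an analogue of Theorem \ref{crpcrr} in this setting, asserting that every definable family $X=\{X_a\colon a\in A\}$ of subsets of $I^{\ell}$ of format at most $\F$ admits a definable cellular $r$-parameterization of size and format $\const(\F,r)$, and likewise for cellular $r$-reparameterizations of definable families of maps. The proof proceeds by induction on $\ell$ exactly as in Section 3: the auxiliary results (Lemmas \ref{linsubst}, \ref{codomainI}, \ref{x^2}, \ref{almostcrr}, and \ref{finalstep}) transfer word for word, since their content only requires effective cell decomposition, effective definability of derivatives and zero sets (Theorem \ref{BVfmlas}), effective monotonicity (Corollary \ref{monotonicity}), and effective definable choice (Lemma \ref{choice}) — all of which are immediate consequences of the effective cell decomposition in Section 6 together with Fact \ref{fact:unrestricted-pfaff}. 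No new geometric idea is needed: wherever the Section 3 argument says ``$\const(\F,r)$ and $\poly_{\F,r}(D)$'', one simply drops the $D$.

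Next, I would prove the analogues of Proposition \ref{prop:pw-induction} and Theorem \ref{thm:pw-blocks} for definable families in $\R_{\Pfaff}$. This again follows the same argument: Proposition \ref{auxiliary_polynomials} is purely an analytic statement and is unchanged; it is applied to the parameterizing maps produced by the unrestricted version of Theorem \ref{crpcrr}; the inductive bookkeeping in Proposition \ref{prop:pw-induction} uses only cell decomposition, Fact \ref{fact:unrestricted-pfaff}, and Remark \ref{rmk:analytic-cells} (whose analogue follows from inspecting the construction of cells in Section 6, where the cell walls are defined by $y_j(x)$, the $j$-th element of $E(x)$, and hence are real-analytic wherever the underlying Pfaffian defining equations have nonzero derivative). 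The block-counting statement (Theorem \ref{thm:pw-blocks}) then follows by iterating the induction at most $n$ times as in its original proof, with all bounds becoming $\const(\F,g,\epsilon)$. Note that sets in $\R^n$ rather than $I^n$ are handled by precomposing with the maps $x\mapsto \pm x^{\pm 1}$ exactly as in Section 4.

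Finally, the Habegger--Pila-style statement is deduced by adapting the proof of Theorem \ref{semi_thm} verbatim: the set $X'=\{(a,x,y)\in X\colon y\text{ is isolated in }X_{(a,x)}\}$ is definable of format $\const(\F)$ by Fact \ref{fact:unrestricted-pfaff}; it is then decomposed into $\const(\F)$-many cells $E^{(i)}$ with continuous functions $f^{(i)}\colon E^{(i)}\to \R^n$ whose graphs cover $X'$, and one applies the unrestricted block-counting theorem fibrewise, as in \cite[Theorem 7.1]{HP}, giving the required $\const(\F,g,\epsilon)H^{\epsilon}$ bound. The main subtlety — if any — is not geometric but notational: one must check that the format of every intermediate set or function constructed (projections, graphs, images of cellular maps, derivatives, auxiliary polynomial zero sets) remains $\const(\F,g,\epsilon)$, which is immediate from Fact \ref{fact:unrestricted-pfaff}. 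No exhaustion trick is needed inside the argument itself since we allow $X\subseteq \R^{\ell+m+n}$ from the start, handled by the $x\mapsto \pm x^{\pm 1}$ reduction already noted.
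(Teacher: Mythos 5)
Your proposal follows essentially the same route as the paper: the paper's own proof consists precisely of the remark that, armed with the effective cell decomposition proposition and Fact \ref{fact:unrestricted-pfaff}, one repeats the parameterization argument of Section \ref{sec:param} and then the counting arguments of Section \ref{sec:counting} (Proposition \ref{prop:pw-induction}, Theorem \ref{thm:pw-blocks}, Theorem \ref{semi_thm}), with every $\poly_{\F,r}(D)$ bound collapsing to $\const$ of the format. Your expanded bookkeeping, including the reduction to $I^n$ via $x\mapsto\pm x^{\pm1}$ and the fibrewise application of the block-counting theorem, matches the paper's intended argument.
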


\bibliographystyle{amsplain}
\bibliography{refs}

\end{document}